\documentclass[12pt]{amsart} 
\usepackage{amsmath,amssymb,amsthm}
\usepackage{geometry}

\usepackage[utf8x]{inputenc}
\usepackage{enumitem}
\usepackage{tikz,tikz-cd}
\usepackage[colorlinks=true,citecolor=blue]{hyperref}
\usepackage{cleveref}
\usepackage{verbatim}
\usepackage{graphicx}
\usepackage{faktor}
\usepackage{xcolor}
\usepackage{xfrac}
\usepackage{todonotes}
\usepackage[normalem]{ulem}
\usepackage{setspace}
\usepackage{bbm}
\usepackage{calrsfs}
\usepackage{dutchcal}
\usepackage{seqsplit}
\usepackage{xstring}
\usepackage{mathtools}
\usepackage{thmtools}

\tikzset{
  arrow/.pic={\path[tips,every arrow/.try,->,>=#1] (0,0) -- +(0,4pt);},
  pics/arrow/.default={triangle 90}
}

\tikzset{->-/.style={decoration={
  markings,
  mark=at position .6 with {\arrow{latex}}},postaction={decorate}}
  }

\tikzset{
  c/.style={every coordinate/.try}
}

\DeclareFontFamily{U}{lasy}{}
\DeclareFontShape{U}{lasy}{m}{n}{
  <-5.5> lasy5
  <5.5-6.5> lasy6
  <6.5-7.5> lasy7
  <7.5-8.5> lasy8
  <8.5-9.5> lasy9
  <9.5-> lasy10
}{}

\newcommand{\nocontentsline}[3]{}
\let\origcontentsline\addcontentsline
\newcommand\stoptoc{\let\addcontentsline\nocontentsline}
\newcommand\resumetoc{\let\addcontentsline\origcontentsline}

\DeclareMathAlphabet{\pazocal}{OMS}{zplm}{m}{n}

\def\on#1{\operatorname{#1}}

\def\Spec{\on{Spec}}
\def\Hom{\on{Hom}}

\def\logGm{\mathbf G_{\log}}

\def\BGL{\mathrm{BGL}}

\def\tropGm{\mathbf G_{\rm trop}}

\def\Pic{\operatorname{Pic}}

\def\uHom{\underline{\Hom}}

\def\Sym{\operatorname{Sym}}

\def\Log{\mathbf{Log}}

\newcommand{\coker}{\operatorname{coker}}

\newcommand{\Gm}{\mathbf{G}_{\rm{m}}}

\newcommand{\Gtr}[1]{\mathbf{G}_{\rm{trop},#1}}

\newcommand{\OO}{\mathcal  O}
\newcommand{\Acal}{\mathcal A}

\newcommand{\Ecal}{\mathcal E}
\newcommand{\Ical}{\pazocal I}
\newcommand{\Fcal}{\mathcal F}

\newcommand{\Pcal}{\pazocal P}

\newcommand{\PP}{\mathbb P}

\newcommand{\Aaff}{\mathbf A}
\newcommand{\NN}{\mathbf N}
\newcommand{\ZZ}{\mathbf Z}

\newcommand{\mo}{P}
\newcommand{\mt}{Q}
\newcommand{\mm}{h}
\newcommand{\amb}[1]{{#1}^{\rm{a}}}

\newcommand{\bq}{\begin{equation}}
\newcommand{\eq}{\end{equation}}
\newcommand{\ba}{\begin{aligned}}
\newcommand{\ea}{\end{aligned}}
\newcommand{\be}{\begin{enumerate}}
\newcommand{\ee}{\end{enumerate}}
\newcommand{\bsm}{\left(\begin{smallmatrix}}
\newcommand{\esm}{\end{smallmatrix}\right)}                   
\newcommand{\bpm}{\begin{pmatrix}}
\newcommand{\epm}{\end{pmatrix}}
\newcommand{\barr}{\begin{displaymath}\begin{array}{cccc}}
\newcommand{\earr}{\end{array}\end{displaymath}}
\newcommand{\barrl}{\begin{displaymath}\begin{array}{lcl}}
\newcommand{\earrl}{\end{array}\end{displaymath}}
\newcommand{\barl}{\begin{displaymath}\begin{array}{l}}
\newcommand{\earl}{\end{array}\end{displaymath}}
\newcommand{\bxym}{ \begin{displaymath}\xymatrix }
\newcommand{\exym}{\end{displaymath}}
\newcommand{\bcd}{\begin{center}\begin{tikzcd}}
\newcommand{\ecd}{\end{tikzcd}\end{center}}

\theoremstyle{theorem}
\newtheorem{theorem}{Theorem}[section]
\newtheorem*{theorem*}{Theorem}

\newtheorem{corollary}[theorem]{Corollary}
\newtheorem{lemma}[theorem]{Lemma}
\newtheorem{proposition}[theorem]{Proposition}
\theoremstyle{definition}
\newtheorem{remark}[theorem]{Remark}
\newtheorem*{remark*}{Remark}

\newtheorem*{runningexample*}{Running example}

\newtheorem*{aside*}{Aside}

\newtheorem{definition}[theorem]{Definition}
\newtheorem{example}[theorem]{Example}

\newtheorem{proposition-definition}[theorem]{Proposition-Definition}

\Crefname{theorem}{Theorem}{Theorems}
\Crefname{claim}{Claim}{Claims}
\Crefname{conjecture}{Conjecture}{Conjectures}
\Crefname{corollary}{Corollary}{Corollaries}
\Crefname{lemma}{Lemma}{Lemmas}
\Crefname{proposition}{Proposition}{Propositions}
\Crefname{remark}{Remark}{Remarks}
\Crefname{assumption}{Assumption}{Assumptions}
\Crefname{aside}{Aside}{Asides}
\Crefname{condition}{Condition}{Conditions}
\Crefname{construction}{Construction}{Constructions}
\Crefname{convention}{Convention}{Conventions}
\Crefname{definition}{Definition}{Definitions}
\crefname{definition}{definition}{definitions}
\Crefname{example}{Example}{Examples}
\Crefname{exercise}{Exercise}{Exercises}
\Crefname{notation}{Notation}{Notations}
\Crefname{proposition-definition}{Proposition-Definition}{Proposition-Definitions}
\Crefname{question}{Question}{Questions}
\Crefname{setting}{Setting}{Settings}

\makeatletter
\newcommand\pcite[2][]{%
	\let\temp@cite\@cite%
	\def\@cite##1##2{\@citestyle \citeform {##1}\if@tempswa , ##2\fi}%
	\ifx#1\relax{\cite[#1]{#2}}\else\cite{#2}\fi%
	\let\@cite\temp@cite%
}
\def\thmhead#1#2#3{{#1}\thmnumber {\@ifnotempty {#1}{ }\@upn {#2}}\thmnote { {\the \thm@notefont #3}}}
\makeatother

\def\iff{\kern-.5ex\Leftrightarrow\kern-.5ex}

\def\Star{\operatorname{Star}}

\makeatletter
\def\tocdesc#1{\addtocontents{toc}{\parbox[t]{\dimexpr\linewidth-3em}{\leftskip2em\footnotesize #1} \par\medskip}}
\makeatother

\title{Vector bundles on Olsson fans}
\author{Luca Battistella}
\address{Università di Bologna, Piazza di Porta S. Donato 5, Bologna 40126, Italy}
\email{luca.battistella2@unibo.it}
\author{Francesca Carocci}
\address{Università di Roma Tor Vergata, Via della Ricerca Scientifica 1, Roma 00133, Italy}
\email{carocci@mat.uniroma2.it}
\author{Jonathan Wise}
\address{University of Colorado Boulder, Campus Box 395
Boulder, Colorado 80309-0395, US}
\email{jonathan.wise@colorado.edu}
\date{January 2026}

\begin{document}

\begin{abstract}
Artin fans are algebro-geometric incarnations of cone complexes. We study weakly convex Olsson fans, generalising Artin fans in two ways: first, they admit lineality spaces, thus including tropical tori as well; second, they are defined over a base logarithmic scheme, thus providing a relative version of equivariant toric geometries. We determine conditions under which Olsson fans are well-behaved, in the sense that their geometry is determined combinatorially. We undertake the study of quasicoherent sheaves, and in particular vector bundles, on Olsson fans: we describe their moduli stack in the case of a (subdivided) cone, and some failures of algebraicity in the general case; Weyl convexity shows up naturally in this context. 
\end{abstract}

\maketitle

\setcounter{tocdepth}{1}
\tableofcontents

\section{Introduction}
\subsection{Olsson fans and their étale sheaves}
 We undertake a preliminary study of a relative notion of Artin fans over a general fine and saturated logarithmic scheme.  We call these \emph{Olsson fans}, since they appeared first in Olsson's work on logarithmic structures and algebraic stacks~\cite{olssonlogarithmic}~: Olsson's stack of relative logarithmic structures is the prototypical example. Artin cones embody equivariant toric geometry~; Olsson fans allow gluing and variation in a family.

An Olsson fan over $S$ may be defined succinctly as an algebraic stack with logarithmic structure that is logarithmically \'etale over $S$.  Our first result asserts that Olsson fans always have (strict) \'etale charts by Olsson cones, namely those Olsson fans that are representable by a saturated extension of the characteristic monoid of $S$.

\begin{theorem} \label{thm:1}
	Let $S$ be a fine, saturated logarithmic scheme and let $\Sigma$ be an Olsson fan over $S$.  Then $\Sigma$ admits a strict \'etale cover by Olsson cones.
\end{theorem}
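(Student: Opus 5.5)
We reduce to Olsson's description of logarithmic structures by algebraic stacks and then use the local structure of log étale maps. Let $\Log_S$ denote Olsson's stack of fine saturated logarithmic structures over $S$. An Olsson fan $\Sigma \to S$ is log étale, so the induced morphism $\Sigma \to \Log_S$ is étale (Olsson's criterion: a morphism of fine log stacks is log étale if and only if the associated morphism to $\Log_S$ is étale). Since $\Sigma$ has logarithmic structure pulled back from $\Log_S$, and $\Sigma \to \Log_S$ is étale, it is enough to show two things:
\begin{itemize}
\item $\Log_S$ admits a strict étale cover by Olsson cones;
\item the étale morphism $\Sigma\to\Log_S$ can be refined, locally on $\Sigma$, by those cones.
\end{itemize}
The second point is not automatic, since an étale map to a cone need not itself be a cone. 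This is the key point to address.

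\emph{Step 1: local charts.} Work étale locally on $S$, so that $S$ has a chart $P=\overline M_{S,s}$ at a point $s$. At a point $\sigma$ of $\Sigma$ over $s$, write $Q=\overline M_{\Sigma,\sigma}$. Log étaleness (Kato's chart criterion) gives, étale locally, a chart $P\to Q^{\rm gp}$-level map $u\colon P\to Q$ that is injective with finite torsion cokernel on groups. Fine saturated charts give a $Q$ with $P\to Q$ a saturated extension, after possibly replacing $Q$ by its saturation and localizing.

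Form the Olsson cone $\Sigma_{Q}=S\times_{[\Spec\ZZ[P]/T_P]}[\Spec\ZZ[Q]/T_Q]$, i.e. the stack representing extensions of $P$-structures to $Q$-structures. The chart induces a strict morphism $U\to\Sigma_Q$ from an étale neighbourhood $U$ of $\sigma$ (strict because $\overline M_\Sigma$ near $\sigma$ is generated by $Q$). Both $U$ and $\Sigma_Q$ are log étale over $S$, so $U\to\Sigma_Q$ is strict and log étale, hence étale.

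\emph{Step 2: upgrade to a cover by cones.} What we actually need is the reverse direction: an étale map from a cone \emph{to} $\Sigma$. Here we use the fact that Olsson cones have the following property. The closed stratum $S\times_{[\Aaff_P/T_P]}[0/T_Q]$-type point of $\Sigma_Q$ is the unique closed stratum, and every open substack containing it is everything. The étale map $U\to\Sigma_Q$ is an isomorphism over the closed stratum after shrinking, since the residual gerbes match, $\Sigma$ being log étale with characteristic $Q$ at $\sigma$. We then argue that the étale map $\Sigma\to\Log_S$, restricted to the star of $\sigma$, admits a section over $\Sigma_Q\subset\Log_S$ (an open substack). This holds because $\Sigma_Q\to\Log_S$ is an open immersion, and étale maps to $\Sigma_Q$ inducing isomorphisms on the closed stratum split, after étale localization on $S$, by the cone property: $\Sigma_Q$ is the "étale star" of its closed point.

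Concretely, we use the fact that étale sheaves on $\Sigma_Q$ are determined by their stalks at the closed point, relative to $S$. This is the key lemma. The section gives a strict étale map $\Sigma_Q\to\Sigma$ hitting $\sigma$, and varying $\sigma$ gives the cover.

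\emph{Main obstacle.} The hard part is the lemma invoked in Step 2: for an Olsson cone $\Sigma_Q\to S$, étale maps $V\to\Sigma_Q$ that are isomorphisms over the closed stratum admit sections, étale locally on $S$. We would prove it by the $T_Q$-equivariant contraction of $\Spec\ZZ[Q]$ onto the torus-fixed locus, using a cocharacter in the interior of the dual cone. The aim is to show that an étale neighbourhood of the closed stratum is all of the quotient stack (the $\Gm$-flow argument of Abramovich–Wise for Artin cones), done relatively over $S$ via henselian pairs.
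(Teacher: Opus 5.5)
Your reduction is the same as the paper's. Since $\Sigma\to\Log_S$ is étale, a strict étale $S$-morphism $\Sigma_Q\to\Sigma$ through $x$ is the same as a section of the étale stack $E=\Sigma\times_{\Log_S}\Sigma_Q$ over $\Sigma_Q$. Everything then rests on the lemma that, over a strictly henselian base, sections of such an $E$ over the closed stratum $Z$ extend uniquely (\Cref{prop:18}).

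\textbf{Incorrect intermediate claims and a missing step.}
\begin{itemize}
\item $\Sigma_Q\to\Log_S$ is étale but not an open immersion in general. For example, $\mathcal A_{\NN^2}\to\Log$ is not even a monomorphism: the stabilizer $\Gm^2$ misses the swap in the automorphism group $\Gm^2\rtimes\ZZ/2$ of the log point with $\bar M=\NN^2$.
\item $E$ need not be an isomorphism over $Z$; in the same example it has two sheets over $Z$. So you must work with $E$ itself, which may be a non-representable stack.
\item What you need over $Z$ is a \emph{section through $x$}, and you never produce one. The paper does: $x$ gives a point of $E$ over the closed point $z$, and this descends to $Z\simeq\mathrm{B}T$, with $T$ dual to $Q^{\rm gp}/P^{\rm gp}$, because $T$ is a torus and hence connected. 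This is exactly where saturation of $\Sigma/S$ enters. Integral plus local gives exact, hence injective, so $Z=\mathrm{B}T$; saturated gives a torsion-free cokernel, so $T$ is connected.
\item Your Step~1 claim that log étaleness yields a chart with finite cokernel is Kato's criterion for \emph{schemes}. For log étale stacks, $Q^{\rm gp}/P^{\rm gp}$ is typically free of positive rank (e.g.\ $\mathcal A_\NN$ over a point). If it were finite, saturation would force $Q=P$ and make $\Sigma$ strict étale over $S$.
\end{itemize}

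\textbf{The key lemma.} The genuine gap is the key lemma itself: the mechanism you propose does not work relative to $S$.
\begin{itemize}
\item A cocharacter in the interior of $\sigma_Q$ is a cocharacter of $T_Q$ with nonzero image in $N_P$, since $P\to Q$ is local. Its flow therefore changes the map to $\mathcal A_P$ and does not act on $\Sigma_Q$ over $S$.
\item The cocharacters that do act over $S$ are those of $T_L$, with $L=Q^{\rm gp}/P^{\rm gp}$, and these need not contract anything onto $Z$. For the relative edge of \Cref{exa:relative_edge}, $\Sigma_Q=\bigl[\Spec\bigl(\mathcal O_S[x,y]/(xy-s)\bigr)/\Gm\bigr]$ with weights $(1,-1)$, and no one-parameter subgroup contracts both branches of the closed fiber.
\item Over non-closed points of $S$, nothing flows to $Z$ at all.
\item Even a genuine contraction only yields the Zariski statement that opens containing $Z$ are everything. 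That does not extend étale sections: a connected finite étale cover of a normal variety has sections at points but on no Zariski open.
\end{itemize}

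What is needed is a genuinely henselian statement for the pair $(\Sigma_Q,Z)$ over strictly henselian $S$, valid for étale, quasi-separated algebraic stacks. The paper proves this in \Cref{prop:18} as follows:
\begin{enumerate}
\item Extend from $Z$ to the closed fiber via the retraction $\mathcal A_Q\to\mathrm BT_Q\to\mathcal A_Q$.
\item Lift through infinitesimal neighbourhoods by étaleness.
\item Algebraize on the completion of the affine chart $V$ using Hall--Rydh.
\item Pass to $V$ by Artin approximation.
\item Descend along $V\to\Sigma_Q$, with uniqueness from the same argument applied to Isom stacks.
\item Spread out from the henselization.
\end{enumerate}
One could instead try to use that $\Sigma_Q\to S$ is a good moduli space (\Cref{gms_univclosed}) together with henselian-pair results for good moduli spaces. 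That route would still need an argument covering non-representable $E$.

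Note also that the paper assumes $\Sigma$ quasi-separated precisely for this step (\Cref{rem:2}). Your sketch would need a comparable hypothesis.
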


See Theorem~\ref{thm:localstructureOlssonfans} for the proof, which involves a study of \'etale sheaves on Olsson fans.  The next theorem, when combined with Theorem~\ref{thm:1}, shows that the cohomology of \'etale sheaves on Olsson fans is determined combinatorially~:

\begin{theorem} \label{thm:2}
	Let $S$ be a fine, saturated logarithmic scheme and let $\pi : \sigma \to S$ be an Olsson cone over $S$.  Then $\mathrm R^p \pi_\ast F = 0$ for all \'etale sheaves of abelian groups $F$ on $\sigma$ and all $p > 0$.
\end{theorem}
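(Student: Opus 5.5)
The plan is to compute the stalks of $\mathrm R^p\pi_\ast F$ and show that, over a strictly henselian base, taking global sections on an Olsson cone is the same as taking the stalk at a single \emph{vertex} point. That functor is exact, so all higher cohomology vanishes.

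\emph{Reduction to a strictly local base.} Since $\mathrm R^p\pi_\ast F$ is an étale sheaf on $S$, it suffices to show its stalk at every geometric point $\bar s \to S$ is zero. The formation of an Olsson cone is compatible with strict base change: the cone is defined by a saturated extension of $\overline M_S$, and strict étale base change preserves this description. Moreover $\pi$ is quasi-compact and quasi-separated, so cohomology commutes with the limit. Hence the stalk is $H^p(\sigma_{\bar S}, F)$, where $\bar S = \Spec \OO^{\rm sh}_{S,\bar s}$ with the pulled back logarithmic structure. We may also shrink $S$ so that $\overline M_S$ is constant with value $\overline M_{S,\bar s}$ and the chart of $\sigma$ is global. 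So we assume $S$ is strictly local with closed point $s$, and $\sigma = S \times_{[\Spec \ZZ[P]/T_P]} [\Spec \ZZ[Q]/T_Q]$ for the saturated extension $P \to Q$. This is the relative Artin cone, up to the precise presentation used in the paper's definition.

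\emph{The vertex.} Let $v \in |\sigma|$ be the point over $s$ where the characteristic monoid is largest, namely the stratum where all of $Q$ not coming from $P$ is nonunit. Its residual gerbe is $B T$ over a separably closed field, with $T$ a torus, up to possible finite diagonalizable factors that disappear after saturation. The key claim is that $v$ lies in the closure of every point of $|\sigma|$, equivalently that every open substack containing $v$ is all of $\sigma$. This holds because $|\sigma|$ is the poset of faces of $Q$ relative to $P$, fibred over the strictly local $S$. More importantly, the analogous statement must hold in the étale topology: every étale map $U \to \sigma$ whose image contains $v$ admits a section after refinement. From this I would deduce that $\Gamma(\sigma, F) = F_{\bar v}$ for every étale sheaf $F$. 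Since $T$ is connected, étale sheaves on $BT_{\bar k}$ are just sets, so the stalk at $\bar v$ is well defined and exact. Consequently $\Gamma(\sigma,-)$ is exact, and $H^p(\sigma,F)=0$ for $p>0$.

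\emph{Main obstacle.} The hard step is the étale-local statement above: every étale neighbourhood of $\bar v$ in $\sigma$ has a section over all of $\sigma$. I would first prove it for the absolute Artin cone $[\Aaff^Q/T]$ over a separably closed field. There one uses the contracting $T$-action: an étale $T$-equivariant neighbourhood of the fixed point in an affine toric variety, pointed at the fixed point, must contain an invariant open set. By the torus action this open set is everything, so the neighbourhood splits. Equivalently, one can use Theorem~\ref{thm:1}'s underlying analysis of étale sheaves on Olsson fans as sheaves on the face poset. I would then pass to a strictly henselian base. The idea is to combine henselianity of $S$ along $s$, which lifts sections from the closed fibre via the henselian-pair property of $(\OO_S[Q\text{-part}], \text{ideal of the vertex})$, with the contracting action over the closed fibre. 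The delicate point is that this henselian-pair argument must be carried out equivariantly and compatibly with the fibre product over $[\Spec\ZZ[P]/T_P]$.
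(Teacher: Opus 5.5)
Your overall architecture is the paper's. After passing to the strict henselization, everything rests on the claim that a section over the closed stratum $Z\simeq\mathrm BT$ of an \'etale (quasi-separated) $E\to\sigma$ extends uniquely to $\sigma$; this is exactly Proposition~\ref{prop:18}. Your endgame is a genuine streamlining of Corollaries~\ref{cor:etale_sheaves} and~\ref{cor:12}. The paper identifies $H^1$ with homomorphisms $T_L\to F_0$ via torsors and a \v{C}ech complex on $\mathrm BT_L$, and then dimension-shifts. You instead note that $F\mapsto \Gamma(\sigma,F)\simeq (F|_Z)_{\bar v}$ is exact, because \'etale sheaves on $\mathrm BT$ over a separably closed field are constant when $T$ is connected. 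Injectivity of $\Gamma(\sigma,F)\to F_{\bar v}$ follows from openness of equalizers together with your Zariski claim.

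Be aware that a vertex over $s$ exists only if $\bar M_{S,\bar s}\to R_{\bar s}$ is local. Take a saturated non-local map, such as the complement $j$ of a normal crossings divisor, which the paper explicitly calls an Olsson cone. There the closed fibre is empty and $(\mathrm R^1 j_\ast\mu_n)_{\bar s}\neq 0$ for $n$ invertible on $S$. So you, like the paper through the hypotheses of Proposition~\ref{prop:18}, are really proving the statement for exact cones, and you should say so.

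The real gap is that the extension lemma, which carries all the content, is only sketched, and the tools you name do not deliver it.

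\emph{The henselian pair.} The pair $(\mathcal O_S\otimes_{\mathbf Z[P]}\mathbf Z[Q],\ \text{ideal of the vertex})$ is not henselian. Already $(k[x],(x))$ fails: $t^2-t+x$ has the simple root $0$ modulo $x$ but no root in $k[x]$. Nor is $(C,\mathfrak m_S C)$ henselian, since $C$ is not finite over $\mathcal O_S$ (take $\mathcal O_S[x]$ and $t^2-t+\varpi x$). Henselianity of $\mathcal O_S$ reaches $\sigma$ only through the invariants. To lift a section of a non-finite \'etale $E$ off the closed fibre you need an algebraization step. The paper does this as follows:
\begin{enumerate}
\item extend through the thickenings $\sigma_n$ by \'etaleness;
\item algebraize over the completion $\hat V$ using Hall--Rydh;
\item apply Artin approximation to $\pi_\ast E$ for $\pi\colon V\to S$, obtaining a section over $V$ that agrees on the closed fibre;
\item descend along $V\to\sigma$ by running the same argument on $V\times_\sigma V$ and $V\times_\sigma V\times_\sigma V$.
\end{enumerate}

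\emph{The closed fibre.} Your closed-fibre step is also off. The fibre $\sigma_0=s\times_{\mathcal A_P}\mathcal A_Q$ is not the absolute cone $[\Aaff^Q/T]$. When the image of $Q$ in $Q^{\rm gp}/P^{\rm gp}$ is not sharp, no cocharacter of $T$ contracts it: for the edge $\alpha+\beta=\delta$ it is $[\{x_1x_2=0\}/\Gm]$ with weights $(1,-1)$. Moreover, ``the image contains an invariant open, hence is everything'' only gives surjectivity of $U\to\sigma$, not a section. The section over $Z$ comes from $U\times_\sigma Z$ being a disjoint union of copies of $Z$, by connectedness of $T$. Extending it over $\sigma_0$ then needs an actual contraction, which the paper takes to be the retraction $\mathcal A_Q\to\mathrm BT_Q\to\mathcal A_Q$.
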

Indeed, working \'etale locally on $S$, these results imply that the cohomology of an \'etale sheaf $F$ on a Olsson fan $\Sigma$ covered by Olsson cones $\sigma_i\to S$ is the cohomology of the
\v{C}ech complex $C^{\bullet}(\mathcal U,F)=\prod_{(i_0\dots i_n)}\Gamma(\sigma_{i_0}\times_\Sigma\sigma_{i_1}\dots\times_\Sigma \sigma_{i_n},  F)$. 
See Corollary~\ref{cor:12}.

\subsection{Weakly convex cones and locally free sheaves}
Ultimately, our goal is to use Olsson fans in our work on the logarithmic Grassmannian and logarithmic linear series.  For these applications, we need to consider locally free sheaves on Olsson fans, as well as on tropical tori and non-strictly convex cones within them. Recall that the tropical multiplicative group is defined by
\begin{equation*}
	\tropGm(S) = \Gamma(S, \bar M_S^{\rm gp}) ;
\end{equation*}
a tropical torus over $S$ is a sheaf $\uHom(X, \tropGm)$ on fine, saturated logarithmic schemes over $S$, where $X$ is an \'etale sheaf of finitely generated, torsion-free abelian groups, called the character group of the torus.  In order to formulate our results in a uniform way that applies both to Olsson cones and to tropical tori, we introduce \emph{weakly convex Olsson cones}, which are convex --- but not necessarily strictly convex --- regions in tropical tori.

For the next few results, we recall an important construction.  Suppose that $S$ is a fine, saturated logarithmic scheme and $\alpha$ is a section of $\bar M_S^{\rm gp}$.  The preimage of $\alpha$ under the projection $M_S^{\rm gp} \to \bar M_S^{\rm gp}$ is a $\mathcal O_S^\ast$-torsor that we denote $\mathcal O_S^\ast(-\alpha)$.  We write $\mathcal O_S(\alpha)$ for the invertible sheaf of $\mathcal O_S^\ast$-equivariant morphisms $\mathcal O_S^\ast(-\alpha) \to \mathcal O_S$.

When $\sigma$ is an Olsson cone over $S$, we call the sections of $\bar M_\sigma^{\rm gp}$ the \emph{linear functions} on $\sigma$, and we call the sections of $\bar M_S^{\rm gp}$ the \emph{constants}.

\begin{theorem} \label{thm:4}
	Let $S$ be a fine, saturated logarithmic scheme.  Let $\pi : \sigma \to S$ be a universally surjective, weakly convex Olsson cone over $S$. 
    Then, locally in the strict \'etale topology of $S$, every invertible sheaf $\mathcal L$ on $\sigma$ is isomorphic to $\pi^\ast L(\lambda)$ for some invertible sheaf $L$ on $S$ and some linear function $\lambda$ on $\sigma$.  The linear function $\lambda$ is uniquely determined by $\mathcal L$ up to addition of constants.  Furthermore, $\mathrm R^p \pi_\ast \mathcal L = 0$ for all $p > 0$ and the natural homomorphism
	\begin{equation*}
		\underset{\mu \leq \lambda}{\varinjlim}\ L(\mu) \to \pi_\ast \mathcal L
	\end{equation*}
	is an isomorphism.  (The colimit is taken over all constants $\mu \leq \lambda$.)
\end{theorem}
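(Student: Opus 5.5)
We work strict étale locally on $S$ throughout, so that $S$ has a chart by its characteristic monoid $P=\bar M_{S,s}$ at a geometric point $s$. The cone $\sigma$ is then presented as a quotient stack. Write $\sigma$ as determined by a saturated monoid $Q$ with $P\to Q$, where $Q$ may have units since $\sigma$ is only weakly convex. Then
\[
\sigma \simeq S\times_{[\Spec \ZZ[P]/T_P]}[\Spec \ZZ[Q]/T_Q],
\]
with $T_Q=\Spec\ZZ[Q^{\rm gp}]$. Thus $\sigma$ is a quotient $[Y/T]$, where $Y=S\times_{\Spec\ZZ[P]}\Spec\ZZ[Q]$ is affine over $S$ and $T=\Spec\ZZ[Q^{\rm gp}/P^{\rm gp}]$ is a diagonalizable group (possibly non-split torsion is absorbed by saturation and the local choice of chart). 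This gives the basic dictionary:
\begin{itemize}
\item quasicoherent sheaves on $\sigma$ are $Q^{\rm gp}/P^{\rm gp}$-graded modules over $\mathcal O_S\otimes_{\ZZ[P]}\ZZ[Q]$;
\item $\pi_\ast$ is "take degree-zero part";
\item $\mathrm R^p\pi_\ast$ vanishes for $p>0$ on quasicoherent sheaves, because $Y\to S$ is affine and $T$ is linearly reductive.
\end{itemize}
This settles the vanishing $\mathrm R^p\pi_\ast\mathcal L=0$ once $\mathcal L$ is known to be quasicoherent, which it is.

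For the classification of $\mathcal L$, restrict to the closed point of $S$ (the fibre over $s$), then spread out. An invertible sheaf on $[Y/T]$ is a graded invertible module over the graded ring $A=\mathcal O_S\otimes_{\ZZ[P]}\ZZ[Q]$. Over the strictly henselian local ring at $s$, the stack $[Y/T]$ has a unique closed point, corresponding to the minimal face of $Q$, i.e.\ its units $Q^\ast$ together with the maximal ideal. Universal surjectivity guarantees that this closed point lies over $s$. A graded invertible module over a graded ring whose homogeneous-maximal ideal is unique is free on one homogeneous generator, by graded Nakayama. Its degree $\lambda\in Q^{\rm gp}/P^{\rm gp}$ is a linear function modulo constants. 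Lifting $\lambda$ to $Q^{\rm gp}=\Gamma(\bar M_\sigma^{\rm gp})$ identifies the module with $A(\lambda)$, i.e.\ with $\pi^\ast L(\lambda)$ for a suitable $L$, possibly after absorbing the $P^{\rm gp}$ ambiguity into $L$ via $L(\mu)$ with $\mu$ constant.

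Uniqueness of $\lambda$ up to constants follows by comparing characters: if $\pi^\ast L(\lambda)\simeq\pi^\ast L'(\lambda')$, then $\mathcal O(\lambda-\lambda')$ is pulled back from $S$. Its degree at the closed orbit must therefore lie in $P^{\rm gp}$, since the $T$-character at the closed point is well defined and $\pi^\ast$ sheaves have trivial character modulo $P^{\rm gp}$.

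For $\pi_\ast\mathcal L$, we compute the degree-zero part of $A(\lambda)\otimes L$. Concretely, the degree-$0$ part of $\mathcal O_S\otimes_{\ZZ[P]}\ZZ[Q]$ shifted by $\lambda$ is spanned by monomials $q\in Q$ with $q\equiv\lambda$ modulo $P^{\rm gp}$, i.e.\ $q=\lambda-\mu$ with $\mu\in P^{\rm gp}$ and $\lambda-\mu\in Q$. This condition says exactly that $\mu\le\lambda$ in the order on $\sigma$. Each such monomial gives a copy of $\mathcal O_S(\mu)\otimes L$, and these are glued along $\mu'\le\mu$ via multiplication by $\mu-\mu'\in P$. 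Matching these identifications with the transition maps of $\varinjlim_{\mu\le\lambda}L(\mu)$ gives the isomorphism. The set of such $\mu$ is filtered, since $P^{\rm gp}$ is directed and $Q$ is saturated, so the colimit is exact and compatible with the tensor presentation.

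The main obstacle is making this colimit identification hold globally over $S$ rather than at a chart. Specifically, we must check that the tensor product over $\ZZ[P]$ with $\mathcal O_S$, which involves the log structure map $P\to\mathcal O_S$ sending nonunits into $0$ in fibres, really produces the filtered colimit of the $L(\mu)$ rather than something with extra relations. I would first prove it when $P\to Q$ is integral/flat, which the saturated extension should give after localization, and then reduce the general weakly convex case by splitting off the lineality space $Q^\ast$, a tropical torus factor, where the statement is a direct Laurent-graded computation.
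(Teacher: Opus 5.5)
Your treatment of the strictly convex case is sound and close to the paper's. You use the closed stratum $\mathrm BT$, produce a homogeneous generator by graded Nakayama, and read off the degree-$\lambda$ piece as the colimit; compare Propositions~\ref{line_bundles_on_cones}, \ref{prop:vect-on-cone} and~\ref{prop:min}. The weakly convex case, however, has a genuine gap: you have dualized the lineality space incorrectly. In the paper a weakly convex cone is $\Theta_{\amb{R}}$ with $R=(\amb{R})_{\geq 0}$ \emph{sharp}. Non-strict convexity is recorded by the strict inclusion $R^{\rm gp}\subsetneq\amb{R}$, not by units of a monoid. By Proposition~\ref{prop:polyhedral domain} the cone is a torsor over $\Theta_R$ under the tropical torus with character group $\amb{R}/R^{\rm gp}$, and such a torsor is not of the form $[Y/T]$ with $Y$ affine over $S$. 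Units in a dual monoid encode something else, namely a cone that is not full-dimensional. Your presentation $S\times_{\Acal_P}[\Spec\ZZ[Q]/T_Q]$ with $T_Q=\Spec\ZZ[Q^{\rm gp}]$ simply quotients those units away: it is the Olsson precone attached to $Q/Q^\ast$, with no lineality at all.

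Concretely, take $\sigma=\tropGm\times S$, so $R=\bar M_S$ and $\amb{R}=\bar M_S^{\rm gp}\oplus\ZZ$. Your model, with $Q=P\oplus\ZZ$, is $[(S\times\Gm)/\Gm]=S$. The generator $t$ of the $\ZZ$ factor is a homogeneous unit of $A=\OO_S[t^{\pm 1}]$, so $A(\lambda)\simeq A$ for every $\lambda$, which destroys the uniqueness of $\lambda$. Your degree-zero computation then returns $\pi_\ast\OO(\lambda)\simeq\OO_S$ for all $\lambda$. The theorem instead asserts $\pi_\ast\OO_T(\lambda)=0$ whenever $\lambda\notin\bar M_S^{\rm gp}$, since then no constant is $\leq\lambda$ in $\amb{R}$. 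So your dictionary (graded modules, $\pi_\ast=$ degree zero, $\mathrm R^p\pi_\ast=0$ by linear reductivity) is unavailable exactly where the theorem has new content. Calling the torus factor ``a direct Laurent-graded computation'' leaves that content unproved.

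What is missing is an actual analysis of sheaves on a tropical torus. The paper first trivializes the torsor strict-\'etale locally on $S$, using Corollary~\ref{cor:etale_sheaves}, so that $\sigma\simeq\sigma'\times T$ with $\sigma'$ an Olsson cone. It then reaches $T$ only through the category $\mathcal C$ of universally surjective Olsson cones $\sigma\to T$, which covers $T$ and is closed under fibre products over $T$. A \v{C}ech spectral sequence over $\mathcal C$, with Proposition~\ref{prop:min} applied on each object, gives $\mathrm R^p\pi_\ast\OO_T(\gamma)=0$ for $p>0$ and $\pi_\ast\OO_T(\gamma)=0$ unless $\gamma=0$ (Corollary~\ref{cor:4}). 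Leray then yields the vanishing and the colimit formula on $\sigma'\times T$ (Corollary~\ref{cor:5}). For the classification, one splits the bundle on each object of $\mathcal C$. One then shows that the splitting characters descend to characters of $T$ via the exact sequence~\eqref{eqn:11} attached to $\sigma\times_T\sigma$ (Proposition~\ref{prop:5}). Finally one checks that the resulting linear functions on $\sigma$ do not depend on the chosen trivialization of the torsor (Corollary~\ref{cor:3}).

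Two smaller remarks. First, the ``main obstacle'' you single out is not one: Proposition~\ref{prop:min} identifies the degree-$\gamma$ piece of $C$ with the standard presentation of the colimit for any $P\to Q$, with no flatness hypothesis. Second, the set $\{\mu\leq\lambda\}$ is filtered because $\bar M_S\to R$ is integral (Proposition~\ref{prop:1}), not because $Q$ is saturated. In Example~\ref{ex:1} the monoid $Q$ is saturated but $P_{\leq z}$ is not filtered. Filteredness is not needed for the statement anyway.
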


\begin{theorem} \label{thm:3}
	Let $S$ be a fine, saturated logarithmic scheme.  Let $\sigma$ be a universally surjective, weakly convex Olsson cone over $S$.  Let $\mathcal E$ be a locally free sheaf on $\sigma$.  Then there is a strict \'etale cover $f : S' \to S$ such that $f^\ast \mathcal E$ splits as a direct sum of invertible sheaves.
\end{theorem}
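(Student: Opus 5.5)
We prove \Cref{thm:3} by a Grothendieck-style splitting argument, with \Cref{thm:4} supplying both the line bundles and the vanishing of higher cohomology. We may work strictly étale locally on $S$ throughout, and may assume $S$ is strictly henselian local with closed point $s$, provided the final splitting spreads out to an étale neighbourhood. It does, because all the sheaves involved are finitely presented. The argument is an induction on the rank of $\mathcal E$. The idea is to find a line subbundle $\pi^\ast L(\lambda)\hookrightarrow \mathcal E$ whose quotient is locally free, and then split the resulting extension.

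\emph{Step 1: choosing $\lambda$.} Consider the pushforward $\pi_\ast(\mathcal E(-\lambda))$ as $\lambda$ varies over linear functions on $\sigma$, taken modulo constants. By the colimit description in \Cref{thm:4}, for very ``positive'' $\lambda$ (relative to the cone) there are no sections, and for very ``negative'' $\lambda$ there are many. I would pick $\lambda$ maximal, in the partial order coming from the cone, among those for which $\mathcal E(-\lambda)$ has a section that is nonvanishing somewhere on the fibre over $s$. To make sense of this I would first restrict to the fibre $\sigma_s$ over the closed point. Over a strictly henselian point, an Olsson cone is an equivariant toric stack $[V/T]$, where $T$ is the torus of the lineality directions and $V$ is an affine toric variety, possibly times a torus. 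A locally free sheaf on $[V/T]$ is then a $T$-equivariant vector bundle on $V$. Equivariant vector bundles on affine toric varieties are equivariantly trivial: choose a torus-fixed point, a graded Nakayama argument, and the positivity of the grading, which comes from weak convexity together with universal surjectivity. Hence on $\sigma_s$ the bundle $\mathcal E$ splits as a sum of characters, that is, as $\bigoplus \OO(\lambda_i)$.

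\emph{Step 2: lifting the splitting from the fibre to $S$.} This is the main step. Universal surjectivity of $\sigma\to S$ makes $\sigma_s$ a meaningful closed substack. I would lift the maps $\OO(\lambda_i)|_{\sigma_s}\to \mathcal E|_{\sigma_s}$ to $\sigma$. The obstructions to lifting, first through the infinitesimal thickenings $\sigma\times_S S_n$ and then beyond, live in $\mathrm R^1\pi_\ast$ of sheaves of the form $\mathcal E(-\lambda_i)\otimes(\text{ideal})$. These can be filtered by twists of line bundles of the form $\pi^\ast L(\mu)$, whose $\mathrm R^1\pi_\ast$ vanishes by \Cref{thm:4}. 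Moreover, since all $\mathrm R^p\pi_\ast$ vanish, $\pi_\ast$ is exact on these sheaves. So surjectivity of $\pi_\ast\mathcal{H}om(\OO(\lambda_i),\mathcal E)\to \pi_\ast\mathcal{H}om(\OO(\lambda_i),\mathcal E|_{\sigma_s})$ should follow directly, without passing through formal thickenings. I would argue this using the short exact sequence $0\to \mathfrak m_s\mathcal E(-\lambda_i)\to\mathcal E(-\lambda_i)\to\mathcal E(-\lambda_i)|_{\sigma_s}\to0$, after reducing the first term to sheaves covered by \Cref{thm:4} via a dévissage on $S$ by coherent sheaves. This dévissage is where I expect the difficulty. \Cref{thm:4} is stated only for invertible sheaves on $\sigma$ twisted from $S$, so I would need its extension to $\mathcal E(-\lambda)\otimes\pi^\ast F$ for quasi-coherent $F$ on $S$. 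I would obtain that by induction on rank, combined with the projection formula, which is justified by the colimit description and the vanishing of $\mathrm R^p\pi_\ast$.

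\emph{Step 3: concluding.} The lifted maps give a morphism $\bigoplus_i \pi^\ast L_i(\lambda_i)\to\mathcal E$, with each $L_i$ trivial since $S$ is local. This morphism is an isomorphism on $\sigma_s$. Its cokernel is therefore supported away from $\sigma_s$, and since $\pi$ is universally surjective and every point of $\sigma$ specialises into the closed fibre, the cokernel vanishes by Nakayama. The morphism is then a surjection between locally free sheaves of the same rank, hence an isomorphism. Finally, spreading out from the strictly henselian local ring to an étale neighbourhood gives the cover $f$.
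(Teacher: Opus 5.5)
Your Steps~1--3, once repaired, prove the theorem only when $\sigma$ is strictly convex, that is, for a universally surjective Olsson cone. In that case they are essentially the paper's proof of Proposition~\ref{prop:vect-on-cone}: split on the closed fibre (the paper uses only its closed stratum $Z\simeq\mathrm BT_L$), lift, and conclude by closedness of $\pi$.

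The genuine gap is the lineality space. By Proposition~\ref{prop:polyhedral domain}, a weakly convex cone is a torsor over $\Theta_R$ under the tropical torus with character group $\amb{R}/R^{\rm gp}$. A tropical torus is not an algebraic stack. It is only a sheaf on logarithmic schemes, and it has no closed fibre, no closed stratum and no torus-fixed point. You identify the lineality directions with the torus in a quotient presentation $[V/T]$ of the fibre. But that torus is $T_L$, with character group $Q^{\rm gp}/P^{\rm gp}$; it is already present for strictly convex cones, and the lineality space does not arise this way. Nor is there any positivity in the lineality directions: weak convexity is exactly the failure of positivity. By Corollary~\ref{cor:4}, $\pi_\ast\mathcal O(\gamma)=0$ for every nonzero character $\gamma$ of a tropical torus. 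So in the simplest case $\sigma=\tropGm\times S$, which the theorem covers, neither the graded Nakayama argument of Step~1 nor the closed-support and specialization argument of Step~3 applies.

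What is missing is the paper's separate treatment of tropical tori. After trivializing the torsor strict-\'etale locally on $S$, one has $\sigma\simeq\sigma'\times T$. Proposition~\ref{prop:5} then shows that a locally free sheaf on a tropical torus is $\bigoplus_\lambda E_\lambda(\lambda)$ with $E_\lambda=\pi_\ast\mathcal E(-\lambda)$ locally free. Since $T$ can only be probed by logarithmic schemes mapping to it, this is checked on universal surjections $\tau\to T$ from exact Olsson precones, in three steps:
\begin{itemize}
\item split $\mathcal E|_\tau$ using Proposition~\ref{prop:vect-on-cone};
\item compare the two pulled-back splittings on $\tau\times_T\tau$, and use the exact sequence~\eqref{eqn:11} to show that the characters occurring descend, up to constants, to characters of $T$;
\item show that the system of pushforwards over these test objects is constant.
\end{itemize}
Applying Proposition~\ref{prop:vect-on-cone} to the $E_\lambda$ on $\sigma'$ then gives Corollary~\ref{cor:3}, which is the theorem.

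Smaller repairs are needed in the strictly convex case as well.
\begin{itemize}
\item The fibre over $s$ is $[\Spec(k[Q]/P^+k[Q])/T_L]$. This is usually not a toric variety; for the diagonal $\mathbf N\to\mathbf N^2$ it is two crossing lines. Your graded Nakayama argument still works, because exactness makes the degree-$0$ part equal to $k$.
\item That every point specializes into the closed fibre follows from universal closedness (Proposition~\ref{gms_univclosed}), not from universal surjectivity.
\item The d\'evissage in Step~2 is unnecessary, since $\pi_\ast$ is exact on all quasicoherent sheaves (Proposition~\ref{prop:exactness}).
\item Only the cohomological half of Theorem~\ref{thm:4} (Corollary~\ref{cor:5}) may be used here. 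Its classification half (Corollary~\ref{cor:6}) is deduced in the paper as the rank-one case of the present theorem.
\end{itemize}
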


Together, Theorems~\ref{thm:4} and~\ref{thm:3} characterize locally free sheaves on  weakly convex cones completely, generalizing Klyachko's description~\cite{Klyachko} of equivariant vector bundles on affine toric varieties.

\begin{corollary} \label{cor:2}
	Let $S$ be a fine, saturated logarithmic scheme.  Let $\sigma$ be a universally surjective, integral, weakly convex Olsson cone over $S$, and let $\Sigma$ be an integral subdivision of $\sigma$.  Then the category fibered in groupoids over $S$ parameterizing locally free sheaves on $\Sigma$ is an algebraic stack.
\end{corollary}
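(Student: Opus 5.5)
The plan is to reduce the algebraicity of the stack $\mathrm{Vect}_\Sigma$ of locally free sheaves on $\Sigma$ to an explicit algebraic description on $\sigma$ together with gluing data, using Theorems~\ref{thm:4} and~\ref{thm:3}. I will work strict étale locally on $S$. Algebraicity is étale local on the base, so I may assume $S$ is affine and that the characteristic monoid is globally charted. An integral subdivision $\Sigma\to\sigma$ is covered by finitely many (integral, weakly convex) Olsson cones $\sigma_1,\dots,\sigma_n$, which are themselves universally surjective after shrinking $S$ suitably. Their pairwise and higher intersections $\sigma_{i_0}\times_\Sigma\cdots\times_\Sigma\sigma_{i_k}$ are again weakly convex Olsson cones, namely faces or intersections of cones.

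The first step describes $\mathrm{Vect}_{\sigma_i}$ for a single cone. By Theorem~\ref{thm:3}, locally on $S$ every rank $r$ bundle splits as $\bigoplus_j \pi^\ast L_j(\lambda_j)$. Theorem~\ref{thm:4} then computes the morphisms: $\pi_\ast\mathcal{H}om(L(\lambda),L'(\lambda'))=\varinjlim_{\mu\le\lambda'-\lambda}L^{-1}L'(\mu)$, and the higher direct images vanish. The idea is that, fixing the multiset of linear functions $\lambda_j$ modulo constants (a discrete datum, locally constant on $S$), the stack is the quotient of a point by the automorphism group sheaf $\underline{\mathrm{Aut}}(\bigoplus_j L_j(\lambda_j))$. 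This is a group of "parabolic" block matrices whose entries are sections of $\varinjlim_\mu \mathcal O_S(\mu)$. Since $\sigma$ is integral, only finitely many relevant constants matter locally, so the colimit stabilizes, and each entry is a sheaf of the form $\mathcal O_S(\mu)$, or $0$ where no constant $\mu$ satisfies $\mu \le \lambda_k-\lambda_j$. Consequently the automorphism group is a smooth affine group scheme over $S$, and $\mathrm{Vect}_{\sigma_i}$ is a disjoint union of classifying stacks $B G_{\underline\lambda}$, hence algebraic.

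The second step glues. A bundle on $\Sigma$ is equivalent to bundles $\mathcal E_i$ on the $\sigma_i$ together with isomorphisms on the overlaps satisfying the cocycle condition. Because all the $\sigma_{ij}$ and $\sigma_{ijk}$ are weakly convex cones with vanishing higher cohomology, this yields a fibre product presentation:
\[
\mathrm{Vect}_\Sigma \;=\; \mathrm{eq}\Bigl(\prod_i \mathrm{Vect}_{\sigma_i} \rightrightarrows \prod_{i,j}\mathrm{Vect}_{\sigma_{ij}} \Bigr)
\]
with the cocycle constraint imposed over the triple overlaps. The restriction maps $\mathrm{Vect}_{\sigma_i}\to\mathrm{Vect}_{\sigma_{ij}}$ are morphisms of algebraic stacks. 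The isomorphism sheaves between two restricted bundles are representable by affine schemes over $S$, as computed in the first step. So the gluing data form an algebraic stack: it is a finite limit of algebraic stacks with representable diagonals, which is algebraic.

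I expect the main obstacle to be that the colimit over constants $\mu\le\lambda$ in Theorem~\ref{thm:4} is an infinite direct limit in general. Without control it is only an ind-scheme, which is precisely the failure of algebraicity mentioned in the abstract. I will use the integrality hypotheses to make it finite. These say that $\sigma$ is integral over $S$ and $\Sigma\to\sigma$ is an integral subdivision, so that the relevant comparison of linear functions restricted to the base reduces to $\bar M_S$-comparisons with a minimal element. With a minimal element the set $\{\mu\le\lambda\}$ has a maximum locally, so the colimit equals a single $L(\mu_{\max})$. I would first try to prove this stabilization directly from integrality of $\bar M_S\to\bar M_\sigma$. Universal surjectivity is what lets the argument be carried out after arbitrary base change, so that the descriptions are compatible with pullback and define stacks.
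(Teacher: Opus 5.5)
\textbf{There is a genuine gap, in the gluing step.} Your first step is sound and is essentially \Cref{prop:8}. On an integral, universally surjective, weakly convex cone, the infimum of each $\lambda_j-\lambda_i$ is a section of $\bar M_S^{\rm gp}\cup\{-\infty\}$. So the automorphism group of $\bigoplus E_i(\lambda_i)$ is open in a vector bundle over $S$, and the stack is a disjoint union of classifying stacks.

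You assert that the restriction maps $\mathrm{Vect}_{\sigma_i}\to\mathrm{Vect}_{\sigma_{ij}}$ are morphisms of algebraic stacks, and that the Isom sheaves on overlaps are affine ``as computed in the first step''. But the first step, like Theorems~\ref{thm:4} and~\ref{thm:3}, needs the overlap to be universally surjective over $S$. Overlaps of maximal cones generally are not.

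A concrete failure:
\begin{itemize}
\item Let $S=\Spec V$ for a complete discrete valuation ring $V$ with fraction field $K$ and divisorial logarithmic structure generated by $\delta$.
\item Let $\sigma$ be the family of squares $0\le a,b\le 3\delta$, subdivided into the three strips $k\delta\le a\le (k+1)\delta$ for $k=0,1,2$.
\item Each strip is integral and universally surjective over $S$, so this is an integral subdivision.
\item The two outer strips meet only in the apex. So $\sigma_1\times_\Sigma\sigma_3$ is the face where $\delta=0$, namely the open point $\Spec K\to S$.
\item Vector bundles on this overlap form the Weil restriction of $\BGL_n$ along $\Spec K\to S$. Its automorphism sheaves, such as $A\mapsto (A\otimes_V K)^\ast$, are not algebraic: the value on $V$ is $K^\ast$, while the values on all $V/\mathfrak m^{n}$ are trivial.
\end{itemize}
So your diagram is not a finite limit of algebraic stacks with representable diagonals. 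In this particular example the cocycle condition on $\sigma_{123}=\sigma_{13}$ forces $\varphi_{13}=\varphi_{23}\circ\varphi_{12}$, so the bad datum is redundant. Showing that this always happens is the real content of the corollary, and your argument does not address it.

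\textbf{The missing idea} is that the limit over all faces of $\Sigma$ can be computed using only the integral, universally surjective faces. This is how the paper argues:
\begin{itemize}
\item On an atomic base, $\uHom(\Sigma,\BGL_n)=\varprojlim_{\tau\in J(\Sigma)}\uHom(\tau,\BGL_n)$.
\item \Cref{thm:5} reduces algebraicity to two inputs: cofinality of the exact intersection complex $I(\Sigma)$ in the face poset $J(\Sigma)$, and \Cref{prop:8} applied to each cone in $I(\Sigma)$.
\item Cofinality is \Cref{prop:4}. By \Cref{lem:exact_stars}, the comma category $(\tau\downarrow I(\Sigma))$ is $I(\Star_\tau(\Sigma)/\Star_\tau(S))$.
\item By \Cref{lem:1}, $\Star_\tau(\Sigma)$ is a subdivision of $\Star_\tau(\sigma)$.
\item \Cref{prop:2} (Quillen's Theorem~A plus contractibility of the intersection complex of a polyhedral subdivision of a convex region) then makes it homotopy equivalent to $I(\Star_\tau(\sigma))$, which has a final object.
\end{itemize}
This is where integrality of the subdivision is genuinely used; without it \Cref{prop:2} can fail, as the remark preceding it shows. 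You use integrality only to stabilize the colimits on individual cones.

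Two minor points. The maximal cones are universally surjective because they are integral and full-dimensional, not because you shrink $S$. Gluing bundles along the cover by cones is plain descent and needs no cohomology vanishing.
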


Theorems~\ref{thm:4} and~\ref{thm:3} can be generalized prosaically to Olsson fans.  We have not troubled to formulate these statements because Corollary~\ref{cor:2} does not generalize~: the stack of locally free sheaves on Olsson fans often fails to be algebraic (see Section~\ref{sec:moduli}).  However, we feel that the following corollary may give a clue about how to think about vector bundles on Olsson fans more generally~: 

\begin{corollary} \label{cor:7}
	Let $S$ be a fine, saturated logarithmic scheme.  Let $T$ be a tropical torus over $S$ spanned by a weakly convex Olsson cone $\sigma$.  Let $\tau$ be the Weyl convex hull of $\sigma$ in $T$.  Then any locally free sheaf on $\sigma$ extends uniquely (up to unique isomorphism) to $\tau$.
\end{corollary}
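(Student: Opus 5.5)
Since the statement is local on $S$ and asserts uniqueness up to unique isomorphism, I will work strict \'etale locally on $S$ and then glue by descent. The glueing step is automatic once uniqueness is established, because uniqueness supplies canonical isomorphisms on overlaps that satisfy the cocycle condition. The cones $\sigma$ and $\tau$ are both weakly convex Olsson cones in $T$, with $\sigma\subset\tau$ an open inclusion. The Weyl convex hull $\tau$ should be the convex hull of the union of the orbits $w\sigma$, where $w$ runs over the Weyl group of linear automorphisms of $T$ coming from permutations and reflections of characters, as the paper defines it. Both cones should be universally surjective, so Theorems~\ref{thm:4} and~\ref{thm:3} apply to each. By Theorem~\ref{thm:3}, after a strict \'etale cover of $S$ any locally free sheaf $\mathcal E$ on $\sigma$ splits as $\bigoplus_i \pi^\ast L_i(\lambda_i)$, where the $L_i$ are invertible on $S$ and the $\lambda_i$ are linear functions on $\sigma$.

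\textbf{Existence.} The key observation is that a linear function on $\sigma$ is a section of $\bar M_\sigma^{\rm gp}$, and this group equals the character group of $T$ modulo nothing further, because $\sigma$ spans $T$. Hence each $\lambda_i$ is canonically a linear function on $\tau$, and $\pi_\tau^\ast L_i(\lambda_i)$ is an invertible sheaf on $\tau$ restricting to $\pi^\ast L_i(\lambda_i)$. This gives the extension $\bigoplus_i \pi_\tau^\ast L_i(\lambda_i)$.

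\textbf{Uniqueness.} I will show that restriction $\mathrm{Hom}_\tau(\mathcal F,\mathcal G)\to\mathrm{Hom}_\sigma(\mathcal F|_\sigma,\mathcal G|_\sigma)$ is bijective for locally free $\mathcal F,\mathcal G$ on $\tau$. This gives full faithfulness; combined with existence it gives essential surjectivity, and hence uniqueness up to unique isomorphism. After splitting both sheaves via Theorem~\ref{thm:3} on $\tau$, the question reduces to invertible sheaves, that is, to comparing global sections of $\pi^\ast L(\nu)$ on $\tau$ and on $\sigma$ for a linear function $\nu=\mu-\lambda$. By Theorem~\ref{thm:4}, $\pi_\ast$ of this sheaf is $\varinjlim_{c\le\nu} L(c)$, where the colimit runs over constants $c$ with $c\le\nu$. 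The inequality $c\le\nu$ must be read on the relevant cone, so I have to check that for every constant $c$,
\[
c\le\nu \text{ on } \sigma \iff c\le\nu \text{ on } \tau .
\]
Since $\tau$ contains $\sigma$, one direction is trivial.

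The converse is the main obstacle, and it is exactly where Weyl convexity enters. A naive convex hull would not work: $\nu\ge c$ on $\sigma$ does not propagate to $w\sigma$ for a nonlinear reason. I expect the correct mechanism to be that the ordering ``$\le$'' is tested against the monoid $\bar M$, and that the set of linear functions bounded below on $\sigma$ is stable under the Weyl group action in the relevant sense. My first attempt will be to use the paper's definition of the Weyl hull to show that the monoid of linear functions that are nonnegative modulo constants on $\sigma$ coincides with that on $\tau$. Concretely, I will check this on each generating reflection and then use convexity of the region where a linear function is bounded below. If the definition instead involves dominance of sections under Weyl reflections, I will adapt the argument by testing on characters and using saturation.
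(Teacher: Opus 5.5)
There is a genuine gap, and it comes from guessing the definition of the Weyl convex hull. In Definition~\ref{def:3}, $\tau$ is not built from Weyl-group translates of $\sigma$. It depends on the multiset of characters $\boldsymbol\lambda=(\lambda_1,\ldots,\lambda_n)$ of the given bundle $\mathcal E\simeq\bigoplus E_i(\lambda_i)$. It is cut out of $T$ by the inequalities $\lambda_i-\lambda_j\ge\mu_{i,j}$, where $\mu_{i,j}=\inf_\sigma(\lambda_i-\lambda_j)$.

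Consequently, the statement you set out to prove is false. You claim that restriction $\Hom_\tau(\mathcal F,\mathcal G)\to\Hom_\sigma(\mathcal F|_\sigma,\mathcal G|_\sigma)$ is bijective for \emph{all} locally free $\mathcal F,\mathcal G$ on $\tau$, i.e.\ that $c\le\nu$ on $\sigma$ iff $c\le\nu$ on $\tau$ for every linear function $\nu$. No argument via reflections, convexity or saturation can establish this. For instance, if $\mathcal E$ is a line bundle on a ray $\sigma$, the only defining inequality is $0\ge 0$, so $\tau=T$. By Corollary~\ref{cor:4}, $\pi_\ast\mathcal O_T(\nu)=0$ for the coordinate character $\nu$. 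But $\pi_\ast\mathcal O_\sigma(\nu)\simeq\mathcal O_S$ by Corollary~\ref{cor:5}, since $0$ is the greatest constant below $\nu$ on $\sigma$. So $\Hom(\mathcal O,\mathcal O(\nu))$ changes under restriction, even though $\mathcal E$ does extend uniquely to $T$. Your proposal also leaves exactly this step unproved.

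The repair is to compare only the $\nu$ that actually occur. Let $\mathcal F'$ be any extension of $\mathcal E$ to $\tau$. Corollary~\ref{cor:3} splits it locally on $S$. Linear functions on $\tau$ and on $\sigma$ are the same characters of $T$ up to constants, so uniqueness of the characters on $\sigma$ forces $\mathcal F'$ to have characters $\lambda_i$ (up to permutation and constants). After further localization, $\mathcal F'\simeq\mathcal F|_\tau$ with $\mathcal F=\bigoplus E_i(\lambda_i)$ on $T$; this is your existence step, which matches the paper.

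Morphisms between such sheaves then involve only $\nu=\lambda_j-\lambda_i$ plus constants. For these, $\inf_\tau\nu=\inf_\sigma\nu$ is tautological: $\nu\ge\inf_\sigma\nu$ holds on $\tau$ by definition, and $\sigma\subset\tau$ gives the reverse inequality. This is precisely the paper's proof. Proposition~\ref{prop:vect-on-cone} gives uniqueness up to isomorphism. Proposition~\ref{prop:8} identifies the automorphism group of $\mathcal F|_\tau$ with the invertible matrices in $\bigoplus\mathcal O_S(\inf_\tau(\lambda_j-\lambda_i))=\bigoplus\mathcal O_S(\inf_\sigma(\lambda_j-\lambda_i))$, i.e.\ with the automorphism group of $\mathcal E$. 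That gives uniqueness of the isomorphism.
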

See Definition~\ref{def:3} and Corollary~\ref{cor:13} for the details.

\subsection{History}
A toroidal structure on a scheme $X$ is an open subset $U \subset X$ such that the pair $(X,U)$ is locally, in the \'etale topology, isomorphic to $(Y,V)$, where $Y$ is a toric variety and $V$ is its dense torus.  By way of the local isomorphism, the combinatorics of the fan of a local model $(Y,V)$ correspond to local geometric features of $(X,V)$~\cite{KKMSD}.

Kato observed~\cite{Kato-toric} that this relationship can be made more direct using logarithmic geometry.  A logarithmic scheme $X$ has a dual identity, both as a scheme via its structure sheaf $\mathcal O_X$, and as a monoidal space via its characteristic monoid $\bar M_X$.  Kato showed that the basic theory of schemes can be repeated with monoids --- gluing spectra of monoids along open subsets --- to yield a theory of what are now called \emph{Kato fans}~\cite{ACMUW} or \emph{monoschemes}~\cite{Ogus}.  Since a logarithmic scheme is also a monoidal space, one can describe morphisms from logarithmic schemes to Kato fans, and therefore transport combinatorial constructions from fans to logarithmic schemes by pullback.  For example, a toroidal blowup arises as the pullback of a subdivision of Kato fans.

The building blocks of Kato fans are spectra of monoids.  Each monoid spectum represents a functor on logarithmic schemes, and Olsson showed that the functor represented by a monoid $P$ is always representable by an algebraic stack $\mathcal A_P$ with a logarithmic structure.  Olsson used these local models to create a moduli stack of logarithmic structures~\cite{olssonlogarithmic}.

Olsson's stacks $\mathcal A_P$ can also be glued together to obtain algebraic stacks that represent Kato's fans~\cite{AWbirational,ACMUW,CCUW}.  In a moment of uninspired eponymization, these became known as \emph{Artin fans}.  

Initially, the utility of Artin fans was to guarantee algebraicity of certain constructions involving the interaction between algebraic geometry and combinatorics, such as stable maps into fans.  These applications only made use of maps \emph{from} logarithmic schemes \emph{into} fans.  It was therefore not been necessary to give any deep consideration to the base over which the Artin fans were defined.  However, it has recently become evident that maps \emph{from} fans into algebraic stacks also contain important information, relating to stability~\cite{Heinloth2017, HL2018}, the existence of good moduli spaces~\cite{AHLH2023}, and Donaldson--Thomas theory \cite{intrinsicDT}.

Finally, the expansions $X^\dagger$ of a normal crossing pair considered in logarithmic Gromov--Witten and Donaldson--Thomas theory \cite{DhruvExpansions,MaulikRanganathan1} are instances of Olsson fans (over $X\times\Spec(k,\NN)$). The study of families of coherent sheaves on them is the heart of the novel logarithmic sheaf 
theory \cite{MaulikRanganathan1,MaulikRanganathan2,Kennedy-Hunt,PKHDR}.  Relatedly, Olsson fans also appear to be a natural language in which to discuss Lafforgue's compactifications of thin Schubert cells~\cite{Lafforgue}.

Along with logarithmic linear series (to be described below), these applications motivate the present work.

\subsection{Logarithmic linear series}
Our initial motivation came from the following classical question~: what is the limit of a linear series when a smooth curve becomes nodal?  Recall Eisenbud and Harris' answer in the case of a curve $X_0$ with two components $D,E$ and a single node $q$ \cite{EHLLS}: let $L_D$ be a line bundle of multidegree $(d,0)$, and $L_E$ its twist of multidegree $(0,d)$; a refined \emph{limit linear series} consists of two vector spaces of sections $V_D\subseteq H^0(D,L_D)$ and $V_E\subseteq H^0(E,L_E)$ such that their \emph{vanishing sequences} at the node $q$ \emph{match}. The vanishing behaviour of sections at $q$ makes $V_D$ and $V_E$ into filtered vector spaces, and matching means precisely an identification of the associated graded vector spaces (up to a reversal and a shift) $\text{gr}(V_D^\bullet)\cong \text{gr}(V_E^{d-\bullet})$.
These data are precisely equivalent to the data of a vector bundle on $\Gamma=\mathcal A^1\bigsqcup_{B\Gm}\mathcal A_1$, and $\Gamma$ is the Olsson fan of the nodal curve $X_0$ when it is endowed with its natural logarithmic structure.

We were therefore led to consider the moduli problem parameterizing the following data, which we call a \emph{logarithmic linear series} \cite{lls}: a logarithmic curve $X$ with Olsson fan $\Gamma$, a logarithmic line bundle $L$ on $X$, and a vector subbundle $V$ of the pushforward of $L$ to $\Gamma$.  In investigating the algebraicity of this moduli problem we were led to study vector bundles on Olsson fans. 

We show below in \Cref{sec:moduli} that the stack parameterizing $V$ by itself is not algebraic in general.  The construction of an algebraic stack of logarithmic linear series will therefore require a restriction on $V$.  We defer further discussion of this additional restriction to future work, but we remark that a family of examples of suitably restricted vector bundles can be found in the tautological bundles on Lafforgue's compactifications of thin Schubert cells \cite{Lafforgue,lg}.

\subsection{Acknowledgments}
This paper is part of our ongoing effort to understand limit linear series from a logarithmic modular perspective. We have benefitted from conversations with Omid Amini, Andreas Gross, Patrick Kennedy-Hunt, Arne Kuhrs, Kevin K\"uhn, Diane Maclagan, Sam Molcho, Navid Nabijou, Sam Payne, Dhruv Ranganathan, Terry Song, Martin Ulirsch, Alejandro Vargas, Annette Werner, and from the hospitality of various institutions, including the Universities of Berlin (Humboldt), Bologna, Colorado (Boulder), Cambridge, Frankfurt, Geneva, Heidelberg, Lausanne (EPFL), and the Lorentz Center in Leiden.

\resumetoc
\tableofcontents

\section{Monoids and logarithmic structures}\label{sec:monoids}
\tocdesc{establishes logarithmic terminology, provides a dictionary between properties of monoid homomorphism and rational polyhedral cones}

\subsection{Properties of monoid homomorphisms}

Given a (commutative) monoid $P$, and a set with $P$-action $S$ (a $P$-set), we say $s_1\leq s_2$ in $S$ if there exists a $p\in P$ such that $p+s_1=s_2$ ; this applies in particular to $P$ itself, and to $P^{\rm gp}$. In this case, $\leq$ is a preorder relation, and an order relation if and only if $P$ is sharp. If $P$ is integral, the specification of $P$ is equivalent to the specification of the preordered abelian group $(P^{\rm gp},\leq)$. A monoid $P$ is called \emph{valuative} if the preordering of $P^{\rm gp}$ is total (in particular, $P$ is saturated).  A monoid \emph{valuation} is a monoid homomorphism to a valuative monoid. 
\begin{definition}
    A monoid homomorphism $\mm\colon\mo\to\mt$ is called \emph{local}%
		\footnote{This condition is also sometimes (particularly in works by the third author) called \emph{sharpness}.  Here we follow the terminology of \cite{Ogus}.}
		if $\mm^{-1}(\mt^*)=\mo^*$, or equivalently $\mm^{-1}(\mt^+)=\mo^+$, where $P^*$ denotes the subgroup of units of $P$, and $P^+=P\setminus P^*$ is the maximal ideal.
\end{definition}

\begin{definition}
    Let $\mo,\mt$ be integral monoids. A morphism $\mm\colon\mo\to\mt$ is called \emph{exact} if the following diagram is cartesian~:
    \bcd
    \mo\ar[r,"\mm"]\ar[d,hook] & \mt\ar[d,hook] \\
    \mo^{\rm gp}\ar[r,"\mm^{\rm gp}"] & \mt^{\rm gp}
    \ecd
		Equivalently~: if $x \in \mo^{\rm gp}$ and $\mm^{\rm gp}(x) \geq 0$ then $x \geq 0$.  Also equivalently~: $x,y \in \mo^{\rm gp}$ have comparable images in $\mt^{\rm gp}$, then $x$ and $y$ are already comparable in $\mo^{\rm gp}$.
\end{definition}

\begin{remark}\label{rmk:exact-local}
  An exact homomorphism $\mm$ is local, and it is injective if $\mo$ is sharp 
  (see \cite[Proposition I.2.1.15]{Ogus}~; in this case, $\mm^{\rm gp}$ is injective too).  
\end{remark}
 
\begin{example}
	 The morphism $s\colon\NN^2\to\NN,(a,b)\mapsto a+b$ is local but not injective. In particular it is not exact. 
\end{example}

 The next proposition gives a characterization of exactness in terms of the dual (strictly convex rational polyhedral) cones $\sigma_P=\on{Hom}(P,\mathbf R_{\geq 0})$ when both $\mo$ and $\mt$ are finitely generated and saturated.  In order to formulate the proposition for monoids that are not necessarily finitely generated, we need a suitable replacement for $\sigma_P$.  For this, we use the set of \emph{all} valuations, recognizing that $\sigma_P$ is essentially the set of rank~$1$ valuations (the only fine, sharp valuative monoids are $0$ and $\mathbf N$ \cite[Proposition I.2.4.2]{Ogus}).

\begin{remark}
    The spectrum of a valuative monoid is totally ordered with respect to the specialization relation~; hence, one can think of a higher-rank valuation of $P$ geometrically as the choice of a flag of cones in a subdivision of $\sigma_P$.
\end{remark}

We need one piece of terminology before stating the proposition.  Given a monoid homomorphism $\mm\colon\mo\to\mt$, we say that a valuation $g\colon\mt\to W$ \emph{extends} a valuation $f\colon\mo\to V$ if there exists a local morphism $V\to W$ making the following diagram commute~:
 \bcd
P\ar[r,"h"]\ar[d,"f"] & Q\ar[d,"g"]\\
V\ar[r] & W
 \ecd
\begin{proposition} \label{prop:6}
	Suppose that $\mm : \mo \to \mt$ is a homomorphism of saturated monoids.  Consider the following conditions~:
	\begin{enumerate}[label=(\roman*)]
		\item \label{it:5} The homomorphism $\mm$ is exact.
		\item \label{it:6} Every valuation of $\mo$ extends 
			to a valuation of $\mt$.
		\item \label{it:7} Every rank~$1$ valuation of $\mo$ extends to a valuation of $\mt$.
		\item \label{it:8} Every rank~$1$ valuation of $\mo$ extends to a rank~$1$ valuation of $\mt$.
	\end{enumerate}
	Conditions~\ref{it:5} and~\ref{it:6} are equivalent and imply~\ref{it:7}.  If $P$ is finitely generated then~\ref{it:7} is equivalent to~\ref{it:5} and~\ref{it:6}.  If $\mt$ is finitely generated then~\ref{it:5} implies~\ref{it:8}.  If both $P$ and $Q$ are finitely generated then all of the conditions are equivalent.
\end{proposition}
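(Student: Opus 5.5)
I will prove the implications separately, reducing everything to the description of an integral monoid by its preordered group.

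\emph{Step 1: \ref{it:6} $\Rightarrow$ \ref{it:5}.} Let $x\in P^{\rm gp}$ with $h^{\rm gp}(x)\geq 0$, and suppose $x\not\geq 0$. Since $P$ is saturated, $P$ is the intersection of the valuation monoids containing it. Concretely, by a Zorn's lemma argument one finds a valuative submonoid $V\subseteq P^{\rm gp}$ containing $P$ but not $x$: take a maximal submonoid of $P^{\rm gp}$ containing $P$ and avoiding $x$, and check it is valuative, using saturation at the step showing that $y\notin V$ and $-y\notin V$ is impossible. I expect this step to be the main technical point. The inclusion $f\colon P\to V$ extends by hypothesis to $g\colon Q\to W$ with a local map $\varphi\colon V\to W$. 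A local map of valuative monoids reflects the order, since $\varphi(v)\geq 0$ forces $v\geq 0$ (otherwise $-v\in V^+$ would map into $W^+$). Then $\varphi(x)=g(h(x))\geq 0$ gives $x\in V$, a contradiction.

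\emph{Step 2: \ref{it:5} $\Rightarrow$ \ref{it:6}.} Given $f\colon P\to V$, form the pushout $Q\oplus_P V$ in integral monoids, that is, the image of $Q\oplus V$ in $(Q^{\rm gp}\oplus V^{\rm gp})/P^{\rm gp}$. Exactness of $h$, together with the fact that exactness is stable under integral pushout, shows that $V\to Q\oplus_P V$ is exact, hence local by Remark~\ref{rmk:exact-local}. Next, apply Step~1's Zorn construction to enlarge $Q\oplus_P V$ to a valuative monoid $W$ in a localisation, keeping $V^+$ mapped to non-units. Concretely, choose $W$ maximal among submonoids of the group containing the pushout and not containing $-v$ for $v\in V^+$. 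The resulting $V\to W$ is local, which is exactly what \ref{it:6} requires.

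\emph{Step 3: the remaining implications.} \ref{it:6} $\Rightarrow$ \ref{it:7} is trivial. For \ref{it:7} $\Rightarrow$ \ref{it:5} with $P$ finitely generated, rerun Step~1 with a rank~1 valuation. If $x\notin P$ and $P$ is fine and saturated, Farkas/Gordan duality gives an element $u\in\sigma_P\cap\Hom(P,\mathbf Z)$ with $u(x)<0$. This $u$ is a rank~1 valuation $P\to\mathbf N$, and we conclude as before, since any local map out of $\mathbf Z_{\geq0}$ into a valuative monoid reflects order. Note that the higher-rank $W$ still suffices here.

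For \ref{it:5} $\Rightarrow$ \ref{it:8} with $Q$ finitely generated, take the extension $g\colon Q\to W$ from Step~2 and compose with a rank~1 quotient of $W$. The image $g(Q)$ is finitely generated, so the pair consisting of $g(Q)$ and the image of $V^+$ generates a face structure. Choosing the largest convex subgroup of $W^{\rm gp}$ that misses the image of $V^+$ and quotienting by it yields a rank~1 valuative monoid. The rank is finite because $g(Q)$ is finitely generated, and locality of $V\to W/H$ is preserved because $V$ has rank~1, so any element of $V^+$ dominates all of $H$. Equivalently, this is a Farkas argument: find a real functional on $\sigma_Q$ restricting to a positive multiple of the given $u$. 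This follows from surjectivity $\sigma_Q\to\sigma_P$ for exact maps of fine monoids, which is cone duality applied to $h^{\rm gp}$. Rationality can be arranged by slightly perturbing within the rational fibre.

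When both monoids are finitely generated, combining these gives all four equivalences.
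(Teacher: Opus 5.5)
Your argument for (i)$\Rightarrow$(iv) does not work. Quotienting by the largest convex subgroup $H$ missing $\varphi(V^+)$ only collapses elements that are infinitesimal relative to $\varphi(V^+)$. It cannot remove elements of $g(Q)$ that are infinitely \emph{larger} than $\varphi(V^+)$, and these are what push the rank above~$1$.

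For a counterexample, take $h\colon\mathbf N\to\mathbf N^2$, $n\mapsto(n,0)$ (exact), and $f=\mathrm{id}_{\mathbf N}$. The lexicographic monoid $W=\{(a,b):b>0\}\cup\{(a,0):a\geq0\}\subset\mathbf Z^2$ is maximal among submonoids containing $\mathbf N^2$ and avoiding $-(n,0)$, $n\geq1$, so your Step~2 may return it. Then $H=0$ and $W/H=W$ has rank~$2$. Finite generation of $g(Q)$ bounds the rank, but rank~$1$ requires $\varphi(V^+)$ to be cofinal in $W$, and no quotient can arrange that.

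Your closing Farkas remark is therefore not ``equivalent''. It is a different argument, and it is the one to use: a point $w\in\sigma_Q$ with $w\circ h=u$ gives the extension $g=w$ into $\mathbf R_{\geq0}$. Two things are missing from it:
\begin{enumerate}
\item Surjectivity of $\sigma_Q\to\sigma_P$ cannot simply be quoted, since the paper derives it (Corollary~\ref{cor:11}) \emph{from} this proposition. You must show that exactness and saturation give $\mathbf R_{\geq0}P=(h_{\mathbf R})^{-1}(\mathbf R_{\geq0}Q)$, and then dualize.
\item The argument presupposes $P$ finitely generated, whereas (iv) is claimed when only $Q$ is. Reduce to that case by replacing $V$ with $V^\sharp$. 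Exactness forces $\ker h^{\rm gp}\subseteq P^*$, so $f$ factors through $h(P)=Q\cap h^{\rm gp}(P^{\rm gp})$, which is finitely generated by Gordan's lemma.
\end{enumerate}
The paper avoids all this. It pushes out along $f$ to reduce to $P=\mathbf N$, then chooses from the outset a \emph{local} rank~$1$ valuation of the finitely generated pushout. Its composite with the local map $\mathbf N\to Q\oplus_P\mathbf N$ is local.

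There is also a slip in Step~1: a maximal submonoid of $P^{\rm gp}$ containing $P$ and avoiding $x$ need not be valuative. For $P=\mathbf N^2$ and $x=(1,-1)$, the monoid $\mathbf N^2+\mathbf N(2,-2)$ avoids $x$, so Zorn yields such a maximal $V$ containing $2x$. Then neither $x$ nor $-x$ lies in $V$ (as $2x+(-x)=x$). Your case analysis only gives $(a+b)x\in V$, and saturation of $P$ does not pass to $V$.

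Require instead that $V$ avoid every $nx$ with $n\geq1$; this is possible precisely because $P$ is saturated. That set is additively closed, so the same computation now yields a contradiction. This is also why your Step~2 construction, with the additively closed forbidden set $-\overline{V^+}$, is correct. The paper's footnote does the equivalent thing, taking $V$ maximal such that $P[-x]\to V$ is local. With these repairs, your treatment of (i)$\Leftrightarrow$(ii)$\Rightarrow$(iii) and of (iii)$\Rightarrow$(i) for finitely generated $P$ follows the paper's proof.
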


\begin{corollary} \label{cor:11}
	Let $\mm : \mo \to \mt$ be a homomorphism of integral, saturated, finitely generated monoids, let $\phi \colon \sigma_{\mt} \to \sigma_{\mo}$ be the homomorphism of strictly convex rational polyhedral cones dual to $\mm$.  Then $\mm$ is exact if and only if $\phi$ is surjective.
\end{corollary}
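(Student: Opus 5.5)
We deduce Corollary~\ref{cor:11} from Proposition~\ref{prop:6}: since $P$ and $Q$ are both finitely generated, all four conditions there are equivalent. It therefore suffices to show that surjectivity of $\phi$ is equivalent to condition~\ref{it:8}: every rank~$1$ valuation of $P$ extends to a rank~$1$ valuation of $Q$.

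We first identify rank~$1$ valuations with points of the dual cones. A rank~$1$ valuation of a fine saturated monoid $P$ is a homomorphism $f\colon P\to V$ with $V$ a sharp valuative monoid of rank $\le 1$. Every such $V$ embeds (order-preservingly) into $\mathbf R_{\ge 0}$, and $V$ is only determined up to local morphisms. Each such $f$ therefore gives a point of $\sigma_P=\Hom(P,\mathbf R_{\geq 0})$, well defined up to positive scaling. Conversely, a point $u\in\sigma_P$ gives the valuation $P\to u(P)\subset\mathbf R_{\ge0}$, whose target is rank $\le 1$ and valuative after saturating inside $\mathbf R$ (one may simply take the target to be the submonoid $\mathbf R_{\ge0}\cap u(P)^{\rm gp}$). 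Under this dictionary, ``$g$ extends $f$ via a local morphism $V\to W$'' becomes: $g\circ h = c\cdot f$ as functions to $\mathbf R_{\ge 0}$, where the constant $c$ is positive if $V\ne 0$. A local morphism of rank~$1$ ordered groups is an order embedding, hence given by multiplication by a positive real constant; if $V=0$ the map $V\to W$ is trivially local.

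The main step, and the only real subtlety, is making this scaling precise, in particular the claim that a local homomorphism between rank-one sharp valuative monoids viewed in $\mathbf R$ is multiplication by a positive scalar. The argument: a local morphism between totally ordered groups of rank one preserves the order strictly. It is thus an injective order-preserving homomorphism of archimedean ordered groups, which by Hölder's theorem is multiplication by some $c>0$ after fixing embeddings into $\mathbf R$. The same reasoning handles $V=0$, which corresponds to the vertex $0\in\sigma_P$.

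With this in hand, the two directions are as follows.
\begin{itemize}
\item If $\phi$ is surjective, then for $u\in\sigma_P$ choose $v\in\sigma_Q$ with $\phi(v)=v\circ h=u$. Then $v$ is a rank~$1$ valuation extending $u$, with $c=1$, so \ref{it:8} holds.
\item Conversely, given~\ref{it:8} and $u\in\sigma_P$, pick an extension $v$ with $v\circ h=c\,u$ and $c>0$ (if $u=0$ take $v=0$). Then $c^{-1}v\in\sigma_Q$ maps to $u$, so $\phi$ is surjective.
\end{itemize}
Since $\phi(v)=v\circ h$ is exactly the dual map, this proves the corollary.
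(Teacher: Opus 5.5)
Your proposal is correct and follows the paper's intended route: the paper gives no separate proof of this corollary and deduces it from the equivalence of \ref{it:5} and \ref{it:8} in Proposition~\ref{prop:6} when both monoids are finitely generated, using the remark that $\sigma_P$ is essentially the set of rank~$1$ valuations. Your argument that a local map between rank-one targets is multiplication by a positive scalar, together with your handling of the vertex $u=0$, simply spells out the identification the paper leaves implicit.
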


\begin{proof}[Proof of Proposition~\ref{prop:6}]
	Suppose first that every valuation of $\mo$ extends to a valuation of $\mt$ and that $x \in \mo^{\rm gp}$ is an element such that $\mm^{\rm gp}(x) \geq 0$.  Then $g(\mm^{\rm gp}(x)) \geq 0$ for all valuations $g$ of $\mt$, and therefore $f(x) \geq 0$ for all valuations $f$ of $\mo$.  But if $f(x) \geq 0$ for every valuation of $\mo$ then $x \geq 0$,\footnote{This follows from Hahn's embedding theorem \cite{Hahn}, but a direct proof is quicker~: Use Zorn's lemma to obtain a submonoid $V$ of $\mo^{\rm gp}$ that is maximal with respect to inclusion among the submonoids with $\mo[-x] \subset V$ and for which this inclusion is local.  Then $f : \mo \to V$ is a valuation such that $f(x) \leq 0$.  But $f(x) \geq 0$ as well,  by assumption.  Since $\mo[-x] \to V$ is local, 
    $x$ must be a unit of $\mo[-x]$, so $x \in \mo$. See also \cite[Proposition I.2.4.1]{Ogus}.} so we conclude that $\mm$ is exact.
	Thus~\ref{it:6} implies~\ref{it:5}.

	If $\mo$ is finitely generated (so $\mo/\mo^\ast$ is dual to a strictly convex rational polyhedral cone) then $x \geq 0$ if and only if $f(x) \geq 0$ for all rank~$1$ valuations of $\mo$ \cite[p.\ 9]{Fulton}. Thus the last step of the argument in the last paragraph only requires $f$ to have rank~$1$ when $\mo$ is finitely generated.  Thus~\ref{it:7} implies~\ref{it:5} when $\mo$ is finitely generated.

	Now assume that $\mm$ is exact and let $f$ be a valuation of $\mo$.  We wish to show $f$ extends to a valuation of $\mt$.  Since the pushout of an exact morphism of monoids is exact~\cite[Proposition~I.3.2~(2)]{Tsuji}, by pushing out along $f$ we reduce to the case where $\mo$ is valuative.  Since $\mm$ is local (\Cref{rmk:exact-local}), any local valuation of $\mt$ extends $f$. This shows that~\ref{it:5} implies~\ref{it:6}.

	If $\mt$ is finitely generated and $f$ is a rank~$1$ valuation then the argument of the last paragraph reduces to the case $\mo = \mathbf N$.  In this case, $\mt$ is finitely generated so it has a local rank~$1$ valuation, which automatically extends $f$.  This shows that~\ref{it:5} implies~\ref{it:8} when $\mt$ is finitely generated.

	Since \ref{it:8} clearly implies \ref{it:7}, we conclude that all of the conditions are equivalent when $P$ and $Q$ are both finitely generated.
\end{proof}

\begin{example}
	Let $\mt = \mathbf Nx + \mathbf Ny$ and let $\mo$ be the set of $ax + by$ in $\mt$ such that either $a = b = 0$ or $b > 0$.  Then $\mo \hookrightarrow \mt$ is not exact but every rank~$1$ valuation of $\mo$ extends to a rank~$1$ valuation of $\mt$.  There is a rank~$2$ valuation of $\mo$ that does not extend to a valuation of $\mt$, in which $y > 0$ and $x < 0$ and $nx \leq y$ for all integers $n$.
\end{example}

The next definition incorporates {\cite[Proposition 4.1]{KatoK}}.
\begin{definition}\label{def:integral}
Let $\mo$ and $\mt$ be integral monoids. A morphism $\mm\colon\mo\to\mt$ is called \emph{integral} if the following equivalent conditions are satisfied:
    \begin{enumerate}[label=(\arabic*)]
	\item \label{it:1} Given $x_1,x_2\in \mo$ and $y_1,y_2\in \mt$ such that $\mm(x_1)+y_1=\mm(x_2)+y_2$ then there exist $x_3,x_4\in \mo$ and $y\in \mt$ such that $y_1=\mm(x_3)+y,\;y_2=\mm(x_4)+y$,  and $x_1+x_3=x_2+x_4$.%
    \item For each morphism $\mo\to \mo'$ with $\mo'$ integral, the push-out $\mo'\oplus_{\mo}\mt$ is integral.
    \end{enumerate}
    Furthermore, if $\mm$ is injective, it is integral if and only if the induced ring homomorphism $\ZZ[\mo]\to\ZZ[\mt]$ is flat (equivalently, flat after base-change to any field).
   \end{definition}
   We shall denote by $X_P$ the toric variety $\Spec\ZZ[P]$.
\begin{remark}\label{rmk:integral-exact}
     A local, integral morphism is exact~\cite[Proposition~I.2.11]{Tsuji}. 
\end{remark}

 \begin{example}\label{exa:exact_not_integral}
	 The homomorphism
	 \begin{equation*}
		 \mm \: : \: \mo = \mathbf N x \!+\! \mathbf N y  \longrightarrow  \mt = \mathbf N x \!+\! \mathbf N y \!+\! \mathbf N (z\!-\!x) \!+\! \mathbf N (z\!-\!y) \quad \subset \quad \mathbf Z x \!+\! \mathbf Z y \!+\! \mathbf Z z
	 \end{equation*}
	 is exact but not integral.  
     Indeed, $\mm(x)+(z-x)=\mm(y)+(z-y)$ but there is no non-trivial decomposition of the form $\mm(p)+q$ for either $z-x$ or $z-y$.

 \end{example}
 
\begin{remark}{\label{rmk:integral-free}}
	Suppose that $\mm : \mo \to \mt$ is an integral morphism of fine, sharp monoids.  Then \Cref{def:integral}~\ref{it:1} shows that $\mt$ is filtered in the partial order determined by the action of $\mo$.  Since $\mt$ is also noetherian, this implies that $\mt$ is \emph{free} as a $\mo$-module \cite[\S 1]{KatoF}, in the sense that there exists a subset $S\subseteq \mt$ such that the function $\mo\times S\to \mt,\ (x,s)\mapsto \mm(x)+s$ is a bijection. Such a subset $S$ is referred to as a \emph{basis} for $\mt$ as a module over $\mo$~; clearly, $S$ is in bijection with $\mt/\mo$. A basis can be a submonoid of $\mt$ if and only if $\mm$ is a split injection.
\end{remark}

In the next proposition, we investigate some of the geometric consequences of integrality.  For the statement, recall that the localization of $P$ by a subset $S$ is the sharp quotient $P[-S]^\sharp = P[-S] / P[-S]^\ast$ of $P$.  The homomorphism $P \to P[-S]^\sharp$ is initial among homomorphisms from $P$ to sharp monoids $R$ that carry $S$ to $0$.  Similarly we can define the localization of a morphism $\mm\colon \mo\to\mt$ as the localization of $\mo$ at $\mm^{-1}(0)$ if $\mt$ is sharp.  If $\sigma$ is the rational polyhedral cone dual to a fine, saturated monoid $P$, and $\tau$ is a face of $\sigma$, then $\tau$ is dual to the localization of $P$ by the submonoid of linear functions vanishing on $\tau$.

\begin{proposition} \label{prop:1}
	 Let $\mm : \mo \to \mt$ be a homomorphism of fine, saturated, sharp monoids.  Consider the following conditions: 
	 \begin{enumerate}[label=(\roman*)]
		 \item \label{it:4} The homomorphism $\mm$ is integral.
		 \item \label{it:10} For every localization $\mt \to \mt'$ of $\mt$, the localization 
			 of the composition $\mo \to \mt \to \mt'$ is exact.
		 \item \label{it:11} Let $\mm' : \mo' \to \mt$ be the localization of $\mo \to \mt$.  For every $q \in \mt^{\rm gp}$, if there is some $p \in (\mo')^{\rm gp}$ such that $\mm'(p) \leq q$, then there is a maximal one.
	 \end{enumerate}
		 Then \ref{it:4} and \ref{it:11} are equivalent and imply \ref{it:10}.
 \end{proposition}

We will see in \Cref{prop:saturated}, below, that all three conditions are equivalent in the presence of a quasisaturation hypothesis.

 \begin{proof}
	 
     If $\mm$ is integral, and $\mt \to \mt'$ is a localization, then the localization $\mo' \to \mt'$ is integral by \cite[Propositon~I.2.3 and Proposition~I.2.7]{Tsuji}.  Since $\mo' \to \mt'$ is local by construction, 
     \Cref{rmk:integral-exact} implies it is exact.  Thus~\ref{it:4} implies~\ref{it:10}.  


	 To see that \ref{it:11} implies \ref{it:4}, we use Kato's equational criterion (Definition~\ref{def:integral}~\eqref{it:1}). 
     Suppose that $\mm(x_1) + y_1 = q = \mm(x_2) + y_2$.  Let $x$ be a maximal element of $\mo^{\rm gp}$ such that $\mm(x) \leq q$.  Set $y = q - \mm(x)$, which is in $\mt$ since $\mm(x) \leq q$.  Let $x_3 = x - x_1$ and $x_4 = x - x_2$.  These are in $P$ since $x \geq x_1, x_2$.  Then $x_1 + x_3 = x_2 + x_4$ and $\mm(x_3) + y = q - \mm(x_1) = y_1$ and $\mm(x_4) + y =q - \mm(x_2) = y_2$, as required.

     On the other hand, to see that \ref{it:4} implies \ref{it:11}, we assume without loss of generality (again by \cite[Propositon~I.2.3 and Proposition~I.2.7]{Tsuji}) that $\mm$ is local. 
     Kato's equational criterion implies that the set of lower bounds \[\mo_{\leq q}=\{x\in P^{\rm gp}\:|\:h(x)\leq q\}\] is either empty or filtered. Assume it is not empty. 
     By sending $x$ to $q - x$, an ascending sequence $x_1 \leq x_2 \leq \cdots$ in $\mo_{\leq q}$ induces a descending sequence $q - x_1 \geq q - x_2 \geq \cdots$ in $\mt$ (bounded below by $0$).  Any such sequence stabilizes because $\mt$ is finitely generated.  Since $\mm$ is injective, this implies the original sequence stabilizes.  Thus $\mo_{\leq q}$ is a filtered partially ordered set in which ascending chains stabilize, hence has a maximal element.
\end{proof}

We explore condition \ref{it:11}, the existence of lower bounds. We write $\inf(q)=\sup \mo_{\leq q}$ for the maximum value of $\mo_{\leq q}$, with the understanding that $\inf(q)= -\infty$ if $\mo_{\leq q}$ is empty.  If $\inf(q)$ exists and $\inf(q) \neq-\infty$, we also write $\inf(q) = \min(q)$.  We show that infima behave well under localization~:

		\begin{lemma} \label{prop:7}
			Suppose that
			\begin{equation*} \begin{tikzcd}
				\mo \ar[r] \ar[d] & \mo' \ar[d] \\
				\mt \ar[r] & \mt'
			\end{tikzcd}
			\end{equation*}
			is a commutative square of fine, saturated, sharp monoids in which the horizontal morphisms are localizations.
      Let $q$ be an element of $\mt$ and let $q'$ be its image in $\mt'$.  Then $\inf(q')$ is the image of $\inf(q)$ if the latter exists.
		\end{lemma}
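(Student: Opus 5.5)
The plan is to compute directly in the groupifications. Write $\mm : \mo \to \mt$ and $\mm' : \mo' \to \mt'$ for the vertical maps. Write $\mo' = \mo[-S]^\sharp$ and $\mt' = \mt[-F]^\sharp$, where we may take $S \subset \mo$ and $F \subset \mt$ to be faces. Then
\begin{equation*}
(\mo')^{\rm gp} = \mo^{\rm gp}/S^{\rm gp}, \qquad (\mt')^{\rm gp} = \mt^{\rm gp}/F^{\rm gp},
\end{equation*}
and for $a,b \in \mt^{\rm gp}$ we have $\bar a \leq \bar b$ in $(\mt')^{\rm gp}$ if and only if $b - a + f \in \mt$ for some $f \in F$. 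Commutativity of the square gives $\mm(S) \subset F$.

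Let $x = \inf(q)$ and let $\bar x$ be its image in $(\mo')^{\rm gp}$. The first step is to check that $\bar x$ lies in $\mo'_{\leq q'}$. This is immediate: $\mm(x) \leq q$ in $\mt^{\rm gp}$, and the localization $\mt^{\rm gp} \to (\mt')^{\rm gp}$ is order preserving.

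The second, and main, step is maximality: given $\bar y \in (\mo')^{\rm gp}$ with $\mm'(\bar y) \leq q'$, show $\bar y \leq \bar x$. Lift $\bar y$ to $y \in \mo^{\rm gp}$; then $q - \mm(y) + f \in \mt$ for some $f \in F$. The aim is to replace $y$ by $y - s$ for a suitable $s \in S$ with $\mm(s) \geq f$. Then $\mm(y - s) \leq q$, so $y - s \leq x$ by maximality of $x$, and hence $\bar y \leq \bar x$.

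Producing such an $s$ is the obstacle. It needs $F$ to be controlled by $\mm(S)$, i.e.\ $\mo'$ should be the localization of the composite $\mo \to \mt \to \mt'$ in the sense used in \Cref{prop:1}, so that $S = \mm^{-1}(F)$. With $S$ merely satisfying $\mm(S)\subset F$ the statement can fail. For instance, take $\mo = \mt = \mathbf N$ with $\mm$ the identity, $\mo' = \mathbf N$ and $\mt' = 0$. Then $\inf(0) = 0$, but every element of $(\mo')^{\rm gp} = \mathbf Z$ is $\leq q'$, so $\inf(q') = +\infty$ is not the image of $\inf(q)$. I would therefore read the lemma with $\mo'$ determined by $\mt'$ in this way. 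Even then, $\mm(S)$ need not dominate all of $F$, so the "absorb $f$" step needs a different argument, which I would attempt as follows.

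Rather than absorbing $f$ in one step, use the elements $q + nf$ for $n \geq 0$, all of which satisfy $\mm(y) \leq q + nf$. The key claim to prove is that, when $x = \inf(q)$ exists, $\inf(q + f) = x + s$ for some $s \in S^{\rm gp}$. I would try to show this with the filtered and stabilizing-chain arguments from the proof of \Cref{prop:1}: $\mo_{\leq q}$ and $\mo_{\leq q+f}$ are compared inside $\mo^{\rm gp}$, and any element of $\mo_{\leq q+f}$ that is not congruent modulo $S^{\rm gp}$ to an element of $\mo_{\leq q}$ would have to acquire positivity outside $F$, contradicting $f \in F$. Given the claim, $y \leq x + s$, so $\bar y \leq \bar x$, which gives maximality and hence $\inf(q') = \bar x$. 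I expect this key claim to be the main difficulty; in particular I expect to need finite generation of $\mt$ in it.
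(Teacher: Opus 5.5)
Your counterexample to the literal statement is correct, and it exposes a hypothesis the paper's proof uses tacitly. The proof writes $\mo'=\mo[-p]^\sharp$ and tests $\mm'(\bar x)\le q'$ by $\mm(x)\le q+n\mm(p)$. In other words it takes $\mt'=\mt[-\mm(p)]^\sharp$, so the square is cocartesian. This is also the only situation in which the lemma is applied: base change to a geometric point in Corollary~\ref{cor:10}.

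However, you repaired the hypothesis in the wrong direction. Imposing $S=\mm^{-1}(F)$ does not make $F$ controlled by $\mm(S)$. Under that reading the lemma is still false, so your key claim cannot be proved, with or without finite generation. Take $\mo=\NN$, $\mt=\NN^2$ and $\mm(1)=(1,1)$; this map is local, even saturated. Take $F=0\oplus\NN$, so that $\mt'=\NN$ via the first coordinate. Then $S=\mm^{-1}(F)=0$, $\mo'=\NN$, and $\mm'$ is the identity. For $q=(a,b)$ with $b<a$ one has $\inf(q)=b$ but $\inf(q')=a$. Equivalently, $\inf\bigl(q+n(0,1)\bigr)=\min(a,b+n)$ is not $\inf(q)$ plus an element of $S^{\rm gp}=0$.

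The right hypothesis is the opposite one: $F$ is the face generated by $\mm(S)$, i.e.\ $\mt'=\mt[-\mm(S)]^\sharp$. Then the preimage of $\mt'$ in $\mt^{\rm gp}$ is $\mt[-\mm(S)]$ itself. So $\mm'(\bar y)\le q'$ means $q-\mm(y)+\mm(s)\in\mt$ for some $s$ in the submonoid generated by $S$. Thus your $f$ can be chosen of the form $\mm(s)$ from the start, and the first approach you abandoned closes at once. The element $y-s$ lies in $\mo_{\le q}$ and lifts $\bar y$, so $y-s\le x$ and hence $\bar y\le\bar x$.

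This is exactly the paper's argument: $\mo_{\le q}\to\mo'_{\le q'}$ is surjective and order preserving, so it carries the maximum to the maximum. No stabilizing-chain or filtration argument is needed.
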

		\begin{proof}
			Write $\mo' = \mo[-p]^\sharp$ 
						for some $p \in \mo$.  If $x \in \mo^{\rm gp}$ represents an element of $\mo'_{\leq q'}$ then $x \leq q + n p$ for some integer $n \geq 0$.  Therefore $x - n p \leq q$ so $x - n p$ lies in $\mo_{\leq q}$.  Thus $\mo_{\leq q}$ surjects onto $\mo'_{\leq q'}$.  In particular $\mo'_{\leq q'} = \varnothing$ if $\mo_{\leq q} = \varnothing$ so $\inf(q') = -\infty$ if $\inf(q) = -\infty$.  Moreover, the order on $\mo'_{\leq q'}$ is induced from the order of $\mo_{\leq q}$, so if $\mo_{\leq q}$ and $\mo'_{\leq q'}$ are nonempty then the maximal element of $\mo_{\leq q}$ must map to the maximal element of $\mo'_{\leq q'}$.  
		\end{proof}

The following condition ensures the finiteness of all infima.

 \begin{definition}[{\cite[Definition~I.4.3.1]{Ogus}}]
	 A morphism of integral monoids $\mm\colon\mo\to\mt$ is \emph{vertical} (or \emph{bounded}) if $\operatorname{coker}(\mm)$, or equivalently the image of ${\mt}$ in ${\mt}^{\rm gp} / {\mo}^{\rm gp}$, is a group.  Equivalently, for every $q\in {\mt}$, there exists $p\in {\mo}$ such that $q\leq \mm(p)$. 
 \end{definition}
 \begin{corollary} \label{cor:9}
			Let $\mm : \mo \to \mt$ be an integral and vertical homomorphism of fine, saturated, and sharp monoids.  Then every $q \in \mt^{\rm gp}$ has a minimum $\min(q)\in\mo^{\rm gp}$.
		\end{corollary}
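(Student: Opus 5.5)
The plan is to apply Proposition~\ref{prop:1}. Since $\mm$ is integral, condition~\ref{it:11} holds: for $q \in \mt^{\rm gp}$, if the set $\mo_{\leq q}$ is nonempty then it has a maximal element. (Here $\mo_{\leq q}$ is formed with respect to the localization $\mo'$ of $\mo$; see the remark at the end.) So it suffices to show that $\mo_{\leq q}$ is never empty, which gives $\inf(q) \neq -\infty$ and hence the existence of $\min(q) = \sup \mo_{\leq q} \in \mo^{\rm gp}$.

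Nonemptiness is where verticality enters. First write $q = q_1 - q_2$ with $q_1, q_2 \in \mt$. By verticality there is $p \in \mo$ with $q_2 \leq \mm(p)$, i.e.\ $\mm(p) - q_2 \in \mt$. Then
\[
q - \mm(-p) = q_1 + (\mm(p) - q_2) \in \mt ,
\]
so $\mm(-p) \leq q$ and $-p \in \mo_{\leq q}$. This shows the set of lower bounds is nonempty for every $q \in \mt^{\rm gp}$, not just for $q \in \mt$.

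The one point requiring care, and the step I expect to be the main obstacle, is that condition~\ref{it:11} is stated for the localization $\mm' : \mo' \to \mt$ of $\mm$ at $\mm^{-1}(0)$. I will check that verticality forces $\mm$ to be local, so that $\mo' = \mo$. Suppose $\mm(p) = 0$ with $p \in \mo$. Integrality, via the equational criterion of Definition~\ref{def:integral}, together with sharpness of $\mt$, should force $p = 0$; but this is not automatic in general, so I would rather avoid needing it. Instead, I will note that $\mo^{\rm gp} \to (\mo')^{\rm gp}$ is surjective, and apply the argument of the previous paragraph to $\mm'$. The localization $\mm'$ is still vertical, since $\mo \to \mo'$ is surjective. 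So \ref{it:11} yields a maximal element of $(\mo')_{\leq q}$, and the conclusion is read as a minimum in $(\mo')^{\rm gp}$, with the identification $\mo' = \mo$ when $\mm$ is local, as in the injective case used in the proof of Proposition~\ref{prop:1}.
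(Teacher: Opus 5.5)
Your argument is correct and is the intended one: integrality gives condition~\ref{it:11} of Proposition~\ref{prop:1}, and verticality, applied after writing $q = q_1 - q_2$ with $q_i \in Q$, shows that the set of lower bounds is nonempty, so the infimum is never $-\infty$. Your caution about localization is warranted, because verticality does not force locality. For instance, $\mathbf N \to 0$ is integral and vertical, yet every $x \in \mathbf Z = P^{\rm gp}$ satisfies $h(x) \leq 0$. So the minimum must be read in $(P')^{\rm gp}$, which equals $P^{\rm gp}$ exactly in the local case, and that is the only case in which the paper later applies this corollary (Corollaries~\ref{cor:10} and~\ref{cor:pushforward}).
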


We now return to condition \ref{it:10} of \Cref{prop:1}.  By \Cref{cor:11}, this says that every face $\tau$ of $\sigma_{\mt}$ surjects onto a face of $\sigma_{\mo}$.
 This condition is equivalent to the equidimensionality of the fibres of the corresponding morphism of toric varieties \cite[Lemma 4.1]{AbramovichKaru}. If, in addition, $X_\mo$ is a smooth toric variety, then, by miracle flatness, we have obtained a characterization of integrality in terms of cones. In general, though, surjectivity on cones is weaker than integrality, as the following example shows.
\begin{example}
    Let $\mo=\NN u\oplus\NN v\oplus \NN w/(u+w=2v)$ and $\mm\colon \mo\to\mt=\NN^2$ mapping $u\mapsto(2,0),\ v\mapsto(1,1),\ w\mapsto(0,2)$. Then $\mm$ is exact, and so are its localizations: inverting $u$ or $w$, we obtain the identity morphism $\NN\to\NN$; inverting more we obtain $0$. On the other hand, $\mm$ is not integral~: $\mt$ is clearly not free as a $\mo$-module~; alternatively,  $h(u)+(0,1)=h(v)+(1,0)$ holds, but there is no non-trivial decomposition of $(1,0)$ or $(0,1)$.  

		Geometrically, $\mt$ represents integer-valued functions on the interval with  integer slope and $\mo$ represents integer-valued functions on the interval with \emph{even} integer slope.  The element $(2,1)$ represents the function with values $2$ and $1$ at the endpoints of the interval and the functions $(2,0)$ and $(1,1)$ in $\mo_{\leq (2,1)}$ have no common upper bound.
\end{example}

Next we introduce saturation, which is a strengthening of integrality that admits a clearer geometric interpretation. We summarize \cite[Definition~I.3.5, Definition~I.3.7, Proposition~I.3.8]{Tsuji}~:
\begin{definition}\label{def:quasisaturated}
	Let $\mm : \mo \to \mt$ be a homomorphism of integral monoids.  Let $\mt'$ be the pushout of $\mm$ along the multiplication-by-$n$ homomorphism $[n] : \mo \to \mo$, in the category of integral monoids.  The multiplication-by-$n$ homomorphism $[n] : \mt \to \mt$ factors uniquely through $h' : \mt' \to \mt$.  We say that $\mm$ is \emph{quasisaturated} if the morphism $h'$ described above is exact for every positive integer~$n$.
\end{definition}

\begin{lemma}[{\cite[Theorem~2.1.4]{molcho2021universal}}] \label{lem:4}
	Suppose $\mo$ and $\mt$ are finitely generated, integral, sharp monoids and that $\mm : \mo \to \mt$ is exact and quasisaturated.  Then every homomorphism $\mo \to \NN$ factors through $\mt$.
\end{lemma}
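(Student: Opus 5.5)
Given a homomorphism $f\colon \mo\to\NN$, we will produce a homomorphism $g\colon\mt\to\NN$ with $g\circ\mm = f$. The strategy is to reduce to the case $f$ surjective onto a submonoid $\NN\cdot d$, then use quasisaturation to remove the index $d$.

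The first step handles the trivial case and identifies the image. If $f=0$ we take $g=0$, so assume $f\neq 0$. The image of $f$ is a submonoid of $\NN$, and $f^{\rm gp}(\mo^{\rm gp}) = d\ZZ$ for some $d\geq 1$. Since $f$ is a rank~$1$ valuation of $\mo$ and $\mt$ is finitely generated, exactness of $\mm$ and \Cref{prop:6} (the implication \ref{it:5}$\Rightarrow$\ref{it:8}) give a rank~$1$ valuation $g_0\colon\mt\to V$ extending $f$. Here $V$ is a fine sharp valuative monoid, so $V = \NN$. Thus there is a local homomorphism $\phi\colon\NN\to\NN$, namely multiplication by some $e\geq 1$, with $g_0\circ\mm = e f$. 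In other words, $ef$ extends to $\mt$, but perhaps not $f$ itself. The remaining work is to divide by $e$.

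The second step uses quasisaturation with $n=e$. Form the integral pushout $\mt'$ of $\mm$ along $[e]\colon\mo\to\mo$, with its map $h'\colon\mt'\to\mt$ factoring $[e]$. The pair $(f\colon\mo\to\NN,\ g_0\colon\mt\to\NN)$ satisfies $g_0\circ\mm = f\circ[e]$. By the universal property of the pushout, it induces $g'\colon\mt'\to\NN$ with:
\begin{itemize}
\item $g'$ restricted to $\mt$ equal to $g_0$, and
\item $g'$ restricted to the copy of $\mo$ equal to $f$.
\end{itemize}
The composite $\mt\to\mt'\xrightarrow{h'}\mt$ is $[e]$.

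We now want $g$ with $g\circ h' = g'$. Define $g^{\rm gp}$ on $\mt^{\rm gp}$ by extending $g'^{\rm gp}$ along $h'^{\rm gp}$. This requires two things: that $g'^{\rm gp}$ kills $\ker h'^{\rm gp}$, and that it extends from the image, e.g. after checking the image has the right index. Exactness of $h'$ then ensures that $g$ carries $\mt$ into $\NN$: if $y\in\mt$ lies in the image of $h'^{\rm gp}$, say $y = h'(z)$, exactness forces $z\in\mt'$, so $g(y) = g'(z)\geq 0$.

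The main obstacle is to ensure the extension exists on all of $\mt^{\rm gp}$, not just on the image of $h'^{\rm gp}$. The image contains $e\mt^{\rm gp}+\mm(\mo^{\rm gp})$, so the cokernel is $e$-torsion. I would try the following. For $y\in\mt^{\rm gp}$, $ey = h'(y')$ with $y'$ in the $\mt$-factor, and we set $g(y) = g_0(y)$. This candidate is forced, since $e\,g(y) = g'(ey)$ requires $g(y) = g_0(y)$. Then $g = g_0$, which is integer valued, and we must check $g_0\circ\mm = f$. Here we use that $\mm(x)$ and the class of $x$ in the $\mo$-factor of $\mt'$ have the same image $e\,\mm(x)$ under $h'$. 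If we verify that $h'$ is injective on the relevant elements, which holds because exactness plus sharpness of $\mt'$ gives injectivity (\Cref{rmk:exact-local}), then $\mm(x)$ in the $\mt$-factor equals $x$ in the $\mo$-factor inside $\mt'$. Hence $g_0(\mm(x)) = g'(\mm(x)) = f(x)$. Checking that $\mt'$ is sharp is the delicate point. If sharpness fails, I would pass to $\mt'^{\sharp}$ and verify that $g'$, being $\NN$-valued, factors through it; the same injectivity argument then applies there. The result is the desired factorization $f = g_0\circ\mm$.
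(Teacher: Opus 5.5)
Your Step~1 is essentially fine. It produces $g_0\colon Q\to\NN$ and $e\geq 1$ with $g_0\circ h = ef$, modulo two details: \Cref{prop:6} is stated for saturated monoids, and you must arrange that the target of the extension is $\NN$. Step~2 has a genuine gap, and the computations in your final paragraph are wrong.

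Write $\iota_P\colon P\to Q'$ and $\iota_Q\colon Q\to Q'$ for the pushout maps.
\begin{itemize}
\item If $g\circ h' = g'$, then $h'(\iota_Q(y)) = ey$ gives $e\,g(y) = g'(\iota_Q(y)) = g_0(y)$. So the forced candidate is $g = g_0/e$, not $g_0$.
\item Similarly $h'(\iota_P(x)) = h(x)$, whereas $h'(\iota_Q(h(x))) = e\,h(x)$. Indeed the pushout relation in $Q'$ reads $\iota_Q(h(x)) = e\,\iota_P(x)$.
\end{itemize}
So no injectivity or sharpness argument can identify $\iota_Q(h(x))$ with $\iota_P(x)$. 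Your conclusion $g_0\circ h = f$ contradicts $g_0\circ h = ef$ unless $e=1$, which is exactly what had to be proved.

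Even the corrected plan fails. The only real issue is whether $g_0/e$ is integer-valued, and that depends on the choice of $g_0$, which quasisaturation does not control. Your two requirements do not address this:
\begin{itemize}
\item The requirement that $g'^{\rm gp}$ kill $\ker h'^{\rm gp}$ holds automatically, since $\ZZ$ is torsion-free.
\item Exactness of $h'$ gives positivity, never divisibility: $[e]\colon\NN\to\NN$ is exact, yet $\mathrm{id}_\NN$ does not extend along it.
\end{itemize}
For a concrete failure, let $h\colon\NN\to\NN^2$ be the diagonal, which is saturated, and let $f=\mathrm{id}$. Step~1 may legitimately return the local valuation $g_0(a,b)=a+b$ with $e=2$. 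Then:
\begin{itemize}
\item $Q'=\langle p,q_1,q_2\mid 2p=q_1+q_2\rangle$, with $h'(p)=(1,1)$, $h'(q_1)=(2,0)$ and $h'(q_2)=(0,2)$.
\item $h'$ is exact, and $g'(p)=g'(q_1)=g'(q_2)=1$.
\item A factorization would require $g(2,0)=1$, which is impossible in $\ZZ$.
\end{itemize}

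The missing idea is to choose the extension extremally. The paper pushes out along $f$ to reduce to $P=\NN$, after which $Q$ is saturated by \cite[Proposition~I.3.9]{Tsuji}. It then replaces $Q$ by a maximal localization that is still exact over $\NN$. Dually this is a minimal face of $\sigma_Q$ surjecting onto $\mathbf R_{\geq 0}$, hence a ray, so the map becomes $[n]\colon\NN\to\NN$. Quasisaturation survives pushout and localization, and it forces $n=1$. In your language, $g_0/e$ must be a vertex of the fibre polyhedron $\{g\in\sigma_Q : g\circ h = f\}$, and quasisaturation is what makes such vertices integral.
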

\begin{proof}
	Suppose that $v : \mo \to \NN$ is a homomorphism that we would like to factor  through $\mt$.  We may replace $\mo$ with $\NN$, and $\mt$ with its pushout along $v$ (in the category of integral monoids).  We therefore assume that $\mo = \NN$.  In particular, $\mo$ is saturated, so $\mt$ is saturated by~\cite[Proposition~I.3.9]{Tsuji}.  We replace $\mt$ with a maximal localization such that $\mo \to \mt$ is still exact~; this is still quasisaturated over $\mo$ by \cite[Proposition~I.3.18]{Tsuji}.  The dual of $\mt$ is then a strictly convex rational polyhedral cone that surjects onto $\mathbf R_{\geq 0}$ and (since we have  chosen a \emph{maximal} localization preserving exactness) it has no proper faces that also surject onto $\mathbf R_{\geq 0}$.  Therefore we can identify $\mt$ with $\NN$ and $\mo \to \mt$ with the multiplication-by-$n$ map for some positive integer $n$.  But this map is saturated only for $n = 1$ \cite[Lemma~I.5.2]{Tsuji}.
\end{proof}

\begin{remark}
	Dually, Lemma~\ref{lem:4} says that an exact, quasisaturated morphism is surjective on integral lattices.
\end{remark}
\begin{example}
    Multiplication by $n$ from $\NN$ to itself is exact but not quasisaturated.
\end{example}
We now summarize \cite[Definition~I.3.12, Proposition I.3.14, Proposition~I.4.1, Theorem~I.6.3]{Tsuji}:
\begin{definition}
	A homomorphism $\mm\colon \mo\to \mt$ of saturated monoids is called \emph{saturated} if it is integral and any of the following equivalent conditions holds~:
	\begin{enumerate}
		\item $\mm$ is quasisaturated.
		\item For each saturated monoid $\mo'$ and for any morphism $\mo\to \mo'$ the pushout $\mo'\oplus_{\mo}\mt$ in the category of integral monoids is saturated.
		\item For any $p\in \mo$, $q\in \mt$, and $n\in\NN$ such that $\mm(p)\leq nq$, there exists a $p'\in \mo$ such that $p \leq n p'$ and $\mm(p') \leq q$.
		\item For any $p \in \mo^+$, $q \in \mt$, and $n \in \NN$ such that $\mm(p) \leq nq$, there exists a $p' \in \mo^+$ such that $\mm(p') \leq q$.
	\end{enumerate}
\end{definition}

The following proposition is essentially contained in \cite[Theorem~2.1.4]{molcho2021universal}, but we include a few additional details in the proof. 

 \begin{proposition} \label{prop:saturated}
	 Let $\mm : \mo \to \mt$ be a homomorphism of fine, saturated, sharp monoids.  If $\mm$ is quasisaturated, then the conditions of \Cref{prop:1} are equivalent, and they are all equivalent to saturation of $\mm$. 
 \end{proposition}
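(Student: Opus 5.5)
Since \ref{it:4} and \ref{it:11} are already equivalent and imply \ref{it:10} by \Cref{prop:1}, it remains to show \ref{it:10} $\Rightarrow$ \ref{it:4} under quasisaturation. Once that is done, "integral and quasisaturated" is by definition "saturated", so all conditions are equivalent to saturation. I will prove \ref{it:10} $\Rightarrow$ \ref{it:11}, using the localization compatibility of infima (\Cref{prop:7}) and \Cref{lem:4}.

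First, fix the setup. Replacing $\mo$ by its localization at $\mm^{-1}(0)$ preserves quasisaturation \cite[Proposition~I.3.18]{Tsuji} and hypothesis \ref{it:10}, so we may assume $\mm$ is local. Hypothesis \ref{it:10} for the trivial localization makes $\mm$ exact, hence injective (\Cref{rmk:exact-local}). Fix $q\in\mt^{\rm gp}$ with $\mo_{\leq q}\neq\varnothing$. We must produce a maximum of $\mo_{\leq q}$. As in the proof of \Cref{prop:1}, ascending chains in $\mo_{\leq q}$ stabilize (use finite generation of $\mt$ and injectivity of $\mm$). So maximal elements exist, and it suffices to show that $\mo_{\leq q}$ is filtered: any $x_1,x_2\in\mo_{\leq q}$ admit a common upper bound in $\mo_{\leq q}$.

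Next, dualize. Rank~$1$ valuations give $\mo_{\leq q}=\{x:\ \langle u,\mm(x)\rangle\le\langle u,q\rangle\ \forall u\in\sigma_\mt\}$. The function $\phi_q(v)=\min\{\langle u,q\rangle: u\in\sigma_\mt,\ u|_\mo=v\}$ on $\sigma_\mo$ is piecewise linear and convex, and it is defined everywhere by exactness (\Cref{cor:11}). Then $x\in\mo_{\leq q}$ iff $x\le\phi_q$ pointwise on $\sigma_\mo$. The fibres of $\sigma_\mt\to\sigma_\mo$ are polyhedra whose minimizing faces are faces $\tau$ of $\sigma_\mt$. Condition \ref{it:10} says each face $\tau$ maps onto a face of $\sigma_\mo$. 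I will use this to argue that the domains of linearity of $\phi_q$ are images of faces of $\sigma_\mt$ and that these images are whole faces of $\sigma_\mo$. Hence $\phi_q$ is in fact linear on each face, and convexity together with linearity on all faces should force $\phi_q$ to be \emph{concave-compatible}: on each maximal face $\phi_q$ is the restriction of a linear function $\ell_\tau$.

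The rational max is then attained. For $v$ in the relative interior of $\sigma_\mo$, a minimizing face $\tau$ surjects onto $\sigma_\mo$, so $\phi_q$ is the restriction of $q|_\tau$ pulled back through $\tau\to\sigma_\mo$. This gives a rational $\ell\in\mo^{\rm gp}_{\mathbf Q}$ with $\ell\le\phi_q$ and $\ell(v)=\phi_q(v)$, and $\ell$ dominates every element of $\mo_{\leq q}$.

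The main obstacle is integrality of $\ell$, since a priori it is only in $\mo^{\rm gp}_{\mathbf Q}$. Here I invoke quasisaturation. Localize $\mt$ at the face $\tau$, keeping \ref{it:10} and quasisaturation. The composite $\mo\to\mt_\tau$ is then local and exact, so \Cref{lem:4}, dually surjectivity on integral lattices, shows that $\tau\to\sigma_\mo$ is surjective on lattice points. Since $q|_\tau$ is integral and $\ker(\tau^{\rm gp}\to\sigma_\mo^{\rm gp})$ pairs to zero with $q$ along the minimizing face, I expect $\ell$ to take integral values on lattice points, i.e.\ $\ell\in\mo^{\rm gp}$ ($\mo$ saturated). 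Thus $\ell=\min(q)$, giving \ref{it:11}.

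Finally, \Cref{prop:7} lets me check compatibility on faces, confirming that $\ell$ is the maximum of $\mo_{\leq q}$. I expect the careful bookkeeping of which face $\tau$ minimizes, and that $\ell$ is globally (not just locally) below $\phi_q$, to be the delicate point.
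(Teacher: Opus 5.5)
Your argument is correct, and it takes a genuinely different route from the paper's.

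\textbf{The paper's route.} The paper stays on the monoid side. For each $p\in P_{\le q}$ it forms the localization $P\to P'_p$ determined by the face of $\sigma_Q$ on which $q=h(p)$. Using quasisaturation through \cite[Proposition~I.4.1]{Tsuji}, it proves that $P\to P'_p$ is universal among maps to saturated sharp monoids in which $p$ bounds $P_{\le q}$. It then takes a local valuation $v$ of $P$ and a $p_0$ maximizing $v$ on $P_{\le q}$, which forces $P'_{p_0}=P$, so $p_0$ is the maximum.

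\textbf{Your route.} You dualize instead. $P_{\le q}$ is the set of integral linear functions lying below the fibrewise minimum $\phi_q$. Condition \ref{it:10} alone forces $\phi_q$ to be linear, which is the rational version of \ref{it:11}. Quasisaturation then enters exactly once, via \Cref{lem:4} applied to the local exact map $P\to Q_\tau$, to make this rational function integral.

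\textbf{What each buys.} Your approach separates the roles of the two hypotheses cleanly. It also explains the paper's even-slope example on the interval: there $\phi_q=q$ is linear but not in $P^{\rm gp}$, and \Cref{lem:4} fails. Finally, it identifies $\min(q)$ with the fibrewise minimum of $q$, which is the geometric content behind \Cref{cor:pushforward}. The paper's argument avoids polyhedral and LP-duality input and is written for arbitrary local valuations.

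\textbf{First point to tighten.} The point you flag as delicate is easy, and the sentence about ``concave-compatible'' functions should be dropped. Let $u$ minimize $q$ on the fibre over an interior point $v$, and let $\tau$ be the face of $\sigma_Q$ containing $u$ in its relative interior. Then:
\begin{enumerate}
\item $h^*(\tau)=\sigma_P$ by \ref{it:10}.
\item Minimality forces $q$ to vanish on $\ker h^*\cap\operatorname{span}\tau$, so $q|_\tau=\ell\circ h^*$ for a linear $\ell$.
\item Every fibre meets $\tau$, so $\ell\ge\phi_q$ on $\sigma_P$ for free, with equality at $v$. This is the opposite of the inequality you wrote.
\item $\ell-\phi_q$ is concave, nonnegative, and vanishes at an interior point, so $\ell=\phi_q$ on all of $\sigma_P$.
\end{enumerate}

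\textbf{Second point to tighten.} Once you have $\ell=\phi_q$, the reduction to filteredness and the final appeal to \Cref{prop:7} are unnecessary. $\ell$ is integral because $\ell(h^*u)=q(u)\in\mathbf Z$ on the lattice points provided by \Cref{lem:4}, and the lattice points of the full-dimensional cone $\sigma_P$ generate $N_P$. This uses sharpness of $P$, not saturation. Then $\ell$ lies in $P_{\le q}$ by saturation of $Q$, and it dominates $P_{\le q}$ by saturation of $P$.
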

 \begin{proof}

	 We need to show that~\ref{it:10} implies~\ref{it:11} in \Cref{prop:1} under the additional assumption that $\mm$ is quasisaturated. We will assume without loss of generality that $\mm$ is local (by replacing $\mo$ with the localization $\mo'$ of $\mo \to \mt$ ; notice that $\mo\to\mo'$ is an integral morphism).
 
 Suppose that $q \in \mt$.  We abbreviate the set of lower bounds $P_{\leq q} \subset P$ to $S$. Let $p$ be \emph{any} element of $S$.  Let $\mt'$ be the minimal localization of $\mt$ where $q = \mm(p)$.  Let $\mo' = \mo'_p$ be the localization of $\mo \to \mt \to \mt'$.  Then $\mo' \to \mt'$ is exact by \ref{it:10}.  We argue that $\mo \to \mo'$ is universal among homomorphisms from $\mo$ to integral, saturated, sharp monoids $R$ such that $p \geq S$ in $R$.
     
     Indeed, let $\mo \to R$ be such a homomorphism.  Let $\mt_R$ be the pushout of $\mo \to \mt$ along $\mo \to R$, and let $\mt'_R$ be the pushout of $\mo \to \mt'$. Then $\mt'_R$ is the minimal localization of $\mt_R$ in which the images of $q$ and $h(p)$ agree.  
     To show that $\mo \to R$ factors through $\mo'$, it will be enough to show that $R \to \mt'_R$ is local~: then $\mo \to R$ factors through the localization of $\mo \to \mt'_R$, which is a localization of $\mo'$.

	 We check that the induced map $R \to \mt'_R$ is local. Suppose $x \in R$ has image $0$ in $\mt'_R$. Then there is a positive integer $n$ such that $x \leq n(q-p)$ in $Q_R$.  But $R \to \mt_R$ is the pushout of the quasisaturated morphism $\mo \to \mt$, hence is quasisaturated by \cite[Proposition~I.3.6]{Tsuji}, and $\mt_R \to \mt'_R$ is a localization, hence is quasisaturated by \cite[Lemma~I.3.17]{Tsuji}.  Therefore $R \to \mt'_R$ is the composition of quasisaturated morphisms, hence is quasisaturated \cite[Proposition~I.3.6]{Tsuji}.  Therefore~\cite[Proposition~I.4.1]{Tsuji} implies that there is a $y \in R$ such that $x \leq ny$ and $y \leq q - p$ in $\mt_R$. Finally, $p + y \leq q$ so the maximality of $p$ in $S$ on $R$ implies that $y = 0$.  So $x=0$ as well.

	 Now suppose that $v$ is a local valuation of $\mo$. Since by assumption $\mm$ is exact, $v$ extends to $Q$ by \Cref{prop:6}. Then $v(S)$ is bounded above by $v(q)$, so, by definition of a valuative monoid, there is some $p_0 \in S$ such that $v(p_0) \geq v(S)$.  By the universal property of $\mo'_{p_0}$, the valuation $v$ factors through $\mo'_{p_0}$.  But then $\mo \to \mo'_{p_0}$ is local.  Since it was defined to be a localization, this means it is an isomorphism.  Hence $p_0 \geq S$ in $\mo$.
 \end{proof}

\begin{theorem} \label{thm:6}
Let $\mm\colon\mo\to\mt$ be an injective homomorphism of fine, saturated, sharp monoids. 
The following conditions are equivalent:
\begin{enumerate}[label=(\roman*)]
	\item \label{it:12} The homomorphism $\mm$ is saturated;
	\item \label{it:13} The morphism of toric varieties $\mm^\vee\colon X_\mt\to X_\mo$ is flat with reduced fibers;
	\item \label{it:14} For every toric divisor $D_{\rho}\in X_{\mo}$, its preimage under $\mm^\vee$ is generically reduced; 
	\item \label{it:15} The morphism of cones $\phi\colon\sigma_{\mt}\to\sigma_\mo
    $ is \emph{weakly-semistable} in the sense of \cite[Definition~2.1.2]{molcho2021universal}, namely (with $N_P=\Hom(P^{\rm gp},\mathbf Z)$):
    \begin{itemize}
        \item every face $\tau$ of $\sigma_{\mt}$ surjects onto a face of $\sigma_{\mo}$, and
        \item there is an equality of monoids $\phi(\tau\cap N_{\mt})=\phi(\tau)\cap N_{\mo}$.
    \end{itemize}
    
\end{enumerate}

\end{theorem}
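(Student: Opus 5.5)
Since $\mm$ is injective and $\mo,\mt$ are fine, saturated and sharp, the plan is to close the cycle \ref{it:12}$\Rightarrow$\ref{it:13}$\Rightarrow$\ref{it:14}$\Rightarrow$\ref{it:15}$\Rightarrow$\ref{it:12}, using \Cref{prop:saturated} as the main combinatorial input.

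For \ref{it:12}$\Rightarrow$\ref{it:13}: a saturated morphism is integral, so by \Cref{def:integral} the ring map $\ZZ[\mo]\to\ZZ[\mt]$ is flat, i.e.\ $\mm^\vee$ is flat. The fibre over a point of $X_\mo$ with residue field $k$ is obtained by base change along $\ZZ[\mo]\to k$. I would factor this through the torus orbit, i.e.\ through $\mo\to\mo'\to k^\ast\cup\{0\}$ with $\mo'$ a face-localization, so that the fibre is a $k$-algebra of the form $k[\mt_{\mo'}]$ twisted by a character. Here $\mt_{\mo'}$ is the integral pushout along $\mo\to\mo'$, further pushed out to a group or to $0$. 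Saturation is stable under pushout to saturated targets, so these pushout monoids are saturated (and in particular reduced modulo the relevant ideals). Hence the monoid algebras are reduced, geometrically so, because saturated monoids give normal monoid algebras.

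\ref{it:13}$\Rightarrow$\ref{it:14} is immediate, since a reduced fibre over the generic point of $D_\rho$ means the preimage of $D_\rho$ is generically reduced.

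For \ref{it:14}$\Rightarrow$\ref{it:15}, I would argue in two parts.
\begin{itemize}
\item \emph{Surjectivity of faces.} Generic reducedness shows that every ray $\tau$ of $\sigma_\mt$ mapping into the relative interior of a ray $\rho$ of $\sigma_\mo$ maps its primitive generator to the primitive generator of $\rho$. Indeed, the multiplicity of $D_\tau$ in $(\mm^\vee)^{-1}D_\rho$ is the index $[\rho\cap N_\mo : \phi(\tau\cap N_\mt)]$, computed as the order of vanishing of the pullback of the generator of $\rho^\perp$-complement along $D_\tau$.
\item \emph{Integral condition.} To get the equality $\phi(\tau\cap N_\mt)=\phi(\tau)\cap N_\mo$ for all faces, I would reduce, via localization and \Cref{prop:7}-style compatibility, to the rank-one case, and then apply the argument of \Cref{lem:4}. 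The face-surjectivity part should come from flatness, via equidimensionality (Abramovich--Karu).
\end{itemize}
This is the step I expect to be the main obstacle: under \ref{it:14} alone flatness is not assumed. I would first try to deduce flatness from \ref{it:14} plus injectivity by showing that generic reducedness forces every face of $\sigma_\mt$ to hit a face exactly, which is possible because toric divisors control codimension-one behaviour. If that fails, I would weaken the cycle and prove \ref{it:14}$\Leftrightarrow$ the lattice condition on rays, together with the face condition, separately.

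For \ref{it:15}$\Rightarrow$\ref{it:12}: the face-surjectivity condition is, by \Cref{cor:11}, exactly condition~\ref{it:10} of \Cref{prop:1}. The lattice condition gives quasisaturation through Tsuji's criterion (iv), applied after localizing: if $\mm(p)\le nq$, then on each face the lattice condition produces $p'$ with $\mm(p')\le q$. \Cref{prop:saturated} then upgrades \ref{it:10} to integrality and hence to saturation.
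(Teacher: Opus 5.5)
\textbf{The step (iii)$\Rightarrow$(iv) fails, and so does your cycle.} This step is not merely hard: with (iii) as literally stated it is false. So neither of your plans can close the cycle, whether you try to extract flatness from (iii) or get the face condition ``separately''.

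The paper's own \Cref{exa:exact_not_integral} (see also \Cref{ex:1}) is a counterexample. Write $X=\chi^x$, $Y=\chi^y$, $U=\chi^{z-x}$, $V=\chi^{z-y}$. Then $h^\vee$ is $X_Q=\operatorname{Spec}k[X,Y,U,V]/(XU-YV)\to\mathbf A^2$, given by $(X,Y,U,V)\mapsto(X,Y)$.
\begin{itemize}
\item The preimages of the two toric divisors are $\operatorname{Spec}k[Y,U,V]/(YV)$ and $\operatorname{Spec}k[X,U,V]/(XU)$. Both are reduced, so (iii) holds. In fact every fibre of $h^\vee$ is reduced.
\item The fibre over the origin is the plane $\operatorname{Spec}k[U,V]$, while the generic fibre is a curve. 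So $h^\vee$ is not flat.
\item The ray through $(1,1,1)$ maps onto the diagonal of $\mathbf R_{\geq0}^2$, which is not a face. So (iv) fails.
\end{itemize}
Reducedness along the toric divisors is a codimension-one condition and cannot detect failure of flatness in codimension two. Hence (iii) is equivalent to the other conditions only once flatness (integrality of $h$) is assumed. By this example, that is necessarily the setting of the Tsuji reduced-fibre criteria (Theorems~II.4.2 and~II.4.11) from which the paper takes (i)$\Leftrightarrow$(ii)$\Leftrightarrow$(iii).

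The paper does not run a cycle at all. It proves (i)$\Leftrightarrow$(iv) separately:
\begin{itemize}
\item face-surjectivity comes from exactness of all localizations (\Cref{prop:1} and \Cref{cor:11});
\item the lattice equality comes from \Cref{lem:4} applied to localizations;
\item the converse comes from \Cref{prop:saturated} together with \Cref{prop:6}.
\end{itemize}

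\textbf{Two of your other steps also need repair.}

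In (i)$\Rightarrow$(ii), the fibres over non-generic points are not monoid algebras of pushout monoids. Pushing out along $P\to 0$ makes $h(P)$ additively trivial. The fibre over the torus-fixed point instead sets the monomials $\chi^{h(p)}$, $p\in P^+$, equal to the ring zero. For the saturated diagonal $\mathbf N\to\mathbf N^2$, your recipe returns $k[s^{\pm1}]$, which is the generic fibre. The special fibre is $k[x,y]/(xy)$, which is reduced but not normal. Moreover, saturation of a monoid says nothing about reducedness of its monomial quotients (think of $k[t]/(t^2)$). The correct input has two parts:
\begin{itemize}
\item the monoid ideal $h(P^+)+Q$ is radical, after localizing and sharpening $P$ at each face (which preserves saturation of $h$); this is literally condition~(4) in the definition of a saturated homomorphism;
\item $k[Q]/(h(P^+))$ is $Q^{\rm gp}$-graded with one-dimensional pieces, so its nilradical is spanned by monomials.
\end{itemize}

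In (iv)$\Rightarrow$(i), you use condition~(4) to obtain quasisaturation. But conditions (1)--(4) in that definition are equivalent only for \emph{integral} $h$, and integrality is exactly what you are trying to prove. Without integrality the implication is false. The map above satisfies~(4), since $k[Q]/(X,Y)\cong k[U,V]$ is reduced. Yet it is not quasisaturated: in the integral pushout $Q'$ of $h$ along $[2]$, the class of $(z-x-y,\,x+y)$ is not in $Q'$, although its image $2(z-x-y)+(x+y)=(z-x)+(z-y)$ lies in $Q$. This map also fails (iv), so it does not refute (iv)$\Rightarrow$(i) itself. It does show that the lemma you rely on is unavailable. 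You need a separate argument producing quasisaturation from the face and lattice conditions before \Cref{prop:saturated} can be applied.
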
 

\begin{proof}
	The equivalences $(1)\iff(2)\iff(3)$ are particular cases of the criteria proved in \cite[Theorems~II.4.2 and~II.4.11]{Tsuji}.  Proposition~\ref{prop:saturated}, combined with Proposition~\ref{prop:6} and Lemma~\ref{lem:4}, shows the equivalence of~\ref{it:12} and~\ref{it:15}.
\end{proof}

 \begin{example}
     An important class of morphisms useful in the study of degenerations is that of \emph{partition morphisms (with boundary)} : it is the smallest class of morphisms of fine monoids closed under composition, pushouts, and finite products, and containing the small diagonals $\Delta_k\colon\NN\to\NN^k$ (resp. and the zero sections $0_k\colon 0\to\NN^k$) \cite[Definition 5.8.1]{GillamLogFlat}. If $\mm\colon\mo\to\mt$ is a partition morphism, then it is saturated; 
     it is vertical if and only if it has no boundary \cite[Proposition 5.8.6]{GillamLogFlat}.
 \end{example}

\subsection{Logarithmic geometry and algebraic stacks} \label{sec:6}
 \begin{definition}
     Let $S$ be a scheme. A \emph{logarithmic structure} on $S$ is an \'etale (equivalently fppf \cite[Theorem A.1]{olssonlogarithmic}) sheaf of monoids $M_S$ on S together with a logarithmic%
		 \footnote{That is, every unit of $\mathcal O_S$ has a unique preimage in $M_S$.}
		 homomorphism $\varepsilon : M_S \to \mathcal O_S$, where $\mathcal O_S$ is given its multiplicative monoid structure.  The quotient $M_S/ \varepsilon^{-1} \mathcal O_S^\ast$ is called the \emph{characteristic} sheaf of monoids  and is denoted by $\bar{M}_S$. 

     A morphism of logarithmic structures is a morphism of sheaves of monoids $f\colon M_S\to M_S'$ such that $\varepsilon'\circ f=\varepsilon$.

     A logarithmic scheme $S$ is a pair $(\underline S,\epsilon \colon M_S\to\OO_S)$ where $\underline S$ is a scheme and $\varepsilon$ is a logarithmic structure on $S$.
 \end{definition}

 A morphism of logarithmic schemes $(\underline S, M_S) \to (\underline T, M_T)$ is a morphism of schemes $f : \underline S \to \underline T$ along with a homomorphism $f^{-1} M_T \to M_S$ such that the square
 \begin{equation*} \begin{tikzcd}
	 f^{-1} M_T \ar[r] \ar[d] & f^{-1} \mathcal O_T \ar[d] \\
	 M_S \ar[r] & \mathcal O_S
 \end{tikzcd}
 \end{equation*}
 commutes.  The category of logarithmic schemes is fibered over schemes by the projection that forgets the logarithmic structure.  Logarithmic structures on algebraic spaces and algebraic stacks are defined as cartesian sections of this projection.  See \cite[\S 5]{olssonlogarithmic} for logarithmic structures on algebraic stacks.

Suppose that $S$ is a logarithmic scheme.  There is an exact sequence of sheaves of abelian groups over $S$:
\[0\to\mathcal O_S^\ast\to M_S^{\rm{gp}}\to\bar{M}_S^{\rm{gp}}\to 0 \]

It follows that for each local section $\alpha\in \bar M_S^{\rm{gp}},$ the fiber of $ M_S^{\rm{gp}}\to\bar{M}_S^{\rm{gp}}$ is an $\mathcal O_S^\ast$-torsor.  We denote this torsor $\mathcal O_S^\ast(-\alpha)$ and we write $\mathcal O_S(\alpha)$ for the associated invertible sheaf of $\mathcal O_S^\ast$-equivariant morphisms $\mathcal O_S^\ast(-\alpha) \to \mathcal O_S$. When $\alpha\in \bar{M}_S$ the restriction of $\varepsilon : M_S \to \mathcal O_S$ to the fiber gives an $\mathcal O_S^*$-equivariant morphism  $\varepsilon_\alpha\colon \mathcal O_S^\ast(-\alpha)\to \mathcal O_S$ and therefore a section $\varepsilon_\alpha$ of $\mathcal O_S(\alpha)$, which we also regard as a morphism of invertible sheaves $\varepsilon_\alpha\colon \mathcal O_S(-\alpha)\to \mathcal O_S$.

\begin{definition}
    Let $S$ be a logarithmic scheme.
		A constant sheaf of monoids $P$ on $S$ equipped with a monoid homorphism $\varphi\colon P\to\mathcal O_S$ is said to be a \emph{chart} for $M_S$ if $M_S$ is the initial logarithmic structure $\varepsilon : P^a \to \mathcal O_S$ through which $\varphi$ factors.
    This initial logarithmic structure can be explicitly constructed as the push-out $P\oplus_{\varphi^{-1}\mathcal O_S^\ast}\mathcal O_S^\ast.$
\end{definition}
\begin{definition}
	A \emph{fine} (and \emph{saturated}) logarithmic scheme is a scheme with a logarithmic structure $\varepsilon : M_S \to \mathcal O_S$ that admits an \'etale cover by maps $U\to S$ such that $M_S\rvert_U$ has a chart by a finitely generated, integral (and saturated) monoid. 

    A morphism of logarithmic schemes $T \xrightarrow{f} S$
		is equivalently a morphism of schemes (denoted by the same symbol $f$) together with a morphism of logarithmic structures $f^\ast \colon {f}^*M_S\to M_T$, where 
		$f^*M_S=f^{-1}M_S\oplus_{f^{-1}(\mathcal O_S^\ast)}\mathcal O_T^\ast$.
   The morphism is said \emph{strict} if $f^\ast : f^\ast M_S \to M_T$ is an isomorphism.
\end{definition}


\begin{definition}\label{def:properties_of_hom_of_monoid_sheaves}
    Let $S$ be a fine and saturated logarithmic scheme.  Let $R$ be an \'etale sheaf of sharp, saturated monoids on $S$, equipped with a homomorphism $\bar M_S \to R$. 
    
		We call $R$ \emph{quasicoherent}, relative $\bar M_S$, if there is a cover of $S$ by \'etale morphisms $U \to S$ such that, as a sheaf of sharp, saturated monoids, $R \big|_U$ is defined relative to $\bar M_S \big|_U$ by global generators and relations.  In other words, 
        after passing to an \'etale cover of $S$, it is possible to find a homomorphism of fine, saturated, sharp monoids $P \to Q$, a morphism $P_S \to \bar M_S$, and a cocartesian square
		\begin{equation*} \begin{tikzcd}
			P_S \ar[r] \ar[d] & \bar M_S \ar[d] \\
			Q_S \ar[r] & R
		\end{tikzcd}
		\end{equation*}
		with $P_S$ and $Q_S$ denoting the constant sheaves on the small \'etale site of $S$ associated with $P$ and $Q$.
    
    We say $R$ is of \emph{finite type}, relative to $\bar M_S$, if it is quasicoherent relative to $\bar M_S$ and the set of global generators over the schemes $U$ in the \'etale cover above may be chosen finite.  We say $R$ is of \emph{finite presentation}, relative to $\bar M_S$, if it is of finite type relative to $\bar M_S$ and the set of global relations can also be chosen finite.

    More generally, let $\Pcal$ be a property of monoid homomorphisms which is stable under pushouts. We say that $R$ has property $\Pcal$ relative to $\bar M_S$ if we can find \'etale local charts as above such that $P\to Q$ has property $\Pcal$.
    \end{definition}

    \begin{remark} 
			If $S$ is a fine, saturated logarithmic scheme and $R$ is of finite type over $\bar M_S$ then R\'edei's theorem \cite[Corollary~1.3]{Grillet} implies that $R$ is also of finite presentation over $\bar M_S$.  Over more general bases, the concepts diverge.
    \end{remark}

The construction of charts of fine (and saturated) logarithmic structures and morphisms is discussed in \cite[\S 2]{KatoK} and \cite[\S 2]{olssonlogarithmic}. If $X_P$ denotes the toric variety (with its toric logarithmic structure) associated with the fine, saturated monoid $P$ then $X_P$ represents the functor on logarithmic schemes 
\begin{equation*}
	S \mapsto \on{Hom}\bigl(P,\Gamma(S,M_S)\bigr).
\end{equation*}
If $T$ is the dense torus of $X_P$ then $\Acal_P = [X_P/T]$ represents the functor
\begin{equation*}
	S  \mapsto \on{Hom}\bigl( P, \Gamma(S, \bar M_S) \bigr) .
\end{equation*}
Equivalently, a morphism of algebraic stacks $S \to \Acal_P$ (ignoring logarithmic structures) can be viewed as a system of invertible sheaves $\mathcal O_S(\alpha)$ for each $\alpha \in P^{\rm gp}$, along with morphisms $\mathcal O_S(\alpha) \to \mathcal O_S(\beta)$ whenever $\alpha \leq \beta$ and satisfying expected compatibilities with the group structure of $P^{\rm gp}$ and the partial order induced by $P$.  The stack $\Acal_P$ is known as the \emph{Artin cone} of $P$.

We summarize the main result of \cite[Theorem 1.1 and Corollary 5.25]{olssonlogarithmic}~:
\begin{theorem} \label{thm:7}
    Let $S$ be a fine logarithmic scheme. Consider the following category fibered over the category of $\underline{S}$-schemes:
    \begin{equation*}
        \Log_S(f\colon \underline T\to\underline S)=\{\text{fine logarithmic structures } M_T \text{ over } f^*M_S \text{ on } \underline{T}\}
    \end{equation*}
    with strict morphisms as arrows. Then $\Log_S$ is an algebraic stack of finite presentation over $\underline{S}$. Moreover, there is a representable, \'etale and surjective morphism:
    \begin{equation*}
        \bigsqcup_{\{U,\beta,\mm\}}\underline{U}\times_{\Acal_P}\Acal_Q\to\Log_S,
    \end{equation*}
    where $\underline{U}\to\underline{S}$ is \'etale, $\beta\colon P\to M_{S|U}$ is a chart, and $\mm\colon P\to Q$ is a morphism of finitely generated, integral monoids.
\end{theorem}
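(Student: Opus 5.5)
The plan is to follow Olsson's strategy: build the stack from local models and check the étale-cover property using charts. Everything is étale local on $\underline S$, since log structures satisfy étale (indeed fppf) descent. So $\Log_S$ is a stack over $\underline S$, and we may assume $M_S$ has a global chart $\beta\colon P\to M_S$ with $P$ fine. For each morphism of fine monoids $\mm\colon P\to Q$, I would first construct a morphism $\underline U\times_{\Acal_P}\Acal_Q\to\Log_S$ as follows.

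Take a $T$-point: a map $\underline T\to\underline U$, a homomorphism $Q\to\Gamma(T,\bar M)$ for the tautological log structure pulled back from $\Acal_Q$, and a compatibility with $P\to\bar M_S$. Concretely, this is a system of line bundles $\mathcal O_T(\alpha)$, $\alpha\in Q^{\rm gp}$, with sections indexed by $Q$, restricting on $P$ to the data $\mathcal O_S(\alpha)$ coming from $\beta$. Pulling back the canonical log structure of $\Acal_Q$ gives a fine log structure $M_T$. The map $P\to Q$ then supplies a morphism $f^*M_S\to M_T$, so we obtain an object of $\Log_S$.

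Next I would prove that this morphism is representable, étale and jointly surjective.

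\textbf{Surjectivity.} Given a fine $M_T$ over $f^*M_S$, Kato's chart lemma for morphisms (\cite[\S 2]{KatoK}) gives, étale locally on $T$, a chart $Q\to M_T$ of fine monoids together with a homomorphism $P\to Q$ compatible with $\beta$. This is exactly a lift to some $\underline U\times_{\Acal_P}\Acal_Q$.

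\textbf{Representability.} For a fixed $(T,M_T)$, the fibre product with $\underline U\times_{\Acal_P}\Acal_Q$ parametrizes homomorphisms $Q\to\bar M_T$ over $P$ that are charts, together with the torsor data. I would identify it with the locus inside the scheme $\uHom_P(Q,\bar M_T)$, with its torsor of lifts to $M_T$ modulo $\uHom(Q^{\rm gp},\mathcal O^*)$, where the induced map $Q^a\to M_T$ is an isomorphism. The relevant Hom-sheaf is representable because $\bar M_T$ is constructible and locally finitely generated.

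\textbf{Étaleness.} I would check two things.
\begin{enumerate}
\item The chart condition is open. Surjectivity of $Q\to\bar M_{T,\bar t}$ and the identification of its kernel face spread out, because $\bar M_T$ has only specializations given by localizations.
\item The morphism is formally étale and locally of finite presentation. Here I would use the standard fact that log structures, and strict morphisms between them, extend uniquely along nilpotent thickenings $T_0\subset T$ via the topological invariance of the étale site. For the chart locus, the obstruction to lifting $Q\to M_{T_0}$ to $Q\to M_T$ vanishes because $\mathcal O_T^*\to\mathcal O_{T_0}^*$ is surjective, and the lift is unique up to the torsor, which is quotiented out.
\end{enumerate}

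\textbf{Diagonal and finite presentation.} The diagonal of $\Log_S$ is representable: isomorphisms between two fine log structures over $f^*M_S$ form, via charts, a locally closed subspace of a product of $\mathcal O^*$-torsors. The $\Acal_Q$ are of finite presentation over $\Acal_P$. For a statement about $\Log_S$ itself I would note that each point lies in a chart with $Q$ bounded by the stalk of $\bar M_T$; whether the result is honestly finite presentation rather than local finite presentation would need a bound on the relevant $Q$, which I have not worked out.

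The main obstacle is the étaleness step, specifically showing that being a chart is an open condition compatible with the torsor quotient by $\uHom(Q^{\rm gp},\mathcal O^*)$. I would handle it by comparing stalks of $\bar M_T$ under generization, using that $\bar M_T$ is fine so that generization maps are localizations, exactly as in Lemma~\ref{prop:7}.
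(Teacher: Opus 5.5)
\textbf{Verdict.} Your route is the one the paper relies on: it gives no argument of its own, only \cite[Theorem~1.1 and Corollary~5.25]{olssonlogarithmic}. Your chart, surjectivity, diagonal and openness steps are fine in outline, but two points need repair.

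\textbf{The finite-presentation claim is false.} The part you left open cannot be closed: $\Log_S$ is not quasi-compact over $\underline S$ whenever $S\neq\varnothing$. It is only \emph{locally} of finite presentation, which is the correct statement (it is what Olsson's theorem asserts). Your \'etale cover gives exactly this, since each $\underline U\times_{\Acal_P}\Acal_Q$ is of finite presentation over $\underline U$. To see non-quasi-compactness:
\begin{itemize}
\item The characteristic monoids at points of one chart $\underline U\times_{\Acal_P}\Acal_Q$ are the quotients of $Q$ by its finitely many faces. Hence finitely many charts bound $\operatorname{rk}\bar M^{\rm gp}$.
\item Over any geometric point $\bar s$ of $S$, adjoin to the log structure of $\bar s$ a free monoid $\NN^r$ whose nonzero elements map to $0$. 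This gives points of $\Log_S$ with $\operatorname{rk}\bar M^{\rm gp}=\operatorname{rk}\bar M_{S,\bar s}^{\rm gp}+r$ for every $r\geq 0$.
\end{itemize}
So the bound on $Q$ you were looking for does not exist.

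\textbf{The ``standard fact'' in your \'etaleness step is false.} Take $S=\Spec k$ with trivial log structure and $T=\Spec k[\epsilon]/(\epsilon^2)$. The log structures charted by $\NN\to\mathcal O_T$, $1\mapsto 0$ and $1\mapsto\epsilon$, have the same restriction to $\Spec k$ but are not isomorphic. Moreover, the identity of that restriction lifts to every automorphism of the first one obtained by scaling the generator by $1+a\epsilon$. In particular $\Log_S$ is not \'etale over $\underline S$, as your fact would imply.

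What you actually need is only that $\bar M_{T_0}=\bar M_T$ on the common \'etale site. In the lifting problem for $\underline U\times_{\Acal_P}\Acal_Q\to\Log_S$, the log structure $M_T$ is part of the given data. What must extend is a strict homomorphism $Q\to\Gamma(T_0,\bar M_{T_0})$ over $P$, together with the lift to $\underline U$, which extends because $\underline U\to\underline S$ is \'etale. No lift $Q\to M_T$ is needed. In any case, surjectivity of $\mathcal O_T^*\to\mathcal O_{T_0}^*$ would not supply one, since the obstruction lives in $\mathrm{Ext}^1(Q^{\rm gp},1+I)$, where $I$ is the ideal of $T_0$.

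\textbf{A simpler fix.} Skip the infinitesimal criterion altogether. Your representability step already identifies $T\times_{\Log_S}(\underline U\times_{\Acal_P}\Acal_Q)$ with the strict locus in the \'etale space of $\uHom_P(Q,\bar M_T)$ over $\underline U\times_{\underline S}T$. This is an algebraic space \'etale over $T$, not in general a scheme. Once your step~(1) shows that this locus is open, representability, \'etaleness and local finite presentation of the cover are immediate.
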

\begin{remark}\label{rmk:Log_S}
    An analogous statement can be made if one restricts to the category of fine and saturated logarithmic schemes. The absolute case corresponds to taking $S=\Spec\ZZ$ with the trivial logarithmic structure. Algebraic stacks of the form $\underline{U}\times_{\Acal_P}\Acal_Q$ are the centrepiece of our paper~; we call them \emph{Olsson precones}.
\end{remark}

Suppose $f : S \to T$ is a morphism of logarithmic schemes.  Then $f^\ast$ induces a morphism of characteristic sheaves $f^\ast \colon f^{-1}\bar{M}_T \to \bar{M}_S$.
In particular, local sections of characteristic monoids as well as of their groupifications can be pulled back along morphisms of logarithmic schemes. 
When the meaning is clear from the context, we will usually write $\alpha\in \bar{M}_S$ (instead of $f^\ast \alpha$) for this pullback.

\medskip



If $S$ is a fine, saturated logarithmic scheme, it has a minimal stratification on which $\bar M_S$ is locally constant.  We call this the \emph{logarithmic stratification} of $S$.

\begin{definition}[{\cite[Definition~2.2.2.2]{MolchoWise}}] \label{def:2}
	A fine, saturated logarithmic scheme $S$ is called \emph{atomic} if it is noetherian, has a unique, connected closed stratum, and $\Gamma(X, \bar M_X) \to \bar M_{X,x}$ is a bijection for all points $x$ of the closed stratum.
\end{definition}

\section{Olsson cones and Olsson fans}\label{sec:polyhedra}
\tocdesc{introduces Olsson cones and fans ; shows fans are glued from cones along \'etale relations}
Artin cones are equivariant toric geometries. Olsson (pre)cones are variations of equivariant toric geometries over a base scheme. We study their local structure with a view towards quasicoherent sheaves and vector bundles. We introduce a mild but useful generalization --- weakly convex (pre)cones --- by extending Olsson (pre)cones with a lineality space (tropical torus). Olsson fans are obtained by gluing Olsson cones in a way that can be described combinatorially.

	\subsection{Olsson cones}

\begin{definition}\label{def:Olssoncone}

    Let $S$ be a fine, saturated logarithmic scheme and $R$ a sheaf of fine, saturated, sharp monoids over $\bar M_S$ as in \Cref{def:properties_of_hom_of_monoid_sheaves}. 
    
    Define a sheaf $\Theta_R$ on fine and saturated logarithmic schemes $f : T \to S$ over $S$ to be the set of monoid homomorphisms $f^{-1} R \to \bar M_T$, commuting with the structural homomorphisms from $f^{-1} \bar M_S$~:
			\begin{equation*}
				\Theta_R(T) = \Hom_{f^{-1} \bar M_S}(f^{-1} R, \bar M_T)  .
			\end{equation*}

\end{definition}
\begin{remark}\label{rem:Olssonabsolute}
    If we apply the same construction to a fine, saturated, sharp monoid $R$, viewed as a sheaf on a point with trivial logarithmic structure, we recover the Artin cone:
			\begin{equation*}
				\mathcal A_R(T) = \Hom(R, \bar M_T) .
			\end{equation*}
\end{remark}
The representability of Artin cones extends easily to the relative setting.

		\begin{lemma}\label{lem:reprOlsson1}

			Let $S$ be a fine, saturated logarithmic scheme and let $R$ be an \'etale sheaf of integral, saturated, sharp monoids that is of finite type relative to $\bar M_S$.  Then $\Theta_R$ is representable by an algebraic stack with a fine, saturated logarithmic structure.  If $R$ is integral or saturated over $\bar M_S$ then $\Theta_R$ is integral or saturated over $S$, respectively.
\end{lemma}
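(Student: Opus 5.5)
Since $\Theta_R$ is a sheaf for the strict étale topology on $S$, and representability by an algebraic stack with logarithmic structure is étale local on $S$ (descent for algebraic stacks, together with gluing of logarithmic structures), we may work étale locally. By \Cref{def:properties_of_hom_of_monoid_sheaves} we then assume there are a homomorphism $h\colon P\to Q$ of fine, saturated, sharp monoids with $Q$ finitely generated over $P$, a morphism $P_S\to\bar M_S$, and a cocartesian square expressing $R$ as the pushout $\bar M_S\oplus_{P_S}Q_S$ in sharp saturated monoids. Shrinking further, we may assume $P_S\to\bar M_S$ lifts to a chart $\beta\colon P\to M_S$. This gives a morphism $S\to\Acal_P$.

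The key step is to identify $\Theta_R$ with the Olsson precone $\underline S\times_{\Acal_P}\Acal_Q$, taken in fine saturated logarithmic schemes; compare \Cref{rmk:Log_S}. For $f\colon T\to S$, the universal property of the pushout gives
\[
\Theta_R(T)=\Hom_{f^{-1}\bar M_S}\bigl(f^{-1}R,\bar M_T\bigr)\cong\Hom_{P}\bigl(Q,\Gamma(T,\bar M_T)\bigr),
\]
where $Q$ maps to $\Gamma(T,\bar M_T)$ compatibly with the map $P\to\Gamma(T,\bar M_T)$ induced by $f$. Three points need checking here. First, $f^{-1}$ commutes with pushouts of constant sheaves. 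Second, since $\bar M_T$ is sharp and saturated, morphisms out of the sharp saturated pushout agree with morphisms out of the naive pushout. Third, homomorphisms from the constant sheaf $Q_T$ are the same as homomorphisms $Q\to\Gamma(T,\bar M_T)$.

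The right-hand side is exactly the functor of points of $S\times_{\Acal_P}\Acal_Q$ on logarithmic schemes over $S$. Indeed, $\Acal_Q$ represents $\Hom(Q,\Gamma(-,\bar M))$, and the fiber product imposes compatibility on $P$. So $\Theta_R$ is representable by the fibre product of logarithmic algebraic stacks $S\times^{\log}_{\Acal_P}\Acal_Q$. One subtlety is that the underlying stack of this fibre product must be formed in the fine saturated category: one takes the ordinary fibre product and then integralizes and saturates. Equivalently, $\Theta_R$ is the open substack of Olsson's $\Log_S$ (\Cref{thm:7}) where the structure map from $S$ factors through $R$; either way it is algebraic with an fs log structure. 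I expect this identification with the correct fs fibre product to be the main obstacle.

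For the final claims, suppose $R$ is integral (resp. saturated) over $\bar M_S$. Then locally we can choose $h$ integral (resp. saturated). By \Cref{def:integral}(2), pushouts of integral maps are already integral, and by the saturated analogue they are already saturated. So no integralization or saturation is needed, and the underlying stack is the honest fibre product $\underline S\times_{\Acal_P}\Acal_Q$. Moreover, $\Acal_Q\to\Acal_P$ is integral (resp. saturated) as a logarithmic morphism because it is charted by $h$. These properties are stable under base change and are checked on charts, so $\Theta_R\to S$ is integral (resp. saturated).
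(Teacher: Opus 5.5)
Your proposal is correct and follows the paper's proof: localize so that $R$ is the pushout of $P\to Q$ along $P\to\bar M_S$, identify $\Theta_R$ with $S\times_{\Acal_P}\Acal_Q$ via the universal property of the pushout, invoke Olsson's representability of Artin cones, and get integrality or saturation by base change from a chart $P\to Q$ with that property. Two side remarks should be dropped, though neither is load-bearing: $\Theta_R\to\Log_S$ is not an open immersion, because a lift of a log structure to $\Theta_R$ is extra data (for $R=\NN^2$ over a trivial log point, the identity and the swap of $\NN^2$ give two distinct lifts over a log point with characteristic monoid $\NN^2$); and once $S\to\Acal_P$ is strict, the fs fibre product is already the ordinary fibre product with the pulled-back log structure, so the integralize-and-saturate step you flag as the main obstacle never arises.
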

			\begin{proof}
				The assertions are all local in the strict \'etale topology of $S$
and,  \'etale-locally in $S$, it is possible to find  fine, saturated, sharp monoids $P,Q$, and homomorphisms $P \to \Gamma(S, \bar M_S)$ and $P\to Q$ such that $R$ fits into a cocartesian diagram:
			\begin{equation*} \begin{tikzcd}
			P \ar[r] \ar[d] & \bar M_S \ar[d] \\
				Q \ar[r] & R
			\end{tikzcd}
			\end{equation*}
			see \cite[\S 2]{KatoK}.  
            Then locally  on $S$, we have $\Theta_R = S \mathop\times_{\Acal_P} \Acal_Q$ and the representability statement follows from the absolute case \cite[Proposition~5.17]{olssonlogarithmic}.
            Moreover, $Q$ can be taken integral or saturated over $P$ if $R$ was assumed to be integral or saturated over $\bar M_S.$
            We conclude by base change that $\Theta_R \to S$ is, respectively, integral or saturated. 
			\end{proof}

\begin{definition}\label{def:Olssoncone2}
   Let $S$ be a fine, saturated logarithmic scheme.  An \emph{Olsson precone} over $S$ is an algebraic stack with logarithmic structure over $S$ that is isomorphic to $\Theta_R$ for an \'etale sheaf of monoids $R$ of finite type over $\bar M_S$. 
\end{definition}

We have already seen that every Olsson precone admits a local presentation as $S \mathop\times_{\mathcal A_P} \mathcal A_Q$ in the strict \'etale topology of $S$.  We will see in Corollary~\ref{cor:1} that every algebraic stack with logarithmic structure over $S$ admitting such a local presentation is an Olsson precone.

We can give a very explicit description of the local structure of Olsson precones which will be useful in the study of quasicoherent sheaves and their sections.

\begin{proposition}\label{prop:local_description_of_relative_Artin_cones}

		Let $\mm\colon P \to Q$ be a homomorphism of fine, saturated, sharp monoids.  Let $S$ be an algebraic stack with a morphism $S \to \Acal_P$.  Let $X_P$ and $X_Q$ be the toric varieties associated with $P$ and $Q$, respectively.  Let $T_P$ and $T_Q$ be the algebraic tori with character groups $P^{\rm gp}$ and $Q^{\rm gp}$, respectively.
			\begin{enumerate}[label=(\roman{*})]
				\item \label{it:23} The product $S \times X_Q$ is represented by $\Spec_S A$ where \[\displaystyle A = \mathcal O_S[Q] = \bigoplus_{q \in Q} x^q \mathcal O_S,\] with $Q^{\rm gp}$-grading induced from $X_Q$. 
				
				\item \label{it:22} The fiber product $S \times_{\mathrm B T_P} X_Q $ is 
				represented by $\Spec_S B$ where 
					\begin{equation*}
						\displaystyle B = \bigoplus_{\alpha \in P^{\rm gp}} y^\alpha x^{\mm(\alpha)} A(\alpha) = 
						\bigoplus_{\substack{\alpha \in P^{\rm gp}, \beta \in Q^{\rm gp} \\ \mm(\alpha) \leq \beta}} y^\alpha x^\beta \mathcal O_S(\alpha).
					\end{equation*}
					The $Q^{\rm{gp}}$-grading extends so that $y^\alpha$ has degree~$0$.
				\item \label{it:19} The fiber product $S \times_{\Acal_P} X_Q $ is 
				represented by $\Spec_S C$ where
					\[C=B\Big/\bigl\langle (\varepsilon_p y^p - 1) x^{\mm(p)} B \bigr\rangle_{p \in P}= B \Big/ \bigl\langle ( \varepsilon_p - y^{-p} ) B(-p)  \bigr\rangle_{p \in P},\]
				and $\epsilon_p$ is the corresponding section of $\OO_S(p)$. This ideal is binomial and homogeneous with respect to the $Q^{\rm{gp}}$-grading.

				\item \label{it:24} $S \mathop\times_{\Acal_P} \Acal_Q = [ \Spec(C) / T_Q ]$ where the $T_Q$-action is induced by the $Q^{\rm{gp}}$-grading above. 

			\end{enumerate}
		\end{proposition}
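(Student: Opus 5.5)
Throughout, I use the description of $\Acal_P$ as $[X_P/T_P]$ and of $\mathrm BT_P$ as the stack classifying $P^{\rm gp}$-indexed systems of invertible sheaves. The plan is to build up the four descriptions in order, each as a fiber product of the previous one.

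For \ref{it:23}, this is the standard base change of $X_Q = \Spec \ZZ[Q]$ to $S$; the $Q^{\rm gp}$-grading is the one defining the $T_Q$-action.

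For \ref{it:22}, I factor the map $X_Q \to \mathrm BT_P$ as $X_Q \to \Acal_Q \to \Acal_P \to \mathrm BT_P$. Concretely, this is the $T_P$-torsor on $X_Q$ obtained by pulling back $X_P\to \Acal_P$.
\begin{itemize}
\item The map $S\to \mathrm BT_P$ is the system of line bundles $\mathcal O_S(\alpha)$, equivalently the $T_P$-torsor $\Spec_S \bigoplus_{\alpha\in P^{\rm gp}} y^\alpha \mathcal O_S(\alpha)$.
\item The map $X_Q\to\mathrm BT_P$ is the trivial torsor, trivialized via the characters $x^{\mm(\alpha)}$.
\item The fiber product is therefore the $T_P$-invariant part of $\Spec$ of the tensor product of the two torsors over $A$. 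Taking invariants for the diagonal $T_P$-action selects the summands $y^\alpha x^{\mm(\alpha)} A(\alpha)$.
\end{itemize}
Expanding $A(\alpha)=\bigoplus_q x^q\mathcal O_S(\alpha)$ and reindexing by $\beta=\mm(\alpha)+q$ gives the second expression, with the condition $\mm(\alpha)\le\beta$. The grading statement follows by declaring $y^\alpha$ of degree~$0$, which is consistent with $T_Q$ acting only through $X_Q$.

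For \ref{it:19}, I write $\Acal_P = [X_P/T_P]$, so that $\Acal_P \to \mathrm BT_P$ is representable with fiber $X_P$. Then
\[
S\times_{\Acal_P}X_Q = (S\times_{\mathrm BT_P}X_Q)\times_{X_P\times_{\mathrm BT_P}X_P}X_P .
\]
Equivalently, we impose that the two induced sections of each torsor-twisted line bundle agree.
\begin{itemize}
\item The map from $S$ gives, for each $p\in P$, the section $\varepsilon_p$ of $\mathcal O_S(p)$.
\item The map from $X_Q$ gives, through the identification $y^{-p}$, the section $x^{\mm(p)}$.
\end{itemize}
Equating them yields the relations $(\varepsilon_p y^p-1)x^{\mm(p)}=0$, or equivalently $\varepsilon_p=y^{-p}$ inside $B(-p)$. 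Homogeneity is clear since $y$ has degree $0$ and $\varepsilon_p$ is a scalar. Binomiality holds because each relation is a difference of two monomial terms.

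The step I expect to require the most care is checking that the diagonal $X_P\to X_P\times_{\mathrm BT_P}X_P$ corresponds exactly to this ideal, generated only by $p\in P$ and not by all of $P^{\rm gp}$. I would verify it locally on $S$ after trivializing the $\mathcal O_S(\alpha)$. Then $B$ becomes $\mathcal O_S[P^{\rm gp}\times_{Q^{\rm gp}}\dots]$ and the computation reduces to the ring identity $\ZZ[P]\otimes_{\ZZ[P^{\rm gp}]\otimes \ZZ[P]}\ZZ[P]$-type cancellations for monoid algebras.

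For \ref{it:24}, I use $\Acal_Q=[X_Q/T_Q]$ and observe that $X_Q\to\Acal_Q$ is a $T_Q$-torsor. So $S\times_{\Acal_P}X_Q \to S\times_{\Acal_P}\Acal_Q$ is a $T_Q$-torsor, the action coming from the grading in \ref{it:19}, and the quotient stack description follows.
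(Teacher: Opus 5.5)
Your proposal is correct and is essentially the paper's argument: (ii) realizes $S\times_{\mathrm BT_P}X_Q$ as the isomorphism scheme of the two $T_P$-torsors, which the paper phrases as trivializations of $x^{\mm(\alpha)}A(\alpha)$ depending homomorphically on $\alpha$; (iii) imposes that the two induced sections of $\mathcal O(p)$, for $p\in P$, agree; and (iv) follows because $X_Q\to\Acal_Q$ is a $T_Q$-torsor. The only slip is the displayed fiber product in (iii), which should use $\Acal_P$ and the relative diagonal $\Acal_P\to\Acal_P\times_{\mathrm BT_P}\Acal_P$ rather than $X_P$, since $S\to\Acal_P$ need not lift to $X_P$; your ``equivalently'' reformulation right after it is the correct statement and is exactly what the paper uses.
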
	

			\begin{proof}
				\begin{enumerate}[label=(\roman{*})]
				\item 	
					Omitted.

				\item 
					The fiber product $S \times_{\mathrm B T_P} X_Q$ is the universal scheme over $\Spec A$ equipped with isomorphisms between the invertible sheaves $x^{-\mm(\alpha)} A$ and $A(\alpha) = A \otimes_{\mathcal O_S} \mathcal O_S(\alpha)$ that depend homomorphically on $\alpha \in P^{\rm gp}$.  Equivalently, it is the universal scheme over $\Spec A$ equipped with trivializations of $x^{\mm(\alpha)} A(\alpha)$ depending homomorphically on $\alpha \in P^{\rm gp}$.  This is the spectrum of%
						\footnote{%
							Recall that if $Y$ is an algebraic stack, and $L,M$ are two invertible sheaves on $Y$, the sheaf of isomorphisms from $L$ to $M$ is representable by the following $\Gm$-torsor over $Y$: $$\Spec_Y\bigoplus_{n\in\ZZ}z^n(L \otimes M^\vee)^{\otimes n}.$$%
						}
				\begin{equation*}
					B = \bigoplus_{\alpha \in P^{\rm gp}} y^\alpha x^{\mm(\alpha)} A(\alpha) =
					\bigoplus_{\substack{\alpha \in P^{\rm gp} \\q \in Q }} y^\alpha x^{\mm(\alpha) + q} \mathcal O_S(\alpha)=
						\bigoplus_{\substack{\alpha \in P^{\rm gp}, \beta \in Q^{\rm gp} \\ \mm(\alpha) \leq \beta}} y^\alpha x^\beta \mathcal O_S(\alpha)
				\end{equation*}
						The elements $y^\alpha$ for $\alpha \in P^{\rm gp}$ are just placeholders.  Formally multiplying by $y^p$ gives an isomorphism from $x^{\mm(p)} B$ to $B(-p)$.  By definition, the isomorphisms $y^p$ descend to the quotient $[ \Spec(B) / T_Q ]$, so they must have degree~$0$ in the $Q^{\rm gp}$-grading.

				\item 
					The fiber product $S \mathop\times_{\Acal_P} X_Q$ is defined relative to $S \mathop\times_{\mathrm BT_P} X_Q$ by the condition that the maps of invertible sheaves $\mathcal O(-p) \to \mathcal O$, induced from the two interpretations of $\mathcal O(-p)$ via the two maps $S \times_{\mathrm BT_P} X_Q\to \Acal_P$, should agree.  In our coordinates, one of these maps is the inclusion $x^{\mm(p)} B \to B$ and the other is $\varepsilon_p : B(-p) \to B$.
					By construction, we have 
					\begin{gather*}
						x^{\mm(p)} B = \bigoplus_{\alpha \in P^{\rm gp}} y^\alpha x^{\mm(\alpha + p)} A(\alpha) = \bigoplus_{\alpha \in P^{\rm gp}} y^{-p+\alpha} x^{\mm(\alpha)} A(-p + \alpha) \\
						\intertext{and}
						B(-p) = \bigoplus_{\alpha \in P^{\rm gp}} y^\alpha x^{\mm(\alpha)} A(-p + \alpha)
					\end{gather*}
						These are isomorphic by multiplication by $y^p$.  The fiber product $S \mathop\times_{\Acal_P} X_P$ is defined by the identification of these two maps by way of this isomorphism.  Thus $S \mathop\times_{\Acal_P} X_P = \Spec C$ where $C$ is the quotient of $B$ by the ideal generated by $(\varepsilon_p y^p - 1) x^{\mm(p)} B$, as $p$ ranges through $P$.  Since $y^p : x^{\mm(p)} B \to B(-p)$ is an isomorphism, we can also write the generators for the ideal as $(\varepsilon_p - y^{-p}) B(-p)$.

					\item 
						Immediate.
				\end{enumerate}

			\end{proof}

	\begin{corollary} \label{cor:14}
		Let $\mm : P \to Q$ be a homomorphism of fine, saturated, sharp monoids and let $X_P$ be the toric variety associated with $P$.  Suppose the morphism $S\to\Acal_P$ admits a factorization through $S\to X_P$ corresponding to trivializations $v^p : \mathcal O_S \simeq \mathcal O_S(p)$ for all $p \in P^{\rm gp}$.
		\begin{enumerate}[label=(\roman*)$'$,start=2]
			\item \label{it:29} The fiber product $S \times_{\mathrm B T_P} X_Q $ is represented by $\Spec_S B'$ where 
					\begin{equation*}
						\displaystyle B' = \bigoplus_{\alpha \in P^{\rm gp}} u^\alpha A = \bigoplus_{\substack{\alpha \in P^{\rm gp} \\ q \in Q}} u^\alpha x^q \mathcal O_S.
					\end{equation*}
					The $Q^{\rm{gp}}$-grading extends so that $u^\alpha$ has degree~$\alpha$.
			\item  \label{it:30} The fiber product $S \times_{\Acal_P} X_Q $ is represented by $\Spec_S C'$ where
				\[C'= B' \Big/ \bigl\langle  \varepsilon_p(v^{-p}) - u^{-p} x^{h(p)}    \bigr\rangle_{p \in P} .\]
				The ideal is binomial and homogeneous with respect to the $Q^{\rm{gp}}$-grading.
		\end{enumerate}
	\end{corollary}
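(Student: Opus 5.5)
The plan is to specialize Proposition~\ref{prop:local_description_of_relative_Artin_cones}~\ref{it:22} and~\ref{it:19} along the given trivializations. I would not repeat the universal-property argument, since the computation there is already done.

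The key steps, in order, are as follows.

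\textbf{Step 1: the ring $B'$.} Given the trivializations $v^\alpha : \mathcal O_S \simeq \mathcal O_S(\alpha)$, which are multiplicative in $\alpha \in P^{\rm gp}$, each graded piece $y^\alpha x^{\mm(\alpha)} A(\alpha)$ of $B$ is identified with $y^\alpha x^{\mm(\alpha)} v^\alpha A$. I would set
\begin{equation*}
u^\alpha = y^\alpha x^{\mm(\alpha)} v^\alpha .
\end{equation*}
This is a formal symbol with $u^\alpha u^\beta = u^{\alpha+\beta}$, because $y$, $x$ and $v$ are all multiplicative. Its $Q^{\rm gp}$-degree is $0 + \mm(\alpha) = \mm(\alpha)$. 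This is what ``degree $\alpha$'' means, via $\mm$, since $y^\alpha$ has degree $0$. With this substitution $B \cong \bigoplus_\alpha u^\alpha A = B'$ as graded $\mathcal O_S$-algebras, which gives~\ref{it:29}.

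Alternatively, \ref{it:29} can be seen directly. Once $S \to \mathrm BT_P$ is trivialized, $S \times_{\mathrm BT_P} X_Q = S \times T_P \times X_Q$, and $u^\alpha$ are the characters of $T_P$.

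\textbf{Step 2: the relations.} The generators of the ideal in~\ref{it:19} are $(\varepsilon_p y^p - 1) x^{\mm(p)} B$ for $p \in P$. Substituting $y^p = u^p x^{-\mm(p)} v^{-p}$, the generator $(\varepsilon_p y^p - 1) x^{\mm(p)}$ becomes
\begin{equation*}
\varepsilon_p(v^{-p})\, u^p - x^{\mm(p)} .
\end{equation*}
Here $\varepsilon_p(v^{-p}) \in \mathcal O_S$ is the section $\varepsilon_p$ read through the trivialization.

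Multiplying by the unit $u^{-p}$ gives $\varepsilon_p(v^{-p}) - u^{-p} x^{\mm(p)}$, which is the relation stated in~\ref{it:30}. Since $u^{-p}$ is a unit of $B'$, the ideals coincide. So $C \cong C'$.

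\textbf{Step 3: homogeneity.} The element $u^{-p} x^{\mm(p)}$ has degree $-\mm(p) + \mm(p) = 0$, and $\varepsilon_p(v^{-p})$ has degree $0$. So the ideal is binomial and homogeneous.

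\textbf{Main obstacle.} The only real difficulty is bookkeeping: the sign conventions relating $\mathcal O_S(p)$, $\mathcal O_S(-p)$ and the direction of $v^p$. These determine whether the scalar is $\varepsilon_p(v^{-p})$ or its inverse-direction analogue, and whether $u^{-p}$ or $u^{p}$ appears. I would fix these by checking the case $P = Q = \NN$, $\mm = \mathrm{id}$. There the answer must be $S \times_{\mathcal A_{\NN}} \Aaff^1 = S \times \Gm$, cut out by the expected relation $x = $ (unit) $\cdot\,\varepsilon$.
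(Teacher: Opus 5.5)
Your proposal is correct and follows the paper's proof essentially verbatim. The paper also sets $u^\alpha = v^\alpha y^\alpha x^{h(\alpha)}$ to identify $B$ with $B'$, and then rewrites the generators of the ideal from part (iii) of the preceding proposition. The only cosmetic difference is in which form of the generators is used: the paper evaluates $(\varepsilon_p - y^{-p})B(-p)$ on the generator $v^{-p}$ of $B(-p)$, which gives $\varepsilon_p(v^{-p}) - u^{-p}x^{h(p)}$ directly, whereas you start from $(\varepsilon_p y^p - 1)x^{h(p)}B$ and then multiply by the unit $u^{-p}$.
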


	\begin{proof}
		Using the trivializations $v^p$, the ring $B$ from \Cref{prop:local_description_of_relative_Artin_cones}~\ref{it:22} can also be presented as
		\begin{equation*}
			B = \kern-2ex \bigoplus_{\substack{\alpha \in P^{\rm gp}, \beta \in Q^{\rm gp} \\ h(\alpha) \leq \beta}} \kern-2ex v^\alpha y^\alpha x^\beta \mathcal O_S = \bigoplus_{\substack{\alpha \in P^{\rm gp} \\ q \in Q}} u^\alpha x^q \mathcal O_S
		\end{equation*}
		where $u^\alpha = v^\alpha y^\alpha x^{h(\alpha)}$ are units of $B$.  Since $v^{-p}$ generates $B(-p)$, the ideal defining $C$ is generated by $\varepsilon_p(v^{-p}) - y^{-p} v^{-p} = \varepsilon_p(v^{-p}) - u^{-p} x^{h(p)}$ for $p \in P$.
	\end{proof}

	In the setting of \Cref{cor:14}, let $M$ be the image of $P^{\rm gp} \to Q^{\rm gp}$.  Since $M$ is a subgroup of $Q^{\rm gp}$, it is finitely generated and torsion-free, hence free, so we may choose a splitting of the surjection $P^{\rm gp} \to M$.  This induces a splitting $g : P^{\rm gp} \to K$ of the inclusion $K \subset P^{\rm gp}$. We may use it to split off an algebraic torus factor from the spectra of the algebras $B'$ and $C'$ above, as follows.
    
    Writing $C'_q$ for the $q$-th graded piece of $C'$ (with $q \in Q^{\rm gp}$) we can identify $C'_q$ with $C'_{h(p) + q}$ by the unit $u^p$ whenever $p \in M$.  If $\alpha$ is a coset of $M$ in $Q^{\rm gp}$ then, using these identifications, we define $C''_\alpha$ to be the common value of $C'_q$ for all $q \in \alpha$.  This is well defined because $u^{p+p'} = u^p u^{p'}$ for $p \in P^{\rm gp}$.  Then $C'' = \bigoplus_{\alpha \in Q^{\rm gp} / P^{\rm gp}} C''_\alpha$ is a graded ring with generators
	\begin{enumerate}[label=(\roman*)]
		\item \label{it:33} $x^q$ for $q \in Q$, lying in graded degree $[q]$ (where $[q] = q + M$ is the $M$-coset of $Q^{\rm gp}$ containing $q$), and
		\item \label{it:34} $u^p$ for $p \in K$, lying in graded degree $0$,
	\end{enumerate}
	satisfying
	\begin{enumerate}[resume*]
		\item \label{it:35} $x^{q + q'} = x^q x^{q'}$ for all $q, q' \in Q$,
		\item \label{it:36} $u^{p + p'} = u^p u^{p'}$ for all $p, p' \in K$, 
		\item \label{it:37} $x^{h(p)} = u^p \varepsilon_p(v^{-p})$ when $p \in K$, and
		\item \label{it:38} $x^{h(p)} = \varepsilon_p(v^{-p})$ when $p \in M$.%
			\footnote{Note that here we are identifying $C'_q$ with $C'_{h(p)+q}$ using $u^p$ so we have suppressed the $u^p$ that appears in \ref{it:30} of \Cref{cor:14}.}
	\end{enumerate}
	It is possible to recover the graded ring $C'$ from $C''$ by setting $C'_q = C''_{[q]}$ and defining $u^p : C'_q \to C'_{h(p) + q}$ to be the identity map when $p \in M$.

	\begin{corollary} \label{cor:15}
		In the context of \Cref{cor:14}, let $T_{Q/P}$ be the diagonalizable group with character group $Q^{\rm gp} / P^{\rm gp}$.
		\begin{enumerate}[label=(\roman*)$''$,start=2]
			\item \label{it:31} $\bigl[ \Spec(B') / T_{Q} \bigr] \simeq \bigl[ \Spec(B'') / T_{Q/P} \bigr]$, where $B''$ is the graded ring with generators and relations as in~\ref{it:33} through~\ref{it:36}, above.
			\item \label{it:32} $\bigl[ \Spec(C') / T_Q \bigr] \simeq \bigl[ \Spec(C'') / T_{Q/P} \bigr]$, where $C''$ is defined relative to $B''$ by the relations~\ref{it:37} and~\ref{it:38}, above.
		\end{enumerate}
	\end{corollary}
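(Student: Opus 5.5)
The plan is to prove both parts at once by descending the $T_Q$-quotient along a quotient of the torus. Let $T_M$ be the torus with character group $M = h(P^{\rm gp}) \subset Q^{\rm gp}$. The exact sequence of character groups $0 \to M \to Q^{\rm gp} \to Q^{\rm gp}/P^{\rm gp} \to 0$, where $Q^{\rm gp}/P^{\rm gp}$ means $Q^{\rm gp}/M$, corresponds to an exact sequence of diagonalizable groups
\begin{equation*}
1 \to T_{Q/P} \to T_Q \to T_M \to 1 .
\end{equation*}
I will show that $\Spec(B')$ (resp.\ $\Spec(C')$) is $T_Q$-equivariantly isomorphic to $T_M \times \Spec(B'')$ (resp.\ $T_M \times \Spec(C'')$). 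Here $T_Q$ acts on $T_M$ through the quotient $T_Q \to T_M$, and $T_Q$ acts on $\Spec(B'')$ through its $Q^{\rm gp}/M$-grading, i.e.\ through the subgroup $T_{Q/P}$ after choosing a splitting. The general fact $[ (T_M \times Y)/T_Q ] \simeq [ Y / T_{Q/P} ]$ then gives the result; this fact holds because $T_M$ is a $T_Q$-torsor over $T_M/T_Q = \mathrm{pt}$ with stabilizer $T_{Q/P}$.

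The algebraic heart is the following observation. The units $u^p$ with $p \in M'$, where $M' \subset P^{\rm gp}$ is the image of the chosen splitting $M \to P^{\rm gp}$, form a copy of the group algebra $\mathcal O_S[M]$ inside $B'$ (and inside $C'$), with $u^p$ in degree $h(p)$. This is exactly $\mathcal O(T_M)$, graded by $M \subset Q^{\rm gp}$. A $Q^{\rm gp}$-graded algebra $D$ containing invertible homogeneous elements $u^p$ of every degree $h(p) \in M$, multiplicative in $p$, is isomorphic to $D_{\rm inv} \otimes \mathcal O_S[M]$. Here $D_{\rm inv}$ is the $Q^{\rm gp}/M$-graded ring obtained by identifying $D_q$ with $D_{q+h(p)}$ via $u^p$. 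This is precisely the construction of $C''$ from $C'$ described before the statement, and that paragraph already says that $C'$ is recovered from $C''$ by $C'_q = C''_{[q]}$. So the main thing to verify is the presentation: that $D_{\rm inv}$ for $D = B'$ has generators $x^q$ ($q \in Q$) and $u^p$ ($p \in K$) with relations~\ref{it:35},~\ref{it:36}, and that for $D = C'$ one must add~\ref{it:37} and~\ref{it:38}.

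For $B'$ the presentation check is direct. Since $P^{\rm gp} = K \oplus M'$, we have $B' = \bigoplus_{p \in K,\, m \in M',\, q \in Q} u^p u^m x^q \mathcal O_S$. Dividing out the $u^m$, the ring $B''$ is the monoid algebra $\mathcal O_S[K \oplus Q]$, graded by $[q]$. For $C'$ I will split the relation $\varepsilon_p(v^{-p}) = u^{-p} x^{h(p)}$ according to the decomposition $p = k + m$. Because the relations are multiplicative in $p$ up to units, it suffices to impose them for $p \in K$ and for $p \in M'$. This needs care: $P \neq K \oplus (M' \cap P)$ in general. So I will pass first to the relations for $p \in P^{\rm gp}$, which make sense in $C'$ after inverting nothing, because the $\varepsilon_p(v^{-p})$ and $u^p$ are compatible with addition. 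Concretely, I plan to show that the ideal generated by the relations for all $p \in P$ equals the ideal generated by the relations~\ref{it:37} and~\ref{it:38} transported to $C''$, using $x^{h(p)} = x^{h(p_1)} x^{-h(p_2)}$-type manipulations only where $p_2$ gives a unit.

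I expect this last point to be the main obstacle. Relation~\ref{it:37} is stated for $p \in K$ and~\ref{it:38} for $p \in M$, and it must be interpreted for elements of $P^{\rm gp}$ rather than $P$, since $K \cap P$ and $M \cap h(P)$ need not generate. My first attempt will be to read~\ref{it:37} and~\ref{it:38} as relations among sections of the invertible sheaves $\mathcal O_S(p)$, after writing each $p \in P$ as $k + m$. The relation for $p$ then becomes the product of the $K$-part and the $M$-part, which equals the original relation after identifying $C'_q \simeq C'_{q+h(m)}$ via $u^m$. This gives the inclusion of ideals in both directions. The final step is to check that everything is compatible with gradings and $T_Q$-equivariance, which I expect to be routine.
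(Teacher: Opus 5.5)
Your overall strategy matches the paper's. You use the units $u^m$, for $m$ in the image $M'$ of your section $M\to P^{\rm gp}$, to split off $T_M$, and then descend along $T_Q\to T_M$, whose kernel is $T_{Q/P}$. Your treatment of $B'$ is fine. The step that fails is your verification of the presentation of $C''$.

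Relations indexed by $p\in P^{\rm gp}\smallsetminus P$ cannot be formed ``after inverting nothing'', because $\varepsilon_p$ exists only for $p\in P$. For $p\in P$, let $g\colon P^{\rm gp}\to K$ be the projection. The components $g(p)\in K$ and $p-g(p)\in M'$ need not lie in $P$. So the relation for $p$ is not a product of a $K$-relation and an $M$-relation, and it does not suffice to impose relations for $p\in K\cap P$ and $p\in M'\cap P$.

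Concretely, take $P=\mathbf N^2\to Q=\mathbf N$, $(a,b)\mapsto a+b$, with section $1\mapsto(1,0)$. Put $s_1=\varepsilon_{(1,0)}(v^{-(1,0)})$ and $s_2=\varepsilon_{(0,1)}(v^{-(0,1)})$. Then $K\cap P=0$ and $M'\cap P=\mathbf N(1,0)$. So (v) and (vi), imposed where they make sense, give only $x=s_1$. They miss the relation $x=u^{(-1,1)}s_2$ coming from $p=(0,1)$, so the equality of ideals you intend to prove is false.

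The correct reading of (v) and (vi) is the single family $x^{h(p)}=u^{g(p)}\varepsilon_p(v^{-p})$ for $p\in P$. With that reading no comparison of ideals is needed:
\begin{itemize}
\item $B'=B''\otimes_{\mathcal O_S}\mathcal O_S[M']$ as rings;
\item $C''=C'/(u^m-1)_{m\in M'}$;
\item right exactness sends each generator $\varepsilon_p(v^{-p})-u^{-p}x^{h(p)}$ of the ideal of $C'$ to $\varepsilon_p(v^{-p})-u^{-g(p)}x^{h(p)}$.
\end{itemize}

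A second, smaller problem concerns the equivariant product. You let $T_Q$ act on $\Spec B''$ and $\Spec C''$ through $T_{Q/P}$ ``after choosing a splitting''. You also claim $D\simeq D_{\rm inv}\otimes\mathcal O_S[M]$ as graded rings. Both require a retraction $Q^{\rm gp}\to M$, which exists only when $Q^{\rm gp}/M$ is torsion-free. The statement, however, allows $T_{Q/P}$ to be any diagonalizable group. For $h\colon\mathbf N\to\mathbf N$, $1\mapsto 2$, one has $Q^{\rm gp}/M=\mathbf Z/2$, and $T_Q\to T_M$ is the squaring map of $\mathbf G_{\rm m}$, which has no splitting.

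The remedy is to avoid the product altogether. The $u^m$ define a $T_Q$-equivariant morphism $\Spec C'\to T_M=T_Q/T_{Q/P}$ whose fiber over $1$ is $\Spec C''$ by construction. Hence $\Spec C'\simeq T_Q\times^{T_{Q/P}}\Spec C''$, and so $[\Spec(C')/T_Q]\simeq[\Spec(C'')/T_{Q/P}]$. The same argument handles $B'$. There the product is harmless anyway, since $B''=\mathcal O_S[K\oplus Q]$ carries a natural $Q^{\rm gp}$-grading and $T_Q$ acts diagonally on $T_M\times\Spec B''$.
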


	\begin{proof}
		We may identify $\Spec B' = T_M \times \Spec B''$ and $\Spec C' = T_M \times \Spec C''$ where $T_M$ is the algebraic torus with character group $M$.  Then $T_Q$ acts on the $T_M$ factor through the projection $T_Q \to T_M$.  The kernel of this projection is $T_{Q/P}$, so we may identify $[ \Spec(B') / T_Q ] = [ \Spec(B'') / T_{Q/P} ]$ and $[ \Spec(C') / T_Q ] = [ \Spec(C'') / T_{Q/P} ]$.
	\end{proof}

	We give a reformulation of \Cref{cor:15}, with a more geometric presentation of the proof, independent of (but essentially equivalent to) the argument in \Cref{prop:local_description_of_relative_Artin_cones}.
	
	\begin{proposition}\label{cor:1}
		Let $\mm : P \to Q$ be a homomorphism of fine, saturated, sharp monoids and let $X_P$ be the toric variety associated with $P$.  Let $K$ be the kernel of $\mm^{\rm gp}$ and let $L$ be the cokernel.  Suppose the morphism $S\to\Acal_P$ admits a factorization through $S\to X_P$.  Then
		\begin{equation*}
			S \mathop\times_{\Acal_P} \Acal_Q \simeq S \mathop\times_{X_P} \bigl( T_K \times [  X_Q / T_L ] \bigr)
		\end{equation*}
		where $T_K$ and $T_L$ are the diagonalizable groups with character groups $K$ and $L$, respectively.
	\end{proposition}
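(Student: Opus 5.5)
The plan is to give a geometric proof that works entirely at the level of stacks and functors of points, rather than through the graded algebras of \Cref{cor:15}. The decomposition comes from the factorization of $\mm^{\rm gp}$ through its image $M$, together with the chosen splitting of $P^{\rm gp}\to M$ fixed just before \Cref{cor:15}.

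First, observe that $X_P \to \Acal_P = [X_P/T_P]$ is a $T_P$-torsor. Base change along $S\to X_P \to \Acal_P$ therefore gives
\begin{equation*}
S\mathop\times_{\Acal_P}\Acal_Q \simeq S\mathop\times_{X_P}\bigl(X_P\mathop\times_{\Acal_P}\Acal_Q\bigr),
\end{equation*}
so it suffices to treat the universal case $S = X_P$. In this case we must show
\begin{equation*}
X_P\mathop\times_{\Acal_P}\Acal_Q \simeq T_K\times[X_Q/T_L]
\end{equation*}
over $X_P$. We have $\Acal_Q=[X_Q/T_Q]$, and the map $\Acal_Q\to\Acal_P$ is induced by the equivariant map $X_Q\to X_P$ and the homomorphism $T_Q\to T_P$ dual to $\mm^{\rm gp}$. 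A standard computation for quotient stacks then gives
\begin{equation*}
X_P\mathop\times_{[X_P/T_P]}[X_Q/T_Q] \simeq \bigl[(X_Q\times T_P)/T_Q\bigr],
\end{equation*}
where $T_Q$ acts diagonally: on $X_Q$ by the torus action, and on $T_P$ through $T_Q\to T_P$. The map to $X_P$ is $(x,t)\mapsto t^{-1}\mm^\vee(x)$, or some similar sign convention.

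Next, analyze the homomorphism $T_Q\to T_P$. Its kernel is $T_L$ and its image is $T_M$, and the splitting identifies $T_P\simeq T_M\times T_K$. The group $T_Q$ acts on the $T_M$ factor transitively, with stabilizer $T_L$, and acts trivially on $T_K$. Taking the slice $\{1\}\times T_K$, which is legitimate because $T_Q\to T_M$ is surjective, fppf-locally split, with kernel $T_L$, we obtain
\begin{equation*}
\bigl[(X_Q\times T_M\times T_K)/T_Q\bigr]\simeq [X_Q/T_L]\times T_K.
\end{equation*}
Finally, check that the structure map to $X_P$ is the expected one: it is the composite of $X_Q\to X_P$ with multiplication by $T_K\subset T_P$, translated via the splitting, and one verifies that it descends along $[X_Q/T_L]$.

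The main obstacle is bookkeeping rather than ideas. One point is that $T_L$ and $T_K$ may be non-reduced or disconnected diagonalizable groups, since $L$ can have torsion; this is why the slice argument should be phrased with fppf torsors. The other is making sure that the resulting morphism to $X_P$, and hence the fiber product with $S$, is compatible with the presentation of \Cref{cor:15}. As a cross-check, passing to coordinate rings should reproduce the description of $C''$ with grading by $Q^{\rm gp}/P^{\rm gp}$, i.e.\ by $L$.
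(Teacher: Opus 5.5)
Your proposal is correct and follows essentially the same route as the paper: identify $X_P\times_{\Acal_P}\Acal_Q$ with $[(X_Q\times T_P)/T_Q]$, split $T_P\simeq T_K\times T_M$ so that $T_Q$ acts trivially on $T_K$ and acts on $T_M$ through the surjection $T_Q\to T_M$ with kernel $T_L$, and then base change to $S$. Your added care goes beyond what the paper writes out, in two places: you phrase the slice argument with fppf torsors (needed since $L$ may have torsion), and you check that the structure map to $X_P$ descends to $[X_Q/T_L]$.
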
 
	\begin{proof}
		Let $T_P$ and $T_Q$ denote the dense tori of the toric varieties $X_P$ and $X_Q$, respectively.  There is an exact sequence of diagonalizable groups~:
					\begin{equation} \label{eqn:2}
						0\to T_L \to T_Q\to T_P\to T_K \to 0
					\end{equation}
					Let $M$ be the image of $P^{\rm gp} \to Q^{\rm gp}$, and choose a splitting of the surjection $P^{\rm gp} \to M$.  This induces a splitting
					\begin{equation*}
						T_P \simeq T_K \times T_M
					\end{equation*}
					compatible with the exact sequence~\eqref{eqn:2}.

					Since $X_Q \to \Acal_P$ factors through $X_P$, we have an identification 
$X_Q \mathop\times_{\Acal_P} X_P \simeq X_Q \times T_P \simeq X_Q \times T_K \times T_M$.  The $T_Q$-action on $X_Q \times T_K \times T_M$ operates trivially on $T_K$, so $[ X_Q \times T_P / T_Q ] = T_K \times [ X_Q \times T_M / T_Q ] = T_K \times [ X_Q / T_L ]$. We therefore have 
		\begin{equation*}
			S \mathop\times_{\Acal_P} \Acal_Q = S \mathop\times_{\Acal_P} [ X_Q / T_Q ] = S \mathop\times_{X_P} [ X_Q \times T_P / T_Q ] = S \mathop\times_{X_P} \bigl( [ X_Q / T_L ] \times T_K \bigr) ,
		\end{equation*}
		as claimed.
	\end{proof}

	Note that the isomorphisms of \Cref{cor:15} and of \Cref{cor:1} depend on the choice of splitting of $K \to P^{\rm gp}$.

	The next two examples show some geometric constructions that can be effected using Olsson cones.

\begin{example}
    Let $f\colon S\to\Acal_P$ be a morphism of algebraic stacks, and let $\mm\colon P\to0$ be the zero morphism. Then $S\mathop\times_{\Acal_P}\Acal_0$ picks out the open substack where the logarithmic structure (pulled back along $f$) is trivial. So, for instance, the open complement of a simple normal crossing divisor is an Olsson cone.  The corresponding homomorphism of monoids $\bar M_S \to R$ is not exact.
\end{example}

\begin{example}
Charts of weighted blow-ups along regular embeddings are instances of Olsson cones. Weighted blow-ups in turn generalise both ordinary blow-ups and root stacks. Let $S$ be the spectrum of a regular, Noetherian, local ring, and let $f_0,\ldots, f_n$ be a regular sequence. The morphism $\NN^{n+1}\to\NN^{n+1}$ mapping $e_0\mapsto d_0e_0'$ and $e_i\mapsto e_i'+d_ie_0'$ for $i=1,\ldots,n$ makes $S\times_{\Acal^{n+1}}(\Acal^{n+1})'$ into a standard chart for the weighted blow-up of $S$ in the weighted regular centre $(f_0,d_0)+\ldots+(f_n,d_n)$ (see \cite[\S 5.4]{QuekRydh}). 
\end{example}

		\begin{example}\label{exa:relative_edge}
			The simplest instance, and main building block, of tropical curves is an edge $e$ of finite length $\delta$.
			Let $\Acal = \Acal_{\mathbf N}$ and let $\mu$ be the multiplication map $\Acal^2\to \Acal$. Let $S=\Spec(R)\to\Acal$ be the rank one logarithmic structure associated with an element $s$ of $R$.  According to \Cref{cor:14}, $S_e$ is the quotient stack $\tilde S / \Gm^2$, where $\tilde S$ is the spectrum of \[C= R[x_1,x_2,u^{\pm}]/(x_1x_2-u^{-1}s),\]and $\Gm^2$ acts on $x_1$, $x_2$, and $u$ with weights
			 \[\begin{pmatrix}
				1 & 0 & -1 \\
				0 & 1 & -1
			   \end{pmatrix}\]
			Corollary~\ref{cor:1} says that we can slice at $u=1$ to obtain an equivalent quotient.  We take $\tilde S' = \Spec(C')$ with 
			\begin{equation*}
				C' = R[x_1, x_2 ] / (x_1 x_2 - s)
			\end{equation*}
			and then $S_e = \tilde S' / \Gm$, with $\Gm$ acting via the antidiagonal of the previous action (that is, with weights $(1,-1)$ on $x_1$ and $x_2$). This is the stack $ST_R$ appearing in the definition of $S$-completeness \cite{AHLH2023}.

			The universal situation for the preceding discussion is $R = \mathbf Z[s]$.  In that case, we can eliminate the  variable $s$ as well.  The quotient of $\tilde S'$ by $\Gm$ may then be imagined as $\Aaff^1$ with the origin replaced by a stack with three points, corresponding to the three $\Gm$-orbits contained in the coordinate axes of $\Aaff^2$.  The general case is derived by base change from this one.

			In general, the $\Gm^2$-action on $\Spec C$ corresponds to the morphism
			\begin{align*}
			 C &\to C\otimes_R R[\lambda_1^{\pm},\lambda_2^{\pm}] \\
			 (x_1,x_2,u) &\mapsto (x_1\otimes\lambda_1, x_2\otimes\lambda_2, u\otimes\lambda_1^{-1}\lambda_2^{-1}) .
			\end{align*}
			At the level of underlying algebraic stacks (that is, not respecting logarithmic structures), a section of $S_e\to S$ corresponds to a $\Gm^2$-torsor $P\to S$ together with a $\Gm^2$-equivariant morphism $f\colon P\to\tilde{S}$. Dually, assuming the torsor to be trivial, this corresponds to a commutative diagram:
			\bcd \label{eqn:14}
			C\ar[r,"f^{\sharp}"]\ar[d] & R[\mu_1^{\pm},\mu_2^{\pm}]\ar[d]\\
			C\otimes_R  R[\lambda_1^{\pm},\lambda_2^{\pm}] \ar[r,"f^{\sharp}\otimes\on{id}"] & R[\mu_1^{\pm},\mu_2^{\pm}]\otimes_R  R[\lambda_1^{\pm},\lambda_2^{\pm}]
			\ecd
			Unsurprisingly, equivariance forces
			\[x_1\mapsto a_1\mu_1,\ x_2\mapsto a_2\mu_2,\ u\mapsto b\mu_1^{-1}\mu_2^{-1}\]
			with $a_1, a_2 \in R$ and $b\in R^\ast$ and $a_1a_2=b^{-1}s$.  Two diagrams~\eqref{eqn:14} are equivalent if they are related by the $\Gm^2$ action on $P$, which scales $a_1$ and $a_2$ by units.  In the universal situation, there are two sections up to equivalence, $(a_1=1,a_2=s,b=1)$ and $(a_1=s,a_2=1,b=1)$.  When $s=0$, the underlying stack of $S_e$ has one more section (up to nonunique isomorphism) given by $(a_1=a_2=0,b=1)$.

			Let $\delta$ denote the tropical parameter on $\Acal_{\mathbf N}$, and $\gamma$ the coordinate on $e$. Then $\mu^*(\delta)=(1,1)$ and $\gamma=(0,1)$ in $\NN^2=M_{\Acal_{\mathbf N}^2}$. The line bundle $\OO_{S_e}(\gamma)$ corresponds to the shift $x_2^{-1}C$.  The sections of $\OO_{S_e}(\gamma)$ correspond to the invariants of the $\Gm^2$-action, or equivalently, to the degree~$0$ part of $x_2^{-1} C$, and may be identified with $R = x_2^{-1} ( x_2 R )$.  The ray $\gamma=0$ corresponds to the section $x_1=u^{-1}s,x_2=1$ as above; the restriction of $\OO_{S_e}(\gamma)$ along this section is isomorphic to $\OO_S$ as expected. The ray $\gamma=\delta$ corresponds instead to the section $x_1=1,x_2=y^{-1}s$; here we find that the restriction of $\OO_{S_e}(\gamma)$ is isomorphic to $\OO_S(\delta)$.
            \end{example}

\begin{lemma}\label{lem:univSurj}
    An Olsson precone $\Theta_R\to S$ is universally surjective if and only if $\bar M_S \to R$ is exact.
\end{lemma}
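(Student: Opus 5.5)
We reduce the statement to Proposition~\ref{prop:6} applied stalkwise. Surjectivity of $\Theta_R \to S$ can be tested on geometric points, and universal surjectivity on geometric points of arbitrary fine, saturated logarithmic schemes $T \to S$. A point of $\Theta_R$ over a geometric point $t \to T$ (with $\Spec$ of an algebraically closed field) is a lift of $\bar M_{S,\bar s} \to \bar M_{t}$ to a homomorphism $R_{\bar s} \to \bar M_t$ under $\bar M_{S,\bar s}$. Here we use the moduli description $\Theta_R(t) = \Hom_{\bar M_S}(R, \bar M_t)$ and the fact that the underlying scheme of $t$ is a field, so every characteristic homomorphism can be lifted to a logarithmic structure. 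Universal surjectivity therefore amounts to the following: for every geometric point $\bar s$ and every homomorphism $f \colon \bar M_{S,\bar s} \to V$ to a fine, saturated, sharp monoid $V$ arising as a characteristic monoid, some local extension $V \to W$ exists with $R_{\bar s} \to W$ compatible. This is essentially condition~\ref{it:7}/\ref{it:8} of Proposition~\ref{prop:6}.

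For exact $\Rightarrow$ universally surjective, work étale locally with a cocartesian chart $P \to Q$ as in Lemma~\ref{lem:reprOlsson1}. Exactness of $\bar M_S \to R$ is inherited by pushouts (\cite[Proposition~I.3.2]{Tsuji}), so $V \to V \oplus_{\bar M_{S,\bar s}} R_{\bar s}$ (integral, saturated pushout) is exact. We then need a homomorphism from this pushout to some fs sharp $W$ that is local on $V$. Since $V$ is fine and $R_{\bar s}$ is finitely generated over it, the pushout is fine. Exactness gives locality (Remark~\ref{rmk:exact-local}), so after localizing at the preimage of $0$ we may take $W$ to be the sharpening of the localized pushout; this $W$ is fine, saturated, sharp, and $V \to W$ is local. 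Finally we realize $W$ as a characteristic monoid over a geometric point mapping to $t$: take $t' = t \times_{\Acal_V} \Acal_W$, which is nonempty because $V \to W$ is local, so the closed point of $\Acal_W$ maps to that of $\Acal_V$.

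For the converse, assume universal surjectivity and test against the rank one valuations of the stalk $P = \bar M_{S,\bar s}$. Given $v \colon P \to \NN$, realize it by a logarithmic point $t = \Spec(k, \NN) \to S$ (again using that charts exist over a field). A lift gives $R_{\bar s} \to \bar M_{t'} = W$ with $\NN \to W$ local, that is, a valuation of $R_{\bar s}$ extending $v$ after composing with any local valuation of $W$. Since the stalks are fine, Proposition~\ref{prop:6} (condition~\ref{it:7} $\Rightarrow$ \ref{it:5}) yields stalkwise exactness. Exactness of the sheaf map is then checked on stalks, or equivalently on local charts, since the stalk of the chart pushout is the pushout of stalks.

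The main obstacle is careful bookkeeping in the first direction. One must produce a genuine geometric point of $\Theta_R$ over $t$, so the required monoid $W$ has to be a valid characteristic monoid, and one must check that "local extension" matches the morphism structure of logarithmic points. The converse additionally needs the subtlety that a lift may land in a point $t'$ over $t$ with larger characteristic monoid; this is handled by the flexibility in Proposition~\ref{prop:6}, which allows extension to any valuation.
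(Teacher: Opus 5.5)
Your route differs from the paper's. The paper localizes to a strictly henselian $S$, writes $\Theta_R=S\times_{\Acal_P}\Acal_Q$ with $P=\bar M_{S,s}$, and then quotes Nakayama's criterion for punctual fs log schemes (Proposition~\ref{Nakayama-exact}). You instead unwind universal surjectivity into a monoid extension condition at each geometric point, and combine Proposition~\ref{prop:6} with stability of exactness under pushout. This is a reasonable and more self-contained strategy.

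Your forward direction works. The map $V\to(V\oplus_P R_{\bar s})^{\rm sat}$ is exact, hence local, and its sharpening $W$ yields the required log point. You can simply put the log structure $W\oplus k^\ast$ on $t$; the extra localization step is unnecessary. Your opening description of points over $t$ as lifts $R_{\bar s}\to\bar M_t$ is wrong as stated. The correct condition is the one with an enlargement $V\to W$, which you do use afterwards.

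The gap is in the converse. You write ``given $v\colon P\to\NN$, realize it by a logarithmic point $t\to S$.'' This is possible only when $v$ is local. For any log morphism $T\to S$ and geometric point $t\mapsto\bar s$, the map $\bar M_{S,\bar s}\to\bar M_{T,t}$ is local, because non-units of $M_{S,\bar s}$ have $\varepsilon$ in the maximal ideal. For example, if $S$ is the log point with $\bar M_S=\NN^2$, the valuation $(a,b)\mapsto a$ is induced by no log point over $S$.

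So universal surjectivity only tells you that \emph{local} rank~$1$ valuations extend. Condition~\ref{it:7} of Proposition~\ref{prop:6}, however, quantifies over all rank~$1$ valuations. Its proof of \ref{it:7}$\Rightarrow$\ref{it:5} uses the whole dual cone $\sigma_P$, including its rays, and these are not local once $\operatorname{rk}P^{\rm gp}\geq 2$. So the proposition cannot be applied as you state it.

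The repair is one extra step. Since $P$ is fine, saturated and sharp, $\sigma_P$ is full-dimensional, and its interior is exactly the set of local homomorphisms $P\to\mathbf R_{\geq 0}$. If $x\in P^{\rm gp}\setminus P$, some $u\in\sigma_P$ has $u(x)<0$. This is an open condition, so you may move $u$ to a rational interior point and clear denominators. That gives a local $v\colon P\to\NN$ with $v(x)<0$.

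Hence $P=\{x\in P^{\rm gp} : v(x)\geq 0 \text{ for all local } v\colon P\to\NN\}$. The argument for \ref{it:7}$\Rightarrow$\ref{it:5} then goes through using only local valuations. Those are exactly the ones you can realize by log points, and you correctly compose with a local valuation of $W$ to get an extension in the paper's sense. With this addition your argument is a complete alternative to the paper's appeal to Nakayama.
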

\begin{proof}
    Locally on $S$, we may assume that $S$ is strictly henselian, and write $\Theta_R=S\times_{\Acal_P}\Acal_Q$ for $P=\overline{M}_{S,s}$. In particular, the map $S\to \Acal_P$ hits the closed point of the latter. Then the lemma follows from {\cite[A.1, Proposition]{Nakayama1}}, which we repeat below for the reader's convenience.
\end{proof}
\begin{proposition}[{\cite{Nakayama1}}]\label{Nakayama-exact}
	If $f : x \to y$ is a surjective morphism of fine, saturated logarithmic schemes whose underlying schemes are punctual then $f$ is universally surjective if and only if $f^\ast \bar M_y \to \bar M_x$ is exact.
\end{proposition}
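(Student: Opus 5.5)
The plan is to prove the two implications separately, after reducing to the case where the base and the target are the spectra of fields. Write $\bar M_x \leftarrow f^\ast \bar M_y$ as a homomorphism $h\colon P \to Q$ of fine, saturated, sharp monoids; since the underlying schemes are punctual, these characteristic monoids are constant. Surjectivity is stable under base change along field extensions and is insensitive to nilpotents. So we may replace $y$ by $\Spec k$ with $k$ algebraically closed, carrying a chart $P \to k$ that sends $P^+$ to $0$, and work with the fibre over it. Locally, $x$ is then given by a chart $Q \to \mathcal O_x$ compatible with $h$, and the fibre product $x\times_y T$ is computed by the integral-and-saturated pushout of charts.

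\emph{Exact $\Rightarrow$ universally surjective.} Fix a morphism of fine, saturated logarithmic schemes $T \to y$ and a point $t\in T$; we may take $T$ to be a logarithmic point $\Spec(\kappa,\bar M_{T,t}\oplus\kappa^\ast)$ with $\kappa$ algebraically closed. A point of $x\times_y T$ over $t$ amounts to a logarithmic point $\Spec(\kappa', N)$ together with a compatible pair of maps from $Q$ and from $R=\bar M_{T,t}$ into $N$, with both maps local. The candidate for $N$ is the sharpening of a suitable localization of the saturated pushout $R\oplus_P Q$. The question is whether the maximal ideal survives, i.e.\ whether there is a face of $(R\oplus_P Q)^{\rm sat}$ that contains no nonzero element of the images of $R^+$ or $Q^+$. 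The key tool is \Cref{prop:6} (criterion \ref{it:6}, or \ref{it:8} in the finitely generated case). Take a local rank~$1$ valuation $v$ of $R$. Its restriction to $P$ is a valuation of $P$, and it is local, because $P\to R$ is local as a morphism of characteristic monoids of a morphism of logarithmic points. By exactness of $h$, this restriction extends to a valuation $w$ of $Q$, and $w$ can be chosen local. The pair $(v,w)$ defines a valuation of the pushout that is positive on $R^+$ and on $Q^+$. Its kernel face gives the required localization $N$, and hence a logarithmic point of $x\times_y T$ over $t$. The underlying scheme of this point is nonempty: the fibre of the scheme of charts is a torus torsor over a point, and it has points after a field extension.

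\emph{Universally surjective $\Rightarrow$ exact.} We argue by contrapositive using \Cref{prop:6} \ref{it:8}. If $h$ is not exact, there is a rank~$1$ valuation $v\colon P\to\NN$ that does not extend to any valuation of $Q$. Take $T$ to be the standard logarithmic point $\Spec(k,\NN\oplus k^\ast)$, mapping to $y$ via $v$; after replacing $v$ by its localization, the morphism is local. A point of $x\times_y T$ would yield a logarithmic point with monoid $N$, together with local maps $Q\to N$ and $\NN\to N$ compatible over $P$. Composing with a local valuation of $N$ would then extend $v$, up to a positive scalar, to a valuation of $Q$, which contradicts the choice of $v$. So the fibre over the closed point of $T$ is empty, and $f$ is not universally surjective.

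The main obstacle is the forward direction: we need the saturated pushout, after localization, to produce an honest logarithmic point of the fibre product. This is the step where a fibre could collapse, namely when the saturated pushout becomes a group on the relevant stratum. I would handle it by working directly with the toric description, i.e.\ the fibre $\Spec k\times_{X_P} X_Q$ modulo tori in the style of \Cref{cor:1}. Under this description, a torus orbit lies over the closed point exactly when the corresponding face of $\sigma_Q$ maps into the relative interior of a face surjecting onto the given ray. Exactness guarantees that such a face exists, by \Cref{cor:11} applied to the localizations.
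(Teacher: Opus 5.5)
The paper gives no argument for this statement; it only quotes Nakayama's Proposition~A.1. Your valuation-theoretic proof via \Cref{prop:6} is therefore an independent, self-contained route, and its overall structure is right. However, one step in the converse fails as written.

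A morphism from the standard logarithmic point $T$ to the logarithmic point $y$ necessarily induces a \emph{local} map $P\to\NN$ on characteristic monoids: elements of $P^+$ go to $0$ in $k$, so they cannot become units of $M_T$. The negation of \Cref{prop:6}~\ref{it:7} only gives you some rank~$1$ valuation $v$ that fails to extend, with no control on locality. ``Replacing $v$ by its localization'' does not repair this. With $F=v^{-1}(0)$, localizing produces a local map out of $P[-F]^\sharp$, not out of $P$, so it still does not define a morphism $T\to y$.

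The fix is to choose $v$ local from the start.
\begin{itemize}
\item Non-exactness gives $z\in P^{\rm gp}\setminus P$ with $h(z)\in Q$.
\item Since $P$ is saturated, there is $v_0\colon P\to\NN$ with $v_0(z)<0$.
\item For any local $u\colon P\to\NN$, the valuation $v=nv_0+u$ with $n\gg 0$ is local and satisfies $v(z)<0$.
\item Such a $v$ cannot extend: $w\circ h=\iota\circ v$ with $\iota$ local would force $0\le w(h(z))=\iota^{\rm gp}(v(z))<0$.
\end{itemize}
With this $v$, the rest of your contrapositive goes through unchanged.

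There are smaller points in the forward direction. Write $g\colon P\to R$ for the map of characteristic monoids of $T\to y$.
\begin{itemize}
\item Your claim that $w$ can be chosen local needs a check: the pushout map $Q\to Q\oplus_P\NN$ along $v\circ g$ must be local. This holds because $h$ is exact and $v\circ g$ is local.
\item Nonemptiness of the ``torus torsor'' of compatible unit data uses that $h^{\rm gp}$ is injective (\Cref{rmk:exact-local}) and that $(\kappa')^\ast$ is divisible.
\item Your last paragraph, on the toric description, is unnecessary. Positivity of the valuation on the images of $Q^+$ and $R^+$ already rules out collapse of the saturated pushout. A unit of $(Q\oplus_P R)^{\rm sat}$ lying in the integral pushout is already a unit there.
\end{itemize}

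You can also skip valuations in this direction altogether. Suppose the image of some $q\in Q^+$ (resp.\ $r\in R^+$) were a unit in $Q\oplus_P R$. Then there would be $a\in P^{\rm gp}$ with $h(a)\in Q$ and $-g(a)\in R$. Exactness gives $a\in P$. Sharpness of $R$ then gives $g(a)=0$, and locality of $g$ gives $a=0$. So $q$ (resp.\ $r$) would itself be a unit, which is a contradiction.
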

From now on, we will use the two terms `exact' and `universally surjective' interchangeably for Olsson precones.

\begin{definition}
   We call $\Theta_R$ an \emph{Olsson cone} if $\bar{M}_S\to R$ is saturated.
\end{definition}

\begin{remark}			
The reason for calling  $\Theta_R$ an Olsson cone only when saturated is motivated by our structural result about Olsson fans, Theorem~\ref{thm:localstructureOlssonfans}.
\end{remark}

\subsection{Olsson fans}

		\begin{definition}\label{def:Olssonfan}
		    Let $S$ be a fine and saturated logarithmic scheme.  An \emph{Olsson fan}  over $S$ is an algebraic stack  $\Sigma\to S$ with a fine, saturated logarithmic structure that is saturated and logarithmically \'etale over $S$.
		\end{definition}

			The purpose of this section is to show that Olsson fans are glued from Olsson cones along strict \'etale relations.

		\begin{proposition} \label{prop:18}
			Let $S$ be the spectrum of a strictly henselian local ring.  Let $\mathcal A_Q \to \mathcal A_P$ be a morphism of Artin cones, dual to a local homomorphism of fine, saturated monoids $P \to Q$, and let $\sigma \to S$ be its base change to $S$ along a map $S \to \Acal_P$ that meets the closed stratum.  Then $\sigma$ has a unique closed stratum, $Z$. Assume that $E$ is an algebraic stack that is \'etale and quasiseparated over $\sigma$.  Then every section of $E$ over $Z$ extends uniquely to a section of $E$ over $\sigma$. 
		\end{proposition}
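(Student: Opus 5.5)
We use the explicit presentation of \Cref{prop:local_description_of_relative_Artin_cones}: $\sigma = [\Spec C / T_Q]$, where $C$ is a $Q^{\rm gp}$-graded $\mathcal O_S$-algebra. Since $S$ is strictly henselian, $\mathcal O_S(\alpha)$ is trivial, so we may use the simplified presentation of \Cref{cor:14} and \Cref{cor:1}:
\[
\sigma \simeq S\times_{X_P}\bigl(T_K\times[X_Q/T_L]\bigr).
\]
Throughout, let $s$ denote the closed point of $S$, which maps to the closed stratum of $\mathcal A_P$.

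\textbf{Identifying the closed stratum.} Since $P\to Q$ is local, the torus-fixed point $0\in X_Q$ (the ideal $Q^+$) lies over the closed point of $X_P$. So $Z$ is the locus $x^q=0$ for all $q\in Q^+$, lying over $s$. On it the relations force $\varepsilon_p=0$ for $p\in P^+$, consistent with the choice of $S\to\mathcal A_P$. Every $T_Q$-orbit closure in $\Spec C$ meets the locus $\{x^{Q^+}=0\}$, because the torus action contracts each point to it: pick a cocharacter in the interior of $\sigma_Q$ and take the limit as $t\to0$. Combined with the fact that closed points of $S$ specialize to $s$, this shows $Z$ is the unique closed stratum, and that every point of $\sigma$ specializes to $Z$.

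\textbf{Reduction to an equivariant statement.} A section of $E$ over $\sigma$ is the same as a $T_Q$-equivariant section of $E\times_\sigma \Spec C$ over $\Spec C$, and the same holds over $Z$. The key structural input is that $\Spec C$ is ``$T_Q$-equivariantly henselian along $Z$''. Concretely:
\begin{itemize}
\item The grading is positive: the degree-$0$ piece $C_0$ is a quotient of $\mathcal O_S[K$-part$]$, i.e.\ $\mathcal O_S$ tensored with a torus algebra.
\item The ideal $I$ of $Z$ is generated by $x^{Q^+}$ together with $\mathfrak m_S$.
\end{itemize}
I would use a $\mathbf G_m$ cocharacter $\lambda$ in the interior of $\sigma_Q$ to get an action contracting $\sigma$ to the fixed locus; the henselian property of $S$ handles the $\mathfrak m_S$-direction.

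\textbf{Uniqueness.} Two extensions agree on $Z$. Since $E\to\sigma$ is étale, their equalizer is an open substack of $\sigma$ (quasiseparatedness makes the diagonal representable by quasicompact étale monomorphisms, hence open immersions). This open substack contains $Z$, and every point specializes into $Z$, so it is all of $\sigma$.

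\textbf{Existence.} This is the main obstacle. My plan is to reduce to the fact that for a $\mathbf G_m$-equivariant situation $[\Spec A/\mathbf G_m]$ with $A$ nonnegatively graded and $A_0$ henselian along the closed point, sections of an étale stack over the fixed locus extend. This is the ``$\Theta$-type'' henselian property of Alper--Hall--Rydh.

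Two routes to this fact:
\begin{itemize}
\item \emph{Direct étale approach.} Work étale-locally: $E\times_\sigma\Spec C\to\Spec C$ is an étale $T_Q$-equivariant algebraic space, and a section over $Z$ gives a point. One could try to take the connected component of an equivariant étale neighbourhood through that point and show it maps isomorphically. I expect this to be the delicate part.
\item \emph{Deformation approach, which I would try first.} Use topological invariance of the étale site together with the contraction. First extend the section over the formal neighbourhood of $Z$, using étaleness to lift uniquely through infinitesimal thickenings. Then algebraize, using that $C$ is graded with $C_0$ henselian: graded completion along $Q^+$ agrees degreewise with $C$ in each fixed degree after finitely many steps.
\end{itemize}
Quasiseparatedness of $E$ is what lets us pass from the formal to the actual neighbourhood via the equalizer argument.
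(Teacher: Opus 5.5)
\textbf{Existence.} The existence half, which is the substance of the proposition, is not proved. You list two possible routes and flag each as the hard point, and the route you would try first rests on a claim that fails for relative cones.

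The relations $u^{-p}x^{h(p)}=\varepsilon_p(v^{-p})$ of \Cref{cor:14} tie the $Q^+$-adic direction to the base. In \Cref{exa:relative_edge}, for instance, $C_0=R$ and $s^n=(ux_1x_2)^n$ lies in the degree-$0$ part of $(Q^+)^{2n}C$. So the $Q^+$-adic filtration does not stabilize degreewise, and the degree-$0$ part of the $Q^+$-adic completion is the $s$-adic completion of $R$. You are left with a genuine algebraization problem over the base.

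Nor can you reduce to a single nonnegatively graded $\Gm$. Once $T_K$ is split off, the group acting is $T_L$. Its cocharacters vanish on $h(P^{\rm gp})\supset h(P^+)$, so none is positive on all of $Q^+$ when $P\neq0$. A cocharacter of $T_Q$ that is positive on $Q^+$ rescales the units $u^p$, so it has no limits at all in $\Spec C'$.

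The missing input is an approximation theorem. The paper separates the two directions:
\begin{enumerate}
\item on the closed fibre $\sigma_0$ it extends from $Z$ by the retraction $\Acal_Q\to\mathrm BT_Q\to\Acal_Q$;
\item it lifts uniquely through the thickenings $\sigma_n$ by \'etaleness;
\item it obtains a section over the completion $\hat V$ by \cite[Corollary~1.5]{HallRydh2019};
\item it algebraizes with Artin approximation \cite{Artin1969} applied to $\pi_\ast E$, after reducing to $S$ a henselization of a scheme of finite type;
\item it descends along $V\to\sigma=[V/T_L]$, using that $V\times_\sigma V$ and $V\times_\sigma V\times_\sigma V$ are affine.
\end{enumerate}
If you adopt this route, note that Artin's theorem must be fed a section over $\sigma\times_S\hat S$ (or $V\times_S\hat S$). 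This is not the same as a section over the $\mathfrak m_S$-adic completion of $V$, since $V$ is not finite over $S$. One way to bridge this is coherent completeness of the good moduli space $\sigma\times_S\hat S\to\hat S$ along its closed fibre (Alper--Hall--Rydh), and that needs exactness.

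\textbf{Specialization and uniqueness.} Your specialization claim is wrongly justified, and this is exactly where the hypotheses must enter. The torus acts fibrewise over $S$, so a cocharacter limit stays over the same point of $S$. But $x^{h(p)}$ is invertible wherever $\varepsilon_p\neq0$, so such limits cannot reach $Z$ from the open log strata. Also, the fact that points of $S$ specialize to $s$ does not lift to $\sigma$ unless $\pi$ is closed.

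What works is the following, assuming $P\to Q$ is exact.
\begin{enumerate}
\item $\pi$ is a good moduli space, hence universally closed (\Cref{gms_univclosed}), so every point specializes into $\sigma_0\simeq[U_0/T_L]$ (here $K=0$).
\item In $\sigma_0$, take an orbit whose nonvanishing coordinates $x^q$ form a face $F$ of $Q$. It is contracted onto $Z$ by a cocharacter of $T_L$ positive on $F\smallsetminus\{0\}$.
\item Such a cocharacter exists because $\mathbf R_{\geq0}F$ meets the real span of $h(P^{\rm gp})$ only in $0$. Indeed, by exactness a nonzero element of $F\cap h(P^{\rm gp})$ would lie in $h(P^+)$, but $x^{h(p)}=0$ on $\sigma_0$ for $p\in P^+$.
\end{enumerate}

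Locality alone does not suffice, and the statement fails without exactness. Take $h\colon\mathbf N^2\to\mathbf N$, $(a,b)\mapsto a+b$, and $S$ the strict henselization of $\Spec k[s_1,s_2]$ at the origin. Then $\sigma=\Spec\OO_S[w^{\pm1}]/(ws_1-s_2)$ and $Z=\sigma_0$. The locus $\{w=s_1\}$ is a closed point of $\sigma$, lying over the generic point of $\{s_2=s_1^2\}$, that does not specialize into $Z$. The inclusion of $Z$ into its open complement does not extend to a section over $\sigma$. (In \Cref{thm:localstructureOlssonfans}, $P\to Q$ is saturated and local, hence exact.)

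Finally, your equalizer argument only covers $E$ representable by algebraic spaces over $\sigma$. Only then is $\Delta_{E/\sigma}$ a monomorphism, and it is an open immersion because $E\to\sigma$ is unramified, not because $E$ is quasiseparated. For stacky $E$ (which can occur for $E=\Sigma\times_{\Log_S}\sigma$ in \Cref{thm:localstructureOlssonfans}), uniqueness means uniqueness up to unique isomorphism. As in the paper, you must apply the existence statement to the representable, \'etale, quasicompact sheaf of isomorphisms between two extensions. Your openness argument then gives uniqueness of that isomorphism.
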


		\begin{proof}
			The unique closed stratum $Z$ is the preimage of the unique closed stratum of $\mathcal A_Q$.  Let $a$ be a section of $E$ over $Z$.  We will extend this to a section over $\sigma$.  We may reduce immediately to the case where $S$ is noetherian by factoring $S \to \mathcal A_P$ through the henselization of a scheme of finite type.

We shall use the notation of \Cref{cor:1}.
            Choose a factorization of $S \to \mathcal A_P$ through $X_P$. We may identify $\sigma$ with $T_K \times [U / T_L]$ where $U = \Spec ( \mathcal O_S \otimes_P Q )$.  Let $\sigma_0$ be the fiber of $\sigma$ over the closed point of $S$.  Then $\sigma_0$ is identified with $T_K \times [U_0/ T_L]$ where $U_0 = \Spec( k[Q] / P^+ k[Q])$, writing $k$ for the residue field of $S$ at its closed point and $P^+$ for the maximal ideal of $P$.  
            The vanishing locus of the maximal ideal $Q^+$ of $Q$ in $U_0$ is a point $z$ of $U$ with residue field $k$.  Since $z$ is fixed by $T_L$, we may identify $Z$ with $T_K \times [z/T_L] = T_K \times \mathrm BT_L$.  The composition 
			\begin{equation*}
				\mathcal A_Q \to \mathrm B T_Q \to \mathcal A_Q
			\end{equation*}
			gives a retraction of $\mathcal A_Q$ onto its closed stratum.  This restricts to a retraction of $\sigma_0$ onto $Z$.
			Composing with this retraction, we extend $a$ to a section $a_0$ of $E$ over $\sigma_0$.

			Let $V = T_K \times U$ so that $\sigma = [V/T_L]$.  Let $I$ be the maximal ideal of $\mathcal O_S$.  For each positive integer $n$, let $S_n$ be the vanishing locus of $I^{n+1}$ in $S$ and let $\sigma_n \subset \sigma$ and $V_n \subset V$ be its preimages.  Since $E$ is \'etale, $a_0$ extends uniquely to a section $a_n$ of $E$ over $\sigma_n$.  Let $\hat V$ be the completion of $V$ along $V_0$.  Then $E \mathop\times_{\sigma} \hat V$ is \'etale and of finite presentation over $\hat V$.  In particular, it is noetherian and has quasifinite diagonal (since we assumed $E$ \'etale and quasi-separated over $\sigma$). Therefore \cite[Corollary~1.5]{HallRydh2019} implies that the system morphisms $\left(a_n \big|_{V_n}\right)$ extends to a morphism $\hat b : \hat V \to E$.  Since $\hat V$ is affine, a second application of \cite[Corollary~1.5]{HallRydh2019} implies that this extension is a section of $E$ over $\hat V$.
			
			Now we apply Artin's approximation theorem.  Let $\pi : V \to S$ be the projection and let $F = \pi_\ast E$.  Since $V$ is quasicompact and quasiseparated, and since $E$ is locally of finite presentation over $\sigma$, it follows that $F$ is also locally of finite presentation~\cite[Proposition~(8.13.1)]{ega4-3} over $S$.  By Artin's approximation theorem~\cite[Theorem~(1.12)]{Artin1969}, there is a $\tilde b \in F(S) = E(V)$ whose restriction to $S_0$ coincides with the restriction of $\hat b$.  In particular, the restriction of $\tilde b$ to $Z \mathop\times_\sigma V$ coincides with the restriction of $a$.

			Since $V$ is affine over $\sigma$, so are $V \mathop\times_\sigma V$ and $V \mathop\times_\sigma V \mathop\times_\sigma V$.  We may therefore repeat the same reasoning and obtain unique maps $V \mathop\times_\sigma V \to E$ and $V \mathop\times_\sigma V \mathop\times_\sigma V \to E$ extending the restriction of $a$.  Together, these constitute a descent datum for sections of $E$ over $\sigma$.  We conclude by faithfully flat descent that $b$ descends uniquely to a section $\tilde a$ of $E$ over $\sigma$ that restricts to $a$ on $Z$.

			To conclude the proof, we show that $\tilde a$ is unique up to unique isomorphism.  To this end, let $E'$ be the sheaf of isomorphisms between two extensions, $\tilde a$ and $\tilde a'$ of $a$ to $E$.  Then the agreement of $\tilde a \big|_Z$ and $\tilde a' \big|_Z$ gives a section of $E'$ over $Z$.  Applying the above reasoning to $E'$ in place of $E$, we obtain an isomorphism between $\tilde a$ and $\tilde a'$ over $\sigma$.  A third application shows that the isomorphism between $\tilde a$ and $\tilde a'$ is unique.
		\end{proof}

		\begin{remark} \label{rem:2}
			We conjecture that \Cref{prop:18} is true without the quasisepartation hypothesis.  If this conjecture is false, quasiseparation or some other condition should probably be added to the definition of an Olsson fan.
		\end{remark}

		\begin{corollary} \label{cor:etale_sheaves}
			Let $\pi : \sigma \to S$ be an Olsson precone over a fine, saturated logarithmic scheme $S$.  Let $F$ be an \'etale sheaf of abelian groups on $\sigma$.  If $\pi$ is saturated or if $F$ is torsion-free then $\mathrm R^1 \pi_\ast F = 0$.
		\end{corollary}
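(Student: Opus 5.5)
We interpret $\mathrm R^1\pi_\ast F$ as the sheafification on the strict étale site of $S$ of $U \mapsto \mathrm H^1(\sigma_U, F)$. The plan is to show that every $F$-torsor on $\sigma$ becomes trivial after strict étale localization on $S$. Let $P$ be a torsor under $F$ on $\sigma_U$ for some strict étale $U \to S$.

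The first reduction is to a local situation. Since $\mathrm R^1\pi_\ast$ commutes with strict étale base change, and a class vanishes in the sheafification if and only if it vanishes at every geometric point, it is enough to treat the case where $S$ is the spectrum of a strictly henselian local ring. We then use the local presentation from Lemma~\ref{lem:reprOlsson1} to write $\sigma = S \times_{\Acal_P} \Acal_Q$ with $P = \bar M_{S,s}$ and $S \to \Acal_P$ meeting the closed stratum. Replacing $Q$ by the localization of $P\to Q$ (which is an open substack, and still has the required properties) we may assume $P \to Q$ is local. If $\sigma$ happens to miss the closed fibre after localizing, we can shrink further. In the local case, Proposition~\ref{prop:18} identifies a unique closed stratum $Z \cong T_K \times \mathrm BT_L$.

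Next we apply Proposition~\ref{prop:18} to the torsor. The torsor $P$ is an algebraic space étale over $\sigma$; it is quasiseparated because $F$ is an étale sheaf, so its diagonal is an open immersion locally. Proposition~\ref{prop:18} then says that sections of $P$ over $\sigma$ correspond bijectively to sections over $Z$. The same holds for torsors themselves: applying the proposition to the stack $\mathrm B F$, which is étale over $\sigma$ since $F$ is étale, shows that restriction $\mathrm H^1(\sigma,F)\to \mathrm H^1(Z,F|_Z)$ is injective. This last application needs the quasiseparation hypothesis checked for $\mathrm BF$, or alternatively an argument that uses only the section statement applied to $P$. It therefore suffices to show $\mathrm H^1(Z, F|_Z) = 0$, possibly after a further strict étale localization, which is harmless since $S$ is already strictly henselian.

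The remaining step is the computation of $\mathrm H^1$ of $Z = T_K \times \mathrm BT_L$ over the separably closed residue field $k$, and it is where the two hypotheses enter. $\mathrm H^1$ splits into contributions from $T_K$ and from $\mathrm BT_L$.

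For $\mathrm BT_L$, étale sheaves correspond to $\pi_0(T_L)$-modules, the component group being the torsion of $L$. So $\mathrm H^1$ is group cohomology of the finite diagonalizable part $\mu$ together with torsors over the connected torus, and the torus part contributes nothing.

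For $T_K$, the torus $T_K$ has nontrivial étale $\mathrm H^1$ with torsion coefficients coming from its tame fundamental group; with torsion-free coefficients, and after a suitable reduction to constant sheaves, $\mathrm H^1 = \Hom(\pi_1, F) = 0$ because $\pi_1$ is profinite. When $\pi$ is saturated, $P \to Q$ is saturated and local, hence exact and injective, so $K = 0$ and $T_K$ is trivial. Lemma~\ref{lem:4} then gives surjectivity on lattices, so $L$ is torsion-free and $T_L$ is a torus. With a torus stabilizer and trivial $T_K$, $Z$ has vanishing $\mathrm H^1$.

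I expect this final step to be the main obstacle. It requires checking that $F|_Z$ is locally constant in the right sense, or reducing to the generic behaviour along strata, and carefully showing that connected group stabilizers contribute no étale $\mathrm H^1$.
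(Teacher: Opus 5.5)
Your outline follows the paper's proof: pass to a strictly henselian base, use Proposition~\ref{prop:18} to replace $\sigma$ by its closed stratum $Z\simeq T_K\times\mathrm BT_L$, then show $\mathrm H^1(Z,F|_Z)=0$. Applying Proposition~\ref{prop:18} to sections of the torsor itself is exactly right, so you can drop $\mathrm BF$: a torsor trivial on $Z$ has a section there, hence on $\sigma$.

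The first genuine problem is your reduction to a local homomorphism. Localizing $P\to Q$ inverts $h^{-1}(0)$ in $P$, not anything in $Q$. This replaces $S$ by the open subscheme where those elements become units, which misses the closed point, and over a strictly henselian base there is nothing left to shrink. No argument can repair this. For $\mathbf N\to 0$, which the paper itself calls an Olsson cone, $\sigma$ is the complement of the boundary divisor, and over a strictly henselian trait $\mathrm H^1(\Spec K,\mu_\ell)=K^\ast/K^{\ast\ell}\neq 0$. So locality of $P\to Q$ must be a standing hypothesis. It is what Proposition~\ref{prop:18} requires, and the paper's proof uses it tacitly.

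The second problem is the computation on $Z$. You defer it as the main obstacle, but it is the real content of the proof, and your sketch of it fails in the torsion-free case. The paper uses the cover $U_0=T_K\times\Spec k\to Z$ with $U_n=T_K\times T_L^n$. A $1$-cocycle is then a map $\varphi\colon T_K\times T_L\to F_0$ with each $\varphi(z,-)$ a homomorphism $T_L\to F_0|_z$.

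In the saturated (local) case your observations $K=0$ and $T_L$ a torus are correct, and this finishes the proof. Here $U_0=\Spec k$ has no $\mathrm H^1$, so the Čech $\mathrm H^1$ of this cover is all of $\mathrm H^1(Z,F)$. Both the descent datum of $F$ and any cocycle are homomorphisms from the connected group $T_L$ to a discrete group, hence trivial.

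The torsion-free case does not go through as you sketch it (work in residue characteristic $\neq 2$ for the examples):
\begin{enumerate}
\item You identify the $\mathrm BT_L$ contribution with group cohomology of the finite group $\pi_0(T_L)$ but never show it vanishes, and it need not. We have $\mathrm H^1(\mathbf Z/2,\mathbf Z_{\rm sgn})=\mathbf Z/2$. This is realized by $\mathbf Z$ with the sign action on the square-root stack of $\mathbf N\xrightarrow{2}\mathbf N$ over a strictly henselian trait; the $\mu_2$-cover is strictly henselian, so $\mathrm H^1(\sigma,F)=\mathrm H^1(\mu_2,\mathbf Z_{\rm sgn})\neq 0$.
\item No ``reduction to constant sheaves'' on $T_K$ is available. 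The sheaf $\mathbf Z$ twisted by the Kummer double cover of $\mathbf G_{\rm m}$ has $\mathrm H^1=\mathbf Z/2$. It occurs as $F|_Z$ for the local sum map $\mathbf N^2\to\mathbf N$, where $Z=T_K=\mathbf G_{\rm m}$.
\end{enumerate}
What the torsion-free case actually needs is that $\pi_0(T_L)$ act trivially on $F_0$ and that $\mathrm H^1(T_K,F_0)=0$ (e.g.\ $F|_Z$ constant). Then $\Hom(\pi_0(T_L),F_0|_z)=0$ finishes, which is the paper's argument. Its cocycle condition $\varphi(z,xy)=\varphi(z,x)+\varphi(z,y)$ presupposes the trivial action, and it only treats the part of $\mathrm H^1$ split by $U_0$.
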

		\begin{proof}
			We assume without loss of generality that $S$ is local and strictly henselian (which is justified because $\sigma$ is quasicompact and quasiseparated over $S$).  Let $Z$ be the closed stratum of the closed fiber of $\sigma$ over $S$.  
            Since $F$ is an \'etale sheaf on $\sigma$, to give a section, it is equivalent to give a section over $Z$ by Proposition~\ref{prop:18}.  But, in the notation of Proposition~\ref{prop:18}, $Z \simeq T_K\times \mathrm BT_L$ for a diagonalizable group $T_L$.  Let $U_\bullet$ be the groupoid associated with the cover of $Z$ by $U_0 = T_K\times \Spec k$, where $k$ is the residue field of $S$ at its closed point.  Then $U_n = T_K\times T_L^n$ for all $n$.  The \v Cech complex of $F$ over $U_\bullet$ is $C^n = \Gamma(T_K\times T_L^n, F_0)$ where $F_0 = F \big|_{U_0}$.  The coboundary condition identifies $H^1(U_\bullet, F)$ with  morphisms $\varphi\colon T_K\times T_L \to F_0$ such that $\varphi(z,xy)-\varphi(z,x)-\varphi(z,y)=0$ for each $z,x,y$.  That is, $\varphi(z,-)\colon T_L\to F_0\rvert_{z}$ is a homomorphism for each $z\in T_K$.  If $\pi$ is saturated then $T_L$ is an algebraic torus, hence connected, and $F_0\rvert_{z}$ is discrete, so every homomorphism $T_L \to F_0\rvert_{z}$ vanishes.  In general, $T_L$ is diagonalizable, so its component group is finite so every homomorphism $T_L \to F_0\rvert_{z}$ will vanish if $F_0\rvert_{z}$ is torsion-free.
		\end{proof}

		\begin{corollary} \label{cor:12}
			Let $\pi : \sigma \to S$ be an Olsson cone over a fine, saturated logarithmic scheme $S$.  Let $F$ be an \'etale sheaf of abelian groups \'etale on $\sigma$.  Then $\mathrm R^p \pi_\ast F = 0$ for all $p > 0$.
		\end{corollary}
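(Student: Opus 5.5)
We reduce, as in the proof of Corollary~\ref{cor:etale_sheaves}, to the case where $S$ is the spectrum of a strictly henselian local ring. This is legitimate because $\sigma$ is quasicompact and quasiseparated over $S$, so the formation of $\mathrm R^p\pi_\ast F$ commutes with passage to strict henselizations. The claim then becomes $\mathrm H^p(\sigma, F) = 0$ for $p>0$. In this situation we write $\sigma = S \times_{\Acal_P} \Acal_Q$ with $P = \bar M_{S,s}$ and $P \to Q$ saturated, replacing $Q$ by the localization of $P\to Q$ if necessary. Let $Z \simeq T_K \times \mathrm B T_L$ be the closed stratum of the closed fiber. Saturation guarantees that $T_L$ is a torus, hence connected, and that the kernel $K$ is the character group of $T_K$. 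Proposition~\ref{prop:18} then shows that restriction along $i\colon Z \to \sigma$ induces an equivalence between the categories of sections of étale, quasiseparated objects over $\sigma$ and over $Z$.

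The key step is to upgrade this to an equivalence of étale topoi, or at least to an isomorphism $\mathrm H^p(\sigma,F) \simeq \mathrm H^p(Z, i^\ast F)$. We use the same argument as Proposition~\ref{prop:18}, applied to every étale $\sigma$-stack $E$ and its base changes. Étale covers of $\sigma$ are then in bijection with étale covers of $Z$ by restriction, since a covering family restricts to a covering family and, conversely, every section extends. It follows that $\Gamma(\sigma,-) = \Gamma(Z, i^{-1}-)$ on étale sheaves, with $i^{-1}$ exact. Deriving this identity, by taking an injective resolution on $\sigma$ and checking that $i^{-1}$ preserves acyclicity via the Čech description, reduces the claim to showing $\mathrm H^p(Z, G) = 0$ for every étale abelian sheaf $G$ on $Z$ and every $p>0$.

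On $Z = T_K \times \mathrm B T_L$, the étale site of $\mathrm BT_L$ over a separably closed field, with $T_L$ a connected torus, has trivial higher cohomology for discrete coefficients. This follows from the Čech computation of Corollary~\ref{cor:etale_sheaves} extended to all degrees: the groupoid $T_K \times T_L^{\bullet}$ has $n$-cochains given by morphisms from the connected scheme $T_L^n$ into the discrete sheaf, hence constant in the $T_L$-variables. The Čech complex therefore becomes the complex computing the cohomology of a point, which is acyclic in positive degrees. Along the $T_K$ factor, we should not need to compute $\mathrm H^p(T_K, -)$ directly. Instead, $T_K$ is itself an Olsson precone factor, since $\sigma \to S\times_{X_P}(T_K)$ as in Proposition~\ref{cor:1}, and $S \times_{X_P} T_K \to S$ has fibers that are henselian neighbourhoods of points. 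More precisely, we reapply the reduction of the first paragraph to $S' = S\times_{X_P} T_K$, which is strict over $S$ in the appropriate sense, and we use that the closed stratum over a strictly henselian base collapses $T_K$ to the fiber over the closed point.

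We expect the main obstacle to be this second step: promoting Proposition~\ref{prop:18} from sections to a statement about cohomology, together with the handling of the $T_K$ factor. If the topos equivalence is hard to establish directly, a fallback is induction on $p$ by dimension shifting. Embed $F$ into a sheaf $I$ that is flasque on all étale $\sigma$-stacks; Proposition~\ref{prop:18} shows that $\Gamma(\sigma, I) \to \Gamma(\sigma, I/F)$ is surjective, because a section of $I/F$ lifts locally, hence over $Z$, hence over $\sigma$. This gives $\mathrm H^1$ vanishing for all sheaves, as already shown in Corollary~\ref{cor:etale_sheaves}, and then $\mathrm H^{p}(F) = \mathrm H^{p-1}(I/F) = 0$ by induction.
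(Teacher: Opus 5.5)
Your fallback paragraph is exactly the paper's proof. Corollary~\ref{cor:etale_sheaves} gives $\mathrm R^1\pi_\ast G=0$ for \emph{every} abelian sheaf $G$ on $\sigma$. So for $F\hookrightarrow I$ with $I$ injective one gets $\mathrm R^p\pi_\ast F\simeq\mathrm R^{p-1}\pi_\ast(I/F)$ for $p\geq 2$, and induction over all sheaves finishes. Two small corrections. Take $I$ injective, as the paper does, rather than an unspecified ``flasque'' sheaf, so that $\mathrm R^q\pi_\ast I=0$ is automatic. And lifting a section of $I/F$ over $Z$ is the torsor computation of Corollary~\ref{cor:etale_sheaves} (connectedness of $T_L$), not something Proposition~\ref{prop:18} provides; that proposition only extends the lift from $Z$ to $\sigma$. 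This argument is complete on its own and never needs $\mathrm H^p(Z,-)$ for $p\geq2$. You should lead with it and drop the main route, which has steps that are wrong as stated.

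\textbf{Problems with the main route.}
\begin{enumerate}
\item There is no equivalence of topoi. Proposition~\ref{prop:18} identifies sections over $\sigma$ and over $Z$, but restriction to $Z$ is not faithful: extension by zero from an open substack missing $Z$ restricts to $0$.
\item More seriously, the \v{C}ech computation on $Z$ does not extend beyond degree one. The complex $\Gamma(T_K\times T_L^n,F_0)$ computes cohomology only if $F_0$ is acyclic on each $T_K\times T_L^n$, and tori are not: $\mathrm H^1(\Gm,\mu_\ell)\simeq\ZZ/\ell$ over a separably closed field with $\ell$ invertible. Degree one works only because $\mathrm H^1$ classifies torsors, which $U_0$ trivializes. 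For smooth-local (lisse-\'etale) cohomology your claim is actually false: $\mathrm H^2(\mathrm B\Gm,\mu_\ell)\supseteq\Pic(\mathrm B\Gm)/\ell\neq0$, and likewise for the Olsson cone $\Acal_\NN$ over a point. So the corollary must be read, as the paper does, with sheaves meaning \'etale stacks over $\sigma$, and there the degree-one torsor statement plus dimension shifting is the right mechanism.
\item The $T_K$ discussion is moot. A local saturated chart $P\to Q$ is integral and local, hence exact, hence injective because $P$ is sharp, so $K=0$ (compare the proof of Theorem~\ref{thm:localstructureOlssonfans}).
\item ``Replacing $Q$ by the localization'' does not rescue a non-local chart. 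Localization changes $P$, not $Q$, and the closed fibre of $\sigma$ is then empty, so Proposition~\ref{prop:18} has nothing to act on. The statement can in fact fail here. Take the complement $\Spec K$ of the closed point of a strictly henselian trait with its divisorial logarithmic structure; the paper counts this as an Olsson cone, yet $\mathrm H^1(\Spec K,\ZZ/\ell)\neq0$.
\end{enumerate}
The last point is shared by the paper's proof through Corollary~\ref{cor:etale_sheaves}: both arguments tacitly require $\sigma\to S$ to be universally surjective.
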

		\begin{proof}
			We saw already in Corollary~\ref{cor:etale_sheaves} that $\mathrm R^1 \pi_\ast F = 0$ for all sheaves of abelian groups $F$ on $\sigma$.  Choose an injective homomorphism $F \to I$ where $I$ is injective.  Then we may identify $\mathrm R^p \pi_\ast F$ with $\mathrm R^{p-1} \pi_\ast (I/F)$ for all $p \geq 2$.  We conclude by induction.
		\end{proof}

			\begin{theorem}\label{thm:localstructureOlssonfans}
			   Let $S$ be a fine and saturated logarithmic scheme, and let  $\Sigma\to S$ be a quasiseparated Olsson fan. Then $\Sigma$ admits a strict \'etale cover by Olsson cones over $S$.
			\end{theorem}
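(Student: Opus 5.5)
The plan is to work pointwise on $\Sigma$ and use the structure of $\Log_S$ from \Cref{thm:7}. Since $\Sigma \to S$ is logarithmically étale, the induced morphism $\Sigma \to \Log_S$ (classifying the logarithmic structure of $\Sigma$ relative to the pullback of $M_S$) is étale; this is Olsson's characterization of logarithmic étaleness. The question is local on $S$ in the strict étale topology, so we may assume $S$ is strictly henselian local with a chart $P = \bar M_{S,s} \to M_S$. Fix a geometric point $x$ of $\Sigma$ and put $Q = \bar M_{\Sigma,x}$. The map $P \to Q$ is saturated, because $\Sigma$ is saturated over $S$. By \Cref{thm:7}, after localizing at $x$ we obtain a chart $\sigma = S \times_{\Acal_P} \Acal_Q$, étale over $\Log_S$, through whose image the point $x$ passes. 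Since $P \to Q$ is saturated, $\sigma$ is an Olsson cone. Replacing $P \to Q$ by its localization at $x$ if necessary, we may assume it is local.

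The goal is to produce a strict étale map $\sigma' \to \Sigma$ from an Olsson cone whose image contains $x$. Consider $E = \Sigma \times_{\Log_S} \sigma$. This is étale over $\sigma$, and it is quasiseparated because $\Sigma$ is quasiseparated and $\Log_S$ has quasiseparated diagonal. The point $x$ together with the point of the closed stratum $Z \subset \sigma$ lying over the image of $x$ gives a point of $E$. Passing to the strict henselization of $S$ at the image of $x$, we may arrange that the map $S \to \Acal_P$ meets the closed stratum, so that \Cref{prop:18} applies.

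The main obstacle is upgrading this point to a section of $E$ over all of $Z$. Since $Z \simeq T_K \times \mathrm BT_L$, a point alone is not enough. My first approach is to exploit the fact that the fibre of $E \to \sigma$ over $Z$ is étale over $T_K \times \mathrm BT_L$, with $T_L$ connected because the map is saturated. After a further strict étale base change on $S$, which enlarges the residue field and can replace $T_K$ by an étale cover, the connected component of $E_Z$ through our point maps isomorphically onto $Z$. The reason is that étale covers of $\mathrm BT_L$ for a torus $T_L$ are trivial over a separably closed field, and those of $T_K$ can be absorbed by passing to a finite-index sublattice. That last step would change the cone, so I would instead keep the cone and accept a strict étale cover $\sigma'' \to \sigma$.

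With a section $a$ of $E$ over $Z$ in hand, \Cref{prop:18} extends it uniquely to a section $\tilde a \colon \sigma \to E$. Composing with $E \to \Sigma$ gives a morphism $\sigma \to \Sigma$ over $\Log_S$. This morphism is strict, being a map over $\Log_S$, and étale, being a map between objects étale over $\Log_S$. Its image contains $x$.

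Finally, let $x$ range over all points. Quasicompactness is not needed, since an arbitrary disjoint union of such charts, each defined over a strict étale neighbourhood in $S$, gives the desired strict étale cover of $\Sigma$ by Olsson cones over $S$.
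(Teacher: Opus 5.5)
Your overall strategy is the same as the paper's. You use that $\Sigma\to\Log_S$ is \'etale, take $\sigma=S\times_{\Acal_P}\Acal_Q$ with $P=\bar M_{S,s}$ and $Q=\bar M_{\Sigma,x}$, find a section of $E=\Sigma\times_{\Log_S}\sigma$ over the closed stratum $Z$, extend it by \Cref{prop:18}, and compose with $E\to\Sigma$. The trouble is the step you yourself flag as the main obstacle, and your handling of a possibly nontrivial $T_K$ fails for three reasons.
\begin{enumerate}
\item Once $S$ is strictly henselian, a further strict \'etale base change does not enlarge the separably closed residue field.
\item \'Etale covers of a torus, such as the Kummer maps $\Gm\to\Gm$, $t\mapsto t^n$, are not split by any such base change.
\item Your fallback of replacing $\sigma$ by a strict \'etale cover $\sigma''$ leaves the class of Olsson cones. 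It also leaves you without a section of $E$ over the closed stratum of $\sigma$ itself, which is exactly what \Cref{prop:18} needs before you can compose $\sigma\to E\to\Sigma$.
\end{enumerate}
So, as written, the existence of a section over $Z$ is not established.

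The missing observation is that this case never occurs. The map $P\to Q$ is automatically local, since it comes from a morphism of logarithmic structures at stalks. Saturated implies integral, so $P\to Q$ is exact by \Cref{rmk:integral-exact}, hence injective because $P$ is sharp. Thus $K=0$ and $Z\simeq\mathrm{B}T_L$, and $T_L$ is a torus because $L=Q^{\rm gp}/P^{\rm gp}$ is torsion-free by saturation. Connectedness of $T_L$ then makes the point of $E$ built from $x$ automatically $T_L$-invariant, so it descends to a section over $Z$. This is exactly your own correct remark about \'etale covers of $\mathrm{B}T_L$, and it is all that is needed.

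Separately, ``we may assume $S$ strictly henselian'' needs a limit argument to return to an \'etale neighbourhood. The paper works over the henselization $\tilde S$ and obtains the section over $\tilde\sigma$. It then spreads this out to an \'etale neighbourhood of $s$ by \cite[Proposition~(18.13.1)]{ega4-3}, using that $\sigma$ is quasicompact and quasiseparated over $S$ and that $E\to\sigma$ is locally of finite presentation. You should make this step explicit rather than leave it implicit in your final sentence.
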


		\begin{proof}
			Let $x$ be a geometric point of $\Sigma$ lying over a geometric point $s$ of $S$.  We argue that there is an Olsson cone $\sigma$ over $S$ and a strict \'etale $S$-morphism $\sigma \to \Sigma$ through which $x$ factors.  After replacing $S$ by a strict \'etale neighborhood of $s$, we can assume that the homomorphism $\Gamma(S, \bar M_S) \to \bar M_{S,s}$ has a section.  Writing $P = \bar M_{S,s}$, this gives us a map $S \to \Acal_P$.  Let $Q = \bar M_{\Sigma,x}$.  Then we set
			\begin{equation*}
				\sigma = \Acal_Q \mathop\times_{\Acal_P} S .
			\end{equation*}
            
			We need to lift the following diagram, with $\sigma \to \Sigma$ strict, at least after \'etale localization around $s$ in $S$~:
			\begin{equation} \label{eqn:4} \begin{tikzcd}
				x \ar[r] \ar[d] & \Sigma \ar[d] \\
				\sigma \ar[r] \ar[ur,dashed] & S
			\end{tikzcd}
			\end{equation}

			Let $\Log_S$ be Olsson's stack of fine, saturated logarithmic structures~\cite{olssonlogarithmic} (see \Cref{thm:7}).   To lift~\eqref{eqn:4}, it is equivalent to lift the following diagram of (non-logarithmic) schemes and algebraic stacks~:
			\begin{equation*} \begin{tikzcd}
				x \ar[r] \ar[d] & \Sigma \ar[d] \\
				\sigma \ar[r] \ar[ur,dashed] & \Log_S
			\end{tikzcd}
			\end{equation*}
			Since $\Sigma$ is \'etale over $\Log_S$ by assumption, this is equivalent to finding a section of the \'etale algebraic stack $\Sigma \mathop\times_{\Log_S} \sigma$ over $\sigma$.  Let $\tilde S$ be the henselization of $S$ at $s$ and let $\tilde \sigma$ be the base change of $\sigma$ to $\tilde S$. By hypothesis $P\to Q$ is integral (because saturated) and local (because it is induced by a morphism of logarithmic structures) so exact by \Cref{rmk:integral-exact}, which in turn implies injective since $P$ is sharp.  We may thus identify the closed stratum of $\sigma$ with $\mathrm BT$, where $T=T_L$ is the diagonalizable group dual to $L=Q^{\rm gp} / P^{\rm gp}$.  Since $\Sigma$ is saturated, $Q^{\rm gp} / P^{\rm gp}$ is torsion-free, so $T$ is in fact an algebraic torus.  Therefore $T$ is connected, so every section of an \'etale sheaf over $x$ is $T$-equivariant.  Therefore the section $x \to \Sigma$ descends to a section over the closed stratum of $\tilde\sigma$.  By Proposition~\ref{prop:18}, this section extends uniquely to a section over $\tilde\sigma$.  Finally, $\sigma$ is quasicompact and quasiseparated over $S$, so this section extends to an \'etale neighborhood of $s$ in $S$~\cite[Proposition~(18.13.1)]{ega4-3}.%
			\footnote{Technically, the cited proposition concerns only sections of \'etale sheaves over schemes, and $\sigma$ is an algebraic stack.  However, the proposition can be applied to a groupoid presentation of $\sigma$ to obtain the required conclusion.}
		\end{proof}

        \subsection{Weakly convex cones}
    \begin{definition}
     Denote by $\logGm$ and $\tropGm$ the group objects over fine saturated schemes whose values at  $S$ are
		\begin{equation*}
			\logGm(S)=\Gamma(S,  M_S^{\rm gp})	\qquad\text{and}\qquad \tropGm(S) = \Gamma(S, \bar M_S^{\rm gp}) .
		\end{equation*}
		Then a \emph{tropical torus}  $T$ over $S$ is a sheaf of groups in the strict \'etale topology of logarithmic schemes over $S$ that, locally in the strict \'etale topology of $S$, is isomorphic to $\tropGm^r \times S$, for some integer $r$.

				The sheaf $X=\uHom_S(T,\Gtr{S})$ of homomorphisms $T \to \tropGm$, on the strict-\'etale site of $S$, is called the \emph{character group} of $T$.  
    \end{definition}
	By \cite[Proposition~A.2]{wise2023monodromy}, the character group of $X$ is, locally in the strict \'etale topology of $S$, a constant sheaf of finitely generated free abelian groups.  The same result shows that the tropical torus $T$ is determined by its character group~: $T = \uHom_S(X,\Gtr{S})$.



\begin{definition}\label{notation:fine_po_groups}
    Let $\amb{R}$ denote a partially ordered abelian group whose underlying group is finitely generated.  Notice that $\amb{R}$ is automatically free.\footnote{This would not be the case if the group were only preordered.} Let $R=(\amb{R})_{\geq 0}$ denote the submonoid of non-negative elements. If $R$ is finitely generated as a monoid then we will call $\amb R$ a \emph{fine partially ordered group}. The category of fine, sharp monoids can be embedded in that of fine partially ordered groups~; notice, however, that $R^{\rm{gp}}$ may be strictly contained in $\amb{R}$ in general. We shall say that $\amb{R}$ is saturated if $R$ is. 
\end{definition}

		\begin{definition}\label{def:polyhedraldomain}
			Let $S$ be a fine and saturated logarithmic scheme and let $\amb{R}$ be a sheaf of fine  and saturated partially ordered groups on $S$, equipped with a homomorphism from $\bar M_S^{\rm gp}$. 

			We define $\Theta_{\amb{R}}$ to be the 
            sheaf on fine, saturated logarithmic schemes over $S$ whose value on $f : T \to S$ is the set of homomorphisms of partially ordered groups $f^{-1} \amb{R} \to \bar M_T^{\rm gp}$ that restrict to the structural morphism $f^{-1} \bar M_S \to \bar M_T$.  
    
				In other words, a $T$ point of $\Theta_{\amb{R}}$ is a commutative square~:
        \begin{equation*} \begin{tikzcd}
					f^{-1}\bar{M}_S\ar[r] & f^{-1}R\ar[hookrightarrow]{d} \ar[r] & \bar{M}_T \ar[hookrightarrow]{d}  \\
				& f^{-1}\amb{R} \ar[r]  & \bar{M}^{\rm gp}_T 
			\end{tikzcd}
			\end{equation*}
				The sheaf $\Theta_{\amb{R}}$ is called  a \emph{weakly convex Olsson precone}, and a weakly convex Olsson cone if $\bar M_S^{\rm gp}\to \amb{R}$ is saturated.
    \end{definition}
    \begin{proposition}\label{prop:polyhedral domain}
    Let $\Theta_{\amb{R}}$ be a weakly convex Olsson precone. 
    The natural  morphism \[\Theta_{\amb{R}}\to\Theta_{R}\]
    exhibits   $\Theta_{\amb{R}}$ as a torsor over the Olsson cone $\Theta_{R}$ under the tropical torus with character group $\amb{R} / R^{\rm gp}$.  This torsor is locally trivial in the strict \'etale topology of~$S$.
    \end{proposition}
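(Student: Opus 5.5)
The plan is to work directly with the functors of points. A $T$-point of $\Theta_{\amb R}$ is a homomorphism of partially ordered groups $\phi\colon f^{-1}\amb R\to\bar M_T^{\rm gp}$ compatible with $f^{-1}\bar M_S$. The morphism $\Theta_{\amb R}\to\Theta_R$ sends $\phi$ to its restriction to $f^{-1}R$. This restriction lands in $\bar M_T$ because $\phi$ is order-preserving, and it is compatible with $\bar M_S$. Set $X=\amb R/R^{\rm gp}$ and let $T_X=\uHom(X,\tropGm)$ be the corresponding tropical torus. It acts on $\Theta_{\amb R}$ by $(\chi\cdot\phi)(r)=\phi(r)+\chi(\bar r)$, where $\bar r$ is the class of $r$ in $X$.

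The first step is to check that the action preserves the fibres of $\Theta_{\amb R}\to\Theta_R$ and the order condition. This holds because $\chi$ vanishes on $R^{\rm gp}$, so $\chi\cdot\phi$ agrees with $\phi$ on $R$. The order condition only asks that $R$ map into $\bar M_T$; since $R=(\amb R)_{\geq0}$, the positive cone is exactly $R$, so nothing beyond $R$ needs checking.

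The second step is to show the action is simply transitive on nonempty fibres. If $\phi,\phi'$ agree on $R$, then $\phi-\phi'$ vanishes on $R^{\rm gp}$ and so factors uniquely through $X$. Hence the map $T_X\times\Theta_{\amb R}\to\Theta_{\amb R}\times_{\Theta_R}\Theta_{\amb R}$ is an isomorphism. I also need $X$ to be torsion-free, so that $T_X$ is a genuine tropical torus and the torsor is for a torus. I expect saturation to give this: if $nr\in R^{\rm gp}$ then, since $R^{\rm gp}$ is generated by differences of elements of $R$, I would want to conclude $r\in R^{\rm gp}$, and saturation of $R$ in $\amb R$ should allow that. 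If this fails, the statement still makes sense with $T_X$ diagonalizable-tropical, so it is not essential.

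The remaining step, and the main obstacle, is local triviality: fibres must be nonempty, with sections existing strict étale locally on $S$. Locally on $S$ the sheaf $\amb R$ is constant with finitely generated free underlying group, so the sequence $0\to R^{\rm gp}\to\amb R\to X\to0$ has $X$ free and hence splits. Choose a retraction $\rho\colon\amb R\to R^{\rm gp}$. Given a point $\psi\colon f^{-1}R\to\bar M_T$ of $\Theta_R$, define $\phi=\psi^{\rm gp}\circ\rho$. On $R$ this agrees with $\psi$, so it is order-preserving and compatible with $\bar M_S$. This gives a section $\Theta_R\to\Theta_{\amb R}$ over the localized base, which trivializes the torsor as $\Theta_{\amb R}\simeq T_X\times_S\Theta_R$. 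The delicate point is ensuring the splitting is compatible with the map from $\bar M_S^{\rm gp}$. This is automatic, because $\bar M_S^{\rm gp}\to\amb R$ factors through $R^{\rm gp}$, the image of $\bar M_S$ lying in $R$. So only the étale-local constancy of $\amb R$ is used, which follows from the finiteness hypotheses in Definition~\ref{def:properties_of_hom_of_monoid_sheaves}.
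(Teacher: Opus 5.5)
Your argument is correct in outline, and for the torsor structure it agrees with the paper's. A lift of $\psi$ is an extension of $\psi^{\rm gp}$ along $R^{\rm gp}\subset\amb R$, and two lifts differ by a homomorphism $X=\amb R/R^{\rm gp}\to\bar M^{\rm gp}$.

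For local triviality you take a genuinely different route. The paper regards the lifts as a torsor on the strict \'etale site of $\Theta_R$ under the torsion-free sheaf $\uHom(X,\bar M^{\rm gp})$. It then trivializes this torsor locally on $S$ using $\mathrm R^1\pi_\ast F=0$ (Corollary~\ref{cor:etale_sheaves}, which rests on the Artin-approximation argument of Proposition~\ref{prop:18}). You instead split $0\to R^{\rm gp}\to\amb R\to X\to 0$ on $S$ and write down the section $\psi\mapsto\psi^{\rm gp}\circ\rho$. That section is indeed order-preserving and compatible with $\bar M_S$, for the reasons you give. Your route is more elementary and gives more: an explicit trivialization $\Theta_{\amb R}\simeq T_X\times_S\Theta_R$ over all of $\Theta_R$ as soon as the splitting exists, with no cohomological input. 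The paper's route tacitly needs the same local existence of lifts before it can apply $H^1$-vanishing.

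Two steps need repair.

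\textbf{Torsion-freeness of $X$.} This is essential, not optional, contrary to your remark. Take $\amb R=\mathbf Z$ with positive cone $R=2\NN$ (a saturated monoid) over a point with trivial logarithmic structure. Then $\Theta_{\amb R}\to\Theta_R$ is the morphism $\Acal_{\NN}\to\Acal_{\NN}$ induced by multiplication by $2$ on $\NN$. Its fibre over the standard logarithmic point with $\psi(2)=1$ stays empty after every strict \'etale base change. So there is no torsor, and your splitting does not exist. The hypothesis you need is that $R$ be saturated inside $\amb R$, i.e.\ $nr\geq 0$ implies $r\geq 0$. With that, your sketch closes in one line: if $nr=a-b$ with $a,b\in R$, then $n(r+b)=a+(n-1)b\in R$, so $r+b\in R$ and $r\in R^{\rm gp}$.

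\textbf{Local constancy.} $\amb R$ is not \'etale-locally constant. It receives $\bar M_S^{\rm gp}$, whose stalks jump along the logarithmic stratification; this already happens for a tropical torus, where $\amb R=\bar M_S^{\rm gp}\oplus X$. So ``$\amb R$ is locally constant, hence the sequence splits'' is not a valid justification. What is locally constant is $X$. In a local chart $P^{\rm gp}\to\amb Q$, passing to a face of $P$ replaces $\amb Q$ and $Q^{\rm gp}$ by their quotients by the same subgroup, so $X\simeq\amb Q/Q^{\rm gp}$ throughout. The paper takes this for granted when it calls the group a tropical torus. To split $\amb R\to X$ locally, lift a basis of $X$ at a geometric point to sections of $\amb R$ over an \'etale neighbourhood. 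The rest of your argument then goes through unchanged.
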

    \begin{proof}
    Given a fine and saturated logarithmic scheme $f\colon S'\to S$ and a point 
			$x\in \Theta_{R}(S'),$  namely a monoid homorphism $x\colon f^{-1}R\to\bar{M}_{S'}$ compatible with the map from $f^{-1}\bar{M}_S$, we  obtain an associated  homomorphism  of groups $x^{\rm gp}\colon f^{-1}R^{\rm gp}\to\bar{M}^{\rm gp}_{S'}.$
   To get $\tilde x\in \Theta_{\amb{R}}(S')$ amounts to the choice of an extension  $\tilde{x}\in\Hom(f^{-1} \amb{R},\bar{M}_{S'}^{\rm gp})$ that restricts to $x^{\rm gp}$ along the inclusion $f^{-1}R^{\rm gp}\to f^{-1} \amb{R}$.
			The set of such lifts is a torsor under $\Hom(\amb{R} \slash R^{\rm gp},\bar{M}^{\rm gp}_{S'})$.  This is the group of $S'$-points of the tropical torus with character group $\amb{R} / R^{\rm gp}$, we may conclude that $\Theta_{\amb R}$ is a torsor over $\Theta_{R^{\rm gp}}$ under the tropical torus with character group $\amb R / R^{\rm gp}$.

			Furthermore, the lifts of $S' \to \Theta_R$ to $S' \to \Theta_{\amb{R}}$ form a torsor, on the strict \'etale site of $S'$, under the group $\Hom(\amb{R} / R^{\rm gp}, \bar M_{S'}^{\rm gp})$.  The espace \'etal\'e of this torsor is an algebraic stack with an \'etale morphism to $\Theta_R$ that is representable by algebraic spaces.  By Corollary~\ref{cor:etale_sheaves}, this torsor can be trivialized locally in the strict \'etale topology of $S$.
   \end{proof}
   \begin{definition}
        The tropical torus  $\Hom_S(\amb{R} \slash R^{\rm gp},\Gtr{S})$ is called the  \emph{lineality space}, and the tropical torus dual to $\amb{R}$ 
        is called the \emph{span} of the weakly convex cone $\Theta_{\amb{R}}$.
    \end{definition}
    \begin{remark}
    Notice that both Olsson cones  and tropical tori over $S$  are particular cases of weakly convex cones over  $S$.
			Indeed, if $R^{\rm gp}=\amb{R}$, it follows immediately from the definitions that $\Theta_R \simeq \Theta_{\amb{R}}$ as presheaves over fine and saturated logarithmic schemes over $S$.
			In the other extreme case, if $R=\bar{M}_S$ and $\amb{R} = R^{\rm gp} \oplus X$ for a sheaf of finitely generated, free abelian groups over $\bar{M}_S$, then it follows from \cite[Proposition~A.2]{wise2023monodromy} that $\Theta_{\amb{R}}$ is the tropical torus $T$ over $S$ with character lattice $X$.
    \end{remark}

\begin{definition}
	Let $\varphi : \amb P \to \amb Q$ be a homomorphism of sheaves of fine partially ordered abelian groups and let $P \to Q$ be the associated homomophism of submonoids of elements $\geq 0$.  We say that $\varphi$ is a \emph{localization} if its kernel is generated by elements in $P$.  In this case, we say that the dual morphism of weakly convex Olsson cones is a \emph{face morphism}.
\end{definition}

\begin{definition}
  A  \emph{weakly convex Olsson fan} or \emph{weak cone complex} $\Sigma$ over $S$ is a stack in the strict \'etale topology on logarithmic schemes over $S$ obtained as the colimit of finitely many weakly convex Olsson cones along face  morphisms. 
\end{definition}

\subsection{The star of a subcone}
We turn to a local characterization of the star construction from toric geometry, based on a monoidal analogue of the Zariski tangent space from algebraic geometry. We start with an analogue of the ring of dual numbers. Recall the notation of \Cref{notation:fine_po_groups}.

\begin{definition}
    Let $\amb{M}$ and $\amb{N}$ be fine, saturated, partially ordered groups.  The \emph{infinitesimal extension} of $\amb{M}$ by $\amb{N}$ is the fine, saturated, partially ordered group $\amb{M[\varepsilon N]}$
with $m + \varepsilon n \geq 0$ if $m > 0$ or $m = 0$ and $n \geq 0$.
\end{definition} For instance, $\NN[\varepsilon\NN]$ is the standard valuative monoid of rank two. Notice that $\amb{M[\varepsilon N]}$ comes with both an inclusion of $\amb{M}$ (making it a partially ordered group over a base, if $\amb{M}$ is) and a projection to $\amb{M}$. We think of the dual cone of $\amb{M[\varepsilon N]}$ as an infinitesimal thickening of that of $\amb{M}$.

\begin{definition}
	Let $\sigma$ be a covariant functor on fine, saturated monoids (or on partially ordered abelian groups).  Suppose that $x \in \sigma(M)$ for some fine, saturated monoid $M$.  We write $\Star_x(\sigma)$ for the functor on fine, saturated monoids (or on partially ordered abelian groups) that sends $N$ to the set of lifts of $x$ to $\sigma(M[\varepsilon N])$ and call this the \emph{star} of $x$.  That is~:
	\begin{equation}\label{eq:starsigma}
		\Star_x(\sigma)(N) = \sigma(M[\varepsilon N]) \mathop\times_{\sigma(M)} \{ x \}
	\end{equation}
\end{definition}

\begin{remark}
    Given a morphism $f\colon\sigma\to\sigma'$ and a point $x$ of $\sigma$, we obtain a morphism \[\Star_x(f)\colon \Star_x(\sigma) \to \Star_{f(x)}({\sigma'}).\]
\end{remark}

\begin{lemma}
    The star construction commutes with arbitrary limits and with filtered colimits in $\sigma$.
\end{lemma}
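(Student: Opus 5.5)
The plan is to read off the statement directly from formula~\eqref{eq:starsigma}, which writes $\Star_x(\sigma)$ in terms of two operations that each commute with limits and filtered colimits: evaluation at a fixed object and a fiber product over a point. Throughout, $M$ and $x$ are fixed. A morphism $x$ into a diagram of functors is interpreted componentwise: for a diagram $i \mapsto \sigma_i$ with $\sigma = \varprojlim \sigma_i$, a point $x \in \sigma(M)$ is a compatible family $(x_i)$. For a filtered colimit $\sigma = \varinjlim \sigma_i$, the point $x$ is represented by some $x_{i_0} \in \sigma_{i_0}(M)$, and we restrict to the cofinal subdiagram under $i_0$, where the images $x_i$ are defined.

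\emph{Limits.} Limits of functors are computed objectwise. Hence
\[
\sigma(M[\varepsilon N]) = \varprojlim_i \sigma_i(M[\varepsilon N])
\quad\text{and}\quad
\sigma(M) = \varprojlim_i \sigma_i(M).
\]
The fiber of $\varprojlim_i \sigma_i(M[\varepsilon N]) \to \varprojlim_i \sigma_i(M)$ over the compatible family $(x_i)$ is the set of compatible families of elements lying over the respective $x_i$. This is exactly $\varprojlim_i \bigl(\sigma_i(M[\varepsilon N]) \times_{\sigma_i(M)} \{x_i\}\bigr)$, since limits commute with limits. The identification is natural in $N$, because the maps $N \to N'$ act through $M[\varepsilon N] \to M[\varepsilon N']$ over the identity of $M$. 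This gives $\Star_x(\varprojlim \sigma_i) = \varprojlim \Star_{x_i}(\sigma_i)$.

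\emph{Filtered colimits.} Colimits are again computed objectwise, so
\[
\sigma(M[\varepsilon N]) = \varinjlim_i \sigma_i(M[\varepsilon N])
\quad\text{and}\quad
\sigma(M) = \varinjlim_i \sigma_i(M).
\]
Filtered colimits of sets commute with finite limits, and a fiber product $A \times_B \{x\}$ is a finite limit of the diagram $A \to B \leftarrow \{x\}$. Taking the constant diagram $\{x_i\}$ over $i \geq i_0$, we therefore get
\[
\varinjlim_i \bigl(\sigma_i(M[\varepsilon N]) \times_{\sigma_i(M)} \{x_i\}\bigr) \;\cong\; \sigma(M[\varepsilon N]) \times_{\sigma(M)} \{x\},
\]
naturally in $N$.

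The only point needing care is the bookkeeping of the basepoint $x$ in the filtered case. One has to check that two representatives of $x$ give canonically isomorphic answers. This holds because the subdiagrams under any two choices of $i_0$ are both cofinal, so they produce the same colimit. No finiteness hypotheses on $M$ or $N$ enter anywhere, so the argument works equally for functors on fine, saturated monoids and on partially ordered groups.
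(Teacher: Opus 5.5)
Your proposal is correct and takes the same approach as the paper. The paper's proof just cites that limits commute with limits and filtered colimits commute with finite limits \cite[\S IX.2]{MacLane}; you have made the objectwise computation, the naturality in $N$, and the basepoint bookkeeping explicit.
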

\begin{proof}
    This follows from a general statement about the commutativity of limits and colimits \cite[\S IX.2]{MacLane}.
\end{proof}

\begin{lemma}\label{lem:star_cone}
    The star of a (weakly convex) cone is again a (weakly convex) cone.
\end{lemma}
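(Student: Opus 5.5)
We compute the star directly from the definitions, treating first the absolute case of a cone given by a fine, saturated partially ordered group $\amb{R}$, so that $\sigma(M) = \Hom(\amb{R}, \amb{M})$ in the category of partially ordered groups. The relative case over $\bar M_S$ follows by the same computation carried out compatibly with the structure map. A point $x \in \sigma(M)$ is an order-preserving homomorphism $x : \amb{R} \to \amb{M}$. A lift of $x$ to $\amb{M[\varepsilon N]} = \amb{M} \oplus \varepsilon \amb{N}$ is a homomorphism $r \mapsto x(r) + \varepsilon y(r)$, where $y : \amb{R} \to \amb{N}$ is any group homomorphism subject to positivity. By the definition of the order on $\amb{M[\varepsilon N]}$, the condition is: for every $r \in R$, either $x(r) > 0$, or $x(r) = 0$ and $y(r) \geq 0$. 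Since $x$ is already order-preserving, this reduces to requiring $y(r) \geq 0$ for all $r \in R \cap x^{-1}(0)$.

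Next we identify this condition with a localization. Let $F = R \cap x^{-1}(0)$, a face of $R$, since $x$ is order-preserving and $\amb{M}$ is partially ordered. Let $\amb{R}_x$ be the group $\amb{R}$ with positive cone $R[-F]$, the localization of $R$ along $F$. Then $y$ must be nonnegative on $R$ (since $R \subset R[-F]$ and $y$ is nonnegative on $F$) and also on $-F$? This is wrong as stated, and must be corrected: $y$ is only required to be nonnegative on $F$, not on all of $R$. The correct positive cone is therefore the monoid generated by $F$ together with the subgroup on which there is no constraint. Concretely, the condition "$y \geq 0$ on $F$" with no condition on $R \setminus F$ says that $y$ is order-preserving for the order on $\amb{R}$ whose positive cone is $F$.

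So $\Star_x(\sigma)$ is represented by the partially ordered group $(\amb{R}, F)$, that is, the weakly convex cone $\Theta_{(\amb{R}, F)}$. Its lineality space is dual to $\amb{R}/F^{\rm gp}$, matching the toric picture in which the star of a face $\tau$ is the quotient fan, enlarged by the span directions. It remains to check that this object is a fine, saturated partially ordered group. $F$ is a face of a fine saturated monoid, so it is fine and saturated. The order is a partial order because $F$ is sharp whenever $R$ is sharp; in the weakly convex setting we have $R \cap -R = 0$, so $F \cap -F = 0$. This yields a weakly convex cone. If one insists that the star of a strictly convex cone be strictly convex, this fails in general: a lineality space appears. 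That is why the statement carries the parenthetical "weakly convex."

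In the relative setting, both $\amb{R}$ and $\amb{M}$ come with maps from $\bar M_S^{\rm gp}$. We must check that the base structure passes to $F$: the relevant structure is $\bar M_S \cap x^{-1}(0)$ mapping into $F$, and saturation of $\bar M_S \to F$ is inherited from localization. Saturation is stable under localization, by Tsuji's results cited earlier. The main obstacle is bookkeeping. First, the star must be interpreted over the correct base, namely the star of the base point. Second, the identification must be made functorially in $N$, and sheafily after working étale locally on $S$ using charts, with the quotient step verified there.
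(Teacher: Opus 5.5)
This is the paper's own argument: unwinding the order on $\amb{M[\varepsilon N]}$, a lift $x+\varepsilon y$ is order-preserving exactly when $y\geq 0$ on $R\cap x^{-1}(0)$, so $\Star_x(\sigma)$ is represented by the same group $\amb{R}$ with positive cone $R\cap\ker(x)$ (the aborted detour through $R[-F]$ should simply be deleted). The paper only proves the absolute case. In your relative aside, the map $\bar M_S\cap x^{-1}(0)\to R\cap x^{-1}(0)$ is a restriction to faces, not a localization, so the citation to stability of saturation under localization does not apply; saturation does still pass to it, by checking the equational criteria directly, because $F$ is a face whose preimage in $\bar M_S$ is $\bar M_S\cap x^{-1}(0)$.
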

\begin{proof}
    If $\sigma$ is representable by a fine partially ordered abelian group $\amb{R}$ by the formula
\begin{equation*}
	\sigma(M) = \Hom(\amb{R}, M^{\rm gp})
\end{equation*}
then, unraveling \eqref{eq:starsigma}, we get that $\Star_x({\sigma})$  is also representable by a partially ordered abelian group $\amb{Q}$ with the same underlying group as $\amb{R}$, but with $a \leq b$ in $\amb{Q}$ if $a \leq b$ in $R$ and $x(a) = x(b)$.
\end{proof}
\begin{remark}
    Given $(S,M_S)$ a fine and logarithmic scheme with a chart $\varphi\colon P\to \mathcal O_S$ we will denote by $\Star_x(S)$ the log scheme $(S,M'_S)$ where $M'_S$ is the log structure associated to $Q\hookrightarrow P\xrightarrow{\varphi}\mathcal O_S$ and $Q$ is the monoid described in the proof of \Cref{lem:star_cone}.
\end{remark}

\begin{definition}
    Let $\sigma$ be the covariant functor on fine, saturated monoids represented by a saturated partially ordered abelian group $\amb{R}$.  A \emph{subdivision} of $\sigma$ is a subfunctor $\Sigma$ of $\sigma$ such that there is a finite family of subfunctors $\rho_i \subset \Sigma$ such that
\begin{enumerate}
	\item \label{it:2} each $\rho_i$ is representable by a finitely generated refinement of the partial order on $\amb{R}$, 
	\item \label{it:16} for every $i$ and $j$, the intersection $\rho_i \cap \rho_j$ is a face of each, and
	\item \label{it:3} if $M$ is a valuative monoid then $\Sigma(M) \to \sigma(M)$ is a bijection.
\end{enumerate}
\end{definition}

\begin{lemma} \label{lem:1}
	Let $\sigma$ be a fine weakly convex Olsson precone. 
    Let $\Sigma$ be a subdivision of $\sigma$.  Let $x$ be an $M$-point of $\Sigma$ ; we shall denote the induced point of $\sigma$ by the same letter. 
    Then $\Star_x(\Sigma)$ is a subdivision of $\Star_x(\sigma)$.
\end{lemma}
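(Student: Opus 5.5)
We check the three conditions in the definition of a subdivision for $\Star_x(\Sigma) \subset \Star_x(\sigma)$, using the candidate family $\Star_x(\rho_i)$. One caveat: we only take those $i$ for which $x$ lies in $\rho_i(M)$; for the other indices the star is empty.

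\textbf{Subfunctor.} Since $\Sigma \subset \sigma$ is a subfunctor, lifts of $x$ to $\Sigma(M[\varepsilon N])$ inject into lifts of $x$ to $\sigma(M[\varepsilon N])$. Hence $\Star_x(\Sigma)$ is a subfunctor of $\Star_x(\sigma)$. Likewise each $\Star_x(\rho_i)$ is a subfunctor of $\Star_x(\Sigma)$.

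\textbf{Condition (1).} Each $\rho_i$ is represented by a finitely generated refinement $\amb{R}_i$ of the order on $\amb{R}$. By \Cref{lem:star_cone}, $\Star_x(\rho_i)$ is represented by the same group, with order $a \le b$ iff $a \le_i b$ and $x(a) = x(b)$. This is a refinement of the order representing $\Star_x(\sigma)$, which is $a \le b$ in $R$ with $x(a) = x(b)$. Finite generation follows because the new positive cone is the face $\{r \in R_i : x(r) = 0\}$ of $R_i$, and faces of fine monoids are fine.

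\textbf{Condition (2).} The star construction commutes with limits (the preceding lemma), so
\[
\Star_x(\rho_i) \cap \Star_x(\rho_j) = \Star_x(\rho_i \cap \rho_j).
\]
It remains to see that the star of a face morphism is a face morphism. The kernel of the localization $\amb{R}_i \to \amb{R}_{ij}$ is generated by elements $p \ge 0$ of $R_i$. Since $x$ factors through $\rho_{ij}$, these satisfy $x(p) = 0$, so they are nonnegative in the starred order. Thus the same kernel is generated by starred-positive elements.

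\textbf{Condition (3).} Let $N$ be valuative. The key claim is that $M[\varepsilon N]$ is then valuative on the relevant part. The order on $M[\varepsilon N]$ is lexicographic, so the only way totality could fail is through incomparable elements of $M$. If $M$ is not valuative, I would reduce to that case by the following argument. Given a lift $\tilde x \in \sigma(M[\varepsilon N])$ over $x \in \Sigma(M)$, $x$ lies in some $\rho_i(M)$, and we must show $\tilde x$ lies in some $\Star_x(\rho_i)(N)$.

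Here I would use the description of points as order homomorphisms $\tilde x\colon \amb{R} \to M^{\rm gp} \oplus \varepsilon N^{\rm gp}$. The first component is $x$. The second component $y$ is a group homomorphism that is nonnegative on $\{r \in R : x(r) = 0\}$, so it defines a point of $\Star_x(\sigma)(N)$, which the cone over $\sigma$ detects. Now $y$ defines an $N$-point of the cone $\Star_x(\sigma)$. Applying condition (3) for the subdivision $\Sigma$ to a valuative monoid built from $y$ and a valuation of $M$ through which $x$ factors, I obtain some $\rho_j$ containing both. Concretely, I would use $N[\varepsilon' \cdot]$-type lexicographic products: take a valuation $v$ of $M$ and form $V = v(M)[\varepsilon N]$, which is valuative, so $\tilde x$ composed into $V$ lies in some $\rho_j(V)$. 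I would then descend back to $M[\varepsilon N]$ using that $\rho_j$ is a cone cut out by finitely many inequalities whose $\varepsilon$-parts only matter where $x$ vanishes.

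\textbf{Main obstacle.} The valuative condition (3) is the hardest step, in particular the descent from $V$ back to $M$ when $M$ is not valuative. The first thing I would try is showing that the set of $j$ with $x \in \rho_j$ and $y$ in the starred $\rho_j$ is independent of the chosen valuation. Alternatively, since condition (3) only quantifies over valuative $N$ for the star functor, it may suffice to test with $M$ replaced by its image under $x$ together with compatible valuations.
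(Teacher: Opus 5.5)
\textbf{Conditions (1) and (2).} Your handling of these is fine and essentially the paper's. For finite generation, $R_i\cap\ker(x)$ is a face of $R_i$, because $x(R_i)\subset M$ with $M$ sharp. The paper instead intersects a rational polyhedral cone with the rational subspace $\ker(x)$. Your ``star of a face is a face'' amounts to the paper's computation $(R_i\cap\ker x)+(R_j\cap\ker x)=(R_i\cap\ker x)[-f]$ with $f\in R_i\cap\ker x$.

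\textbf{Condition (3): the gap.} You leave this open, and the reduction you sketch does not work as written. If $v\colon M\to V$ is an arbitrary valuation, the map $M[\varepsilon N]\to V[\varepsilon N]$ need not preserve the order. Take $m>0$ with $v(m)=0$ and $n<0$: then $m+\varepsilon n\geq 0$ but $v(m)+\varepsilon n<0$. So $\tilde x$ composed into $V[\varepsilon N]$ need not be a point of $\sigma$ at all. Also, $v(M)[\varepsilon N]$ is not valuative in general, since the image $v(M)$ is not. Valuativity of $M[\varepsilon N]$ ``on the relevant part'' is not what is needed either.

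\textbf{The missing idea.} Take $v$ \emph{local}, so that $m>0\iff v(m)>0$ for $m\in M$. Then $M[\varepsilon N]\to V[\varepsilon N]$ is a homomorphism, $V[\varepsilon N]$ is valuative, and $M[\varepsilon N]\cong M\times_V V[\varepsilon N]$. Concretely, an element $m+\varepsilon n$ with $m\in M$ is $\geq 0$ if and only if $v(m)+\varepsilon n\geq 0$. The argument then runs as follows.
\begin{enumerate}
\item By (3) for $\Sigma$, the composite point lies in some $\rho_j(V[\varepsilon N])$, hence $v\circ x\in\rho_j(V)$.
\item To descend you also need $x\in\rho_j(M)$. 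You know $x\in\rho_i(M)$ for some $i$, and by (2) $\rho_i\cap\rho_j=\rho_i\cap\{f=0\}$ with $f\in R_i$. Since $v\circ x\in(\rho_i\cap\rho_j)(V)$, we get $v(x(f))=0$, so $x(f)=0$ by locality, and therefore $x\in\rho_j(M)$.
\item For $r\in R_j$ we now have $x(r)\in M$ and $v(x(r))+\varepsilon y(r)\geq 0$. The fibre-product description gives $\tilde x(r)\geq 0$, i.e.\ $\tilde x\in\Star_x(\rho_j)(N)$.
\end{enumerate}
This cartesian-square argument is exactly the paper's proof of (3). Your fallback ideas (showing the set of $j$ is independent of the valuation, or replacing $M$ by the image of $x$) do not supply it.
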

\begin{proof}
	Let $\amb{R}$ be the finitely generated partially ordered abelian group corresponding to $\sigma$, and let $R$ the submonoid of elements $\geq 0$.  Suppose that $\Sigma = \bigcup \rho_i$ where each $\rho_i$ is representable by a finitely generated refinement $\amb R_i$ of the partial order of $\amb{R}$.  Let $x : \amb{R} \to M^{\rm gp}$ be the given point of $\sigma$.  Let $\amb Q$ be the partially ordered group representing $\Star_x(\sigma)$ given in Lemma~\ref{lem:star_cone}. In particular we saw that the underlying group of $\amb Q$ is the same as that of $\amb{R}$, and the submonoid $Q$ of elements $\geq 0$ is $R \cap \ker(x)$.  Since $R$ is the intersection of a rational polyhedral cone in $\amb{R} \otimes \mathbf R$ with $\amb{R}$, and since $\ker(x)$ is a rational subspace of $\amb R$, it follows that $Q = R \cap \ker(x)$ is also the set of integral points of a rational polyhedral cone, hence is finitely generated as a monoid.  The same reasoning applies to each of the $R_i$, and $R_i \cap \ker(x)$ is then a finitely generated extension of $R \cap \ker(x)$.  This shows that
	\begin{equation*}
		\Star_x(\Sigma) = \bigcup_{x \in \rho_i} \Star_x({\rho_i})
	\end{equation*}
	is the union of finitely many functors that are representable by finitely generated extensions of the partially ordered abelian group representing $\Star_x(\sigma)$.

	Next, we show that $\Star_x({\rho_i}) \cap \Star_x({\rho_j})$ is a face of each of $\Star_x({\rho_i})$ and $\Star_x({\rho_j})$.  To have $\Star_x({\rho_i})$ and $\Star_x({\rho_j})$ both be nonempty, it is necessary that $x$ factor through $\rho_i \cap \rho_j$, which is a face of both $\rho_i$ and $\rho_j$, by assumption.  That is, there is some $f \in \amb R$ that is contained in both $R_i$ and $-R_j$ such that $\rho_i \cap \rho_j = \rho_i \cap \{ f = 0 \} = \rho_j \cap \{ f = 0 \}$.  In other words, $R_i+R_j=R_i[-f]=R_j[f]$, and $x^\ast f = 0$.  Thus,
	\begin{equation*} \begin{aligned}
		( R_i \cap \ker(x) ) + ( R_j \cap \ker(x) )
		& \supset ( R_i \cap \ker(x) )[-f] \\
		& = R_i[-f] \cap \ker(x) \\
		& = ( R_i + R_j ) \cap \ker(x) \\
		& \supset ( R_i \cap \ker(x) ) + ( R_j \cap \ker(x) )
	\end{aligned}
	\end{equation*}
	so
	\begin{equation*}
		( R_i \cap \ker(x) ) + ( R_j  \cap \ker(x) )
		= ( R_i \cap \ker(x) )[-f] .
	\end{equation*}
	Since $( R_i \cap \ker(x) ) + ( R_j \cap \ker(x) )$ represents $\Star_x({\rho_i}) \cap \Star_x({\rho_j})$, and since $f \in R_i \cap \ker(x)$, this shows that $\Star_x({\rho_i}) \cap \Star_x({\rho_j})$ is a face of $\Star_x({\rho_i})$.  By symmetry, it is also a face of $\Star_x({\rho_j})$.

	To conclude the proof, we show that every valuative point of $\Star_x(\sigma)$ lifts to $\Star_x(\Sigma)$.  Suppose that $\amb{R}\to M[\varepsilon N]$ is a valuative point of $\Star_x\sigma)$ (so $N$ is a valuative monoid).  Choose a local valuation $M \to V$.  Then the diagram
	\begin{equation} \label{eqn:3} \begin{tikzcd}
	M[\varepsilon N] \ar[r] \ar[d] & V[\varepsilon N] \ar[d] \\
		M \ar[r] & V
	\end{tikzcd}
	\end{equation}
	is cartesian and $V[\varepsilon N]$ is valuative.  Let $y$ be the $V$-point of $\sigma$ induced by $M \to V$.  Then any $N$-valued point of $\Star_x(\sigma)$ induces a point of $\Star_y(\sigma)$ by composition with the upper horizontal arrow in the diagram~; this is equivalently a $V[\varepsilon N]$-valued point of $\sigma$, and this lifts uniquely to a $V[\varepsilon N]$-valued point of $\Sigma$ by hypothesis.  But then since~\eqref{eqn:3} is cartesian, this is induced from a uniquely determined point of $\Star_x(\Sigma)$ that projects as required to $\Star_x(\sigma)$.
\end{proof}

 \begin{lemma}\label{lem:exact_stars}
     Let $f\colon\sigma\to\sigma'$ be an integral morphism of weakly convex Olsson precones, and let $x\colon\tau\to\sigma$ a morphism from an Olsson precone. Then $f$ is universally surjective if and only if $\on{Star}_x(f)$ is.
 \end{lemma}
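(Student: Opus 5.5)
We reduce the statement to the monoid-theoretic characterization of universal surjectivity, namely exactness (\Cref{lem:univSurj} and \Cref{Nakayama-exact}), together with the explicit description of stars in \Cref{lem:star_cone}.

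\textbf{Step 1: translate into monoids.} Work strict \'etale locally on the base, so that $\sigma$ and $\sigma'$ are represented by fine, saturated partially ordered groups. The morphism $f$ is dual to a homomorphism $h\colon \amb{R'} \to \amb{R}$, with integral underlying monoid homomorphism $R' \to R$. The point $x$ gives $x\colon \amb R \to M^{\rm gp}$, and hence $x' = x\circ h$. By \Cref{lem:star_cone}, $\Star_x(f)$ is dual to the homomorphism
\begin{equation*}
R' \cap \ker(x') \longrightarrow R \cap \ker(x)
\end{equation*}
on the same underlying groups. Since $\ker(x')\subset R'^{\rm gp}$ is exactly the set of elements whose image lies in $\ker(x)$, this is a localization of $h$, or more precisely its restriction to faces. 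By \Cref{lem:univSurj} it therefore suffices to show that $R'\to R$ is exact if and only if the induced map on these $\ker$-localized monoids is exact. The lineality parts are handled separately: by \Cref{prop:polyhedral domain} they are tropical torus torsors, which are universally surjective, so they do not affect surjectivity.

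\textbf{Step 2: exact implies star exact.} Suppose $y \in \ker(x')\cap R'^{\rm gp}$ and $h(y) \in R\cap\ker(x)$. Then $h(y)\ge 0$ in $R$, so exactness gives $y \in R'$. Together with $x'(y)=0$ this places $y$ in $R'\cap\ker(x')$. This direction needs neither integrality nor any localization argument.

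\textbf{Step 3: star exact implies exact.} This is where integrality enters, via \Cref{prop:1}: integral implies that every localization of $R\to R'$ along faces is exact, and conversely one wants exactness of the global map. Geometrically, dual cone maps are surjective (\Cref{cor:11}), and $\Star_x$ looks at the neighbourhood of the face containing $x$. The approach is to use \Cref{prop:6}: given a valuation $v$ of $R'$, we must extend it to $R$. First compose $v$ with the star: the infinitesimal extension $M[\varepsilon N]$ makes a valuation of $R'\cap\ker(x')$ into a valuation of $R'$ near $x'$. More precisely, consider valuations $V[\varepsilon N]$ as in the proof of \Cref{lem:1}, with cartesian diagram \eqref{eqn:3}. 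A valuation of $\sigma'$ lifting the point $x'$ infinitesimally corresponds to a point of $\Star_{x'}(\sigma')$. Star surjectivity lifts it to $\Star_x(\sigma)$, hence to $\sigma$, so surjectivity holds over the part of $\sigma'$ lying over the image of $x$. To reach all of $\sigma'$, use integrality: by \Cref{prop:1}\ref{it:10}, faces of $\sigma$ map onto faces of $\sigma'$, and the image of $f$ is a union of faces. Surjectivity onto the star of $f(x)$ then forces the image to contain every face of $\sigma'$ that contains $f(x)$, in particular the whole cone when $\sigma'$ is a cone, since the star of any point contains the unique maximal face.

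\textbf{Main obstacle.} I expect Step 3 to be the hardest part, specifically promoting surjectivity near $x$ to global surjectivity. The plan is to argue that the image of an integral morphism is closed under generization in the face poset, because dual maps of cones are open, and that the star surjectivity makes the image contain an open neighbourhood of $f(x)$, which meets the interior. Openness and convexity then give everything.
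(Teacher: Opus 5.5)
Your Step~2 is exactly the paper's argument for the forward direction. The gap is in Step~3, which you rightly flag. As written it does not give a proof, and the globalisation you propose rests on false claims.

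First, the valuation discussion does not help. An arbitrary valuation $v$ of $R'$ is a point of $\sigma'$ with no reason to be infinitesimally near $x'$, so surjectivity of $\Star_x(f)$ says nothing about lifting it.

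Second, integral morphisms of cones are not open, and their images are not closed under generization in the face poset. The inclusion of a ray as a face of $\mathbf R_{\geq 0}^2$ is dual to the projection $\NN^2\to\NN$, $(a,b)\mapsto a$. That projection is integral, yet its image is a proper face.

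Third, ``a neighbourhood of $f(x)$ plus convexity'' cannot suffice. Let $\sigma\subsetneq\sigma'$ be a full-dimensional subcone of a strictly convex cone and $x$ interior to $\sigma$. Both stars are the whole ambient torus and $\Star_x(f)$ is an isomorphism, while $f$ is not surjective (such an $f$ is not integral).

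What integrality actually provides is that $f(\sigma)$ is itself a face of $\sigma'$ (\Cref{prop:1}\ref{it:10} applied to the trivial localization). Your sentence ``faces of $\sigma$ map onto faces of $\sigma'$'' gestures at this. With it the geometric picture could be repaired, but only after passing to real cones via rank-one valuations.

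The paper needs none of this. You already observed that $\ker(x')=h^{-1}(\ker x)$, and that is the whole point:
\begin{itemize}
\item If $\alpha\in R'$ and $h(\alpha)=0$, then $x'(\alpha)=0$, so $\alpha$ lies in the star monoid $R'\cap\ker(x')$ and maps to $0$.
\item The star map is exact, hence local (\Cref{rmk:exact-local}), and $R'\cap\ker(x')$ is sharp, so $\alpha=0$.
\item Thus $R'\to R$ is local, and a local integral homomorphism is exact (\Cref{rmk:integral-exact}).
\end{itemize}
Geometrically, integrality makes $f(\sigma)$ a face of $\sigma'$, and locality says that face is not proper. This is the correct form of the globalisation you were after.

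Separately, the remark in Step~1 that lineality spaces ``do not affect surjectivity'' is false as stated. The inclusion of a point into a tropical line becomes an isomorphism after dividing out lineality spaces, but it is not surjective. That reduction should be dropped rather than relied on.
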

 \begin{proof}
     Dually, let us start from morphisms of partially ordered abelian groups:
     \[\amb{R}\xrightarrow{F}\amb{(R')}\xrightarrow{X}M^{\rm gp}.\]
     Recall from \Cref{lem:star_cone} that $\on{Star}_x({\sigma})$ is represented by the monoid $Q=R\cap (X\circ F)^{-1}(0)$ (within the same partially ordered group $\amb{R}$), and similarly for $Q'$.

     First, assume that $R\to R'$ is exact. Then
     \[ F^{-1}(Q') = F^{-1}(X^{-1}(0) \cap R') = (X \circ F)^{-1}(0) \cap F^{-1}(R') = (X \circ F)^{-1}(0) \cap R = Q, \]
     so $Q\to Q'$ is exact as well.

     Conversely, assume that $Q\to Q'$ is exact. Since $R\to R'$ is integral, it is enough to prove that it is local in order for it to be exact (\Cref{rmk:integral-exact}). So let $\alpha\in R$ be such that $F(\alpha)=0$. But then $\alpha$ is in $Q$, so since $Q\to Q'$ is local and $Q$ is sharp we conclude that $\alpha=0$.
 \end{proof}
 
\section{On toric vector bundles}\label{sec:overview}
\tocdesc{reviews results of Klyachko, Perling, and Payne on toric vector bundles and their moduli}

We review some aspects of the theory of torus-equivariant sheaves on toric varieties, which are equivalently quasicoherent sheaves on Olsson cones (or, more generally, subdivisions of weakly convex cones) over a trivial logarithmic point.
This topic has drawn attention in the past fifty years \cite{Kaneyama,Klyachko,Perling,Payne,PayneBranched,DRJS,KavehManon,kaveh2025toricvectorbundlesdiscrete}, including connections with physics \cite{FLTZ}.  We summarize some of these results in this section before generalizing them to Olsson cones in the next.


\subsection{Equivariant sheaves as graded modules} \label{sec:2}

We start from the affine case. Fix a base field $k$. Let $P$ be a fine, saturated, sharp monoid, let $\sigma=\on{Hom}_{\on{Mon}}(P,\mathbf R_{\geq0})$ be the corresponding strictly convex rational polyhedral cone, and let $X_P$ (or $X_\sigma$) denote the associated affine toric variety $\on{Spec}(k[P])$, with dense torus $T=\Hom(P^{\mathrm gp},\Gm)$. Let $\mathcal E$ be a $T$-equivariant  quasicoherent sheaf on $X_P$, or equivalently a quasicoherent sheaf on the Artin cone $\Acal_P=[X_P/T]$ (or $\Acal_\sigma$).




A $T$-equivariant  quasicoherent sheaf on $X_P$ is the same as a $P^{\rm gp}$-graded $k[P]$-module.
Let $p\colon \Acal_P\to \mathrm BT$ be the natural structure morphism, and $\pi$ the composition of the latter with $\mathrm BT\to\Spec k.$ Then pushing forward along $p$ forgets the $k[P]$-module structure (Weil restriction), so $p_*\mathcal E$
 is a $P^{\rm gp}$-graded $k$-vector space. Pushing forward further to the point corresponds to taking the degree $0$ part of this graded vector space. We may therefore write the pieces of the isotypical decomposition as follows: for all $\delta\in P^{\rm gp}$ we have
\[E_{\delta}^\sigma \coloneq (p_*\mathcal E)_{\delta}=\pi_*(\mathcal E(\delta))=H^0(X_P,\mathcal E(\delta))^T.\] 
\begin{remark}
    If $\Ecal$ is coherent, then $E_{\delta}^\sigma$ is a vector space of finite dimension for every $\delta\in P^{\rm gp}$. So, despite being far from proper, $\Acal_P$ satisfies a form of the coherent pushforward property.
\end{remark}

For every $\delta_1\leq\delta_2$ in $P^{\rm gp}$ (meaning $\delta_2-\delta_1$ lies in the monoid $P$) the logarithmic structure induces a map of sheaves $\OO_{X_P}(\delta_1)\to \OO_{X_P}(\delta_2)$ between the corresponding line bundles, in turn inducing a linear map of $k$-vector spaces:
\[\chi^{\delta_2}_{\delta_1}\colon E^{\sigma}_{\delta_1}\to E^{\sigma}_{\delta_2}. \]
This encodes the structure of $k[P]$-module on $E = \bigoplus_{\delta} E^\sigma_\delta$. 
We now summarise the previous discussion in the notation of \cite{Perling}~:

\begin{definition}[{\cite[Definitions~5.1, 5.2, 5.4]{Perling}}] \label{def:4}
A $\sigma$-\emph{family} $E^{\sigma}$ or $P$-\emph{filtered vector space} is a collection $\left\{E^{\sigma}_{\delta}\right\}_{\delta\in P^{\rm gp}}$ of $k$-vector spaces, together with a collection of linear maps $\chi^{\delta_2}_{\delta_1}\colon E^{\sigma}_{\delta_1}\to E^{\sigma}_{\delta_2}$ whenever $\delta_1\leq\delta_2$ in $P^{\rm gp}$ in the partial ordered induced by $P$, subject to the compatibility condition $\chi^{\delta_3}_{\delta_2}\circ\chi^{\delta_2}_{\delta_1}=\chi^{\delta_3}_{\delta_1}$ for $\delta_1\leq\delta_2\leq\delta_3$ in $P^{\rm gp}$. We call the $\chi^{\delta}$ the \emph{multiplication maps}.

\end{definition}

\begin{proposition}[{\cite[Proposition~5.5]{Perling}}]\label{prop:equivariantsheaves}
The following categories are equivalent~:
\begin{itemize}
\item $T$-equivariant quasicoherent sheaves over $X_P$~;
\item $P^{\rm gp}$-graded $k[P]$ modules (morphisms are graded homomorphisms of degree $0$)~;
\item $\sigma$-families.
\end{itemize}
\end{proposition}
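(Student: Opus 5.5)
The plan is to prove the two equivalences by going through the intermediate category of $P^{\rm gp}$-graded $k[P]$-modules. The first equivalence is descent for the quotient stack $\Acal_P=[X_P/T]$. The second is a direct repackaging of the grading and the module structure.

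\emph{Step 1 (sheaves to graded modules).} Since $X_P=\Spec k[P]$ is affine, quasicoherent sheaves on $X_P$ are the same as $k[P]$-modules $E$. A $T$-equivariant structure is an isomorphism $a^\ast\mathcal E\simeq \mathrm{pr}^\ast\mathcal E$ on $T\times X_P$ satisfying the cocycle condition. This amounts to a coaction $E\to E\otimes_k k[P^{\rm gp}]$ that is compatible with the coaction on $k[P]$ and satisfies coassociativity and counitality. Because $T$ is a split torus, comodules over $k[P^{\rm gp}]$ are exactly $P^{\rm gp}$-graded vector spaces: one sets $E_\delta=\{e : e\mapsto e\otimes \chi^\delta\}$ and checks that $E=\bigoplus_\delta E_\delta$ using the counit and coassociativity. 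Compatibility of the coaction with the module structure says exactly that $x^p E_\delta\subset E_{\delta+p}$. Equivariant morphisms correspond to comodule maps, hence to degree-$0$ graded homomorphisms. Pullback along a morphism and the descent data match, so the functor is fully faithful and essentially surjective.

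\emph{Step 2 (graded modules to $\sigma$-families).} Given a graded module $E$, set $E^\sigma_\delta=E_\delta$. For $\delta_1\le\delta_2$, define $\chi^{\delta_2}_{\delta_1}$ to be multiplication by $x^{\delta_2-\delta_1}$, where $\delta_2-\delta_1\in P$. The composition rule $\chi^{\delta_3}_{\delta_2}\chi^{\delta_2}_{\delta_1}=\chi^{\delta_3}_{\delta_1}$ follows from $x^{p}x^{p'}=x^{p+p'}$, and $\chi^\delta_\delta=\mathrm{id}$ follows from $x^0=1$. Sharpness of $P$ makes $\le$ a genuine partial order, so each $\chi$ is indexed unambiguously. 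Conversely, a $\sigma$-family yields $E=\bigoplus_\delta E^\sigma_\delta$ with $x^p$ acting on $E^\sigma_\delta$ by $\chi^{\delta+p}_\delta$. The module axioms reduce to the compatibility condition, together with linearity over $k$ on generators $x^p$, since $k[P]$ is the monoid algebra. Morphisms of families, meaning collections of maps commuting with the $\chi$'s, correspond to degree-$0$ module maps. The two constructions are visibly mutually inverse.

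The only step with real content is Step 1, namely the identification of $T$-comodules with graded vector spaces, which uses that $T$ is diagonalizable. I would cite this standard fact (SGA3, Exp.~I), or spell out the decomposition using the counit. Everything else is bookkeeping. This also matches the description $E^\sigma_\delta=H^0(X_P,\mathcal E(\delta))^T$ given above.
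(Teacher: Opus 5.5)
Your proposal is correct and follows the route the paper indicates before citing Perling: a $T$-equivariant structure on a $k[P]$-module is a $P^{\rm gp}$-grading because $T$ is diagonalizable, and the $\sigma$-family is read off from the graded pieces, with $\chi^{\delta_2}_{\delta_1}$ given by multiplication by $x^{\delta_2-\delta_1}$. The paper phrases the same data as $\pi_\ast(\mathcal E(\delta))$ and the maps induced by $\mathcal O(\delta_1)\to\mathcal O(\delta_2)$.

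One point needs correcting in the converse direction. The unit axiom $x^0e=e$ requires $\chi^\delta_\delta=\mathrm{id}$. This condition is part of Perling's definition of a $\sigma$-family but is omitted from Definition~\ref{def:4} as written. So you should impose it explicitly rather than claim the module axioms follow from the compatibility condition alone: for example, the family with all $\chi=0$ satisfies compatibility but comes from no unital module.
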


In order to treat general toric varieties, we need to discuss how quasicoherent sheaves pull back along torus-equivariant maps, keeping in mind the important special case of face morphisms (see \cite[Definition 5.6, Proposition 5.7]{Perling}).

\begin{proposition}\label{prop:pullback}
	Let $\mm\colon P\to Q$ be a morphism of fine, saturated, sharp monoids, and let $f\colon X_Q\to X_P$ denote the corresponding equivariant morphism of affine toric varieties. Let $\Ecal$ be a $T_P$-equivariant quasicoherent sheaf on $X_P$ and let $E$ be the corresponding graded $k[P]$-module. Up to isomorphism, the pullback equivariant quasicoherent sheaf $f^*\Ecal$ on $X_Q$ can be described as follows:
    \begin{itemize}
        \item As a $Q^{\rm gp}$-graded $k[Q]$-module~: it is the graded tensor product $E\otimes_{k[P]}k[Q]$.
        \item As a $\sigma_Q$-family~: the grading is
					\[(f^*E)_\delta=\underset{\mm(\gamma)\leq\delta}{\varinjlim} E_{\gamma}\]
        with the naturally induced multiplication maps.
    \end{itemize}
\end{proposition}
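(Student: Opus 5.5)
The plan is to reduce both descriptions to the graded-module dictionary of \Cref{prop:equivariantsheaves}, and then compute the graded pieces of the tensor product directly.

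First, the module description. The morphism $f\colon X_Q\to X_P$ is affine, induced by the ring map $k[P]\to k[Q]$. Hence, forgetting equivariance, $f^*\Ecal$ corresponds to the $k[Q]$-module $E\otimes_{k[P]}k[Q]$. The equivariant structure is pulled back along the homomorphism $T_Q\to T_P$ dual to $\mm^{\rm gp}$. On coactions this means: if $e\in E_\gamma$ has weight $\gamma\in P^{\rm gp}$ and $x^q$ has weight $q\in Q^{\rm gp}$, then $e\otimes x^q$ has weight $\mm(\gamma)+q$. This grading is compatible with the relations $ex^p\otimes 1=e\otimes x^{\mm(p)}$, because both sides have degree $\mm(\gamma+p)$. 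The tensor product is therefore a well-defined $Q^{\rm gp}$-graded $k[Q]$-module. The standard description of pullback of equivariant sheaves along equivariant affine morphisms identifies it with $f^*\Ecal$, so the first bullet follows.

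Second, I compute the degree-$\delta$ part. As a $k$-vector space,
\[
E\otimes_{k[P]}k[Q]=\Bigl(\bigoplus_{\gamma\in P^{\rm gp},\,q\in Q}E_\gamma\, x^q\Bigr)\Big/\bigl(ex^p\otimes x^q\sim e\otimes x^{\mm(p)+q}\bigr).
\]
Fix $\delta\in Q^{\rm gp}$. The summands of degree $\delta$ are indexed by pairs $(\gamma,q)$ with $\mm(\gamma)+q=\delta$. Such a pair is the same as an element $\gamma$ with $\mm(\gamma)\le\delta$, since $q=\delta-\mm(\gamma)$ is then determined and lies in $Q$. The relations identify $e\in E_\gamma$, placed at index $\gamma$, with $\chi^{\gamma+p}_\gamma(e)\in E_{\gamma+p}$, placed at index $\gamma+p$, whenever $\mm(\gamma+p)\le\delta$. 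So the degree-$\delta$ piece is exactly the quotient of $\bigoplus_{\mm(\gamma)\le\delta}E_\gamma$ by these identifications, that is, the colimit $\varinjlim_{\mm(\gamma)\le\delta}E_\gamma$ over the poset $\{\gamma\in P^{\rm gp}:\mm(\gamma)\le\delta\}$ with transition maps $\chi$.

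The multiplication map for $\delta\le\delta'$ is multiplication by $x^{\delta'-\delta}$. On the colimit it is the map induced by the inclusion of indexing posets $\{\mm(\gamma)\le\delta\}\subseteq\{\mm(\gamma)\le\delta'\}$. This proves the second bullet.

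The main subtlety is that this indexing poset need not be filtered, for instance when $\mm$ is not integral (compare \Cref{prop:1}). So the colimit must be taken in the general sense, as a coequalizer of direct sums, rather than as a filtered union. I would check this with care: the relations coming from the tensor product are generated exactly by single steps $\gamma\le\gamma+p$ with $p\in P$, which are precisely the generating morphisms of the indexing category. Hence the two presentations agree and no filteredness is needed.
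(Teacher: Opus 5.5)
Your proof is correct. It also follows a genuinely different route from anything the paper writes down. The paper gives no proof of this proposition, since it is quoted from Perling; the closest argument it does give is for the relative analogue, the corollary following \Cref{cor:systems_of_bundles}.

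There the graded pieces are obtained geometrically. First, $(\psi^\ast\mathcal F)_\beta=\rho_\ast(\psi^\ast\mathcal F\otimes\mathcal O_\tau(\beta))=\pi_\ast(\mathcal F\otimes\psi_\ast\mathcal O_\tau(\beta))$ by the projection formula. Then the formula $\psi_\ast\mathcal O_\tau(\beta)=\varinjlim_{g(\alpha)\leq\beta}\mathcal O_\sigma(\alpha)$ from \Cref{prop:min} is substituted. Finally $\pi_\ast$, which is exact and commutes with direct sums, is moved past the colimit.

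You instead compute the degree-$\delta$ part of $E\otimes_{k[P]}k[Q]$ by hand. You reindex pairs $(\gamma,q)$ with $\mm(\gamma)+q=\delta$ by the elements $\gamma$ with $\mm(\gamma)\leq\delta$, and match the tensor relations with the coequalizer presentation of the colimit. This is the same bookkeeping the paper performs in \Cref{prop:min} for the twisting sheaves, applied directly to an arbitrary graded module.

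Your version is elementary and self-contained. It also rightly insists that the colimit be taken in the general (coequalizer) sense, because the indexing poset need not be filtered when $\mm$ is not integral; this is the phenomenon behind \Cref{ex:1}.

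The paper's version concentrates all the combinatorics in the single computation of pushforwards of $\mathcal O(\beta)$ and then runs formally. That is what lets it transfer verbatim to the relative setting over a base $S$, where the $E_\gamma$ become sheaves and the twists $\mathcal O_S(\alpha)$ enter.
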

This applies in particular to localisations $P_\sigma=P\to P[-\delta]=P_\tau$ for a face $\tau\prec\sigma$ and the corresponding open embeddings of affine toric varieties $X_\tau\subset X_\sigma$, which Perling uses to characterize equivariant quasicoherence on general toric varieties \cite[Definition~5.8 and Theorem~5.9]{Perling}. Consider that in the setting of toric varieties it is natural to fix the character lattice of the torus $P^{\rm gp}$ (where $P$ is the monoid of linear functions on any maximal cone), but when passing to a face $\tau$ the multiplication maps corresponding to invertible elements become isomorphisms \cite[Lemma 5.3]{Perling}, hence we obtain an equivalent but combinatorially simpler $\tau$-family by passing to the sharpening $P[-\delta]^\sharp$.
\begin{definition}[{\cite[Definition~5.8]{Perling}}]
    Let $\Delta$ be the fan of a normal toric variety $X_{\Delta}$. Let $\iota_{\tau,\sigma}$ denote the open embedding $U_\tau\hookrightarrow U_\sigma$ for a face $\tau\prec\sigma$ in $\Delta$. A \emph{$\Delta$-family} if a collection of $\sigma$-families $\{ E^\sigma\}_{\sigma\in\Delta}$, together with isomorphisms $\eta_{\tau,\sigma}\colon \iota_{\tau,\sigma}^* E^\sigma\simeq  E^\tau$ for $\tau\prec\sigma$, satisfying the cocycle condition $\eta_{\tau,\rho}=\eta_{\tau,\sigma}\circ\iota_{\tau,\sigma}^*(\eta_{\sigma,\rho})$ for $\tau\prec\sigma\prec\rho$.
\end{definition}

Therefore $\Delta$-families  represent equivariant quasicoherent sheaves on $X_\Delta$.
Some properties of quasicoherent sheaves can be characterised conveniently in terms of the corresponding $\Delta$-families. In fact, the following properties are local and therefore only depend on the $\sigma$-families making up $\Ecal$.

\begin{proposition}[{\cite[Definition~5.10 and Proposition~5.11]{Perling}}]
An equivariant quasicoherent sheaf $\Ecal$ on a toric variety $X_{\sigma}$ is coherent if and only if the corresponding $\sigma$-family $E$ is \emph{finite}, in the sense that it has the following properties~:
	\begin{enumerate}[label=(\roman*)]
\item every vector space $E^\sigma_\delta$ is of finite dimension over $k$~;
\item for every strictly descending chain $\delta_1>\delta_2>\cdots$ in $P^{\rm gp}$ there exists an $n_0$ such that $E_{\delta_n}$ is $0$ for $n\geq n_0$;
\item \label{it:17} there are only finitely many linear functions $\delta_1,\ldots,\delta_N$ such that the sum of multiplication maps
\begin{equation}\label{sum_mult}
    \bigoplus_{\gamma<\delta}E^\sigma_\gamma\to E^\sigma_\delta
\end{equation}
is not surjective.
\end{enumerate}
\end{proposition}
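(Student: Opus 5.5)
Under Proposition~\ref{prop:equivariantsheaves}, $\mathcal E$ corresponds to a $P^{\rm gp}$-graded $k[P]$-module $E=\bigoplus_\delta E^\sigma_\delta$. The plan is to show that each of the three finiteness conditions on the $\sigma$-family is equivalent to a piece of finite generation of $E$ over $k[P]$. Coherence of $\mathcal E$ means exactly that $E$ is finitely generated: $X_P$ is affine and noetherian, so coherent sheaves are finitely generated modules. Moreover, a finitely generated graded module is generated by finitely many homogeneous elements.

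\textbf{Coherent implies finite.} Fix homogeneous generators $e_1,\dots,e_N$ of degrees $\delta_1,\dots,\delta_N$. Then $E_\delta$ is spanned by the elements $x^p e_i$ with $p\in P$ and $\delta_i+p=\delta$.
\begin{itemize}
\item For (i): since $P$ is sharp, for each $i$ there is at most one $p$ with $\delta_i+p=\delta$. Hence $\dim E_\delta\le N$.
\item For (iii): the sum map~\eqref{sum_mult} fails to be surjective only if some generator $e_i$ sits in degree $\delta$ and is not a combination of elements of strictly smaller degree. So only $\delta\in\{\delta_1,\dots,\delta_N\}$ can occur.
\item For (ii): if $E_{\delta_n}\ne0$ along a strictly descending chain, then each $\delta_n$ lies in some $\delta_i+P$. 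By pigeonhole, infinitely many $\delta_n$ lie in a single $\delta_i+P$. This gives an infinite strictly descending chain in $P$, which is impossible: composing with a local rank-one valuation $P\to\NN$ (available since $P$ is fine, sharp and saturated) turns the strict decreases into decreases of positive integers.
\end{itemize}

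\textbf{Finite implies coherent.} This is the main step. Let $\delta_1,\dots,\delta_N$ be the finitely many degrees from (iii), and let $G=\bigoplus_i E_{\delta_i}$, which is finite-dimensional by (i). Let $E'\subseteq E$ be the submodule generated by $G$; the goal is $E'=E$. Suppose not, and pick a homogeneous element $e\in E_\delta\setminus E'_\delta$.
\begin{itemize}
\item If $\delta\notin\{\delta_i\}$, the map~\eqref{sum_mult} into $E_\delta$ is surjective. So $e=\sum_j\chi(e_j)$ with $e_j\in E_{\gamma_j}$ and $\gamma_j<\delta$, and at least one $e_j$ is not in $E'$.
\item If $\delta=\delta_i$ for some $i$, then $e\in G\subseteq E'$, which is a contradiction.
\end{itemize}
Iterating the first case produces a strictly descending chain $\delta>\gamma>\gamma'>\cdots$ with nonzero graded pieces at every stage. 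This contradicts (ii). Hence $E=E'$ is finitely generated, and $\mathcal E$ is coherent.

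The only delicate point is the well-foundedness used in both directions, i.e.\ that strictly descending chains in a fine sharp monoid (or its translates) cannot be infinite. The rank-one valuation argument above settles it. If needed, I would also note that the multiplication maps $\chi$ encode the $k[P]$-action exactly, so that "generated by $G$" agrees with "spanned by images under multiplication maps".
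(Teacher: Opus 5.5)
Your proof is correct, and it is essentially the standard argument: the paper does not prove this itself but cites Perling, whose proof follows the same lines. Finitely many homogeneous generators give (i)--(iii); conversely, the finite-dimensional space $\bigoplus_i E^\sigma_{\delta_i}$ supplied by (i) and (iii) generates $E$, since a homogeneous element outside the submodule it generates would produce an infinite strictly descending chain of nonzero pieces, contradicting (ii).

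Two cosmetic points. First, the uniqueness of $p$ with $\delta_i+p=\delta$ comes from cancellativity of $P\subset P^{\rm gp}$, not from sharpness; sharpness is what makes $<$ a strict order and supplies the local homomorphism $P\to\NN$. Second, in proving (ii) you should start from ``$E_{\delta_n}\neq 0$ for infinitely many $n$'' and pass to a subsequence, which is again strictly descending.
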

The $k[P]$ module $E$ is finitely generated by generators of $E_{\delta_1},\ldots,E_{\delta_N}$, where $\delta_1, \ldots, \delta_N$ are as in~\ref{it:17}. More precisely, one can find a basis by lifting bases for the finitely many, finite dimensional cokernels of the maps in~\eqref{sum_mult}. It follows that, for every infinite ascending chain in $P^{\rm gp}$, the multiplication maps eventually stabilise to isomorphisms.

\begin{proposition}[{\cite[Proposition~5.13]{Perling}}]
    An equivariant coherent sheaf $\Ecal$ on a toric variety $X_{\sigma}$ is torsion-free if and only if the corresponding multiplication maps $\chi_{\delta_1}^{\delta_2}$ of the associated $\sigma$-family  $E^\sigma$ are all injective.
\end{proposition}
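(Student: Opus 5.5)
We translate torsion-freeness into a statement about the graded module and argue with graded pieces. By Proposition~\ref{prop:equivariantsheaves}, $\Ecal$ corresponds to the $P^{\rm gp}$-graded $k[P]$-module $E=\bigoplus_\delta E^\sigma_\delta$. Since $X_P=\Spec k[P]$ is affine and integral ($k[P]$ is a domain, $P$ being integral and $P^{\rm gp}$ torsion-free), $\Ecal$ is torsion-free if and only if, for every nonzero $f\in k[P]$ and nonzero $m\in E$, we have $fm\neq 0$. The multiplication map $\chi^{\delta+p}_\delta$ for $p\in P$ is, by construction, the restriction to $E_\delta$ of multiplication by the monomial $x^p$. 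Coherence plays no role beyond fixing the setting.

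\emph{Torsion-free implies injective.} If $\delta_1\le\delta_2$, write $\delta_2=\delta_1+p$ with $p\in P$. Then $\chi^{\delta_2}_{\delta_1}$ is multiplication by the nonzero element $x^p$, restricted to $E_{\delta_1}$. Torsion-freeness makes this injective.

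\emph{Injective implies torsion-free.} This is a leading-term argument.
\begin{enumerate}
\item Since $P^{\rm gp}$ is a finitely generated torsion-free abelian group, it is isomorphic to $\ZZ^r$. We fix a total order $\preceq$ on it that is translation invariant, for instance the lexicographic order. This order need not be related to the order induced by $P$.
\item Let $f=\sum_{p}c_p x^p\neq 0$ and $m=\sum_\delta m_\delta\neq 0$, where the $m_\delta\in E_\delta$ are the homogeneous components. Let $p_0$ be the $\preceq$-largest $p$ with $c_p\neq0$, and let $\delta_0$ be the $\preceq$-largest $\delta$ with $m_\delta\neq0$.
\item Consider the homogeneous component of $fm$ of degree $p_0+\delta_0$. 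It is the sum of the terms $c_p x^p m_\delta$ with $p+\delta=p_0+\delta_0$. Since $p\preceq p_0$ and $\delta\preceq\delta_0$, translation invariance and totality force $p=p_0$ and $\delta=\delta_0$ whenever $p+\delta=p_0+\delta_0$. Hence this component equals $c_{p_0}\,\chi^{\delta_0+p_0}_{\delta_0}(m_{\delta_0})$.
\item By hypothesis $\chi^{\delta_0+p_0}_{\delta_0}$ is injective, and $c_{p_0}\neq0$ and $m_{\delta_0}\neq 0$, so this component is nonzero. Therefore $fm\neq0$.
\end{enumerate}

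The only delicate point is that the order $\preceq$ must be total and compatible with addition. The partial order coming from $P$ will not do, since leading terms need not be unique for it. Existence of such a total order uses exactly that $P^{\rm gp}$ is torsion-free.
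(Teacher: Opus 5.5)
Your proof is correct and complete. The paper gives no argument of its own for this statement and simply cites Perling, so there is no in-paper proof to match.

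Your route is the leading-term (monomial order) argument. You test torsion-freeness against arbitrary non-homogeneous $f$ and $m$ and isolate the top homogeneous component of $fm$. That reduces everything to the single multiplication map $\chi^{\delta_0+p_0}_{\delta_0}$. Your convention $x^pE_\delta\subseteq E_{\delta+p}$ also matches the paper's.

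A standard order-free alternative is to localize at all monomials. This is closer to the sentence following the statement in the paper, where all $E^\sigma_\delta$ are viewed inside the colimit $\mathbf{E}^\sigma$. The argument runs as follows:
\begin{enumerate}
\item The kernel of $E\to E\otimes_{k[P]}k[P^{\rm gp}]$ is a graded submodule.
\item Its homogeneous elements are exactly those killed by some $\chi^{\delta+p}_\delta$.
\item Hence all multiplication maps are injective if and only if $E$ embeds into $E\otimes_{k[P]}k[P^{\rm gp}]\cong\mathbf{E}^\sigma\otimes_k k[P^{\rm gp}]$.
\item That module is free over the domain $k[P^{\rm gp}]\supseteq k[P]$, hence torsion-free.
\end{enumerate}

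Both arguments work for any equivariant quasicoherent sheaf, so, as you note, coherence is irrelevant. Yours needs no identification of the localization as a free module, at the cost of an auxiliary translation-invariant total order on $P^{\rm gp}$. The localization argument avoids the order and also produces the embeddings $E^\sigma_\delta\hookrightarrow\mathbf{E}^\sigma$, which the paper uses immediately afterwards.
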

In this case, the vector spaces $E^\sigma_{\delta}$ can all be viewed as subspaces of their colimit $\mathbf{E}^\sigma$. Moreover, localisation maps induce injective maps (which turn out to be isomorphisms, \cite[Corollary 5.16]{Perling}) $\mathbf{E}^\sigma\hookrightarrow\mathbf{E}^\tau$ for $\tau\prec\sigma$, so every vector space in a $\Delta$-family can be viewed as a subspace of $\mathbf{E}^0$ -where $0$ is the minimal cone of $\Delta$, which in turn is a vector space of the same dimension as the (generic) rank of $\Ecal$.
Reflexivity is reflected by the fact that all $\sigma$-families are determined only by the ones corresponding to the rays of $\Delta$ \cite[Theorem 5.19]{Perling}. 

This brings us to Klyachko's description of framed toric vector bundles on $X_\Delta$ in terms of compatible, bounded, increasing $\ZZ$-filtrations of a fixed vector space $\mathbf{E}^0$ (which, by \Cref{prop:pullback}, can be identified with the fiber of $\Ecal$ at $1\in T$). 
 
\subsection{Moduli of toric vector bundles}
Let $X_{\sigma}$ be an affine toric variety. The next proposition is \cite[Proposition 2.1.1]{Klyachko}~; see also \cite[Proposition~2.2]{Payne}.
\begin{proposition}\label{paynelocfree}
Every toric vector bundle on  $X_{\sigma}$ splits equivariantly into a direct sum
of toric line bundles. The underlying line bundles of these are trivial, while the torus action is encoded by a character $u\in P^{\rm gp}$.
In particular, we have a bijective correspondence between finite multisets $\mathbf{u}(\sigma)\subseteq P^{\rm gp}$ and isomorphism classes of toric vector bundles:
\[\mathbf{u}(\sigma)\;\;\;\mapsto\;\;\;\mathcal E=\bigoplus_{u\in\mathbf{u}(\sigma)} \OO_{\sigma}(u).\]

	If $\tau\preceq\sigma$ is a face, and $U_{\tau}\subseteq X_\sigma$ the corresponding torus-invariant open subset, the restriction of $\Ecal$ to $U_\tau$ corresponds to the image multiset $\mathbf u(\tau)=[\mathbf{u}(\sigma)]$ along the projection $P^{\rm gp} / (\rho^\perp \cap P^{\rm gp}) \to P^{\rm gp}/(\tau^\perp\cap P^{\rm gp})$.
\end{proposition}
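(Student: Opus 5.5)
We work with the graded-module description of \Cref{prop:equivariantsheaves}. A toric vector bundle $\mathcal E$ on $X_\sigma$ is then a $P^{\rm gp}$-graded $k[P]$-module $E=\bigoplus_\delta E_\delta$ that is finitely generated and projective, i.e.\ locally free. The argument is the graded Nakayama lemma at the torus-fixed point. Let $\mathfrak m=P^+k[P]$ be the homogeneous ideal of the closed orbit; since $P$ is sharp, $k[P]/\mathfrak m=k$. The fibre $E/\mathfrak m E$ is then a finite-dimensional $P^{\rm gp}$-graded $k$-vector space.

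First step: choose a homogeneous basis of $E/\mathfrak m E$, with elements of degrees $u_1,\dots,u_r$, and lift these to homogeneous elements $e_i\in E_{u_i}$. These lifts define a graded homomorphism
\[
\phi\colon \bigoplus_i k[P](-u_i)\to E,
\]
where the twist is normalized so that the generator sits in degree $u_i$; this matches the convention $\mathcal O_\sigma(u)$ in the statement.

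Second step: show that $\phi$ is surjective by graded Nakayama. Its cokernel $C$ is finitely generated and graded, and $C=\mathfrak m C$. Since $P$ is sharp and fine, some linear functional $\ell$ on $P^{\rm gp}$ is strictly positive on $P^+$. If $C\neq 0$, a nonzero homogeneous generator of $C$ of minimal $\ell$-value cannot lie in $\mathfrak m C$, a contradiction. This is the standard argument, and it is the one place where sharpness of $P$ is used.

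Third step: show that $\phi$ is injective. The kernel $K$ is graded and finitely generated, since $k[P]$ is noetherian. Because $E$ is projective the sequence splits, so $K/\mathfrak m K\to k[P]^r/\mathfrak m\to E/\mathfrak m E$ is injective. The last map is an isomorphism by construction, so $K/\mathfrak m K=0$, and graded Nakayama again gives $K=0$. The underlying line bundles $k[P](-u_i)$ are trivial. This gives existence.

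Uniqueness of the multiset: $\mathbf u(\sigma)$ is recovered as the multiset of degrees of the graded fibre $E/\mathfrak m E$, that is, as the $T$-representation on the fibre at the fixed point. This fibre is an isomorphism invariant, so distinct multisets give non-isomorphic bundles. Conversely, any multiset gives a bundle by taking the direct sum, so the correspondence is bijective.

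Restriction to faces: apply \Cref{prop:pullback} to the localization $P\to P[-\delta]$ followed by sharpening. Pullback commutes with direct sums, and $\mathcal O(u)$ pulls back to $\mathcal O(\bar u)$, where $\bar u$ is the image of $u$ in $P^{\rm gp}/(\tau^\perp\cap P^{\rm gp})$, because the units of $P[-\delta]$ are exactly $\tau^\perp\cap P^{\rm gp}$. Hence $\mathbf u(\tau)$ is the image multiset, as claimed.

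The main obstacle is conceptual rather than technical. The key input is that the closed orbit of an affine toric variety is a single fixed point, which holds because $P$ is sharp, and this is what makes Nakayama apply globally rather than only locally. With that in hand, the rest is bookkeeping of degree conventions.
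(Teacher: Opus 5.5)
Your argument is correct and is essentially the paper's route. The paper only cites Klyachko and Payne for this statement, but its own relative version (\Cref{prop:vect-on-cone}) proceeds as you do: it lifts homogeneous generators of the graded fibre over the closed stratum to a map $\bigoplus\mathcal O_\sigma(\lambda_i)\to\mathcal E$, shows this map is an isomorphism (your graded Nakayama step is the algebraic form of its closedness argument), and handles restriction to faces via \Cref{prop:pullback}. The only slip is a sign convention: the paper identifies $\mathcal O_\sigma(u)$ with $x^{-u}k[P]$, whose generator sits in degree $-u$, so your multiset is the negative of the paper's. This does not affect the bijection.
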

\begin{remark}
    The multiset $\bf{u}(\sigma)$ can be interpreted as the equivariant Chern character of $\Ecal$, see \cite[Section~3]{Payne}. The rank of $\Ecal$ is the cardinality $r$ of $\mathbf u$.
\end{remark}
Rewritten in the language of moduli stacks, the previous statement reads as follows:
\begin{proposition}\label{lem:vbonaffine}
    The stack $\mathfrak V_{\bf{u}(\sigma)}$ of toric vector bundles with fixed Chern character $\bf{u}(\sigma)$ is isomorphic to the classifying stack $\mathrm{B}G(\bf{u}(\sigma))$, where $G(\bf{u}(\sigma))$ is the group of automorphisms of $\mathcal  E=\oplus_{u\in \bf{u}(\sigma)}\OO_{\sigma}(u).$
\end{proposition}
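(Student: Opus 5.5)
The plan is to deduce the statement from \Cref{paynelocfree} by identifying, functorially in the test scheme, toric vector bundles of Chern character $\mathbf u(\sigma)$ with $G(\mathbf u(\sigma))$-torsors. Let $\mathfrak V_{\mathbf u(\sigma)}$ denote the category fibered in groupoids over $k$-schemes whose fibre over $S$ consists of $T$-equivariant vector bundles on $S\times X_\sigma$ (equivalently, vector bundles on $S\times\Acal_\sigma$) which, fibrewise over $S$, have Chern character $\mathbf u(\sigma)$. Write $\mathcal E_0=\bigoplus_{u\in\mathbf u(\sigma)}\OO_\sigma(u)$ and $G=G(\mathbf u(\sigma))=\underline{\mathrm{Aut}}(\mathcal E_0)$. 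There is a tautological morphism $\mathrm BG\to\mathfrak V_{\mathbf u(\sigma)}$: it sends a $G$-torsor $P$ on $S$ to the twisted form $P\times^G\pi_S^\ast\mathcal E_0$, whose descent is effective because $\mathcal E_0$ is affine over $\Acal_\sigma$.

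First, I would check that $G$ is an affine algebraic group. Graded endomorphisms of $\mathcal E_0$ are described by
\[
\mathrm{Hom}(\OO(u),\OO(u'))=k[P]_{u'-u},
\]
which is $k$ when $u\le u'$ and $0$ otherwise. So $\mathrm{End}(\mathcal E_0)$ is a finite-dimensional algebra of block upper-triangular matrices with respect to the preorder on $\mathbf u(\sigma)$, and $G$ is its open subgroup of units. The same formula shows that $\underline{\mathrm{Isom}}$ between two families is represented by a scheme. This is the ingredient behind algebraicity of the diagonal, and it also shows that the morphism $\mathrm BG\to\mathfrak V_{\mathbf u(\sigma)}$ is fully faithful.

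The key step, and the main obstacle, is essential surjectivity: every family $\mathcal E$ over $S$ must be étale (or fppf) locally on $S$ isomorphic to $\pi_S^\ast\mathcal E_0$. I would work with $S=\Spec A$ local and regard $\mathcal E$ as a $P^{\rm gp}$-graded projective $A[P]$-module. The idea is to repeat Klyachko's argument in families. Choose a torus-fixed point $z$ of $X_\sigma$, namely the closed orbit, or the unique fixed point after splitting off the lineality torus. The fibre $\mathcal E|_{S\times z}$ is a graded projective $A$-module, hence a sum $\bigoplus_u A\otimes(\text{degree }u)$ after localizing; its multiset of degrees is $\mathbf u(\sigma)$, because this multiset is locally constant. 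Lift homogeneous bases of these graded pieces to homogeneous sections of $\mathcal E$, using surjectivity of graded restriction from $A[P]$ onto the fixed-point quotient. This gives a graded map $\pi_S^\ast\mathcal E_0\to\mathcal E$ that is an isomorphism on $S\times z$.

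A graded Nakayama argument then finishes the step. The support of the cokernel is closed and $T$-invariant, and every nonempty such subset of $S\times X_\sigma$ meets $S\times z$ over the closed point of $S$; hence the cokernel vanishes. A surjection between vector bundles of equal rank is an isomorphism. Alternatively, the extension from the closed stratum could be cited from \Cref{prop:18}, but the direct lifting argument seems cleaner for quasicoherent objects. Once local triviality is established, descent identifies $\mathfrak V_{\mathbf u(\sigma)}$ with the stack of $\underline{\mathrm{Isom}}(\pi_S^\ast\mathcal E_0,\mathcal E)$-torsors, that is, with $\mathrm BG$.
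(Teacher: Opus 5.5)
Your proposal is correct and follows the paper's approach. The paper presents this statement as a direct restatement of Klyachko's splitting (\Cref{paynelocfree}): a stack whose objects are all locally isomorphic to $\mathcal E$ is $\mathrm BG$ with $G=\underline{\mathrm{Aut}}(\mathcal E)$. Your families version of the splitting (lift a homogeneous basis from the torus-fixed point, then conclude by graded Nakayama) is the same argument the paper later runs for \Cref{prop:vect-on-cone} and \Cref{prop:8}.

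Two small tidy-ups. First, your formula gives representability of $\underline{\mathrm{Isom}}$ between two families only once local triviality is known; this is harmless, since $\mathrm BG$ is then automatically algebraic. Second, the isomorphism you obtain over $\Spec\mathcal O_{S,s}$ must still be spread to a neighbourhood of $s$ by a standard limit argument. At that step the paper instead uses closedness of images under the good moduli space map.
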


In order to understand the automorphism group $G(\mathbf u(\sigma))$ better, we make the following observations.

First, since $\Ecal$ splits, so does its endomorphism algebra:
\[\Hom_\sigma(\Ecal,\Ecal)=\bigoplus_{u,u'\in\mathbf u(\sigma)}\Hom(\OO_{\sigma}(u),\OO_{\sigma}(u'))=\bigoplus_{u,u'\in\mathbf u(\sigma)}H^0(\Acal_\sigma,\OO_{\sigma}(u'-u)),\]
where each factor is non-zero if and only if $u\leq u'$ in $P^{\rm gp}$, and in this case $H^0(\Acal_\sigma,\OO_{\sigma}(u'-u))\simeq k$ is $1$-dimensional.

Second, if $\tau\prec\sigma$ is a face, then  
 there is a natural inclusion 
\begin{equation}\label{eqn:9}
	G({\bf{u}}(\sigma))\hookrightarrow G({\bf{u}}(\tau)).
\end{equation}
Taking $\tau= 0$ we get an inclusion of $G(\mathbf u(\sigma))$ into $\operatorname{GL}_r(k)$.
We shall present $G(\bf{u}(\sigma))$ as the intersection of the parabolic subgroups 
preserving certain partial flags.




 \begin{lemma}\label{lem:parabolic}
     When $\rho\prec\sigma$ is a ray, the group $G(\bf{u}(\rho))$ is a parabolic subgroup of $\operatorname{GL}_r(k)$.
 \end{lemma}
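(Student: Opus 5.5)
We need to show that when $\rho\prec\sigma$ is a ray, $G(\mathbf u(\rho))$ is the stabilizer in $\operatorname{GL}_r(k)$ of a partial flag. The plan is to reduce to the one-dimensional situation. The lattice $P^{\rm gp}/(\rho^\perp\cap P^{\rm gp})$ is identified with $\ZZ$: the ray $\rho$ has a primitive generator $v_\rho$, and pairing with $v_\rho$ gives the isomorphism. Localizing $P$ at $\rho$ and taking the sharpening, as in the discussion after \Cref{prop:pullback}, gives the monoid $\NN$ with its usual order. So, by \Cref{paynelocfree}, the restriction of $\Ecal$ to $U_\rho$ is $\bigoplus_{u\in\mathbf u(\rho)}\OO_\rho(u)$, where $\mathbf u(\rho)$ is a multiset of $r$ integers $u_1\le\dots\le u_r$.

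The first step is to compute the endomorphism algebra. Applying the splitting observation above with $\rho$ in place of $\sigma$, we get
\[\operatorname{End}(\Ecal|_{U_\rho})=\bigoplus_{i,j}H^0(\Acal_\rho,\OO_\rho(u_j-u_i)).\]
Each summand equals $k$ if $u_i\le u_j$ and is zero otherwise. Since the order on $\ZZ$ is total, any two indices are comparable in one direction.

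The second step is to embed this algebra in $\operatorname{End}(k^r)$ via the restriction map \eqref{eqn:9} to the face $0$. Restricting to $0$ identifies each nonzero summand with the matrix unit $e_{ji}$, because the multiplication maps $\chi$ become isomorphisms after inverting everything. The image is then the algebra of block upper-triangular matrices, with one block for each distinct value of the $u_i$ and blocks ordered by increasing value. Concretely, this algebra is the set of matrices $A$ with $A_{ji}=0$ whenever $u_i>u_j$. Taking units, $G(\mathbf u(\rho))$ is exactly the stabilizer of the partial flag $F_{\ge m}=\operatorname{span}\{e_i : u_i\ge m\}$ for $m\in\ZZ$. This is Klyachko's filtration of $\mathbf E^0$.

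The third step is to check that a matrix in this algebra which is invertible in $\operatorname{GL}_r(k)$ has its inverse in the algebra again. This holds because the stabilizer of a flag is a closed subgroup. It is also clear directly, since the algebra is a finite-dimensional algebra containing the identity. The stabilizer of a partial flag is by definition a parabolic subgroup, which completes the argument.

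The main obstacle is bookkeeping rather than substance. One must check that the identification through \eqref{eqn:9} sends the homomorphism $\OO(u_i)\to\OO(u_j)$ to the matrix unit with the correct orientation, so that the flag is increasing in the right direction. One must also check that the identification is independent of choices of trivializations, which affects only a conjugation.
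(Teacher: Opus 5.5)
Your proof is correct and is essentially the paper's argument. The paper obtains the same flag as $W_i=E^\rho_{-\delta_i}\subset\Ecal\rvert_1$ from the $\rho$-family and identifies $G(\mathbf u(\rho))$ with the automorphisms of that filtered vector space; you instead compute $\operatorname{End}(\Ecal)$ summand by summand, which is the same full-faithfulness statement. There is one harmless slip in the bookkeeping you flagged: for the ordering $u_1\le\cdots\le u_r$, your (correct) condition $A_{ji}=0$ whenever $u_i>u_j$ describes block lower-triangular matrices, not upper-triangular ones. It is still exactly the stabilizer of the flag $F_{\ge m}$, which coincides with the paper's $W_\bullet$, so the conclusion is unaffected.
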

\begin{proof}
    
    The multiset ${\bf{u}}(\rho)$ takes values in $P^{\rm gp}/(\rho^\perp\cap P^{\rm gp})\simeq\mathbf Z$.  Explicitly, we can write ${\bf{u}}(\rho)=(\delta_1^{r_1},\dots,\delta_k^{r_k})$, where the characters are in decreasing order $\delta_1>\delta_2>\dots>\delta_k$, and their multiplicities satisfy $\sum_{i=1}^k r_i=r$.

From the point of view of the $P^{\rm gp}$-graded $k[P]$-modules, the line bundle $\OO_\sigma(u)$ corresponds to $x^{-u}k[P]$ (having invariant sections if and only if $u\geq0$). In particular, for $\Ecal=\Ecal(\mathbf u(\rho))$ as above we obtain:
\[\dim_k(E^\rho_\delta)=\#\{u\in\mathbf u(\rho):\delta\geq-u\}.\]
The data of the weighted, increasing filtration:
\[0=W_0\subset W_1=E^\rho_{-\delta_1}\subset\ldots\subset W_k=E^\rho_{-\delta_k}=\underset{\delta}{\varinjlim} E^\rho_{\delta}=\Ecal\rvert_{1}\]
is then equivalent to the vector bundle
\[\Ecal=\bigoplus_{i=1}^k\OO_\rho(\delta_i)\otimes W_i/W_{i-1}.\]
The automorphism group $G(\mathbf u(\rho))\subseteq \on{GL}(\Ecal\rvert_{1})$ is the parabolic subgroup that fixes the partial flag $W_\bullet$.
\end{proof}

\begin{lemma}\label{lem:autocone}
    Given a cone $\sigma$ with rays $\rho\in\sigma(1)$, the automorphism group of $\Ecal(\mathbf u(\sigma))$ is the intersection of the parabolic subgroups corresponding to the rays:
	\begin{equation} \label{eqn:10}
	G(\mathbf{u}(\sigma))\cong \bigcap_{\rho\in\sigma(1)}G(\mathbf{u}(\rho))\subset \on{GL}_r(k).
	\end{equation}
\end{lemma}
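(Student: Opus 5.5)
The plan is to compare the two sides of \eqref{eqn:10} inside $\on{GL}_r(k)=\on{GL}(\mathbf E^0)$, where $\mathbf E^0=\Ecal\rvert_1$ is the fiber at the identity. All the groups involved act faithfully there, because every multiplication map of a toric vector bundle is injective and the restriction maps \eqref{eqn:9} are induced by restriction to the dense torus. The inclusion $G(\mathbf u(\sigma))\subseteq\bigcap_\rho G(\mathbf u(\rho))$ is formal. An automorphism of $\Ecal$ restricts to an automorphism of $\Ecal\rvert_{U_\rho}$ for each ray $\rho$, and \eqref{eqn:9} is compatible with the further inclusions into $\on{GL}(\mathbf E^0)$. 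So the content of the statement is the reverse inclusion.

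For the reverse inclusion, I would translate to $\sigma$-families via \Cref{prop:equivariantsheaves}. Since $\Ecal$ is torsion-free, each $E^\sigma_\delta$ is a subspace of $\mathbf E^0$. An element $g\in\on{GL}(\mathbf E^0)$ defines an endomorphism of the $\sigma$-family, and hence of $\Ecal$, exactly when $g(E^\sigma_\delta)\subseteq E^\sigma_\delta$ for all $\delta\in P^{\rm gp}$. Compatibility with the multiplication maps is then automatic, because they are inclusions of subspaces. If this holds for both $g$ and $g^{-1}$, then $g$ is an automorphism. Likewise, $g\in G(\mathbf u(\rho))$ means exactly that $g$ preserves each $E^\rho_\delta$, which by the proof of \Cref{lem:parabolic} means preserving the flag $W_\bullet^\rho$.

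The key step is the reflexivity identity
\begin{equation*}
E^\sigma_\delta=\bigcap_{\rho\in\sigma(1)}E^\rho_{\delta},
\end{equation*}
where $E^\rho_\delta$ denotes the space attached to the image of $\delta$ in $P^{\rm gp}/(\rho^\perp\cap P^{\rm gp})$, through \Cref{prop:pullback}. This is Perling's \cite[Theorem~5.19]{Perling}, recalled above. I would nevertheless check it directly using the splitting from \Cref{paynelocfree}. Write $\Ecal=\bigoplus_{u\in\mathbf u(\sigma)}\OO_\sigma(u)$, so that $\mathbf E^0=\bigoplus_u k\,e_u$. Then $E^\sigma_\delta$ is spanned by the $e_u$ with $\delta+u\in P$, that is, with $\langle\delta+u,v_\rho\rangle\geq0$ for every ray generator $v_\rho$, by saturation and $P=\sigma^\vee\cap P^{\rm gp}$. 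On the other hand, $E^\rho_\delta$ is spanned by the $e_u$ with $\langle\delta+u,v_\rho\rangle\geq0$. Both spaces are spanned by subsets of one common basis, so their intersection is computed basis-element-wise, and the identity follows.

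Given this identity, if $g$ preserves every $E^\rho_\delta$ then it preserves every $E^\sigma_\delta$. The same holds for $g^{-1}$, since $\bigcap_\rho G(\mathbf u(\rho))$ is a group. This yields the reverse inclusion. The main obstacle is this intersection identity, and more precisely ensuring that the embeddings of the $E^\rho_\delta$ into $\mathbf E^0$ coming from the different rays are all taken with respect to the same splitting. I would handle this by fixing one global splitting on $X_\sigma$ from \Cref{paynelocfree} and restricting it to each $U_\rho$, which is legitimate by the final sentence of that proposition.
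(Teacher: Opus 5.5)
Your proof is correct, but it takes a somewhat different route from the paper's.

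\textbf{The paper's argument.} It works with matrix coefficients. Using the splitting of \Cref{paynelocfree}, it writes $\on{End}_\sigma(\Ecal)=\bigoplus_{u,u'}H^0(X_\sigma,\OO_\sigma(u'-u))^T$. It then applies Hartogs' theorem on the normal variety $X_\sigma$ to identify each entry with $\bigcap_\rho H^0(U_\rho,\OO_\rho([u'-u]))^T$ inside the function field. Finally it sums over pairs and intersects with $\on{GL}_r(k)$.

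\textbf{Your argument.} You characterize equivariant endomorphisms intrinsically, as the elements of $\on{End}(\mathbf E^0)$ that stabilize every subspace $E^\sigma_\delta$. You then replace Hartogs by the reflexivity identity $E^\sigma_\delta=\bigcap_\rho E^\rho_\delta$, which you verify combinatorially from $P=\sigma^\vee\cap P^{\rm gp}$.

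\textbf{Comparison.} The two key inputs are the same fact in different guises. For $T$-invariant sections of $\OO(m)$, Hartogs on $X_\sigma$ says exactly that $m\in P$ if and only if $\langle m,v_\rho\rangle\geq 0$ for every ray $\rho$. That is the saturation of $P$, equivalently the normality of $X_P$. So neither argument is deeper. The paper's version is shorter. Yours is more elementary and avoids passing to the function field. It also directly exhibits $G(\mathbf u(\sigma))$ as the stabilizer of the collection of flags $W^\rho_\bullet$, which is precisely the form the paper invokes afterwards when comparing with Payne's framed moduli space.

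\textbf{A small simplification.} The obstacle you flag at the end is not really one. The embeddings $E^\rho_\delta\hookrightarrow\mathbf E^0$ are canonical: they are colimits of injective multiplication maps, i.e.\ restriction to the dense torus. The splitting is only a device for computing them, and any single splitting on $X_\sigma$ computes all of them at once. Likewise, $g^{-1}$ preserving each $E^\sigma_\delta$ is automatic once $g$ is invertible and preserves these finite-dimensional subspaces.
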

\begin{proof}
Recall that $G(\mathbf u(\sigma))$ is the group of invertible elements in \[\on{End}_\sigma(\Ecal)=\bigoplus_{u,u'\in\mathbf u(\sigma)}H^0(X_\sigma,\OO_\sigma(u'-u))^T.\]
The toric variety $X_\sigma$ is normal, so by Hartogs' theorem:
\[H^0(X_\sigma,\OO_\sigma(u'-u))^T=\bigcap_{\rho\in\sigma(1)} H^0(U_\rho,\OO_\rho([u]'-[u]))^T\]
in the function field of $X_\sigma$. We conclude by summing over $\mathbf u(\sigma)^2$ and intersecting with $\on{GL}_r(k)$.
\end{proof}

Let now $\Delta$ be the fan of a normal toric variety $X_{\Delta}$. The Chern character of an equivariant bundle can be thought of as a collection $\Psi=\left\{\bf{u}(\sigma)\right\}_{\sigma\in\Delta}$ of multisets compatible with face restrictions, see \cite[Section~3]{Payne}. We fix such a $\Psi$. 

\begin{proposition}\label{thm:vbgeneral}
 The stack $\mathfrak V_{\Psi}$ of vector bundles on
 $\Acal_{\Delta}=[X_{\Delta}\slash T]$   with fixed Chern character $\Psi$ can be identified with the inverse limit 
	\[\varprojlim_{\sigma \in \Delta} \;\;\mathrm{B}G(\bf{u}(\sigma)).\]
\end{proposition}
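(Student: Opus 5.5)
Since $\Acal_\Delta=[X_\Delta/T]$ is the colimit of the Artin cones $\Acal_\sigma$, $\sigma\in\Delta$, glued along the open immersions $\Acal_\tau\hookrightarrow\Acal_\sigma$ for faces $\tau\prec\sigma$, I would first establish descent. Vector bundles on $\Acal_\Delta$ form a stack for the Zariski topology, and the $\Acal_\sigma$ form an open cover whose pairwise intersections are $\Acal_{\sigma\cap\sigma'}$, again members of the diagram. So for any test scheme $S$, the groupoid of vector bundles on $\Acal_\Delta\times S$ is the limit, over the face poset of $\Delta$, of the groupoids of vector bundles on $\Acal_\sigma\times S$. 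The poset already contains all intersections, so this limit captures the gluing data together with the cocycle condition. This step is formal; it only uses that $\Delta$ is a fan, so that intersections of cones are faces of each.

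Next I would impose the Chern character. Fixing $\Psi$ amounts to fixing each local multiset $\mathbf u(\sigma)$, and these are compatible under restriction by the face-restriction rule in \Cref{paynelocfree}. Hence $\mathfrak V_\Psi=\varprojlim_\sigma\mathfrak V_{\mathbf u(\sigma)}$, with transition maps given by restriction to faces. By \Cref{lem:vbonaffine}, $\mathfrak V_{\mathbf u(\sigma)}\simeq \mathrm BG(\mathbf u(\sigma))$. The restriction map $\mathrm BG(\mathbf u(\sigma))\to\mathrm BG(\mathbf u(\tau))$ is the one induced by the inclusion \eqref{eqn:9}, because restricting the split model bundle $\bigoplus_{u\in\mathbf u(\sigma)}\OO_\sigma(u)$ to $\Acal_\tau$ gives the split model bundle for $\mathbf u(\tau)$.

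The main obstacle is that \Cref{lem:vbonaffine} is stated over the base field $k$, while the identification as stacks needs the splitting of \Cref{paynelocfree} to hold in families. The required statement is that a vector bundle on $\Acal_\sigma\times S$ with Chern character $\mathbf u(\sigma)$ becomes isomorphic to the model bundle Zariski- (or \'etale-) locally on $S$. I would prove this by Nakayama-type deformation from a closed point $s\in S$. On $\Acal_\sigma$ the torus-fixed closed orbit $\mathrm BT$ is the closed stratum, and the structure map $\Acal_\sigma\to\mathrm BT\to\Acal_\sigma$ gives a retraction onto it. Using this retraction, I would lift the equivariant weight splitting of the fibre at the fixed point to a splitting over $S$ near $s$. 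The weight decomposition of a representation of the diagonalizable group $T$ on a locally free $\OO_S$-module is automatic; I would then extend it off the closed orbit by graded Nakayama, which works because the grading is bounded below in each direction of $P^{\rm gp}$.

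Granting the splitting in families, each $\mathfrak V_{\mathbf u(\sigma)}$ is the classifying stack of the automorphism group scheme of the model bundle. That group scheme is the base change of $G(\mathbf u(\sigma))$, since the relevant Hom spaces are free of rank $0$ or $1$ over $\OO_S$. Passing to the limit then gives the claimed identification.
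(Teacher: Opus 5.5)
Your proposal is correct and takes the same route as the paper, whose proof simply invokes the Zariski cover of $\Acal_\Delta$ by the Artin cones $\Acal_\sigma$ (closed under intersections) together with \Cref{lem:vbonaffine}. The paper leaves implicit the point you raise, namely that the splitting of \Cref{paynelocfree} must hold locally over an arbitrary base $S$. Your sketch for it (lift the weight decomposition from the closed stratum $\mathrm BT\times S$, then a Nakayama argument near $s$) is essentially the argument the paper gives later, in greater generality, in \Cref{prop:vect-on-cone}, except that there the closedness of the non-isomorphism locus comes from the good moduli space property of \Cref{gms_univclosed} rather than from graded Nakayama.
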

\begin{proof}
    The Artin fan $\Acal_\Delta$ has a Zariski open cover given by the Artin cones $\Acal_\sigma,\ \sigma\in\Delta$, so the statement follows from \Cref{lem:vbonaffine}.
\end{proof}
Payne's formulation \cite[Theorem~3.9, Corollary~3.11]{Payne} uses the following definition~:
    \begin{definition}
        Let $\mathcal M_{\Psi}^{\text{fr}}$ denote the moduli stack of \emph{framed} equivariant vector bundles on $X_\Delta$ with fixed equivariant Chern character $\Psi$.  That is, an $S$-point of $\mathcal M_\Psi^{\text{fr}}$ is an equivariant vector bundle $\mathcal E$ on $X_\Delta \times S$ along with an isomorphism $\mathcal E \big\rvert_{1\times S}\simeq E\otimes\OO_S$, where $1\in X_\Delta$ is the unit of the dense torus, and $E$ is a fixed $k$-vector space of dimension~$r$.
    \end{definition}
Payne's description of the moduli space of framed vector bundles rests upon Klyachko's classification \cite[Theorem~2.2.1]{Klyachko}.  We write $\Delta(1)$ for the set of rays of a fan $\Delta$.
\begin{theorem}
  The category of $T$
  -equivariant vector bundles $\mathcal E$ on $X_{\Delta}$ with a fixed framing $\mathcal E\rvert_{1}\simeq E$ is equivalent to the category of weakly increasing filtrations $\left\{E^\rho_\delta\right\}^{\rho\in\Delta(1)}_{\delta\in\mathbf Z}$,  satisfying the following compatibility condition~:

	For every cone $\sigma\in\Delta$ denote by $P_\sigma=P^{\rm gp}\slash(\sigma^\perp\cap P^{\rm gp})$~; then, there exists a (not necessarily unique) decomposition $E=\oplus_{u\in P_\sigma} {E_{u}}$  such that%
	
\[E^\rho_{\delta}=\sum_{u(v_\rho)\geq -\delta} E_{u}\]
for every $\rho\prec\sigma$, where $v_\rho$ denotes the primitive generator of $\rho$.\footnote{Our conventions on filtrations are reversed with respect to those of \cite{Klyachko} and \cite{Payne}.}
\end{theorem}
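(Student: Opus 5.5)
The plan is to deduce Klyachko's classification from the splitting of \Cref{paynelocfree} on each affine chart, combined with the gluing description of \Cref{thm:vbgeneral} (equivalently Perling's $\Delta$-families) and reflexivity. First, given $\mathcal E$ with framing $\mathcal E\rvert_1\simeq E$, note that $\mathcal E$ is torsion-free, so every $\sigma$-family $E^\sigma$ has injective multiplication maps. Its colimit $\mathbf E^\sigma$ is identified with $\mathbf E^0=\mathcal E\rvert_1=E$ via \Cref{prop:pullback} and \cite[Corollary 5.16]{Perling}. For each ray $\rho$, $P^{\rm gp}/(\rho^\perp\cap P^{\rm gp})\simeq\mathbf Z$ via evaluation at $v_\rho$. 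The $\rho$-family therefore gives a weakly increasing filtration $E^\rho_\delta\subset E$ indexed by $\delta\in\mathbf Z$, exhaustive and bounded because $\mathcal E$ is coherent and locally free (Perling's finiteness conditions). This defines the functor to filtrations; morphisms of bundles respecting framings go to maps of $E$ preserving all filtrations.

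Compatibility: on $X_\sigma$, \Cref{paynelocfree} splits $\mathcal E\rvert_{X_\sigma}\simeq\bigoplus_{u\in\mathbf u(\sigma)}\mathcal O_\sigma(u)$. Taking fibres at $1$ gives a decomposition $E=\bigoplus_{u\in P_\sigma}E_u$, with $E_u$ the fibre of the summands of character $u$. Restricting to the ray $\rho\prec\sigma$ and using the computation in the proof of \Cref{lem:parabolic}, $\dim E^\rho_\delta=\#\{u:\delta\geq -u\}$, with $u$ read through $v_\rho$. Since the line bundle $\mathcal O_\rho(u)$ contributes its fibre exactly in degrees $\delta\geq -u(v_\rho)$, we get $E^\rho_\delta=\sum_{u(v_\rho)\geq-\delta}E_u$, which is the required condition.

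Conversely, given compatible filtrations, I would define for each $\sigma$ the $\sigma$-family $E^\sigma_\delta=\bigcap_{\rho\prec\sigma}E^\rho_{\delta(v_\rho)}\subset E$. Using the decomposition $E=\bigoplus E_u$ supplied by the compatibility hypothesis, this is $\bigoplus_{u}\{E_u : \delta(v_\rho)\geq -u(v_\rho)\ \forall\rho\}=\bigoplus_{u\geq-\delta}E_u$. It is therefore the family of $\bigoplus_u\mathcal O_\sigma(u)\otimes E_u$, hence locally free. For $\tau\prec\sigma$ the families are visibly compatible, because the rays of $\tau$ are a subset of those of $\sigma$ and the localization formula of \Cref{prop:pullback} recovers the intersection over fewer rays. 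This gives a $\Delta$-family, and the framing is the identity of $E$. On morphisms, a filtration-preserving map of $E$ preserves each intersection, so it induces a map of $\Delta$-families.

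Checking that the two constructions are inverse reduces to showing $E^\sigma_\delta=\bigcap_\rho E^\rho_{\delta(v_\rho)}$ for a locally free $\mathcal E$. This is the reflexivity/Hartogs statement \cite[Theorem 5.19]{Perling}, and it can be proved directly in the split form as in \Cref{lem:autocone}. Full faithfulness then follows from the same Hartogs argument applied to $\mathcal{H}om$ sheaves. The main obstacle is showing that the intersection of filtrations is locally free. This genuinely needs a single decomposition adapted to all rays of $\sigma$ at once: without it, the intersection need not be realised by a split family, which is why the compatibility condition is stated on cones rather than on rays.
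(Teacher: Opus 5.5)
Your argument is correct, and it takes essentially the route the paper relies on. The paper does not prove this statement itself: it cites \cite[Theorem~2.2.1]{Klyachko}, and the surrounding discussion assembles exactly your ingredients, namely Perling's families viewed as subspaces of $\mathbf{E}^0$ for torsion-free sheaves, reflexivity (the $\sigma$-families are determined by the rays), and the local splitting of \Cref{paynelocfree}. The compatibility condition enters precisely where you place it, making $\bigcap_{\rho\prec\sigma}E^\rho_{\delta(v_\rho)}$ the family of a split, hence locally free, sheaf on $X_\sigma$.
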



\begin{corollary} \label{cor:8}
	The framed moduli space of vector bundles with Chern character $\bf{u}(\rho)$ on a  ray $\Acal_{\rho}\cong\mathbf[\mathbf A^1\slash \Gm]$ is the partial flag variety $\mathcal {Fl}(\bf{u}(\rho))$.
\end{corollary}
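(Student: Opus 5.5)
The statement is Corollary~\ref{cor:8}: the framed moduli space of vector bundles with Chern character $\mathbf u(\rho)$ on $\Acal_\rho\cong[\mathbf A^1/\Gm]$ is the partial flag variety $\mathcal{Fl}(\mathbf u(\rho))$. I would deduce it from Klyachko's theorem, specialised to the fan $\Delta=\{0,\rho\}$, and then upgrade the resulting bijection on $k$-points to an isomorphism of moduli functors on $S$-points.

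\emph{Step 1 (points over $k$).} For the fan of a single ray there is exactly one filtration $E^\rho_\bullet$, indexed by $\delta\in\mathbf Z$. The compatibility condition in Klyachko's theorem for $\sigma=\rho$ requires a decomposition $E=\bigoplus_{u\in\mathbf Z}E_u$ with $E^\rho_\delta=\sum_{u\geq-\delta}E_u$. Over a field such a splitting of a finite increasing filtration always exists, so the condition is vacuous. Fixing the Chern character $\mathbf u(\rho)=(\delta_1^{r_1},\dots,\delta_k^{r_k})$ prescribes $\dim E^\rho_\delta=\#\{u\in\mathbf u(\rho):\delta\geq -u\}$, as in the proof of Lemma~\ref{lem:parabolic}. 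Equivalently, it prescribes the jumps at $-\delta_1<\dots<-\delta_k$ and the dimensions $r_1+\dots+r_i$ of $W_i$. So framed bundles correspond bijectively to partial flags $W_1\subset\dots\subset W_k=E$ of this type, that is, to $k$-points of $\mathcal{Fl}(\mathbf u(\rho))$. Isomorphisms of framed bundles must be the identity on the framing, so framed objects have no nontrivial automorphisms and the moduli stack is a sheaf.

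\emph{Step 2 (families).} For a $k$-scheme $S$, a family $\mathcal E$ on $[\mathbf A^1_S/\Gm]$ is a $\mathbf Z$-graded $\mathcal O_S[x]$-module $\bigoplus_\delta E_\delta$ that is locally free over $\mathcal O_S[x]$. By the relative form of Proposition~\ref{prop:equivariantsheaves}, such modules are the same as diagrams $\cdots\to E_\delta\xrightarrow{x}E_{\delta+1}\to\cdots$ of quasicoherent $\mathcal O_S$-modules. I would show that local freeness over $\mathcal O_S[x]$ is equivalent to three conditions: each $E_\delta$ is locally free, each multiplication map is injective with locally free cokernel, and the system stabilises to the framing $E\otimes\mathcal O_S$. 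The Chern character then fixes the ranks of the cokernels, so the $E_\delta$ give a family of partial flags of subbundles of $E\otimes\mathcal O_S$ of type $\mathbf u(\rho)$, i.e.\ an $S$-point of $\mathcal{Fl}$. The inverse sends a flag $W_\bullet$ to $\bigoplus_i\mathcal O_\rho(\delta_i)\otimes W_i/W_{i-1}$ after Zariski-locally splitting $W_\bullet$, which is the construction in Lemma~\ref{lem:parabolic}; this is independent of the splitting because the filtration is recovered canonically as $E_\delta$.

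The main obstacle is the equivalence in Step 2 over a general base $S$, specifically that the cokernels of $x$ are locally free of constant rank. I would first argue Zariski-locally on $S$ that $\mathcal E$ splits as a sum of $\mathcal O_\rho(u)\otimes\mathcal O_S$. To get this, lift a basis from the fibre over the closed point of $\Acal_\rho\times S$, using Nakayama's lemma for graded modules, whose grading is bounded below because the Chern character is finite. Once $\mathcal E$ splits, all the properties in Step 2 are immediate.
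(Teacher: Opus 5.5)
Your Step 1 is exactly the paper's proof: for the fan of a single ray, Klyachko's compatibility condition only asks for a splitting adapted to one finite filtration, which always exists, so framed bundles of type $\mathbf u(\rho)$ are just partial flags of that type (cf.\ \Cref{lem:parabolic}). Your Step 2 adds the family-level check that the paper leaves implicit; it is essentially the trivial-log-structure case of \Cref{prop:vect-on-cone}, namely local splitting by lifting a graded basis from $x=0$. One small slip: your list of conditions equivalent to local freeness should also require $E_\delta=0$ for $\delta\ll 0$, since otherwise $\mathcal O_S[x^{\pm1}]$ is a counterexample, but once the Chern character is fixed this condition holds automatically.
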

   
\begin{proof}
The compatibility condition is trivial in this case. See also  \Cref{lem:parabolic}.
\end{proof}

\begin{theorem}[{\cite[Theorem~3.9]{Payne}}]
	The moduli space $\mathcal M_{\Psi}^{\text{fr}}$ is the locally closed subscheme of the product of partial flag varieties $\prod_{\rho\in\Sigma(1)}\mathcal{Fl}(\bf{u}(\rho))$ consisting of those collection of flags $(W^\rho_\bullet)^{\rho\in\Delta(1)}$ satisfying the following condition for all cones $\sigma$ of $\Delta$~:
	\[\operatorname{dim}(\bigcap_{\rho \in \sigma(1)} W^\rho_{\delta} )=\#\left\{[u]\in{\bf{u}}(\sigma)\;\;\big|\;\; \forall \rho \in \sigma(1), \; [u](v_{\rho})\geq -\delta \right\}.\]
\end{theorem}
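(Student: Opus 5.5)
The plan is to reduce to the affine case and then glue, following the strategy of \Cref{thm:vbgeneral} but working with framings. First, restrict to a single cone $\sigma\in\Delta$. Giving a framed equivariant bundle on $X_\sigma$ with Chern character $\mathbf u(\sigma)$ is equivalent to giving an isomorphism $E\simeq \Ecal\rvert_1$, modulo automorphisms of $\Ecal$. By \Cref{paynelocfree} and \Cref{lem:vbonaffine}, the framed moduli space of $X_\sigma$ is therefore the homogeneous space $\on{GL}(E)/G(\mathbf u(\sigma))$. By \Cref{lem:autocone}, $G(\mathbf u(\sigma))$ is the intersection of the parabolics $G(\mathbf u(\rho))$ for $\rho\in\sigma(1)$. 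This gives a morphism
\[
\on{GL}(E)/G(\mathbf u(\sigma))\to\prod_{\rho\in\sigma(1)}\on{GL}(E)/G(\mathbf u(\rho))=\prod_{\rho\in\sigma(1)}\mathcal{Fl}(\mathbf u(\rho)),
\]
using \Cref{cor:8}. Concretely, this morphism sends a framed bundle to its collection of ray filtrations $W^\rho_\bullet$.

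The next step is to identify the image. The morphism is the orbit map of the point $(W^\rho_\bullet)_\rho$ arising from a split bundle $\bigoplus_u E_u$. Because the stabilizer is exactly $G(\mathbf u(\sigma))$, the orbit map is a monomorphism. Its image is the $\on{GL}(E)$-orbit of that point, which is locally closed. It remains to show this orbit is cut out by the stated dimension conditions. Necessity follows from Klyachko's theorem: if $E=\bigoplus_u E_u$ with $W^\rho_\delta=\sum_{u(v_\rho)\ge-\delta}E_u$, then
\[
\bigcap_\rho W^\rho_\delta=\sum_{\forall\rho,\ u(v_\rho)\ge -\delta}E_u,
\]
and this has the required dimension. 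For sufficiency, I would argue that the dimension conditions, imposed for all $\delta$ (reading $\delta$ as a multi-index in $P^{\rm gp}$ paired against each ray), force the existence of a compatible splitting. To do this, define
\[
E_{\ge u}=\bigcap_\rho W^\rho_{-u(v_\rho)},
\]
and choose complements of $\sum_{u'>u}E_{\ge u'}$ inside $E_{\ge u}$ by induction on the partial order. The dimension count shows that these complements have the multiplicities prescribed by $\mathbf u(\sigma)$ and that their sum is direct and equal to $E$.

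The global statement then follows by gluing. A framed bundle on $X_\Delta$ is a compatible system of framed bundles on the $X_\sigma$. All of them share the same framing at $1$, and the ray filtrations determine each local piece, by the previous step together with reflexivity (Perling's Theorem 5.19). Hence the data is exactly a point of $\prod_{\rho\in\Delta(1)}\mathcal{Fl}(\mathbf u(\rho))$ satisfying the conditions for every $\sigma$. Face compatibilities are automatic, because restriction to a face only forgets rays, as in \Cref{paynelocfree}. Finally, the stated conditions are given by ranks of intersections of universal subbundles, hence by rank conditions that are locally closed. The scheme structure should match the moduli functor because both are homogeneous under $\on{GL}(E)$ with matching stabilizers in families.

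I expect the sufficiency step to be the main obstacle: showing that the dimension conditions on intersections produce a simultaneous splitting, and doing so in families over a base $S$ rather than just over $k$. Generally, intersections of flags need not be spanned by a common adapted basis, so the dimension equalities must be used in full, over all multi-indices $\delta$. To handle this, I would first treat the case of points via the inductive complement construction. I would then pass to families by noting that on the locus where these ranks are constant, the $E_{\ge u}$ form subbundles, so complements can be chosen locally on $S$.
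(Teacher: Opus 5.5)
\textbf{Where the proposal holds up.} Your first half matches the comparison the paper makes right after the statement. That is: identifying the framed moduli space of $X_\sigma$ with $\on{GL}(E)/G(\mathbf u(\sigma))$; mapping it to $\prod_{\rho\in\sigma(1)}\mathcal{Fl}(\mathbf u(\rho))$ as the orbit of a split configuration, since $G(\mathbf u(\sigma))$ is the stabilizer (\Cref{lem:autocone}); and gluing over $\Delta$. The paper does not itself prove that this orbit is described by the displayed numerical conditions. It defers that step to \cite[Proposition~3.13]{Payne}.

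\textbf{The gap.} Your attempt to supply that step is where the proof breaks, and the failure is substantive. The displayed conditions fix only the dimensions of the spaces $E_{\ge u}$ and of their intersections, which are again of this form. But $\dim C_u$ for your complements depends on the span $\sum_{u'>u}E_{\ge u'}$. For three or more flags, the dimension of such a span is not determined by intersection dimensions.

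\textbf{A concrete failure.} Let $\sigma$ be the positive orthant in $\mathbf R^3$, $E=k^3$, and $\mathbf u(\sigma)=\{e_1^*,e_2^*,e_3^*\}$. Then ray $\rho_i$ carries a flag $0\subset\ell_i\subset E$ with $W^{\rho_i}_{-1}=\ell_i$. Every intersection $\bigcap_i W^{\rho_i}_{\delta_i}$ is $0$, $E$, a single $\ell_i$, or an intersection of several of the $\ell_i$. So all the conditions, for $\sigma$, its faces and every multi-index $\delta$, only say that the $\ell_i$ are pairwise distinct.

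Now take the $\ell_i$ pairwise distinct but lying in a common plane $H$. Then $E_{\ge e_i^*}=\ell_i$, $E_{\ge 0}=E$ and $\sum_{u'>0}E_{\ge u'}=H$. Your construction therefore produces a line $C_0$ even though $0\notin\mathbf u(\sigma)$, and $\sum_u\dim C_u=4>3$. No compatible splitting exists, since one would need $E=\ell_1\oplus\ell_2\oplus\ell_3$.

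These are the Klyachko data of the reflexive, non-locally-free sheaf $\mathcal O\oplus\ker\bigl(\mathcal O^3\xrightarrow{(x,y,z)}\mathcal O\bigr)$ on $\Aaff^3$, suitably linearized. Its multigraded Hilbert function equals that of $\bigoplus_i\mathcal O(e_i^*)$. So the orbit (non-collinear triples in $\PP(E)$) is strictly smaller than the locus cut out by intersection dimensions (pairwise distinct triples). The sufficiency step cannot be completed as you state it.

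\textbf{What is missing: rank conditions on spans.} Additionally require that $\dim\bigl(E_{\ge u}/\sum_{u'>u}E_{\ge u'}\bigr)$ equal the multiplicity of $u$ in $\mathbf u(\sigma)$, for every $u\in M_\sigma$. Only finitely many of these conditions are non-trivial. With them, your construction goes through:
\begin{enumerate}
\item By descending induction over the finite set of $u\ge w$ with $E_{\ge u}\neq 0$, one gets $E_{\ge w}=\sum_{u\ge w}C_u$.
\item Taking $w$ very negative gives $E=\sum_u C_u$ with $\sum_u\dim C_u=r$, so the sum is direct.
\item Taking $w$ with $w(v_\rho)=-\delta$ and very negative on the other rays gives $W^\rho_\delta=\bigoplus_{u(v_\rho)\ge-\delta}C_u$, which is Klyachko's condition.
\end{enumerate}

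These are still rank conditions on maps between universal subbundles, hence locally closed. So the rest of your plan survives, including the passage to families, provided the ranks are imposed via Fitting ideals so that the $E_{\ge u}$ and their spans really are subbundles. However, the numerical condition in the statement has to be read as encoding Klyachko's compatibility, as in Payne; intersection dimensions alone cannot do it.
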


This numerical condition is a rephrasing of Klyachko's compatibility condition. We refer the reader to \cite[Proposition~3.13]{Payne} for the proof. 
The moduli stack $\mathfrak{V}_{\Psi}$ is nothing but the quotient of $\mathcal M_{\Psi}^{\text{fr}}$ by the action of the group $\on{GL}_r$ on the choice of a framing. We proceed to compare Payne's description with ours from \Cref{thm:vbgeneral}. 

\subsubsection{Rays}
Since $\operatorname{GL}_r(k)$ acts transitively on the variety $\mathcal{Fl}(\mathbf{u}(\rho))$  and $G(\mathbf{u}(\rho))$ is the stabilizer of a partial flag $W_{\bullet},$ we conclude that
\[[\mathcal{Fl}(\mathbf{u}(\rho))\slash \operatorname{GL}_r(k)]\simeq
\mathrm{B}G(\mathbf{u}(\rho))\]
proving that, for a single ray, our description of $\mathfrak V_{\mathbf u(\rho)}$ coincides with Payne's.
\subsubsection{Cones}
Let $\sigma$ be a cone. 
It follows from the general theory of actions of reductive groups on quasiprojective schemes and homogeneous spaces (see for example \cite{milne2012reductive}) that we have a locally closed immersion
\[\operatorname{GL}_r(k)\backslash  G(\mathbf{u}(\sigma))\hookrightarrow \prod_{\rho\in\sigma(1)}\mathcal{Fl}(\mathbf{u}(\rho))\]
since, by Lemma~\ref{lem:autocone}, $ G(\bf{u}(\sigma))$ is the stabilizer of a collection of flags $\left\{W^\rho_\bullet\right\}^{\rho\in\sigma(1).}$

On the other hand, $\operatorname{GL}_r(k)\backslash G(\bf{u}(\sigma))$ coincides with Payne's moduli space $\mathcal M_{\bf{u}(\sigma)}^{\text{fr}}$ since the action of $\operatorname{GL}_r(k)$ on $\mathcal M_{{\bf u}(\sigma)}^{\text{fr}}$ is transitive on the set of splittings of $E$ satisfying the Klyachko compatibility condition.

Forgetting the framing we then get the isomorphism of algebraic stacks
\[\mathrm{B}G(\mathbf{u}(\sigma))\cong [\mathcal M_{{\mathbf u}(\sigma)}^{\text{fr}}\slash \operatorname{GL}_r(k)]\]

\subsubsection{Fans} For $\Delta$ the fan of a normal toric variety and $\Psi=\left\{ {\mathbf u}(\sigma)\right\}$ a compatible collection of multisets, \cite[Theorem~3.9]{Payne} shows that 
\[\mathcal M_{\Psi}^{\text{fr}}=\bigcap_{\sigma\in\Delta} \left(\mathcal M_{{\mathbf u}(\sigma)}^{\text{fr}}\times\prod_{\rho\notin\sigma(1)} \mathcal{Fl}(\bf{u}(\rho))\right)\]
where the intersection is in the product of partial flag varieties indexed by $\rho\in\Delta(1).$

It follows from the explicit presentation as an intersection of locally closed subschemes given above that for each cone $\sigma$ we have a morphism
\[[\mathcal M_{\Psi}^{\text{fr}}\slash \operatorname{GL}_r(k)]\to \mathrm{B}G(\mathbf{u}(\sigma)) \]
compatible with face morphisms. 
The universal property of limits implies then that we have an isomorphism
\[[\mathcal M_{\Psi}^{\text{fr}}\slash \operatorname{GL}_r(k)]\to \mathfrak V_{\Psi}.\]

\begin{remark}
    The advantage of presenting $\mathfrak V_{\Psi}$ as an inverse limit is that this perspective generalizes to the moduli stack of vector bundles 
    on Olsson fans, whereas the meaning of fixing a framing would not be clear.
\end{remark}
\begin{remark}
   Payne's construction of the moduli stack of equivariant vector bundles can be generalized to moduli of equivariant coherent sheaves with fixed equivariant Chern character. This was first done in M. Kool's PhD thesis \cite{kool2010moduli}.
\end{remark}

\section{Quasicoherent sheaves on Olsson fans}\label{sec:sheaves}
\tocdesc{relates invertible, quasicoherent, and locally free sheaves on Olsson cones, as well as sections and cohomology, in terms of combinatorial data}

In this section we prove that locally free sheaves on Olsson cones behave in many ways similarly to equivariant locally free sheaves on affine toric varieties.

			\subsection{Sections} \label{sec:7}

\begin{proposition} \label{prop:exactness}
				Let $S$ be a fine, saturated logarithmic scheme and let $\pi : \sigma \to S$ be an Olsson precone over $S$.  Then $\pi_\ast$ is exact on quasicoherent sheaves, and the higher derived push-forwards vanish.
			\end{proposition}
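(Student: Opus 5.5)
Both assertions are local on $S$, and formation of $\pi_\ast$ and $\mathrm R^p\pi_\ast$ on quasicoherent sheaves commutes with flat (in particular strict \'etale) base change on $S$, since $\pi$ is quasicompact and quasiseparated. So we may assume $S=\Spec A$ is affine and that $\sigma = S\mathop\times_{\Acal_P}\Acal_Q$ for a homomorphism $h\colon P\to Q$ of fine, saturated, sharp monoids and a morphism $S\to\Acal_P$, as in the proof of \Cref{lem:reprOlsson1}. Then it suffices to show that $H^0(\sigma,-)$ is exact on quasicoherent sheaves and that $H^p(\sigma,\mathcal F)=0$ for $p>0$.

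By \Cref{prop:local_description_of_relative_Artin_cones}~\ref{it:24}, $\sigma=[\Spec C/T_Q]$, where $C$ is a quasicoherent $\mathcal O_S$-algebra, so $V=\Spec_S C$ is affine over the affine scheme $S$. Hence $\sigma$ factors as
\begin{equation*}
	\sigma=[V/T_Q]\xrightarrow{\;q\;}\mathrm BT_Q\times S\xrightarrow{\;r\;}S .
\end{equation*}
The morphism $q$ is affine, being the quotient of the affine morphism $V\to S$ by $T_Q$. Thus $q_\ast$ is exact on quasicoherent sheaves and $\mathrm R^p q_\ast=0$ for $p>0$.

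For $r$, quasicoherent sheaves on $\mathrm BT_Q\times S$ are $Q^{\rm gp}$-graded $A$-modules, and $r_\ast$ takes the degree-$0$ summand. Taking a summand is exact. Moreover $T_Q$ is diagonalizable, hence linearly reductive over any base, so $\mathrm R^p r_\ast=0$ for $p>0$. One can see this via the \v Cech complex of the cover $S\to\mathrm BT_Q\times S$, whose terms are $M\otimes A[Q^{\rm gp}]^{\otimes n}$; this is the standard cobar complex of the Hopf algebra $A[Q^{\rm gp}]$, which is split exact in positive degrees by the grading. Composing via the Leray spectral sequence gives $\mathrm R^p\pi_\ast=\mathrm R^p r_\ast\circ q_\ast=0$ for $p>0$, and $\pi_\ast=r_\ast q_\ast$ is exact.

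Concretely, $\pi_\ast\mathcal F$ is the degree-$0$ part of the graded $C$-module corresponding to $\mathcal F$, which is visibly exact. The main point needing care is the reduction to the local chart. One has to check that the vanishing of $\mathrm R^p\pi_\ast$, a sheaf on $S$, can be tested strict-\'etale locally on $S$, and that cohomology computed on the stack agrees with cohomology computed via the affine morphism $q$. I expect both to be routine: the first by flat base change, the second because affine morphisms of algebraic stacks have no higher pushforward on quasicoherent sheaves. No saturation or exactness hypothesis on $h$ is needed, consistent with the statement applying to arbitrary Olsson precones.
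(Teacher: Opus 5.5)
Your proposal is correct and follows essentially the same route as the paper. Both arguments reduce to a chart $\sigma = [\Spec C/T_Q]$, factor $\pi$ as an affine morphism to $\mathrm BT_Q \times S$ followed by the projection from the classifying stack of a diagonalizable group, and combine the two vanishing statements via the Leray spectral sequence. Your extra details (flat base change to justify the localization, and the cobar complex for $\mathrm BT_Q$) fill in what the paper leaves implicit.
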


			\begin{proof}
				The assertion is \'etale-local on $S$, so we can assume that there is a local homomorphism of fine, saturated monoids $P \to Q$ and a strict morphism $S \to \mathcal A_P$ such that $\sigma = S \mathop\times_{\mathcal A_P} \mathcal A_Q$.  By Proposition~\ref{prop:local_description_of_relative_Artin_cones}, there is an affine $S$-scheme $Y = S \mathop\times_{\Acal_P} X_Q $ and a diagonalizable group $T=T_Q$ over $S$ such that $\sigma = [ Y / T ]$.  The projection $\sigma \to S$ now factors:
				\begin{equation*}
					\sigma = [ Y / T ] \to \mathrm BT \to S
				\end{equation*}
				Since in each of the two steps, pushforward is exact and higher derived pushforwards of quasicoherent sheaves vanish, the same conclusions hold for the composition by the Leray spectral sequence.
			\end{proof}

			In the next proposition, we use the notation of Proposition~\ref{prop:local_description_of_relative_Artin_cones}.  We write $\sigma$ for $\Acal_Q \mathop\times_{\Acal_P} S$.  For each $\beta \in Q^{\rm gp}$, we write $\mathcal O_\sigma(\beta)$ for the quasicoherent sheaf on $\sigma$ whose $\gamma$-graded piece is $C_{\beta + \gamma}$.  This notation apparently conflicts with pullback along the projection $\pi : \sigma \to S$, because if $\beta \in P^{\rm gp}$, then $\pi^\ast \mathcal O_S(\beta)$ is the quasicoherent sheaf whose $\gamma$-graded piece is $C_\gamma(\beta)$.  However, Proposition~\ref{prop:local_description_of_relative_Artin_cones}~\ref{it:19} shows that these two graded modules are canonically isomorphic by multiplication by $y^\beta$.  When $\beta \in P^{\rm gp}$, we identify these two natural interpretations of $\mathcal O_\sigma(\beta)$ by way of this isomorphism.

		\begin{proposition} \label{prop:min}
			Let $\mm\colon P \to Q$ be a 
            homomorphism of fine, saturated, sharp monoids and let $\pi : \sigma = S\mathop\times_{\Acal_P} \Acal_Q \to S$ be the Olsson precone induced by base change along a morphism $S \to \mathcal A_P$. If $\gamma \in Q^{\rm gp}$ then 
			\begin{equation*}
				\pi_\ast \mathcal O_\sigma(\gamma) = \underset{\substack{\alpha \in P^{\rm gp} \\ \mm(\beta) \leq \gamma}}{\varinjlim} \: \OO_S(\alpha),
			\end{equation*}
			and $\mathrm R^i \pi_\ast\mathcal O_\sigma(\gamma) = 0$ for all $i > 0$.
		\end{proposition}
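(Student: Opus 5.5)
The vanishing of $\mathrm R^i\pi_\ast\mathcal O_\sigma(\gamma)$ for $i>0$ is immediate from Proposition~\ref{prop:exactness}, since $\mathcal O_\sigma(\gamma)$ is quasicoherent. So the content is the computation of $\pi_\ast$, which I would carry out with the explicit presentation of Proposition~\ref{prop:local_description_of_relative_Artin_cones}. (I read the index $\beta$ under the colimit as $\alpha$: the colimit is over $\alpha\in P^{\rm gp}$ with $\mm(\alpha)\le\gamma$, ordered by $\le$, with transition maps $\varepsilon_{\alpha'-\alpha}\colon \OO_S(\alpha)\to\OO_S(\alpha')$.)

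Write $\sigma=[\Spec_S C/T_Q]$. Pushing forward along $[\Spec_S C/T_Q]\to\mathrm BT_Q\to S$ takes invariants, that is, the degree-$0$ part of the graded module. Thus $\pi_\ast\mathcal O_\sigma(\gamma)$ is the graded piece $C_\gamma$ (with the paper's convention that the $\delta$-piece of $\mathcal O_\sigma(\gamma)$ is $C_{\gamma+\delta}$).

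First compute $B_\gamma$. From the formula
\begin{equation*}
B=\bigoplus_{\mm(\alpha)\le\beta} y^\alpha x^\beta\OO_S(\alpha),
\end{equation*}
where $y^\alpha$ has degree $0$ and $x^\beta$ has degree $\beta$, we get $B_\gamma=\bigoplus_{\alpha:\,\mm(\alpha)\le\gamma} y^\alpha x^\gamma\OO_S(\alpha)$.

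Now pass to $C$. The relations $(\varepsilon_p - y^{-p})B(-p)$ for $p\in P$ are homogeneous. In degree $\gamma$ they identify the summand indexed by $\alpha$ with that indexed by $\alpha+p$: an element $y^{\alpha+p}x^\gamma s$ with $s\in\OO_S(\alpha)\otimes\OO_S(p)^{\vee}$ (reading $\OO_S(\alpha+p)$ as $\OO_S(\alpha)$ twisted) is set equal to the image of $y^{\alpha}x^\gamma s$ under $\varepsilon_p$. The summand for $\alpha+p$ is defined whenever the one for $\alpha$ is, since $\mm(\alpha)\le\mm(\alpha+p)$ is not needed. Indeed, if $\mm(\alpha+p)\le\gamma$ then $\mm(\alpha)\le\gamma$, so the index set $\{\alpha:\mm(\alpha)\le\gamma\}$ is downward closed, and the identification goes from smaller to larger $\alpha$.

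The quotient of a direct sum $\bigoplus_{i\in I}F_i$ by the relations $f=\varphi_{ij}(f)$ for $i\le j$ is precisely $\varinjlim_I F_i$. Hence $C_\gamma=\varinjlim_{\mm(\alpha)\le\gamma}\OO_S(\alpha)$. For this to be a colimit and not just a coequalizer of a non-filtered diagram, one only needs that the relations are generated by these pairwise identifications, which the presentation gives directly. The compatibility $\varepsilon_{p+p'}=\varepsilon_{p'}\varepsilon_p$ makes this a functor on the poset.

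The main obstacle is bookkeeping. One must check that the ideal generated by the $(\varepsilon_p-y^{-p})B(-p)$, intersected with $B_\gamma$, is exactly the span of the differences $y^{\alpha+p}x^\gamma\,\varepsilon_p(s)-y^\alpha x^\gamma s$. Products with further elements of $B$ of degree $0$ must not introduce anything new. I would verify this by noting that $B_0\cdot(\text{relation})$ is again a sum of relations of the same shape, because $B_0$ is spanned by the $y^{\alpha'}x^0$ with $\mm(\alpha')\le0$. Finally, the identification of $\mathcal O_\sigma(\gamma)$ for $\gamma\in P^{\rm gp}$ via $y^\gamma$ makes the answer canonical and independent of the local chart. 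The statement is étale local on $S$, so the local computation suffices.
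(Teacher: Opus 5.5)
Your proposal is correct and matches the paper's proof: vanishing via Proposition~\ref{prop:exactness}, $\pi_\ast$ computed as the degree-$\gamma$ piece of $C$, and $\bigoplus_{\mm(\alpha)\le\gamma}y^\alpha x^\gamma\OO_S(\alpha)$ modulo the relations $\varepsilon_p-y^{-p}$ recognized as the standard presentation of $\varinjlim_{\mm(\alpha)\le\gamma}\OO_S(\alpha)$. Two small repairs: your check using only $B_0$ would not suffice, because products of elements of $B_{\gamma-\delta}$ with relations of degree $\delta$ also land in degree $\gamma$, but no check is needed, since each $(\varepsilon_p-y^{-p})B(-p)$ is the image of a degree-$0$ $B$-linear map $B(-p)\to B$ and hence already an ideal with degree-$\gamma$ part $(\varepsilon_p-y^{-p})B(-p)_\gamma$; and your sentence saying the $\alpha+p$ summand exists whenever the $\alpha$ summand does is backwards, although the downward-closedness statement you give right after it is the correct one and is what the argument uses.
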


		\begin{proof}
			The vanishing of higher cohomology comes from Proposition~\ref{prop:exactness}.  Pushforward along $\pi$ corresponds to taking invariants under the action of the torus $T_Q=\Hom(Q^{\rm gp}, \Gm)$.  Since $C(\gamma)$ is $C$ with its grading shifted by $\gamma$, the pushforward $\pi_\ast \mathcal O_\sigma(\gamma)$ is the sum of the graded pieces of $C$ lying in graded degree~$\gamma$.  By \Cref{prop:local_description_of_relative_Artin_cones}, we can write
			\begin{equation*}
				C = \bigoplus_{\substack{\alpha \in P^{\rm gp}, \beta \in Q^{\rm gp} \\ \mm(\alpha)\leq\beta}} y^\alpha x^\beta \mathcal O_S(\alpha) \Big/ \left\langle (\varepsilon_p - y^{-p}) \bigoplus y^\alpha x^\beta \mathcal O_S(\alpha - p) \right\rangle_{p\in P} .
			\end{equation*}
			The piece in graded degree $\gamma$ is isomorphic to
			\begin{equation*}
				\bigoplus_{\substack{\alpha \in P^{\rm gp} \\ h(\alpha) \leq \gamma}} y^\alpha x^\gamma \mathcal O_S(\alpha) \Big/ \bigl\langle \varepsilon_p - y^{-p} \bigr\rangle_{p \in P} .
			\end{equation*}

			This is the usual presentation of $\varinjlim_{\mm(\alpha) \leq \gamma} \mathcal O_S(\alpha)$ as the quotient of the direct sum.
		\end{proof}

		\begin{corollary}\label{cor:structuresheaf}
			Let $P \to Q$ be an exact homomorphism of fine, saturated, sharp monoids.  Let $S$ be a fine, saturated logarithmic scheme and let $\pi : \sigma \to S$ be the base change of $\mathcal A_Q \to \mathcal A_P$ along $S \to \mathcal A_P$.  Then $\pi_\ast \mathcal O_\sigma = \mathcal O_S$.
		\end{corollary}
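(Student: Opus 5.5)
We apply Proposition~\ref{prop:min} with $\gamma = 0$. It identifies $\pi_\ast \mathcal O_\sigma$ with the colimit
\begin{equation*}
	\underset{\substack{\alpha \in P^{\rm gp} \\ \mm(\alpha) \leq 0}}{\varinjlim} \: \OO_S(\alpha),
\end{equation*}
with transition maps $\varepsilon_{\alpha'-\alpha}\colon \OO_S(\alpha) \to \OO_S(\alpha')$ whenever $\alpha \leq \alpha'$ in $P^{\rm gp}$. The task is therefore purely combinatorial: describe the indexing poset $P_{\leq 0} = \{\alpha \in P^{\rm gp} : \mm(\alpha) \leq 0\}$ and show it has a maximum, namely $0$.

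First, $0$ lies in $P_{\leq 0}$, since $\mm(0) = 0 \leq 0$. Next, suppose $\alpha \in P_{\leq 0}$, so that $-\mm^{\rm gp}(\alpha) = \mm^{\rm gp}(-\alpha) \in Q$. Exactness of $\mm$ says exactly that an element of $P^{\rm gp}$ whose image lies in $Q$ already lies in $P$; hence $-\alpha \in P$, that is, $\alpha \leq 0$. So $0$ is the greatest element of $P_{\leq 0}$. The colimit over a poset with a greatest element is the value at that element, so the colimit is $\OO_S(0) = \OO_S$.

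It remains to check that the resulting isomorphism $\OO_S \to \pi_\ast \mathcal O_\sigma$ is the natural adjunction map. This comes from the identification, fixed before Proposition~\ref{prop:min}, of $\pi^\ast \OO_S(\beta)$ with $\mathcal O_\sigma(\beta)$ for $\beta \in P^{\rm gp}$ via multiplication by $y^\beta$: at $\beta = 0$ this is the identity, so the inclusion of the term $\alpha = 0$ into the colimit is the unit map. Since both sides are compatible with strict \'etale base change on $S$, we may work \'etale locally, where a chart $S \to \Acal_P$ exists as in Proposition~\ref{prop:min}. This bookkeeping about the natural map is the only mildly delicate point; the essential input is that exactness forces $\max P_{\leq 0} = 0$.
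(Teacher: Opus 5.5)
Your proof is correct and is essentially the paper's argument: apply Proposition~\ref{prop:min} with $\gamma = 0$, and use exactness to identify $\{\alpha \in P^{\rm gp} : \mm(\alpha) \leq 0\}$ with $\{\alpha \in P^{\rm gp} : \alpha \leq 0\}$, whose greatest element $0$ makes the colimit equal to $\mathcal O_S(0) = \mathcal O_S$. Your extra check that this isomorphism is the unit map is a harmless refinement that the paper omits, and the \'etale localization is unnecessary because the map $S \to \Acal_P$ is given globally.
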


		\begin{proof}
                Let $\mm : P \to Q$ be an exact homomorphism and let $P_\gamma$ be the set of all $\beta \in P^{\rm gp}$ such that $\mm(\beta) \leq 0$.  Since $\mm$ is exact, this is the same as the set of all $\beta \in P^{\rm gp}$ such that $\beta \leq 0$, which clearly has a unique final object,~$0$.  Therefore $\pi_\ast \mathcal O_\sigma = \mathcal O_S(0) = \mathcal O_S$.
		\end{proof}

		\begin{corollary} \label{cor:4}
			Let $T$ be a tropical torus over a fine, saturated logarithmic scheme $S$ and let $\gamma$ be a character of $T$.  Then $\mathrm R^p \pi_\ast \mathcal O_T(\gamma) = 0$ for all $p > 0$ and $\pi_\ast \mathcal O_T(\gamma) = 0$ unless $\gamma = 0$,  in which case $\pi_\ast \mathcal O_T = \mathcal O_S$.
		\end{corollary}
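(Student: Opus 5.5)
The plan is to reduce to the case of a trivial tropical torus over a strict \'etale cover of $S$, and then compute the pushforward using the explicit presentation of \Cref{cor:15} or \Cref{cor:1}. All the assertions are local in the strict \'etale topology of $S$, and $\pi_\ast$ and $\mathrm R^p\pi_\ast$ commute with strict \'etale base change on $S$. We may therefore assume that the character group $X$ is constant, $X \simeq \ZZ^r$, so that $T = \tropGm^r \times S$.

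In the language of weakly convex cones, $T = \Theta_{\amb R}$ with $R = \bar M_S$ and $\amb R = \bar M_S^{\rm gp} \oplus X$. After further localization on $S$ we may pick a chart $P \to \bar M_S$, and then $T$ is the base change along $S \to \Acal_P$ of the weakly convex Artin cone attached to $P^{\rm gp}\oplus\ZZ^r$ with positive part $P$. Concretely, $T \simeq S \times \mathrm B\Gm^r$ with trivial relative logarithmic structure beyond that of $S$. This is the case $Q = P$ of \Cref{cor:1}, augmented by a lineality space $\ZZ^r$ with no positive elements.

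Under this identification, a character $\gamma \in X$ (shifted by any constant, which only twists by $\pi^\ast\mathcal O_S(\alpha)$ and can be dealt with separately) gives $\mathcal O_T(\gamma)$. This is the line bundle on $S\times \mathrm B\Gm^r$ associated with the character $\gamma$ of $\Gm^r$, equivalently the graded module $\mathcal O_S$ placed in degree $\gamma$. I expect the main obstacle to be pinning down this identification carefully, including the sign convention and the comparison with the definition of $\mathcal O(\gamma)$ via torsors in $M^{\rm gp}$. Once it is in place, the rest is routine.

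Pushforward along $S\times\mathrm B\Gm^r \to S$ is taking $\Gm^r$-invariants, that is, the degree-$0$ graded piece. This is exact, so $\mathrm R^p\pi_\ast = 0$ for $p>0$; this also follows from \Cref{prop:exactness} applied to the torsor description. The invariants of the module concentrated in degree $\gamma$ vanish unless $\gamma = 0$, in which case they equal $\mathcal O_S$.

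Alternatively, one can phrase this through the colimit formula of \Cref{prop:min}, read with $\amb R$ in place of $Q^{\rm gp}$. The sections of $\mathcal O_T(\gamma)$ are $\varinjlim \mathcal O_S(\alpha)$ over the constants $\alpha \in \bar M_S^{\rm gp}$ with $\alpha \le \gamma$ in $\amb R$. Since $X$ has no positive elements, $\alpha\le\gamma$ forces $\gamma - \alpha \in \bar M_S$. With $\gamma \in X$ this can happen only if $\gamma=0$, and in that case the index set has final object $\alpha = 0$, as in \Cref{cor:structuresheaf}.
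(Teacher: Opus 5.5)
Your argument depends on identifying $T$ with $S\times\mathrm B\Gm^r$, with logarithmic structure pulled back from $S$, and that identification is false. A tropical torus is a sheaf of groups on fine, saturated logarithmic schemes over $S$, with $T(U)=\Hom(X,\Gamma(U,\bar M_U^{\rm gp}))$, so its objects have no nontrivial automorphisms. Every object of $S\times\mathrm B\Gm^r$, by contrast, has automorphism group $\Gm^r$. For example, if $S$ has trivial logarithmic structure, then $T(S)$ is a single point, while $(S\times\mathrm B\Gm^r)(S)$ is the groupoid of $\Gm^r$-torsors on $S$.

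There is a morphism $T\to S\times\mathrm B\Gm^r$, sending a point to the torsors $\mathcal O^\ast(-\chi)$, but it is not an equivalence. In fact $T$ is not an algebraic stack at all. It is only covered, in the strict \'etale topology, by universally surjective Olsson precones, for instance $\Acal^2\to\tropGm$, $(a,b)\mapsto a-b$. The functor $\Theta_{\amb R}$ sees the extra lattice $X$ through homomorphisms into $\bar M^{\rm gp}$, not through a trivially acting torus. So taking $Q=P$ in \Cref{cor:1} and enlarging $Q^{\rm gp}$ by $\ZZ^r$ does not describe it.

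For the same reason, \Cref{prop:exactness} cannot be applied to $T$. Your alternative route is circular as well. \Cref{prop:min} is proved from the presentation $[\Spec C/T_Q]$, which exists only for Olsson precones. The colimit formula for weakly convex precones (\Cref{cor:5}) is deduced in the paper from the very statement you are proving.

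What is missing is a way to compute $\pi_\ast$ and $\mathrm R^p\pi_\ast$ on this non-algebraic object. The paper localizes so that $S$ is strictly henselian, then works with the category $\mathcal C$ of universally surjective Olsson cones $\sigma\to T$. This category has the following properties:
\begin{enumerate}
\item It covers $T$: every map from a logarithmic scheme to $T$ factors locally through an object of $\mathcal C$.
\item It is closed under $\times_T$.
\item Each object is acyclic for $\mathcal O(\gamma)$, by \Cref{prop:min}.
\item $\pi_\ast\mathcal O(\gamma)$ is constant on $\mathcal C$: it equals $\mathcal O_S$ when $\gamma=0$, by \Cref{cor:structuresheaf}, and $0$ otherwise.
\end{enumerate}
The \v{C}ech complex over $\mathcal C$ is then that of an infinite simplex, so the \v{C}ech spectral sequence gives vanishing in positive degrees.

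Your observation that a constant $\alpha\le\gamma$ with $\gamma\in X$ forces $\gamma=0$ is exactly the right combinatorial input. It has to be applied on each cone $\sigma\in\mathcal C$, not on $T$ itself. There it is justified by universal surjectivity, that is, exactness of $\bar M_S^{\rm gp}\oplus X\to Q^{\rm gp}$: the preimage of $Q$ is $\bar M_S\oplus 0$.
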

		\begin{proof}
			This assertion is local in the strict-\'etale topology of $S$ so we assume that $S$ is strictly henselian.  Let $\mathcal C$ be the category of all universally surjective $\sigma \to T$ where $\sigma$ is an Olsson cone over $S$.  If $\sigma_1 \to T$ and $\sigma_2 \to T$ are both in $\mathcal C$ then so is $\sigma_1 \mathop\times_T \sigma_2$. 

			We argue that the collection of $\sigma\in\mathcal  C$ defines a cover of $T$ in the strict \'etale topology.  If $U$ is any fine, saturated logarithmic scheme then any morphism $U \to T$ factors, locally in the strict \'etale topolgy of $U$, through some $\sigma$ in $\mathcal C$.  Indeed, $U \to T$ certainly factors locally through some Olsson precone $\tau$ over $S$.  But then $\tau \to T$ factors through the universal surjection $\mu : \tau \times \sigma \to T$, for any $\sigma \in \mathcal C$, with $\mu$ denoting the sum of the maps $\tau \to T$ and $\sigma \to T$ in the group structure of $T$.  

			Since $\mathcal C$ covers $T$, there is a \v Cech spectral sequence:
			\begin{equation*}
				H^p(\mathcal C, H^q(\sigma, \mathcal O_\sigma(\gamma))) \Rightarrow H^{p+q}(T, \mathcal O_T(\gamma))
			\end{equation*}
			where $H^p(\mathcal C, -)$ is the Čech cohomology.

			But $\mathcal O(\gamma)$ is acyclic on the objects of $\mathcal C$ by Proposition~\ref{prop:min}, so
			\begin{equation*}
				H^p(T, \mathcal O_T(\gamma)) = H^p(\mathcal C, \mathcal O(\gamma))  .
			\end{equation*}
			The presheaf $\pi_*\mathcal O(\gamma)$ is constant on $\mathcal C$ with value $\mathcal O_S$ if $\gamma = 0$ and value $0$ otherwise. Indeed, by Proposition~\ref{prop:min}, if $\gamma$ is nonzero, then it is unbounded below on each object of $\mathcal  C$. On the other hand, for $\gamma=0$,  since $\pi$ is universally surjective  and thus exact on logarithmic structures, the equality $\pi_*\mathcal O(\gamma)=\OO_S$ is proved in Corollary~\ref{cor:structuresheaf}.

            Finally, if $\sigma_1$ and $\sigma_2$ are in $\mathcal C$ then so is $\sigma_1 \mathop\times_T \sigma_2$, so the complex of sections of $\mathcal O(\gamma)$ on $\mathcal C$ is the same as the cellular cohomology of an infinite simplex and therefore vanishes in positive degrees.
		\end{proof}

		\begin{corollary} \label{cor:5}
			Let $S$ be a fine, saturated logarithmic scheme.  Let $\pi : \sigma \to S$ be a weakly convex Olsson precone over $S$.  Let $\gamma$ be a linear function on $\sigma$.  Then $\mathrm R^i \pi_\ast \mathcal O_\sigma(\gamma) = 0$ for all $i > 0$ and the natural map
			\begin{equation} \label{eqn:5}
				\underset{\substack{\beta \in \bar M_S^{\rm gp} \\ \beta \leq \gamma}}{\varinjlim} \mathcal O_S(\beta) \to \pi_\ast \mathcal O_\sigma(\gamma)
			\end{equation}
			is an isomorphism.
		\end{corollary}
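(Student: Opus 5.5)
The plan is to reduce to the two extreme cases already treated: Olsson precones (\Cref{prop:min}) and tropical tori (\Cref{cor:4}), using the torsor structure of \Cref{prop:polyhedral domain}. Both assertions are local in the strict \'etale topology of $S$, and formation of the colimit in~\eqref{eqn:5} commutes with strict \'etale base change. We may therefore assume that $\sigma = \Theta_{\amb R}$ and that the torsor $\Theta_{\amb R}\to\Theta_R$ under the lineality space $L$ (the tropical torus with character group $\amb R/R^{\rm gp}$) is trivial. We may also assume that $\amb R = R^{\rm gp}\oplus X$ with $X\simeq \amb R/R^{\rm gp}$ free. The splitting exists because the quotient is a finitely generated, torsion-free abelian group, after passing to a cover on which the sheaves are constant. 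Then $\sigma \simeq \Theta_R\times_S L$, and $\pi$ factors as $\sigma \xrightarrow{q} \Theta_R \xrightarrow{p} S$. Moreover $L$ is the base change of a tropical torus over $S$ to $\Theta_R$.

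Write $\gamma = \gamma_0 + \xi$ with $\gamma_0\in R^{\rm gp}$ and $\xi \in X$. Then $\mathcal O_\sigma(\gamma) \simeq q^\ast\mathcal O_{\Theta_R}(\gamma_0)\otimes \mathcal O_L(\xi)$, where $\mathcal O_L(\xi)$ is pulled back from the torus factor. By flat base change along $\Theta_R\to S$ (the torus factor is a base change), \Cref{cor:4} gives $\mathrm R^i q_\ast \mathcal O_L(\xi)=0$ for $i>0$. It also gives $q_\ast\mathcal O_L(\xi) = 0$ if $\xi\neq 0$ and $\mathcal O_{\Theta_R}$ if $\xi = 0$. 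By the projection formula, the same holds after twisting by $\mathcal O_{\Theta_R}(\gamma_0)$. The Leray spectral sequence together with \Cref{prop:min} (or \Cref{prop:exactness}) for $p$ then yields $\mathrm R^i\pi_\ast\mathcal O_\sigma(\gamma)=0$ for $i>0$. It also gives $\pi_\ast\mathcal O_\sigma(\gamma)=0$ when $\xi\ne0$, and $\pi_\ast\mathcal O_\sigma(\gamma) = \varinjlim_{\beta\in\bar M_S^{\rm gp},\ \beta\le\gamma_0}\mathcal O_S(\beta)$ when $\xi=0$. Here the order in the second formula is the one in $R^{\rm gp}$.

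It remains to match this with the left side of~\eqref{eqn:5}, where $\beta\le\gamma$ is taken in $\amb R$. The key point is that $\beta\le\gamma$ in $\amb R$ means $\gamma-\beta\in R\subset R^{\rm gp}$. Since $\beta\in \bar M_S^{\rm gp}$ maps into $R^{\rm gp}$, this forces $\gamma\in R^{\rm gp}$, i.e.\ $\xi=0$. So if $\xi\neq 0$ the indexing set is empty and the colimit is $0$, matching the computation. If $\xi = 0$, the indexing sets coincide. One must also check that the natural map~\eqref{eqn:5}, built from the sections $\varepsilon$ of $\mathcal O_\sigma(\gamma-\beta)$, is the isomorphism produced above, and is not merely an abstract isomorphism. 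I expect this compatibility, together with the well-definedness of the splitting of $\mathcal O_\sigma(\gamma)$ as a tensor product, to be the main bookkeeping obstacle. It should follow by tracing the identification $\pi^\ast\mathcal O_S(\beta)\simeq\mathcal O_\sigma(\beta)$ through \Cref{prop:min} and \Cref{cor:structuresheaf}. Finally, since both sides of~\eqref{eqn:5} are compatible with strict \'etale localization, the local isomorphisms glue.
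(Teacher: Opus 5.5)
Your proposal is correct and follows essentially the same route as the paper. You split $\sigma\simeq\Theta_R\times_S L$ locally via Proposition~\ref{prop:polyhedral domain}, use the Leray spectral sequence together with \Cref{cor:4} and \Cref{prop:min}, and distinguish whether $\gamma$ is constant along the lineality fibres. Your observation that the indexing set is empty when $\xi\neq 0$ makes explicit why the left side vanishes, a point the paper only attributes to \Cref{cor:4}.

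One small correction: $\Theta_R\to S$ need not be flat, since the precone need not be integral over $S$. So instead of ``flat base change along $\Theta_R\to S$'', apply \Cref{cor:4} directly to the tropical torus $q\colon \Theta_R\times_S L\to\Theta_R$, over a smooth atlas of $\Theta_R$ if you want a scheme base; this yields the same conclusion.
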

		\begin{proof}
			By Proposition~\ref{prop:polyhedral domain}, we may identify $\sigma$ with the product of a tropical torus $T$ and an Olsson precone $\sigma'$ by working locally in the strict \'etale topology of $S$.  For $i > 0$, the assertion follows immediately from Proposition~\ref{prop:min} and Corollary~\ref{cor:4} and the Leray spectral sequence.  By Corollary~\ref{cor:4}, both sides of~\eqref{eqn:5} are zero when $\gamma$ is not constant on the fibers of $\sigma$ over $\sigma'$.  The remaining case is where $\gamma$ is pulled back from $\sigma'$, in which case the conclusion is immediate from Proposition~\ref{prop:min} and Corollary~\ref{cor:4}.
		\end{proof}

		Corollary~\ref{cor:structuresheaf} also implies that the base of a universally surjective Olsson precone is a good moduli space:

   \begin{proposition}\label{gms_univclosed}
              Let $S$ be a fine, saturated logarithmic scheme and let $\pi : \sigma \to S$ be an Olsson precone such that the projection to $S$ is universally surjective.  Then $\pi : \sigma \to S$ is a good moduli space. In particular, it is surjective and universally closed. 
            \end{proposition}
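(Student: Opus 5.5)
The plan is to verify the definition of a good moduli space (Alper): $\pi$ should be quasicompact and quasiseparated, $\pi_\ast$ should be exact on quasicoherent sheaves, and the natural map $\mathcal O_S \to \pi_\ast \mathcal O_\sigma$ should be an isomorphism. Surjectivity and universal closedness then follow from Alper's general theory of good moduli spaces. Surjectivity is in any case part of the hypothesis of universal surjectivity.

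First, all three conditions are local on $S$ in the strict étale topology, and good moduli spaces are compatible with flat base change. We therefore work étale locally and write $\sigma = S \mathop\times_{\Acal_P} \Acal_Q$ for a homomorphism $\mm\colon P \to Q$ of fine, saturated, sharp monoids, using the local presentation of Olsson precones from Lemma~\ref{lem:reprOlsson1}. In this presentation $\sigma = [\Spec C / T_Q]$ with $\Spec C$ affine over $S$, by Proposition~\ref{prop:local_description_of_relative_Artin_cones}. This gives quasicompactness and quasiseparatedness directly, since $T_Q$ is diagonalizable and hence affine and flat of finite presentation.

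Exactness of $\pi_\ast$ on quasicoherent sheaves is exactly Proposition~\ref{prop:exactness}.

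The remaining point is $\pi_\ast\mathcal O_\sigma = \mathcal O_S$. By Lemma~\ref{lem:univSurj}, universal surjectivity of $\pi$ means $\bar M_S \to R$ is exact. I need this exactness for the chart $P \to Q$ in the local presentation, so that Corollary~\ref{cor:structuresheaf} applies. This is the step that requires some care, since exactness of $\bar M_S \to R$ is a condition on stalks, while the chart is a map of constant monoids.

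I would choose the chart as in the proof of Theorem~\ref{thm:localstructureOlssonfans}, taking $P = \bar M_{S,s}$ at a geometric point $s$. Exactness at the stalk then gives exactness of $P \to Q$ after localizing $Q$ appropriately, and this localization does not change $\sigma$ near $s$. Alternatively, I would apply Proposition~\ref{prop:min} directly at each stalk. The pushforward $\pi_\ast\mathcal O_\sigma$ is the colimit of $\OO_S(\alpha)$ over those $\alpha$ with $\mm(\alpha)\le 0$. Exactness forces $\alpha \le 0$, so the colimit has final object $0$ and equals $\OO_S$. Combining the three conditions shows that $\pi$ is a good moduli space.

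Finally, Alper's theorem states that a good moduli space morphism is surjective and universally closed, which gives the last clause of the statement.
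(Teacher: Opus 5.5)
Your proposal is correct and follows the paper's proof. Like the paper, you verify Alper's conditions: quasicompactness, exactness of $\pi_\ast$ (Proposition~\ref{prop:exactness}), and $\mathcal O_S \simeq \pi_\ast\mathcal O_\sigma$ (Corollary~\ref{cor:structuresheaf}), and you conclude surjectivity and universal closedness from \cite[Theorem~4.16]{ALPgood}. Your explicit construction of an exact chart $P=\bar M_{S,s}\to Q$ via Lemma~\ref{lem:univSurj} is a step the paper leaves implicit when it invokes Corollary~\ref{cor:structuresheaf}.
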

            \begin{proof}
							By definition \cite[Section~1.2]{ALPgood}, we must show that $\pi$ is quasicompact (immediate), that $\pi_\ast$ is exact on quasicoherent sheaves (Proposition~\ref{prop:exactness}) and that $\mathcal O_S \to \pi_\ast \mathcal O_\sigma$ is an isomorphism (Corollary~\ref{cor:structuresheaf}).  The surjectivity and universal closedness follow from \cite[Theorem~4.16]{ALPgood}.
            \end{proof}

		\begin{corollary} \label{cor:10}
			Let $\mm : P \to Q$ be a local, integral homomorphism of fine, saturated, and sharp monoids and suppose $\gamma \in Q^{\rm gp}$.  Let $S$ be a fine, saturated logarithmic scheme and let $\pi : \sigma \to S$ be the Olsson precone obtained from $\mathcal A_Q \to \mathcal A_P$ by base change along a morphism $S \to \mathcal A_P$.  For each geometric point $s$ of $S$, let $\gamma_s$ be the image of $\gamma$ in $\sigma_s = \pi^{-1}(s)$.  Then there is a section of $\bar M_S^{\rm gp} \cup \{ -\infty \}$ whose stalk at each geometric point $s$ of $S$ is the greatest lower bound $\gamma_s$.
		\end{corollary}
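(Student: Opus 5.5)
The plan is to build the section stalk by stalk and then glue, using the fact that infima are compatible with localization (Lemma~\ref{prop:7}). First reduce to the case where $\mm$ is local, integral and the morphism $S\to\mathcal A_P$ comes from a chart $P\to\Gamma(S,\bar M_S)$. This is harmless: the question is étale local on $S$, and a section of $\bar M_S^{\rm gp}\cup\{-\infty\}$ is determined by its stalks once we know it is locally constant along the logarithmic stratification with the correct cospecialization behaviour.

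Fix a geometric point $s$ of $S$ and write $P_s=\bar M_{S,s}$. The fiber $\sigma_s$ is the Olsson precone $s\times_{\mathcal A_{P_s}}\mathcal A_{Q_s}$, where $Q_s$ is the (sharpened, saturated) pushout of $Q$ along $P\to P_s$. By Tsuji's results already used in Proposition~\ref{prop:1}, the map $P_s\to Q_s$ is again integral, and after localization it is local. Proposition~\ref{prop:1}~\ref{it:11} then says that the set $(P_s)^{\rm gp}_{\le\gamma_s}$ is either empty or has a maximum. We define the stalk at $s$ to be this element $\inf(\gamma_s)\in \bar M^{\rm gp}_{S,s}\cup\{-\infty\}$.

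Next we check that these stalks glue. Let $t$ be a geometric point specializing to $s$. Then $\bar M_{S,s}\to\bar M_{S,t}$ is a localization, and likewise $Q_s\to Q_t$. These fit into a commutative square of the shape in Lemma~\ref{prop:7}. That lemma shows $\inf(\gamma_t)$ is the image of $\inf(\gamma_s)$ whenever the latter is finite, and that $-\infty$ maps to $-\infty$. The one gap is that Lemma~\ref{prop:7} is stated for $q\in Q$ rather than $q\in Q^{\rm gp}$. To close it, translate $\gamma$ by $\mm(p)$ for a suitable $p\in P^{\rm gp}$, which shifts every infimum by $p$, or simply note that the proof of Lemma~\ref{prop:7} works verbatim for elements of the group.

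Compatibility with generization shows that the stalks agree with those of a single section of $\bar M_S^{\rm gp}\cup\{-\infty\}$ on an étale neighbourhood of $s$. To produce that section, take $x=\inf(\gamma_s)$, lift it to $\Gamma(U,\bar M_U^{\rm gp})$ on an étale neighbourhood $U$ where the chart at $s$ is defined, and observe that its image at every $t$ in $U$ is $\inf(\gamma_t)$. Because the lifts are unique, these local sections glue by étale descent. The $-\infty$ locus is open by the same lemma, so the sections glue there as well.

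I expect the main obstacle to be the pushout step in the second paragraph. We need the pushout of $P\to Q$ along $P\to P_s$, taken in saturated monoids and then sharpened and localized, to remain integral. Otherwise Proposition~\ref{prop:1} cannot be applied to the fiber. Integrality is stable under integral pushouts and localizations by Tsuji, but saturation of the pushout may fail unless $\mm$ is quasisaturated. If it fails, I would work with the integral pushout and note that condition~\ref{it:11} only involves groups and orders, which saturation does not change.
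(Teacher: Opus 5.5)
Your argument is correct. It uses the same two ingredients as the paper: condition~\ref{it:11} of Proposition~\ref{prop:1} for the existence of the infimum, and Lemma~\ref{prop:7} for its compatibility with localization. The difference is that you apply them stalk by stalk and then glue, whereas the paper applies them once, upstairs. Because $h$ itself is local and integral, Proposition~\ref{prop:1} already gives $\mu=\inf(\gamma)\in P^{\rm gp}\cup\{-\infty\}$, and its image under $P\to\Gamma(S,\bar M_S)$ is a global section. Write $F_s$ for the face of $P$ that dies in $\bar M_{S,s}$. Lemma~\ref{prop:7}, applied to the localizations $P\to\bar M_{S,s}=P[-F_s]^\sharp$ and $Q\to Q[-h(F_s)]^\sharp$, then identifies the stalk of that section at $s$ with $\inf(\gamma_s)$. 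This makes three of your steps automatic: the gluing, the local constancy of the $-\infty$ locus, and re-checking integrality and locality at each stalk. Your fourth paragraph is in effect the paper's proof, run with the chart by $\bar M_{S,s}$ in place of $P$. What the stalkwise version buys is only that the section is visibly independent of the chosen chart.

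Three smaller points.

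\emph{The obstacle in your last paragraph does not arise} in the chart setting you reduced to. There $Q_s=Q[-h(F_s)]^\sharp$ is a localization of $Q$, hence already saturated. Moreover $P_s\to Q_s$ is already local: Kato's criterion together with locality of $h$ shows that any element of $P$ becoming a unit in $Q_s$ lies in $F_s$. So no further localization is needed, which matters, because localizing further would move the infimum into a proper quotient of $\bar M_{S,s}^{\rm gp}$.

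\emph{Your proposed fallback would be false.} Saturating adds new nonnegative elements, so it changes the order and can change infima. For example, push $\NN\xrightarrow{2}\NN$ out along $\NN\xrightarrow{2}\NN$: the image of $1$ has infimum $0$ in the integral pushout but $1$ in its saturation.

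\emph{Of your two fixes for $\gamma\in Q^{\rm gp}$ in Lemma~\ref{prop:7}, only the second works in general.} A translate $\gamma+h(p)$ lies in $Q$ only when $P_{\le\gamma}\neq\varnothing$.
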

		\begin{proof}
			Since $\mm$ is local and integral, \Cref{prop:1} guarantees that $\gamma$ has a greatest lower bound $\mu \in P^{\rm gp} \cup \{ -\infty \}$.  It is compatible pullback to $s$ by Proposition~\ref{prop:7}.
		\end{proof}

		\begin{corollary}\label{cor:pushforward}
			Let $P \to Q$ be a local, integral (resp.\ local, integral, and vertical) homomorphism of fine, saturated, sharp monoids.  Let $\pi : \sigma \to S$ be the base change of $\mathcal A_Q \to \mathcal A_P$ along a morphism $S \to \mathcal A_P$.  Let $\delta$ be the section of $\bar M_S^{\rm gp} \cup \{ -\infty \}$ guaranteed by Corollary~\ref{cor:10}.  Then $\pi_\ast \mathcal O_\sigma(\gamma) = \mathcal O_S(\delta)$, with the understanding that $\mathcal O_S(-\infty) = 0$.  In particular, $\pi_\ast \mathcal O_\sigma(\gamma)$ is a locally free sheaf of rank $\leq 1$ (resp.\ an invertible sheaf).
		\end{corollary}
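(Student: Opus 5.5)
The plan is to combine Proposition~\ref{prop:min} with Corollary~\ref{cor:10}. Proposition~\ref{prop:min} identifies $\pi_\ast \mathcal O_\sigma(\gamma)$ with the colimit $\varinjlim \mathcal O_S(\alpha)$, taken over the set $P^{\rm gp}_{\leq\gamma}=\{\alpha\in P^{\rm gp} : \mm(\alpha)\leq\gamma\}$, where the transition maps are the maps $\varepsilon_{\alpha'-\alpha}\colon\mathcal O_S(\alpha)\to\mathcal O_S(\alpha')$ for $\alpha\leq\alpha'$. The question is local on $S$, and this is compatible with how Corollary~\ref{cor:10} produces $\delta$. So I may work over an étale neighbourhood on which $S\to\mathcal A_P$ is given. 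There $\delta$ is simply the element $\inf(\gamma)\in P^{\rm gp}\cup\{-\infty\}$ computed in the monoid $P$.

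Since $\mm$ is local and integral, Proposition~\ref{prop:1}\ref{it:11} applies. Locality means the localization $\mo'$ appearing there is $\mo$ itself. Consequently the poset $P^{\rm gp}_{\leq\gamma}$ is either empty or has a maximum element $\delta$. I would state this directly and not cite Corollary~\ref{cor:10} again: I expect the argument in the proof of Proposition~\ref{prop:1} to show that the set of lower bounds is filtered and has a maximum even when $\gamma\in Q^{\rm gp}$ rather than $Q$. I would check this point carefully. One way is to translate $\gamma$ by some $\mm(p)$ to land in $Q$; alternatively, the claim is exactly what Corollary~\ref{cor:10} asserts.

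The two cases then go as follows. If the poset is empty, the colimit is $0=\mathcal O_S(-\infty)$. If it has a maximum $\delta$, then $\delta$ is a final object of the indexing category, so the colimit is canonically $\mathcal O_S(\delta)$. Two things need verifying for this identification to make sense globally. First, the isomorphisms obtained locally must glue. They do, because they are the canonical colimit-to-final-object maps and are compatible with the identifications of Proposition~\ref{prop:min}. Second, the locus where $\delta=-\infty$ must be handled consistently. Lemma~\ref{prop:7} gives the compatibility under passing to stalks and localizations, which is how $\delta$ becomes a section of $\bar M_S^{\rm gp}\cup\{-\infty\}$. It follows that $\pi_\ast\mathcal O_\sigma(\gamma)$ is locally either $0$ or invertible, so it is locally free of rank $\leq 1$.

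In the vertical case, Corollary~\ref{cor:9} shows that $\min(\gamma)$ exists and is finite. Hence $\delta\neq-\infty$ everywhere and $\pi_\ast\mathcal O_\sigma(\gamma)$ is invertible.

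The main obstacle is compatibility across points. On a connected neighbourhood, the global $\inf$ is computed in the chart monoid $P$, while the stalk at $s$ is computed in the localization $\bar M_{S,s}$. I need these to agree, so that the isomorphism with $\mathcal O_S(\delta)$ holds stalkwise, including where the stalk infimum becomes finite after localization. Lemma~\ref{prop:7} should settle this, together with the fact that pulling back along $S\to\mathcal A_P$ commutes with the colimit presentation.
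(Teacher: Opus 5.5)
Your proposal is correct and is essentially the paper's argument: the paper treats the statement as immediate from Proposition~\ref{prop:min}, which gives the colimit over $\{\alpha\in P^{\rm gp} : h(\alpha)\leq\gamma\}$. As in your plan, the existence of a greatest element (or emptiness) comes from Proposition~\ref{prop:1} via Corollary~\ref{cor:10}, and finiteness in the vertical case comes from Corollary~\ref{cor:9}.

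The gluing and stalkwise-compatibility worries you flag are not needed for this statement. The morphism $S\to\mathcal A_P$ is given globally, and $\delta$ is the image of the single element $\inf(\gamma)\in P^{\rm gp}\cup\{-\infty\}$. Moreover, by the surjectivity shown in the proof of Lemma~\ref{prop:7}, an infimum of $-\infty$ can never become finite after localization.
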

		\begin{proof}
			This is immediate from Corollary~\ref{cor:9} and Proposition~\ref{prop:min}.
		\end{proof}

		\begin{remark}
			While $\delta$ is uniquely determined by $\gamma$ in Corollary~\ref{cor:pushforward}, only $\mathcal O_S(\delta)$ is determined uniquely by $\mathcal O_\sigma(\gamma)$.  In other words, it is possible that the line bundle $\mathcal O_S(\delta)$ does not determine~$\delta$. This is the case, for instance, when all line bundles on $S$ are trivial.
		\end{remark}

\subsection{$Q/P$-systems} \label{sec:1}
It follows immediately from the local description of Olsson precones in \Cref{prop:local_description_of_relative_Artin_cones} that quasicoherent sheaves on $\sigma$ can be represented, locally on $S$, as $Q^{\rm gp}$-graded $C$-modules.
We make this description more explicit by working with the graded pieces, generalizing Perling's $\sigma$-families.

\begin{definition} \label{def:6}
	Let $P \to Q$ be a homomorphism of fine, saturated, sharp monoids. Let $S$ an algebraic stack with a 
	morphism to $\Acal_P$. A \emph{$Q/P$-filtered} quasicoherent sheaf on $S$ is a system of quasicoherent sheaves $\{F_{\alpha}\}_{\alpha\in Q^{\rm gp}}$ on $S$, together with maps $f_{\alpha,\beta}\colon F_\alpha \to F_\beta$ whenever $\alpha \leq \beta$ in the partial order induced by $Q\subseteq Q^{\rm{gp}}$ such that
	\begin{enumerate}[label=(\roman*)]
		\item $f_{\beta,\gamma} \circ f_{\alpha,\beta} = f_{\alpha,\gamma}$ whenever $\alpha \leq \beta \leq \gamma$~;
		\item \label{it:18} there are isomorphisms $F_{\alpha + \delta} \simeq F_\alpha\otimes\OO_S(\delta)$ when $\delta \in P^{\rm gp}$, depending homomorphically on $\delta$~; and
		\item under the isomorphism from \ref{it:18}, the map $F_\alpha \to F_{\alpha + \delta}$ coincides with $\varepsilon_\delta$ when $\delta \in P$.
	\end{enumerate}
\end{definition}


\begin{proposition}\label{cor:systems_of_bundles}
	Let $P \to Q$ be a homomorphism of fine, saturated, sharp monoids, let $S$ be a fine, saturated, logarithmic scheme, and let $S \to \mathcal A_P$ be a morphism of stacks with logarithmic structures.  Let $\pi : \sigma \to S$ denote the  Olsson precone obtained by base change from $\mathcal A_Q \to \mathcal A_P$. 
Then quasicoherent sheaves on $\sigma$ are equivalent to $Q/P$-filtered quasicoherent sheaves on $S$.
\end{proposition}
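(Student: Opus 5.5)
The plan is to reduce to graded modules via \Cref{prop:local_description_of_relative_Artin_cones}, and then unwind the grading into a system of sheaves on $S$, in the spirit of Perling's \Cref{prop:equivariantsheaves}.

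First I would build the functor from sheaves to systems. Given a quasicoherent sheaf $\mathcal F$ on $\sigma$, set $F_\alpha = \pi_\ast(\mathcal F \otimes \mathcal O_\sigma(\alpha))$ for $\alpha\in Q^{\rm gp}$. For $\alpha\le\beta$, the logarithmic structure of $\sigma$ supplies maps $\mathcal O_\sigma(\alpha)\to\mathcal O_\sigma(\beta)$, and these give $f_{\alpha,\beta}$. For $\delta\in P^{\rm gp}$, the identification $\mathcal O_\sigma(\delta)=\pi^\ast\mathcal O_S(\delta)$ made just before \Cref{prop:min}, together with the projection formula, yields $F_{\alpha+\delta}\simeq F_\alpha\otimes\mathcal O_S(\delta)$. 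This isomorphism is homomorphic in $\delta$, and for $\delta\in P$ it matches $\varepsilon_\delta$, because $\varepsilon_\delta$ pulls back to the logarithmic map on $\sigma$. This construction is global, functorial, and compatible with flat base change on $S$ (by \Cref{prop:exactness}). Both sides are stacks for the étale topology on $S$: quasicoherent sheaves satisfy descent, and so do systems of quasicoherent sheaves. It therefore suffices to check that the functor is an equivalence étale-locally on $S$. In particular we may assume $S$ affine and, if convenient, that $S\to\mathcal A_P$ lifts to $X_P$.

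Locally, \Cref{prop:local_description_of_relative_Artin_cones}~\ref{it:24} presents $\sigma=[\Spec_S C/T_Q]$. Quasicoherent sheaves on $\sigma$ are then $Q^{\rm gp}$-graded $C$-modules $N=\bigoplus_\gamma N_\gamma$, and under this dictionary $F_\alpha=N_\alpha$, i.e.\ the degree-$0$ part of $N(\alpha)$. I would then give the inverse explicitly. A $Q^{\rm gp}$-graded $C$-module is the same as a graded $\mathcal O_S$-module carrying three kinds of compatible data:
\begin{itemize}
\item multiplication by $x^q$ for $q\in Q$, which gives $f_{\alpha,\alpha+q}$;
\item multiplication by the degree-$0$ elements $y^\alpha\mathcal O_S(\alpha)$, which gives $N_\gamma\otimes\mathcal O_S(\alpha)\to N_{\gamma}$ after twisting, i.e.\ the isomorphisms of \ref{it:18};
\item the relations $\varepsilon_p y^p=1$ on $x^{h(p)}B$, which say exactly that the map $F_\alpha\to F_{\alpha+h(p)}$ equals $\varepsilon_p$ under \ref{it:18}.
\end{itemize}
Invertibility of the $y^\alpha$ matches the requirement in \ref{it:18} that these maps be isomorphisms. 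Commutativity and associativity of $C$ match the cocycle condition for the $f$'s and the homomorphic dependence on $\delta$. From a $Q/P$-system I would therefore form $N=\bigoplus_\alpha F_\alpha$ with exactly these actions. By the presentation of $C$ as the quotient of $B$ by the stated binomial ideal, this is a well-defined $C$-module, and the two constructions are mutually inverse.

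The main obstacle is bookkeeping rather than ideas. One must check that the action of $B$, and in particular of the summands $y^\alpha x^\beta\mathcal O_S(\alpha)$ with $h(\alpha)\le\beta$, is determined by and compatible with the data (i)--(iii). This requires composing the isomorphism of \ref{it:18} for $\alpha$ with $f_{\gamma+h(\alpha),\gamma+\beta}$, and verifying that the resulting map is independent of the factorization. I would handle this by using the identity $y^\alpha x^\beta = y^\alpha x^{h(\alpha)}\cdot x^{\beta-h(\alpha)}$ and the homomorphic dependence in \ref{it:18}. A second point to check is that the local equivalence is canonical enough to glue. This is why I define the global functor via $\pi_\ast(\mathcal F(\alpha))$ first, and use the chart only to prove that this already-defined functor is an equivalence.
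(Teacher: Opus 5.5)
Your argument is essentially the paper's. The global functor $\mathcal F\mapsto\{\pi_\ast\mathcal F(\alpha)\}_\alpha$ is the functor $\Phi$ of the paper's second proof. Your dictionary is exactly the paper's first proof: the $f_{\alpha,\beta}$ correspond to the $A$-action, the isomorphisms of \ref{it:18} to the extension to a $B$-action, and condition (iii) to the relations defining $C$. The difference is that you avoid building the paper's explicit inverse $\Psi$. The \'etale localization is harmless but unnecessary, since \Cref{prop:local_description_of_relative_Artin_cones}~\ref{it:24} already gives $\sigma\simeq[\Spec_S C/T_Q]$ globally for any $S\to\mathcal A_P$.

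One step does not go through as you describe it. The ``independence of the factorization'' you flag compares two composites: the isomorphism of \ref{it:18} for $\alpha$ followed by $f_{\gamma+h(\alpha),\gamma+\beta}$, against $f_{\gamma,\gamma+\beta-h(\alpha)}$ followed by that isomorphism. Their agreement is the commutativity in $B$ of $x^{\beta-h(\alpha)}$ with $y^\alpha x^{h(\alpha)}\mathcal O_S(\alpha)$. In other words, it is naturality of the isomorphisms $F_{\gamma+h(\delta)}\simeq F_\gamma\otimes\mathcal O_S(\delta)$ with respect to the $f$'s. Homomorphic dependence on $\delta$, together with (i) and (iii), does not imply this when the $\varepsilon_\delta$ are not injective.

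Here is a counterexample. Take $h\colon\NN\to\NN^2$, $1\mapsto(1,1)$, over the standard log point, so that $\mathcal O_S(\delta)$ is trivial and $\varepsilon_\delta=0$ for $\delta>0$. Set $F_\alpha=k$ for all $\alpha$, with identity isomorphisms in \ref{it:18}. Let $f_{(0,0),(1,0)}=\mathrm{id}$ and let every other $f_{\alpha,\beta}$ with $\alpha<\beta$ be zero. Condition (i) holds because $(1,0)$ is an atom of $\NN^2$, so the one nonzero strict transition map is never a composite. Condition (iii) holds since $f_{\alpha,\alpha+n(1,1)}=0=\varepsilon_n$ for $n>0$. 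Yet $f_{(1,1),(2,1)}=0\neq f_{(0,0),(1,0)}$, so this system is not isomorphic to any $\{\pi_\ast\mathcal F(\alpha)\}$.

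So naturality has to be read into \Cref{def:6}. It is automatic on the image of your functor, and it is the analogue of the last condition of \Cref{def:5}. With it, your check is immediate and the argument is complete. The paper's own proofs pass over this point too.

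There is also a minor slip in your second bullet. The elements of $B$ inducing $N_\gamma\otimes\mathcal O_S(\alpha)\simeq N_{\gamma+h(\alpha)}$ are $y^\alpha x^{h(\alpha)}\mathcal O_S(\alpha)$, of degree $h(\alpha)$. By contrast, $y^\alpha\mathcal O_S(\alpha)$ lies in $B$ only when $h(\alpha)\le 0$.
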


\begin{proof}[First proof.]
	The data in \Cref{def:6} are equivalent to the structure of a graded $C$-module, in the notation of \Cref{prop:local_description_of_relative_Artin_cones}.  Indeed, one gets a $Q^{\rm gp}$-graded quasicoherent $\mathcal O_S$-module from the direct sum of the $F_\alpha$.  The maps $f_{\alpha,\beta}$ encode the action of $A$, in the notation of \Cref{prop:local_description_of_relative_Artin_cones}~; the isomorphisms $F_{\alpha + \delta} \simeq F_\alpha \otimes \mathcal O_S(\delta)$ correspond to the promotion to a $B$-action~; and the identification of $f_{\alpha+\delta,\alpha}$ with $\varepsilon_\delta$ when $\delta \in P$ corresponds to the descent to a $C$-action.
\end{proof}

We give a second, direct demonstration of \Cref{cor:systems_of_bundles}.

\begin{proof}[Second proof.]
	Suppose that $\mathcal E$ is a quasicoherent sheaf on $\sigma$.  Set 
	\begin{equation*}
		\Phi(\mathcal E) = \{ \pi_\ast \mathcal E(\alpha)\}_{\alpha\in Q^{\rm gp}},
	\end{equation*}
	with maps induced by $\mathcal E(\alpha)\to \mathcal E(\beta)$ whenever $\alpha\leq\beta$ in $Q^{\rm gp}$, which is a $Q/P$-filtered quasicoherent sheaf on $S$. 
	
	Conversely, if $F = \{ F_\alpha \}_{\alpha\in Q^{\rm gp}}$ is a $Q/P$-filtered quasicoherent sheaf on $S$, we can form a quasicoherent sheaf on $\sigma$ by
	\begin{equation} \label{eqn:1}
		\Psi(F) = \kern-1em \varinjlim_{\substack{\alpha, \beta \in Q^{\rm gp}
        \\ \alpha \leq \beta}} 
        \pi^\ast F_\alpha(-\beta) = \coker \bigl( \bigoplus_{\alpha \leq \beta} \pi^\ast F_\alpha(-\beta) \to \bigoplus_\alpha \pi^\ast F_\alpha(-\alpha) \bigr) .
	\end{equation}
	The first direct sum is taken over pairs $(\alpha,\beta) \in Q^{\rm gp} \times Q^{\rm gp}$ such that $\alpha \leq \beta$ and the second is taken over $Q^{\rm gp}$. On the $F_\alpha(-\beta)$ component, the map is the difference of the natural map $\varepsilon_{\beta-\alpha}$ to the $F_\alpha(-\alpha)$ summand and the transition morphism $f_{\alpha,\beta}$ to the $F_\beta(-\beta)$ summand.

	We argue that $\Phi(\Psi(F))=F$ using the projection formula%
	\footnote{Since $\pi : \sigma \to S$ factors as an affine morphism followed by the projection $\mathrm BT \to S$, where $T$ is a diagonalizable group, we have $\pi_\ast ( G \otimes \pi^\ast F ) = \pi_\ast(G) \otimes F$ for \emph{all} quasicoherent sheaves $F$ on $S$, and not only the flat ones.}
	and exactness of $\pi_\ast$~:
	\begin{equation*} \begin{aligned}
		\Phi(\Psi(F))_\gamma 
		& = \pi_\ast \Bigl( \kern-1ex \varinjlim_{\substack{ \alpha, \beta \in Q^{\rm gp}\\\alpha \leq \beta}} \kern-1ex \pi^\ast F_\alpha(-\beta + \gamma) \Bigr) 
		\\ & = \kern-1ex \varinjlim_{\substack{ \alpha, \beta \in Q^{\rm gp}\\\alpha \leq \beta}} \kern-1ex F_\alpha \otimes \pi_\ast \mathcal O_\sigma(-\beta + \gamma)
		\\ & = \kern-3.75ex \varinjlim_{\substack{ \alpha, \beta \in Q^{\rm gp} , \; \delta \in P^{\rm gp}\\\alpha \leq \beta \leq \gamma - \delta}} \kern-3.75ex F_\alpha(\delta) 
		\\ & = \varinjlim_{\delta \in P^{\rm gp}} F_{\gamma - \delta}(\delta)  = F_\gamma
	\end{aligned}
	\end{equation*}

	Thus $\Phi(\Psi(F)) \simeq F$, so $\Psi$ is fully faithful.  On the other hand, if $\mathcal E$ is quasicoherent on $\sigma$ then we have a map $\Psi(\Phi(\mathcal E)) \to \mathcal E$.  Let $K$ and $L$ be its kernel and cokernel.  Then $\Phi(K) = \Phi(L) = 0$ since $\Phi$ is exact by Proposition~\ref{prop:exactness}.  Therefore it suffices to show that $\Phi(\mathcal E) = 0$ only if $\mathcal E = 0$.

	But there is an diagonalizable group $T$ and a factorization of $\sigma \to S$ through $\mathrm B T$ with $\tau : \sigma \to \mathrm B T$ affine.  If $\Phi(\mathcal E) = 0$ then $\tau_\ast \mathcal E = 0$, which then implies that $\mathcal E = 0$ because $\tau$ is affine.
\end{proof}

\begin{remark}
    When $S$ is a point with trivial log structure, and thus $\sigma$ is simply the Artin cone $\Acal_Q$, the previous proposition reduces to Perling's equivalence  between quasicoherent sheaves and $\sigma$-families \cite{Perling}, recalled in Section~\ref{sec:2}.
\end{remark}

\begin{corollary}
	Let $h\colon P \to Q$ and $g\colon Q \to R$ be homomorphisms of fine, saturated, sharp monoids.  Let $S$ be a fine, saturated logarithmic scheme with a morphism to $\Acal_P$, and let $\psi\colon\tau \to \sigma$ be the morphism of Olsson cones over $S$ pulled back from $\mathcal A_R \to \mathcal A_Q$. 
    Let $\mathcal F$ be a quasicoherent sheaf on $\sigma$ and let $\{ F_\alpha \}_{\alpha \in Q}$ be its associated $Q/P$-filtered $\mathcal O_S$-module.  Then $\psi^\ast \mathcal F$ has associated $R/P$-filtered module
	\[\{\varinjlim_{g(\alpha)\leq\beta} F_{\alpha} \}_{\beta \in R^{\rm gp}}. \]
\end{corollary}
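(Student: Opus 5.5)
The plan is to compute $\Phi(\psi^\ast\mathcal F)$ directly, using the equivalence $\Phi$ of \Cref{cor:systems_of_bundles}. For $\beta \in R^{\rm gp}$ it is the pushforward $\pi_{\tau\ast}\bigl((\psi^\ast\mathcal F)(\beta)\bigr)$. We write $\pi_\tau = \pi_\sigma\circ\psi$ and express $\mathcal F$ through the resolution~\eqref{eqn:1}:
\begin{equation*}
\mathcal F \simeq \Psi(\Phi(\mathcal F)) = \varinjlim_{\alpha\le\alpha'} \pi_\sigma^\ast F_\alpha(-\alpha').
\end{equation*}
Pullback $\psi^\ast$ is right exact and commutes with colimits. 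Moreover $\psi^\ast\mathcal O_\sigma(-\alpha') = \mathcal O_\tau(-g(\alpha'))$, since $\psi$ is induced by $g$ on characteristic monoids. Hence
\begin{equation*}
\psi^\ast\mathcal F \simeq \varinjlim_{\alpha\le\alpha'} \pi_\tau^\ast F_\alpha(-g(\alpha')).
\end{equation*}

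Next we twist by $\beta$ and push forward along $\pi_\tau$. Since $\pi_{\tau\ast}$ is exact (\Cref{prop:exactness}), it commutes with this cokernel presentation. The projection formula (the footnote in the second proof of \Cref{cor:systems_of_bundles}) then gives $\pi_{\tau\ast}\bigl(\pi_\tau^\ast F_\alpha \otimes \mathcal O_\tau(\beta - g(\alpha'))\bigr) = F_\alpha\otimes \pi_{\tau\ast}\mathcal O_\tau(\beta-g(\alpha'))$. By \Cref{prop:min} the last factor is $\varinjlim_{\delta\in P^{\rm gp},\, gh(\delta)\le \beta-g(\alpha')}\mathcal O_S(\delta)$. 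Substituting, we get a colimit of $F_\alpha(\delta)$ over triples $\alpha\le\alpha'$, $\delta\in P^{\rm gp}$ with $g(\alpha'+h(\delta))\le\beta$.

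The final step collapses this colimit to $\varinjlim_{g(\alpha)\le\beta}F_\alpha$, mirroring the computation showing $\Phi\Psi(F)=F$. First we use $F_\alpha(\delta)\simeq F_{\alpha+h(\delta)}$, from \Cref{def:6}~\ref{it:18}, to reindex by $\alpha_1=\alpha+h(\delta)$ and $\alpha_1'=\alpha'+h(\delta)$. The transition maps then match the $f_{\alpha,\beta}$ and the $\varepsilon$'s, so the diagram becomes one indexed by pairs $\alpha_1\le\alpha_1'$ in $Q^{\rm gp}$ with $g(\alpha_1')\le\beta$, with value $F_{\alpha_1}$. The remaining $P^{\rm gp}$-parameter is absorbed because the identifications depend homomorphically on $\delta$. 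Finally, the pair $(\alpha_1,\alpha_1')$ maps cofinally to the terminal choice $\alpha_1'=\alpha_1$, which leaves $\varinjlim_{g(\alpha)\le\beta}F_\alpha$. One still has to check that the induced multiplication maps and the $P$-twisting data are the natural ones; this follows by naturality of every identification used.

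I expect the main obstacle to be this reindexing and cofinality argument, specifically verifying that the identifications $F_\alpha(\delta)\simeq F_{\alpha+h(\delta)}$ are compatible with every transition map, so that the triple-indexed colimit really reduces to the stated one rather than to a quotient by extra relations. I would handle it exactly as in the displayed chain in the second proof of \Cref{cor:systems_of_bundles}, which is the special case $g=\mathrm{id}$.
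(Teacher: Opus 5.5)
Your route differs from the paper's, and it has a genuine gap at the collapse step. The problem comes from the formula you take from \eqref{eqn:1}. Read literally, it is a colimit over the poset $Q^{\rm gp}$ alone. It never identifies the summand $\pi_\sigma^\ast F_\alpha(-\alpha)$ with $\pi_\sigma^\ast F_{\alpha+h(\delta)}(-\alpha-h(\delta))$, although the two are isomorphic by \Cref{def:6}~\ref{it:18}.

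Here is a counterexample. Take $P=Q=\mathbf N$, $S$ the standard logarithmic point (so $\varepsilon_p=0$ for $p\neq 0$), $\sigma=S$, and $\mathcal F$ a vector space $V$. Every transition map $f_{\alpha,\alpha'}$ with $\alpha<\alpha'$ vanishes, so the cokernel in \eqref{eqn:1} is $\bigoplus_{n\in\mathbf Z}V$, not $V$.

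For the same reason, your step ``the remaining $P^{\rm gp}$-parameter is absorbed'' fails. The triples $(\alpha,\alpha',\delta)$ and $(\alpha-h(\epsilon),\alpha'-h(\epsilon),\delta+\epsilon)$ carry isomorphic values, but that isomorphism is not a transition map of your diagram, so a plain colimit does not glue them. In the example with $g=\mathrm{id}$ you get one copy of $F_\beta$ for each $n\in\mathbf Z$.

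The displayed chain in the second proof of \Cref{cor:systems_of_bundles}, which you propose to imitate, makes the same silent identification in its step $\varinjlim_{\delta\in P^{\rm gp}}F_{\gamma-\delta}(\delta)=F_\gamma$, so it cannot justify your step. Separately, the diagonal $\alpha_1'=\alpha_1$ is not cofinal in your twisted-arrow index category, since its comma categories are discrete intervals. What is cofinal is the projection $(\alpha_1,\alpha_1')\mapsto\alpha_1$.

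You can repair the argument by reading every such colimit as an $\mathcal O_S$-linear coend over the category with objects $Q^{\rm gp}$ and hom-sheaves $\pi_{\sigma\ast}\mathcal O_\sigma(\alpha'-\alpha)$, which builds in the $P^{\rm gp}$-twists. Co-Yoneda then reduces everything to the identity $\pi_{\tau\ast}\mathcal O_\tau(\beta-g(\alpha))\simeq\varinjlim_{g(\alpha')\le\beta}\pi_{\sigma\ast}\mathcal O_\sigma(\alpha'-\alpha)$.

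The paper obtains exactly this identity directly and never resolves $\mathcal F$. It applies the projection formula to $\psi$ rather than to $\pi_\tau$, giving $(\psi^\ast\mathcal F)_\beta=\pi_{\sigma\ast}\bigl(\mathcal F\otimes\psi_\ast\mathcal O_\tau(\beta)\bigr)$. It then applies \Cref{prop:min} to $\tau=\sigma\times_{\Acal_Q}\Acal_R\to\sigma$ with $\sigma$ as the base, which gives $\psi_\ast\mathcal O_\tau(\beta)=\varinjlim_{g(\alpha)\le\beta}\mathcal O_\sigma(\alpha)$ as an honest colimit over a poset. Since $\pi_{\sigma\ast}$ is exact and commutes with direct sums, it commutes with this colimit, and $\varinjlim_{g(\alpha)\le\beta}F_\alpha$ follows at once with no $P^{\rm gp}$-bookkeeping.
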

\begin{proof}
	Let $\rho : \tau \to S$ and $\pi : \sigma \to S$ be the projections.  Then from Proposition~\ref{cor:systems_of_bundles} we have 
	\begin{equation*} \begin{aligned}
		(\psi^*\mathcal F)_{\beta}&=\rho_\ast(\psi^*\mathcal F\otimes\mathcal O_{\tau}(\beta))=\pi_\ast\left(\mathcal F\otimes \psi_\ast \mathcal O_\tau(\beta) \right) \\[.5\baselineskip]
		& = \pi_\ast \Bigl( \mathcal F  \otimes \kern-.5ex \varinjlim_{\substack{\alpha\in Q^{\rm gp} \\ g(\alpha)\leq \beta}} \kern-1ex \mathcal O_{\sigma}(\alpha) \Bigr) = \varinjlim_{\substack{\alpha \in Q^{\rm gp} \\ g(\alpha) \leq \beta}} \pi_\ast \mathcal F(\alpha) = \varinjlim_{\substack{\alpha \in Q^{\rm gp} \\ g(\alpha) \leq \beta}} F_\alpha .
	\end{aligned}
	\end{equation*}
\end{proof}

We obtain a simpler description if we assume that the morphism $S\to\Acal_P$ factors through the toric variety $X_P$, in which case we may use the local description of \Cref{cor:1}.

\begin{definition} \label{def:5}
	Let $h : P \to Q$ be a homomorphism of fine, saturated, sharp monoids. Let $S$ an algebraic stack with a 
	morphism to $X_P$.  Let $K$ be the kernel of $P^{\rm gp} \to Q^{\rm gp}$, let $L$ be the cokernel, and let $M$ be the image.  Fix a splitting $P^{\rm gp} = K \oplus M$ of the surjection $P^{\rm gp} \to M$.  A \emph{reduced $Q/P$-filtered} quasicoherent sheaf on $S$ is a system of quasicoherent sheaves $\{E_{\alpha}\}_{\alpha\in L}$ on $S$, together with 
	\begin{enumerate}[label=(\roman*)]
		\item maps $x^q=x^q_{\alpha,\beta} : E_\alpha \to E_\beta$ for each $q \in Q$ such that $q + \alpha = \beta$, and
		\item isomorphisms $u^p = u^p_\alpha : E_\alpha \to E_\alpha$ for each $p \in K$,
	\end{enumerate}
	such that
	\begin{enumerate}[resume*]
		\item $x^q x^{q'} = x^{q+q'}$ for all $q, q' \in Q$, 
		\item $u^p u^{p'} = u^{p+p'}$ for all $p, p' \in K$,
		\item $x^{h(p)} : E_\alpha \to E_\alpha$ coincides with multiplication by $\varepsilon_p(v^{-p})$ for $p \in M$,
		\item $x^{h(p)} : E_\alpha \to E_\alpha$ coincides with multiplication by $u^p \varepsilon_p(v^{-p})$ for $p \in K$, and
		\item $x^q$ and $u^p$ commute for all $q \in Q$ and  $p \in K$.
	\end{enumerate}
\end{definition}

\Cref{def:5} simplifies considerably in the situation of primary interest, where $P \to Q$ is injective.  In this case, the splitting $g$ is unique and the isomorphisms $u$ may be suppressed.

\begin{corollary}\label{cor:reducedQPsystems}
    Let $P \to Q$ be a homomorphism of fine, saturated, sharp monoids and let $S \to X_P$ be a morphism of schemes.  Let $\pi : \sigma \to S$ denote the relative Olsson cone obtained by base change from $\mathcal A_Q \to \mathcal A_P$. 
Then quasicoherent sheaves on $\sigma$ are equivalent to reduced $Q/P$-filtered quasicoherent sheaves on $S$.
\end{corollary}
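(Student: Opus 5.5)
The plan is to deduce this from \Cref{cor:systems_of_bundles}, using the graded-ring description of \Cref{cor:15}~\ref{it:32}. By \Cref{cor:1} and \Cref{cor:15}, when $S \to \Acal_P$ factors through $X_P$ we have
\[
\sigma \simeq \bigl[\Spec_S(C'')/T_{Q/P}\bigr],
\]
where $T_{Q/P}$ is the diagonalizable group with character group $L = Q^{\rm gp}/P^{\rm gp}$. Hence quasicoherent sheaves on $\sigma$ are equivalent to $L$-graded quasicoherent $C''$-modules on $S$; this uses that $\Spec_S C''$ is affine over $S$ and that quasicoherent sheaves on $\mathrm BT_{Q/P}$ over $S$ are $L$-graded $\mathcal O_S$-modules.

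I will translate an $L$-graded $C''$-module $E = \bigoplus_{\alpha \in L} E_\alpha$ into the data of \Cref{def:5}. The generators $x^q$ of $C''$ have degree $[q]$, so multiplication by $x^q$ gives maps $E_\alpha \to E_{\alpha+[q]}$. The generators $u^p$, for $p \in K$, have degree $0$ and are units, so multiplication by them gives automorphisms of each $E_\alpha$. The relations \ref{it:35}--\ref{it:38} defining $C''$ translate exactly into axioms (iii)--(vi) of \Cref{def:5}. Commutativity of $C''$ gives axiom (vii). Conversely, data as in \Cref{def:5} define an action of the free commutative algebra on these generators on $\bigoplus E_\alpha$ which respects the relations, hence descends to a graded $C''$-module. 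Morphisms correspond on both sides to graded, generator-compatible maps, so this is an equivalence of categories.

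The main point requiring care is checking that $C''$ really is presented by exactly the generators \ref{it:33}, \ref{it:34} and relations \ref{it:35}--\ref{it:38}. This matches the statement of \Cref{cor:15}, which I take as given. One also has to remember that the isomorphism depends on the chosen splitting $P^{\rm gp} = K \oplus M$; this is why \Cref{def:5} fixes that splitting.

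As a sanity check, I would verify compatibility with \Cref{cor:systems_of_bundles}. One recovers the $Q/P$-filtered sheaf by setting $F_q = E_{[q]}$ and using $u^p$ together with the trivializations $v^p$ to produce the isomorphisms $F_{q+\delta} \simeq F_q \otimes \mathcal O_S(\delta)$. This mirrors the reconstruction of $C'$ from $C''$ described before \Cref{cor:15}.
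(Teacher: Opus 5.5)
Your proposal is correct and is essentially the paper's argument. The paper's proof is the one-line observation that, via \Cref{cor:15}, a quasicoherent sheaf on $\sigma$ is an $L$-graded $C''$-module, and the data of \Cref{def:5} are just its graded pieces together with the actions of the generators $x^q$ and $u^p$ subject to the defining relations; you have written out that translation in more detail, including the correct remark that the equivalence depends on the chosen splitting of $K \subset P^{\rm gp}$.
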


Note that the equivalence in \Cref{cor:reducedQPsystems} is not canonical, but rather depends in the choice of a splitting of the inclusion $K \subset P^{\rm gp}$.  However, the equivalence is canonical when this splitting is unique, namely when $K = 0$ or $K = P^{\rm gp}$.

\begin{proof}
	This is simply an interpretation of the notion of a graded $C''$-module (in the notation of \Cref{cor:15}) in terms of the graded pieces.
\end{proof}

Some properties of quasicoherent sheaves on $\sigma$ can be effectively translated in terms of $Q/P$-filtered systems. We discuss finite presentability and, under an integrality assumption, (log) flatness. We work locally in $S=\Spec(R)$ and assume that there is a morphism $S\to X_P$.  We continue to use $C''$ to refer to the algebra from \Cref{cor:15}.  Note that when $P^{\rm gp} \to Q^{\rm gp}$ is injective, $C'' = R\otimes_{\ZZ[P]}\ZZ[Q]$.

\begin{proposition}
	Let $\sigma$ be an Olsson precone on $S$, obtained by base change from $\mathcal A_Q \to \mathcal A_P$ for a homomorphism $h : P \to Q$ of fine, saturated, sharp monoids.  Let $\Ecal$ be a quasicoherent sheaf on $\sigma$. Let $\{ E_q \}_{q \in Q^{\rm gp}}$ be its associated $Q/P$-filtered system.  Let $E$ be the associated $C$-module in the notation of \Cref{prop:local_description_of_relative_Artin_cones}.  The following conditions are equivalent after \'etale-localization in $S$~:
	\begin{enumerate}[label=(\roman*)]
		\item \label{it:20} The $\mathcal O_\sigma$-module $\mathcal E$ is of finite type.
		\item \label{it:21} The $C$-module $E$ is of finite type.
		\item \label{it:28} The $\mathcal O_\sigma$-module $\mathcal E$ is a quotient of a finite direct sum $\bigoplus \mathcal O_\sigma(\alpha_i)$.
		\item \label{it:26} There is a finite subset $F \subset Q^{\rm gp}$ such that for every $q \in Q^{\rm gp}$, there is some $p \in P$ such that the map
			\begin{equation*}
				\bigoplus_{\substack{f \in F\\f \leq h(p) + q}} E_f \to E_{h(p) + q}
			\end{equation*}
			is surjective.
	\end{enumerate}
	If $\mathcal E$ has a reduced $P/Q$-filtered system $\{ E'_\alpha \}_{\alpha \in Q^{\rm gp} / P^{\rm gp}}$ (in other words, if $S \to \mathcal A_P$ factors through the toric variety $X_P$), we take $C'' = \mathcal O_S \otimes_{\mathbf Z[P]} \mathbf Z[Q]$ and $E'$ to be the associated $C''$-module.  Then the above conditions are also equivalent to the following ones~:
	\begin{enumerate}[resume*]
		\item \label{it:27} The $C''$-module $E'$ is of finite type.
		\item \label{it:25} There is a finite subset $F$ of $Q^{\rm gp} / P^{\rm gp}$ such that $E'_\alpha$ is of finite type over $\mathcal O_S$ for all $\alpha \in F$ and $E'$ is generated as a $C''$-module by the $E'_\alpha$ for $\alpha \in F$.
	\end{enumerate}
\end{proposition}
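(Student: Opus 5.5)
The plan is to prove the cycle of equivalences \ref{it:20}$\Rightarrow$\ref{it:28}$\Rightarrow$\ref{it:21}$\Rightarrow$\ref{it:26}$\Rightarrow$\ref{it:28}$\Rightarrow$\ref{it:20}, working locally on $S=\Spec R$ affine with a chart $S\to\Acal_P$. Throughout I will use the presentation $\sigma=[\Spec C/T_Q]$ from \Cref{prop:local_description_of_relative_Artin_cones}, and the dictionary of \Cref{cor:systems_of_bundles}, under which $\mathcal E$ corresponds to the $Q^{\rm gp}$-graded $C$-module $E=\bigoplus_q E_q$ with $E_q=\pi_\ast\mathcal E(q)$. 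The sheaf $\mathcal O_\sigma(\alpha)$ corresponds to the free graded module $C(\alpha)$.

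The easy implications go as follows.
\begin{itemize}
\item \ref{it:28}$\Rightarrow$\ref{it:20}: this is clear.
\item \ref{it:20}$\Rightarrow$\ref{it:21}: pulling back to the atlas $\Spec C\to\sigma$ gives a $C$-module of finite type, since finite type is fppf local and $\Spec C\to\sigma$ is a $T_Q$-torsor.
\item \ref{it:21}$\Rightarrow$\ref{it:28}: a finite set of generators of $E$ can be replaced by finitely many homogeneous components. A homogeneous element of degree $\alpha$ is the same as a map $C(-\alpha)\to E$, so we get a graded surjection $\bigoplus C(\alpha_i)\to E$. This is a surjection $\bigoplus\mathcal O_\sigma(\alpha_i)\to\mathcal E$ by the equivalence.
\end{itemize}

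For \ref{it:28}$\Leftrightarrow$\ref{it:26}, I take degree-$\gamma$ pieces using \Cref{prop:min}. We have $\pi_\ast\mathcal O_\sigma(\gamma-\alpha_i)=\varinjlim_{h(\beta)\le\gamma-\alpha_i}\mathcal O_S(\beta)$, and by exactness of $\pi_\ast$ (\Cref{prop:exactness}) the map $\bigoplus_i\pi_\ast\mathcal O_\sigma(\alpha_i)(\gamma)\to E_\gamma$ is surjective for every $\gamma$. The source is generated by the images of $E$-translates: each generator of degree $\alpha_i$ contributes $y^\beta x^{\gamma}$-type elements, which is precisely the image of $E_{f}$ with $f=\alpha_i$ and $h(\beta)+f\le\gamma$.

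Since $\varinjlim$ over $\beta$ is filtered and the $\mathcal O_S(\beta)$ are invertible, locally on $S$ (here the étale localization enters) one can use the isomorphisms $E_{f+h(p)}\simeq E_f\otimes\mathcal O_S(p)$ to rewrite surjectivity at $\gamma$. The claim becomes that, after shifting $\gamma$ by some $h(p)$, the sum of maps from $E_f$ with $f\le h(p)+\gamma$ is onto. Here the generators of $E_\gamma$ are finite in number only locally, which is why I quantify existentially over $p$.

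Conversely, given $F$ as in \ref{it:26}, the $E_f$ for $f\in F$ generate $E$ as a $C$-module, because every $E_q$ is reached up to a unit shift by $P^{\rm gp}$. To get \ref{it:28}, however, each $E_f$ must itself be of finite type over $\mathcal O_S$. I expect this to be the main obstacle, since \ref{it:26} as stated does not obviously give it. My first attempt will be to deduce it from \ref{it:20} via the good moduli space property (\Cref{gms_univclosed}, or just the exactness and projection formula of $\pi_\ast$ in the precone case). Pushing forward a surjection $\bigoplus\mathcal O_\sigma(\alpha_i)\to\mathcal E(f)$ gives surjections from finite sums of the invertible sheaves of \Cref{prop:min}. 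In the non-vertical case these are only filtered colimits of invertible sheaves, and there I will instead choose $F$ to consist of the $\alpha_i$ themselves and use the images of finitely many generators directly.

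For the reduced statements \ref{it:27} and \ref{it:25}, I will apply \Cref{cor:15}. Since $\Spec C'=T_M\times\Spec C''$, finite type of $E$ over $C$ (equivalently of $E'\otimes\mathcal O_{T_M}$ over $C'$) is equivalent to finite type of $E'$ over $C''$, by faithfully flat descent along $T_M\to\Spec\ZZ$. Then \ref{it:25} is the graded-generator reformulation of \ref{it:27}, proved exactly as \ref{it:21}$\Leftrightarrow$\ref{it:28} above. In the reduced setting the graded pieces $E'_\alpha$ are genuine $\mathcal O_S$-modules with no twist, so the finite-type-of-$E'_\alpha$ issue is handled by noting that the homogeneous generators in degree $\alpha$ generate $E'_\alpha$ over $C''_0$, which is a finitely generated $\mathcal O_S$-algebra.
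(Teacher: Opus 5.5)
Your arguments for (i)$\Leftrightarrow$(ii)$\Leftrightarrow$(iii) and for (v) are the paper's: descent along the atlas, homogeneous generators, and descent along $T_M$. The gaps are in (iv) and (vi), which the paper treats as immediate reinterpretations of (iii) and (v).

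In (iv)$\Rightarrow$(iii) you rightly notice that (iv) gives no finiteness for the $E_f$, but your remedy is circular. In this direction (i) is the conclusion, so you cannot invoke it. Also $F$ is given, so you cannot replace it by the degrees of a generating family. No argument can close this, because (iv) as written is too weak. For $P=Q=0$ one has $\sigma=S$, $Q^{\rm gp}=0$ and $E_0=\mathcal E$, so (iv) holds with $F=\{0\}$, $p=0$ for every quasicoherent $\mathcal E$, e.g.\ $\mathcal O_S^{\oplus\mathbf N}$. Condition (iv) must also require each $E_f$, $f\in F$, to be of finite type over $\pi_\ast\mathcal O_\sigma$, as (vi) does; this is $\mathcal O_S$ when $h$ is exact, by \Cref{cor:structuresheaf}. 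With that added, (iv)$\Rightarrow$(iii) is short. Local generators of the $E_f$ give a graded map $\bigoplus_f\mathcal O_\sigma(-f)^{n_f}\to\mathcal E$ whose image $N$ satisfies $N_{h(p)+q}=E_{h(p)+q}$. Hence $N_q=E_q$, because $E_{h(p)+q}\simeq E_q\otimes\mathcal O_S(p)$ is induced by multiplication by elements of $C$ and so respects graded submodules.

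Your sketch of (iii)$\Rightarrow$(iv) also does not go through. The colimit $\varinjlim_{h(\beta)\le\gamma-f}\mathcal O_S(\beta)$ is not filtered in general; in \Cref{ex:1} the index set $P_{\le z}$ is not filtered. Even when it is filtered, the shift this produces is $p=-\beta$ for a top stage $\beta$, which need not lie in $P$. Concretely, take $h=\mathrm{id}_{\mathbf N}$, $S=\Spec k[s]$ with chart $1\mapsto s$, and $\mathcal E=\mathcal O_\sigma$. Then $\sigma=S$ and $E_q=\mathcal O_S(q)$, and for $q>\max F$ and $p\in\mathbf N$ every map $E_f\to E_{h(p)+q}$ is multiplication by a positive power of $s$. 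So (iv) fails near $s=0$ although (i) holds. The usable formulation is surjectivity of $\bigoplus_{f\in F}\bigoplus_{h(\beta)+f\le q}E_f\otimes\mathcal O_S(\beta)\to E_q$ for every $q$, which is what the degree-$q$ part of a surjection as in (iii) gives.

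Finally, your finiteness argument for (vi) is wrong on two counts. The degree-$\alpha$ generators do not generate $E'_\alpha$ over $C''_0$; rather $E'_\alpha=\sum_i C''_{\alpha-\alpha_i}e_i$. And being finite over the finitely generated algebra $C''_0$ does not give finite type over $\mathcal O_S$. What is needed is that each $C''_\beta$ is finite over $C''_0$ and that $C''_0=\mathcal O_S$, which holds when $h$ is exact. Without exactness (v)$\Rightarrow$(vi) can fail. Take the blow-up chart $h:\mathbf N^2\to\mathbf N^2$, $e_1\mapsto e_1$, $e_2\mapsto e_1+e_2$, over $S=X_P=\Spec\mathbf Z[a,b]$. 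Here $Q^{\rm gp}/P^{\rm gp}=0$, and for $\mathcal E=\mathcal O_\sigma$ one gets $E'_0=\mathbf Z[u,t]$, which is not finite over $\mathcal O_S=\mathbf Z[u,ut]$.
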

\begin{proof}
	The equivalence of~\ref{it:20},~\ref{it:21}, and~\ref{it:27} is immediate by descent.  Since a graded $C$-module is always a quotient of a direct sum of shifts of $C$, a graded $C$-module is of finite type if and only if it is a quotient a finite direct sum of shifts of $C$.  Hence~\ref{it:21} and~\ref{it:28} are equivalent.  Item~\ref{it:26} is just a reinterpretation of~\ref{it:28} in terms of a $Q/P$-filtered system, and~\ref{it:25} is a reinterpretation in terms of a reduced $Q/P$-filtered system.
\end{proof}

%

For the next proposition we are going to assume that the monoid homomorphism $h\colon P\to Q$ is injective and free, with a finite basis $B\subset Q$. In particular, the associated Olsson cone $\sigma$ will be integral over the base $S$, see \Cref{rmk:integral-free}. The quotient group $(Q/P)^{\rm gp}$ will be denoted by $L$.  Following \cite[Definition~7.11.5]{GillamLogFlat}, we choose a \emph{finite spawning set} $G\subset Q$~: this means that the submonoid generated by $G$ contains $B$ (and therefore $G$ generates $Q/P$ and $L$). 

\begin{proposition}
    Under the previous assumptions, a quasicoherent sheaf $\Ecal$ on $\sigma$ is flat over $\sigma$ if and only if the associated reduced $Q/P$-filtered system $\{E_\alpha\}_{\alpha\in(Q/P)^{\rm gp}}$ satisfies the following properties :
    \begin{enumerate}
        \item every $E_{\alpha}$ is a flat $\OO_S$-module, and
	\item for every $g\in G$, $\on{Tor}_1^{C''}(\oplus E_{\alpha},C''/x^gC'')=0$ and $(\oplus E_\alpha)\otimes_{C''}C''/x^gC''$ is flat over the vanishing locus of $g$ in $\sigma$.
    \end{enumerate}
\end{proposition}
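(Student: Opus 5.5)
The plan is to reduce flatness over $\sigma$ to flatness of the graded $C''$-module $E'=\bigoplus_\alpha E_\alpha$ over $C''=R\otimes_{\ZZ[P]}\ZZ[Q]$, and then apply a local criterion for flatness adapted to the free extension $P\to Q$, in the style of Gillam's criterion with spawning sets \cite[\S7.11]{GillamLogFlat}. By \Cref{cor:15}, $\sigma=[\Spec C''/T_{Q/P}]$. Flatness is fppf local, so $\Ecal$ is flat over $\sigma$ if and only if $E'$ is flat over $C''$. Moreover, since $h$ is injective and free with basis $B$, $C''=\bigoplus_{b\in B} x^b R$ is a free $R$-module. In particular $C''$ is flat over $R$, and each graded piece $C''_\alpha$ is a finite free $R$-module.

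For necessity, suppose $E'$ is flat over $C''$. Then it is flat over $R$ by transitivity, and flatness is inherited by each graded summand $E_\alpha$, since a direct summand of a flat module is flat. Next, $x^g$ is a nonzerodivisor on $C''$: $C''$ is flat over $R$, $x^g$ is a nonzerodivisor on $\ZZ[Q]$, and integrality makes $\ZZ[P]\to\ZZ[Q]$ flat. So $C''/x^gC''$ has the two-term resolution $C''\xrightarrow{x^g}C''$, and flatness of $E'$ gives $\on{Tor}_1^{C''}(E',C''/x^gC'')=0$. Finally, $E'/x^gE'$ is the base change of a flat module to $C''/x^gC''$, so it is flat there. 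This handles condition~(2).

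For sufficiency, I would argue by induction using the spawning set. Set $I=(x^g)_{g\in G}$. Since $G$ generates $Q/P$, the quotient $C''/I$ is essentially $R\otimes_{\ZZ[P]}\ZZ[Q]/(x^g)$. By a filtration argument over the basis $B$, this quotient is a direct sum of copies of $R$ in degrees where the $E_\alpha$ appear. Hypothesis~(1) should then give flatness of $E'/IE'$ over $C''/I$; for this I would reduce to the degree-$0$ piece and use that $E_0\otimes$ identifies with the restriction to the closed stratum. Then I would apply the local flatness criterion for graded modules (the graded version of \cite[Theorem~22.3]{Matsumura}-type, valid without noetherian hypotheses for the $Q$-grading because $Q$ is sharp and the grading is bounded below along $Q$). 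This criterion says that $E'$ is flat over $C''$ if $E'/x^gE'$ is flat over $C''/x^g$ and $\on{Tor}_1(E',C''/x^g)=0$. Condition~(2) provides exactly these two inputs for each $g$.

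The main obstacle is justifying this local flatness criterion in the graded, possibly non-noetherian setting. The $x^g$-adic filtration need not be separated degreewise unless the element $g$ is positive in the grading. Here sharpness of $Q$ and verticality or freeness of $h$ should ensure that in each fixed degree $\alpha$ the chain $E_{\alpha-ng}$ eventually reduces to $\OO_S$-modules controlled by hypothesis~(1), so that the Artin–Rees-type separation holds degreewise. I would first try to reduce to $R$ noetherian by a limit argument, and then check Krull's intersection theorem degreewise, using the finite basis $B$.
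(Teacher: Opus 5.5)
Your reduction to flatness of $E'=\bigoplus_\alpha E_\alpha$ over $C''$, via the flat cover $\Spec C''\to\sigma$, is fine; the paper makes the same observation. Your necessity direction is also correct, with one error that does no harm. The claim that $x^g$ is a nonzerodivisor on $C''$ is false. Take $h\colon\NN\to\NN$, $1\mapsto 2$, which is free with basis $\{0,1\}$ and spawning set $G=\{1\}$. Then $C''=R[x]/(x^2-s)$, and this is $R[x]/(x^2)$ when $s=0$ in $R$, so $x$ is a zero divisor. Regularity of $x^g$ after arbitrary base change would require $\ZZ[Q]/x^g\ZZ[Q]$ to be flat over $\ZZ[P]$. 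You do not need the claim anyway: $\on{Tor}_1^{C''}(E',C''/x^gC'')=0$ follows directly from flatness of $E'$.

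The sufficiency direction has a genuine gap. Your final step derives flatness of $E'$ from condition (2) alone, through a local flatness criterion with respect to $x^g$. But (2) alone does not imply flatness. In the same Kummer example, take $R=k[s]$, so that $C''=k[x]$, and let $E'=k[x]/(x^2-1)$. Here $x$ acts invertibly, so $E'/xE'=0$ and $\on{Tor}_1^{C''}(E',C''/xC'')=0$, yet $E'$ is torsion and hence not flat.

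The separatedness you hope to obtain degreewise is not available, for three reasons:
\begin{enumerate}
\item $x^g$ can be a unit over part of $\sigma$, where (2) is vacuous.
\item There is no positivity: $C''$ is graded by $L=Q^{\rm gp}/P^{\rm gp}$, and the image of $Q$ in $L$ is typically a group (here $L=\ZZ/2$).
\item $E'$ is an arbitrary quasicoherent module, so Krull or Artin--Rees arguments are unavailable even over noetherian $R$.
\end{enumerate}

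Hypothesis (1) is exactly what rules out examples like this one: here $E_0\simeq k[s]/(s-1)$ is not flat. In your proposal, however, (1) is only used to claim that $E'/IE'$ is flat over $C''/I$. That claim plays no role in your final step, and it is false. Take $R=k[u,v]$, $s=uv$, $E_0=E_1=R^2$, with $x$ acting by $\mathrm{diag}(u,v)\colon E_0\to E_1$ and $\mathrm{diag}(v,u)\colon E_1\to E_0$. All $E_\alpha$ are free, but $E_1/xE_0\simeq R/u\oplus R/v$ is not flat over $C''/xC''=R/(uv)$. Note also that $C''/I$ is not a sum of copies of $R$.

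A correct argument has to treat two regions separately:
\begin{enumerate}
\item The open locus where all $x^g$, $g\in G$, are invertible. There $C''$ has a unit in every degree of $L$, so flatness reduces to the degree-$0$ part. That part is a filtered colimit of the $E_\alpha$ over a localization of $R$, and this is where (1) enters.
\item The loci $V(x^g)$. There (2) must be fed into a local criterion valid for arbitrary quasicoherent modules.
\end{enumerate}

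That last ingredient is the one you identify as the main obstacle and leave open. The paper does not prove it either. It deduces the whole statement from \cite[Theorem~7.11.7]{GillamLogFlat}, using \cite[Proposition~7.12.2]{GillamLogFlat} and a comparison of the $L$- and $Q^{\rm gp}$-gradings to identify flatness over $\sigma$ with Gillam's graded flatness.
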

\begin{proof}
	This is rephrasing \cite[Theorem 7.11.7]{GillamLogFlat}. Indeed, flatness over the Olsson cone $\sigma$ is equivalent to $(L,C'')$-graded flatness over $\mathcal O_S$ (graded by the trivial group) by \cite[Proposition 7.12.2]{GillamLogFlat}. The aforementioned theorem provides a criterion for a graded $C''$ module to be $(Q^{\rm gp},C'')$-graded flat over $\mathcal O_S$.  We make several observations to connect graded flatness with our notation.
Consider the diagram :
\[ X=\Spec(C'')\xrightarrow{\pi} Y=[\Spec(C'')/T_{L}]\xrightarrow{\rho} Z=[\Spec(C'')/T_{Q}].\]
Since $\pi$ is a flat cover, a quasicoherent sheaf $\Ecal$ on $Y$ is flat if and only if $\pi^*\Ecal$ is $(L,C'')$-graded flat. On the other hand, for an $L$-graded $C''$-module $\Ecal$ we have $(\rho_*\Fcal)_\alpha=\Fcal_{[\alpha]}$, i.e. $\rho_*\Fcal=\oplus_{P^{\rm gp}}\Fcal$, so $\pi^*\Ecal$ is $(L,C'')$-graded flat if and only if it is $(Q^{\rm gp},C'')$-graded flat.
\end{proof}
\begin{remark}
    These propositions generalize the characterization of finitely presentable, flat modules over $\Theta_R$ and $\overline{ST}_R$ given in \cite[\S 7.2]{AHLH2023}, see also \cite[Corollaries 7.11.9-11]{GillamLogFlat}.
\end{remark}
\begin{remark}
    Let $\pi\colon X\to S$ be a family of logarithmic curves over the spectrum of a discrete valuation ring $R$ with its divisorial logarithmic structure, and let $\tau\colon X\to\Gamma$ be the tropicalization map.  Assume that the central fiber of $X$ consists of two components joined at a single node, so that $\Gamma$ can be identified with $\overline{ST}_R$. Let $L$ be a line bundle on $X$. Then (despite the tropicalization map not being proper) $\tau_* L$ is a finitely presentable, flat module over $\Gamma$, as can be seen by pushing further down to $S$. A vector subbundle $\Ecal\subset\tau_*L$ can be identified with a \emph{limit linear series} in the sense of Osserman \cite{OssermanLLS} by taking the associated reduced $\NN^2/\NN$-filtered system.
\end{remark}

		\subsection{Vector bundles}

			We begin with a direct proof of \Cref{line_bundles_on_cones} (which will be demonstrated more generally later in \Cref{cor:6}) using the theory of good moduli spaces.

            \begin{proposition}\label{line_bundles_on_cones}
                Let $S$ be a fine, saturated logarithmic scheme and let $\pi : \sigma \to S$ be a universally surjective Olsson precone.  If $\mathcal L$ is an invertible sheaf on $\sigma$ then, locally in the strict \'etale topology of $S$, there exists a section $\lambda$ of $\bar M_\sigma^{\rm gp}$, unique up to the addition of sections of $\bar M_S^{\rm gp}$, and an invertible sheaf $L$ on $S$ such that there is an isomorphism
			\begin{equation*}
				\mathcal L \simeq \pi^*L(\lambda).
			\end{equation*}
            \end{proposition}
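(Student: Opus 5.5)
The plan is to use that $\pi : \sigma \to S$ is a good moduli space (Proposition~\ref{gms_univclosed}) and study the action of stabilizers on fibres of $\mathcal L$ at closed points. The first step is to localize. Work strictly \'etale locally around a geometric point $s$ of $S$; we may take $S$ strictly henselian, with $P = \bar M_{S,s}$, a chart $S \to \mathcal A_P$, and $\sigma = S \times_{\mathcal A_P} \mathcal A_Q$ for a local homomorphism $h : P \to Q$. Since $\sigma$ is universally surjective, $h$ is exact by Lemma~\ref{lem:univSurj}. By Proposition~\ref{prop:18}, the closed stratum $Z$ of the closed fibre is $T_K \times \mathrm BT_L$. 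Exactness together with sharpness of $P$ makes $h$ injective, so $K = 0$ and $Z = \mathrm BT_L$ over the residue field $k$.

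Next I would read off $\lambda$ from the restriction $\mathcal L|_Z$. This restriction is a character $\bar\lambda$ of $T_L$, that is, an element of $L = Q^{\rm gp}/P^{\rm gp}$. Lift it to $\lambda \in Q^{\rm gp}$, which is a section of $\bar M_\sigma^{\rm gp}$ over $\sigma$, since $\sigma$ is covered by the chart. Then set $\mathcal L' = \mathcal L(-\lambda)$. By construction the stabilizer $T_L$ of the unique closed point acts trivially on the fibre of $\mathcal L'$ there.

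The key step, and the one I expect to be hardest, is to show that $\mathcal L'$ descends to $S$. The plan is to use the descent criterion for good moduli spaces (Alper, \cite{ALPgood}): a vector bundle on $\sigma$ descends along the good moduli space map $\pi$ if and only if, at every closed point of every geometric fibre, the stabilizer acts trivially on the fibre. Applying this near $s$ requires controlling the closed points of the fibres over the other points $s'$ of $S$. For these, Lemma~\ref{prop:7} and the localization picture should show that the closed point of $\sigma_{s'}$ is the closed stratum of the localized cone $\mathcal A_{Q'} \times_{\mathcal A_{P'}} s'$. Its stabilizer is $T_{L'}$ with $L' = Q'^{\rm gp}/P'^{\rm gp}$, and the character of $\mathcal L'$ there is the image of $0$, hence trivial. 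To justify this I would need the character of $\mathcal L'$ to be locally constant along $T_L$-fixed loci. I would get this from Proposition~\ref{prop:18}, which says $\mathcal L'$ restricted to $\sigma$ is determined by its restriction to $Z$; failing that, I would work directly with the graded $C$-module description of Proposition~\ref{cor:systems_of_bundles}. Once descent holds, we get $\mathcal L' \simeq \pi^\ast L$ with $L = \pi_\ast \mathcal L'$, using Corollary~\ref{cor:structuresheaf}. Hence $\mathcal L \simeq \pi^\ast L(\lambda)$.

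Finally, for uniqueness, suppose $\pi^\ast L(\lambda) \simeq \pi^\ast L'(\lambda')$. Then $\mathcal O_\sigma(\lambda - \lambda')$ is pulled back from $S$, so $T_L$ acts trivially on its fibre at the closed point. This forces $\lambda - \lambda' \in P^{\rm gp}$ at each stalk, so $\lambda$ is unique up to addition of sections of $\bar M_S^{\rm gp}$. As a cross-check, I would verify this using Corollary~\ref{cor:5}, which forbids a nonconstant $\lambda - \lambda'$ with both it and its negative having nonzero pushforward.
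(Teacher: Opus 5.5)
Your argument is the paper's: reduce to $S$ strictly henselian, use that $\pi$ is a good moduli space (\Cref{gms_univclosed}), twist by a lift $\lambda\in Q^{\rm gp}$ of the character of $T_L$ on $\mathcal L\big|_Z$, and descend $\mathcal L(-\lambda)$ by \cite[Theorem~10.3]{ALPgood}. The step you call hardest, however, comes from misstating that criterion, and the justification you propose for it does not apply.

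Alper's theorem asks for triviality of the stabilizer action only at geometric points with \emph{closed image in $|\sigma|$}, not at the closed point of every fibre. You have already made $S$ strictly henselian local, and $\pi$ is universally closed. So every closed point of $|\sigma|$ lies over $s$, and $Z$ is the only closed point of $|\sigma|$. The condition holds at $Z$ by your choice of $\lambda$, and nothing about the fibres over generizations $s'$ is needed. (The resulting isomorphism then spreads out to an \'etale neighbourhood of $s$ because $\sigma$ is quasicompact and quasiseparated over $S$.)

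Your appeal to \Cref{prop:18} would not have closed that step anyway. That proposition concerns sections of \'etale, quasiseparated stacks over $\sigma$, and $\mathrm B\Gm$ is not \'etale. Concretely, $\operatorname{Aut}(\mathcal L') = \Gamma(S,\mathcal O_S^\ast)$ by \Cref{cor:structuresheaf}, while $\operatorname{Aut}(\mathcal L'\big|_Z) = k^\ast$. So line bundles are not determined by their restriction to $Z$ in the unique-extension sense of \Cref{prop:18}. The weaker statement about isomorphism classes is essentially the proposition you are proving.

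Your uniqueness argument, via the character of $\mathcal O_\sigma(\lambda-\lambda')$ at $Z$, is fine and makes explicit what the paper leaves implicit. Drop the cross-check through \Cref{cor:5}, though: it is false that a nonconstant linear function $\gamma$ cannot have both $\pi_\ast\mathcal O_\sigma(\gamma)$ and $\pi_\ast\mathcal O_\sigma(-\gamma)$ nonzero. On the relative edge of \Cref{exa:relative_edge}, the coordinate $\gamma$ satisfies $0\le\gamma\le\delta$ and is not constant. Yet the two pushforwards are $\mathcal O_S$ and $\mathcal O_S(-\delta)$, both nonzero.
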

            \begin{proof}
							We note first that we may assume $S$ is strictly henselian.  Indeed, suppose that $\tilde S$ is the henselization of $S$ at a geometric point, and $\mathcal L \big|_{\tilde S} \simeq \pi^*L(\lambda)$ for an invertible sheaf $L$ on $S$ and a section $\lambda$ of $\bar M_{S,s}$.  Then $L$, $\lambda$, and the isomorphism all extend to an \'etale neighborhood of $s$ (the extension of the isomorphism uses the fact that $\sigma$ is quasicompact and quasiseparated over $S$).

							By \Cref{gms_univclosed}, $\pi$ is a good moduli space. By \cite[Theorem 10.3]{ALPgood}, pullback along $\pi$ identifies the category of vector bundles on $S$ with the full subcategory of vector bundles $\Ecal$ on $\sigma$ such that, for every geometric point $\bar x\to \sigma$ with closed image, the action of the stabiliser group $G_{\bar x}$ on $\Ecal\otimes k(\bar x)$ is trivial. Since the assertion is local in the strict \'etale topology of $S$, we may assume that $\sigma$ is pulled back from $\Acal_Q \to \Acal_P$ for some exact homomorphism of fine saturated monoids, $P \to Q$.  We can assume that $S \to \Acal_P$ factors through the toric variety $X_P$ and use the description of $\sigma$ from \Cref{cor:1}.			If $s$ is a geometric point of $S$, the fiber $\pi^{-1} s$ contains a unique closed stratum $Z$ isomorphic to $\mathrm BT$ where $T$ is the diagonalizable group with character group $Q^{\rm gp} / P^{\rm gp}$. The restriction of $\mathcal L$ to $Z$ determines a character $\mu$ of $T$.  Let $\lambda$ be a lift of $\mu$ to $Q^{\rm gp}$.  Then $\mathcal L(-\lambda)$ has trivial stabilizer along $Z$.  But $Z$ is the unique closed stratum of $\sigma$, so by \cite[Theorem~10.3]{ALPgood}, $L = \pi_\ast \mathcal L(-\lambda)$ is invertible and $\mathcal L \simeq \pi^\ast L(\lambda)$.
            \end{proof}

		\begin{proposition} \label{prop:vect-on-cone}
			Let $S$ be a fine and saturated logarithmic scheme.  Let $\pi : \sigma \to S$ be a universally surjective Olsson precone over $S$.  If $\mathcal E$ is a locally free sheaf of finite rank on $\sigma$ then, locally in the strict \'etale topology of $S$, there exist sections $\lambda_0, \ldots, \lambda_r$ of $\bar M_\sigma^{\rm gp}$ and invertible sheaves $E_0, \ldots, E_r$ on $S$ and an isomorphism
			\begin{equation*}
				\mathcal E \simeq E_0(\lambda_0) \oplus \cdots \oplus E_r(\lambda_r) .
			\end{equation*}
			The $\lambda_0, \ldots, \lambda_r$ are uniquely determined up to permutation, further localization in the strict \'etale topology of $S$, and addition of sections of $\bar M_S^{\rm gp}$.
		\end{proposition}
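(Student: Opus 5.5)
We will follow the strategy of \Cref{line_bundles_on_cones}, splitting off invertible summands one at a time. We argue by induction on the rank. As there, we may first assume that $S$ is strictly henselian with closed point $s$: an isomorphism over the henselization spreads out to a strict \'etale neighbourhood, since $\sigma$ is quasicompact and quasiseparated over $S$. We may also assume that $\sigma = S \times_{\Acal_P} \Acal_Q$ for an exact homomorphism $P \to Q$ with $P = \bar M_{S,s}$, and that $S \to \Acal_P$ factors through $X_P$. Then the fibre over $s$ has a unique closed stratum $Z \simeq T_K \times \mathrm BT_L$, with $T_L$ diagonalizable with character group $L = Q^{\rm gp}/P^{\rm gp}$ (here $K=0$ by exactness).

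\emph{Step 1: decomposing at the closed point.} The restriction $\mathcal E|_Z$ is a representation of the diagonalizable group $T_L$ on a finite-dimensional vector space over the residue field, after trivializing along the point $T_K \times \Spec k$. It therefore decomposes into isotypic pieces indexed by characters $\mu_0,\dots,\mu_r \in L$, counted with multiplicity. We choose lifts $\lambda_i \in Q^{\rm gp}$. The uniqueness claim will follow from this step: the multiset of classes $[\lambda_i]\in Q^{\rm gp}/P^{\rm gp}$ is the multiset of $T_L$-weights of $\mathcal E|_Z$, hence is an invariant of $\mathcal E$.

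\emph{Step 2: lifting a splitting map.} Take $\lambda = \lambda_0$. We want maps $\OO_\sigma(\lambda) \to \mathcal E \to \OO_\sigma(\lambda)$ whose composite is an isomorphism. Consider the sheaves $\mathcal E(-\lambda)$ and $\mathcal E^\vee(\lambda)$. By \Cref{prop:exactness}, $\pi_\ast$ is exact on quasicoherent sheaves. By the good-moduli-space property (\Cref{gms_univclosed}), together with the fact that for a good moduli space invariants commute with restriction to the closed fibre (\cite{ALPgood}), the $T_L$-invariant part of the fibre at $Z$ can be read off: the natural maps
\begin{equation*}
\pi_\ast \mathcal E(-\lambda)\otimes k(s)\to H^0(Z,\mathcal E(-\lambda)|_Z)
\qquad\text{and}\qquad
\pi_\ast \mathcal E^\vee(\lambda)\otimes k(s)\to H^0(Z,\mathcal E^\vee(\lambda)|_Z)
\end{equation*}
are surjective. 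The targets are the weight-$\mu_0$ parts (up to the $T_K$ factor, which is absent by exactness). We choose a vector $e$ in the $\mu_0$-isotypic piece and a dual functional $\phi$ with $\phi(e)=1$, and lift them to global sections $\tilde e$ and $\tilde\phi$ over the henselian $S$. The composite $\tilde\phi\circ\tilde e$ is a section of $\pi_\ast\OO_\sigma = \OO_S$ (\Cref{cor:structuresheaf}) which is $1$ at $s$, hence a unit. Hence $\OO_\sigma(\lambda)$ is a direct summand of $\mathcal E$ with locally free complement $\mathcal E'$ of smaller rank, and induction applies. Because $\lambda_i$ is only determined modulo $P^{\rm gp}$, we twist by a line bundle $E_i$ on $S$, absorbing the ambiguity via $\OO_\sigma(\lambda+\delta)\simeq\pi^\ast\OO_S(\delta)(\lambda)$. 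Over strictly henselian $S$ one can even take $E_i$ trivial, but keeping $E_i$ is harmless.

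\emph{Main obstacle.} The key point is the surjectivity onto invariants on the closed stratum. This is the base-change property of good moduli spaces for coherent sheaves, and it requires $\pi$ to be a good moduli space, which is why universal surjectivity is needed. Alternatively, one can prove it directly from the explicit graded-algebra description in \Cref{cor:15}. In that description $\pi_\ast$ is taking a graded piece, and restriction to $Z$ is reduction modulo $Q^+$ and the maximal ideal of $\OO_S$. Graded pieces commute with these quotients because taking a fixed degree is exact. I would try this direct argument first, since it avoids citing the more delicate parts of \cite{ALPgood}.
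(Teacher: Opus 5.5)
Your argument is correct, but it is organized differently from the paper's. The paper lifts all homogeneous generators of $\mathcal E|_Z$ at once to a map $\bigoplus \mathcal O_\sigma(\lambda_i)\to\mathcal E$ that is an isomorphism over $Z$. It then uses universal closedness of the good moduli space $\pi$ (\Cref{gms_univclosed}, i.e.\ \cite[Theorem~4.16]{ALPgood}): the locus where this map fails to be an isomorphism has closed image in $S$ missing $s$.

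You instead peel off one summand at a time with a section/retraction pair $\tilde e,\tilde\phi$. Your key input is that $\tilde\phi\circ\tilde e$ lies in $\Gamma(\sigma,\mathcal O_\sigma)=\Gamma(S,\mathcal O_S)$ by \Cref{cor:structuresheaf}, so its value $1$ at $s$ makes it a unit on the local base. This uses only the two defining properties of a good moduli space, exactness of $\pi_\ast$ and $\pi_\ast\mathcal O_\sigma=\mathcal O_S$, and not the deeper closedness theorem. Working one weight at a time is exactly what makes the composite a function on $S$. If you paired vectors and functionals of different weights, you would only get sections of $\pi_\ast\mathcal O_\sigma(\lambda'-\lambda)$. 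The price is the induction on rank and the passage through the henselization with a spreading-out step. The paper only shrinks $S$ around $s$ and produces the splitting map explicitly in one step.

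Two small remarks. First, the surjectivity $\Gamma(\sigma,\mathcal E(-\lambda))\to H^0(Z,\mathcal E(-\lambda)|_Z)$ needs only exactness of $\pi_\ast$ (\Cref{prop:exactness}, valid for every precone), applied to the surjection $\mathcal E(-\lambda)\to i_\ast i^\ast\mathcal E(-\lambda)$ over an affine base. So universal surjectivity enters your proof through $K=0$, the form $Z\simeq\mathrm BT_L$ of the closed stratum, and $\pi_\ast\mathcal O_\sigma=\mathcal O_S$, not through that lifting step. Second, your uniqueness argument is sound and slightly more explicit than the paper's one-line remark: any splitting restricts on $Z$ to a weight decomposition with weights the classes $[\lambda_i]\in Q^{\rm gp}/P^{\rm gp}$.
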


		\begin{proof}
			Since the assertion is strict-\'etale local on $S$, we may assume that $\sigma$ is pulled back from $\Acal_Q \to \Acal_P$ for some exact homomorphism of fine saturated monoids, $P \to Q$.  We can assume that $S \to \Acal_P$ factors through the toric variety $X_P = \Spec \mathbf Z[P]$ and therefore that quasicoherent sheaves on $\sigma$ may be identified with $Q^{\rm gp} / P^{\rm gp}$-graded quasicoherent $Q \otimes_P \mathcal O_S$-modules.  

			If $s$ is a geometric point of $S$, the fiber $\pi^{-1} s$ contains a unique closed stratum $Z$ isomorphic to $\mathrm BT$ where $T$ is the diagonalizable group with character group $Q^{\rm gp} / P^{\rm gp}$.  If $\mathcal E$ is a locally free sheaf on $\sigma$ then the restriction of $\mathcal E$ to $Z$ can be viewed as a $T$-equivariant locally free $\mathcal O_S$-module, or equivalently, as a $Q^{\rm gp} / P^{\rm gp}$-graded locally free $\mathcal O_S$-module.  If $E$ is the graded $Q \otimes_P \mathcal O_S$-module corresponding to $\mathcal E$ then the graded $\mathcal O_S$-module corresponding to $\mathcal E \big|_Z$ is the associated graded module $\operatorname{gr} E$ of $E$.  

			A graded generator of $\operatorname{gr} E$ corresponds to a homomorphism $\mathcal O_Z(\lambda) \to \mathcal E \big|_Z$ for some $\lambda \in Q$.  Since $E$ is a graded $Q \otimes_P \mathcal O_S$-module, we can (at least after shrinking $S$ further) lift the generating maps $\mathcal O_Z(\lambda) \to \operatorname{gr} E$ to maps $\mathcal O_\sigma(\lambda) \to \mathcal E$~: a lift corresponds to lifing a homogeneous element of $\operatorname{gr} E$ to $E$.  Selecting lifts for each of the finitely many homogeneous generators of $\operatorname{gr} E$ in a produces a map 
			\begin{equation} \label{eqn:8}
				\bigoplus \mathcal O_\sigma(\lambda_i) \to \mathcal E
			\end{equation}
			that restricts to an isomorphism over $Z$.  Its kernel and cokernel are supported on a closed substack $W \subset \sigma$.  But by Proposition~\ref{gms_univclosed}, the image $\pi(W)$ of $W$ in $S$ is a closed subset of $S$.  We know that $s \not\in \pi(W)$ because $W \subset \sigma$ is closed and does not contain the unique closed point of $\pi^{-1} s$.  Therefore the complement of $\pi(W)$ in $S$ is an open neighborhood of $s$ over which~\eqref{eqn:8} is an isomorphism.

			In the argument above, the $\lambda_i$ were determined by the grading of $ \operatorname{gr} E$ by $Q^{\rm gp} / P^{\rm gp}$.  Therefore they are uniquely determined modulo $\bar M_S^{\rm gp}$.
		\end{proof}

		\begin{proposition} \label{prop:5}
			Let $S$ be a fine, saturated logarithmic scheme.  Let $T$ be a tropical torus over $S$ with character group $X$.  Suppose that $\mathcal E$ is a locally free sheaf on $T$.  Then there is a unique splitting $\mathcal E \simeq \bigoplus_{\lambda} E_\lambda(\lambda)$, indexed by characters%
			\footnote{If $T$ is not split, the collection of characters $\lambda$ and their multiplicities must be regarded as a section of $\Sym^r X$, where $X$ is the character group of $T$ and $r$ is the rank of $\mathcal E$.}
			$\lambda$ of $T$, where each $E_\lambda$ is a locally free sheaf on~$S$.
		\end{proposition}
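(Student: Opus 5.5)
The plan is to take the isotypical decomposition under the lineality directions as the candidate splitting, with $E_\lambda=\pi_\ast\mathcal E(-\lambda)$. The statement is local in the strict étale topology of $S$: by uniqueness, local splittings will glue, and the footnote's $\Sym^r X$ formulation absorbs monodromy. So we first assume $X$ is constant, $T=\tropGm^n\times S$ is split, and $S$ is strictly henselian.

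Existence comes from covering $T$ by Olsson cones. As in the proof of Corollary~\ref{cor:4}, choose a universally surjective Olsson cone $\sigma\to T$ (for instance the product cone dual to $\NN^{2n}$ mapping onto $\tropGm^n$ via differences). Proposition~\ref{prop:vect-on-cone} then splits $\mathcal E|_\sigma$, locally on $S$, as $\bigoplus E_i(\lambda_i)$ with $\lambda_i$ linear functions on $\sigma$. The key point is that each linear function on this cone pulled back from $T$ is a character of $T$ plus a constant. Rather than tracking this, I would argue more directly with the pushforwards. For each character $\lambda$ of $T$, Corollary~\ref{cor:4} gives $\pi_\ast\mathcal O_T(\mu)=0$ for $\mu\neq0$ and $\pi_\ast\mathcal O_T=\mathcal O_S$, with vanishing higher cohomology. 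Hence for a putative splitting $\mathcal E\simeq\bigoplus E_\mu(\mu)$ we would have $\pi_\ast\mathcal E(-\lambda)=E_\lambda$. This dictates $E_\lambda:=\pi_\ast\mathcal E(-\lambda)$ and the evaluation map
\begin{equation*}
	\Phi\colon\bigoplus_\lambda \pi^\ast\pi_\ast\mathcal E(-\lambda)\otimes\mathcal O_T(\lambda)\to\mathcal E .
\end{equation*}
Uniqueness of the splitting, including uniqueness of the summands, follows at once from the same computation, since $\Hom(E_\lambda(\lambda),E'_\mu(\mu))=\Hom(E_\lambda,E'_\mu\otimes\pi_\ast\mathcal O_T(\mu-\lambda))$ vanishes for $\lambda\ne\mu$.

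The remaining work is to show that $\Phi$ is an isomorphism with only finitely many nonzero summands, each locally free. I would check this after pulling back along the cover $\sigma\to T$; more precisely, I would pull back to the product of $T$ with a cone and use Proposition~\ref{prop:5}-style analysis fiberwise. On the closed stratum of $\sigma$ over $s$, namely $\mathrm BT_L$, the restriction of $\mathcal E$ is a graded free module whose grading by $Q^{\rm gp}/P^{\rm gp}$ projects to characters of $T$. I would then show that the multiplication maps between graded pieces differing by a lineality direction are isomorphisms: both $\lambda$ and $-\lambda$ are "$\geq0$ up to constants" on $T$, so the corresponding maps $\varepsilon$ are invertible after twisting. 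This step, showing that $\mathcal E$ restricted to the cone is a sum of $\mathcal O(\lambda_i)$ with the $\lambda_i$ coming from characters of $T$ (via Theorem-\ref{thm:4}-type uniqueness of $\lambda$ modulo constants and translation invariance), is the main obstacle.

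Once that is established, $\Phi|_\sigma$ is an isomorphism by comparing the summands through the computation of $\pi_\ast$. Since $\sigma\to T$ is a cover, $\Phi$ is an isomorphism. Each $E_\lambda$ is then locally free, as a direct summand of $\pi_\ast$ of a locally free sheaf computed on the cone, where it is a sum of $E_i$.
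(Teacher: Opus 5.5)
Your choice $E_\lambda=\pi_\ast\mathcal E(-\lambda)$, the adjunction map $\Phi$, and the uniqueness argument via \Cref{cor:4} agree with the paper. Existence, however, has a genuine gap exactly at the step you flag as the main obstacle, and the mechanism you sketch for it does not work.

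After \Cref{prop:vect-on-cone} you only know $\mathcal E|_\sigma\simeq\bigoplus F_i(\mu_i)$ with $\mu_i\in Q^{\rm gp}/\bar M_S^{\rm gp}$. This group is much larger than $X$: for your cone $Q=\bar M_S\oplus\NN^{2n}$ it has rank $2n$ versus $n$. There is no projection from the grading group on the closed stratum $\mathrm BT_L$ onto $X$; the map goes the other way, $X\hookrightarrow Q^{\rm gp}/\bar M_S^{\rm gp}$. Nothing intrinsic to a single cone forces the degrees into the image of $X$.

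The ``lineality'' idea cannot supply this. The cone $\sigma$ is strictly convex, and by exactness of $\bar M_S^{\rm gp}\oplus X\to Q^{\rm gp}$ a nonzero $\lambda\in X$ is comparable to no constant on $\sigma$. So the $Q/P$-system has no multiplication map $E_\alpha\to E_{\alpha+\lambda+c}$ that could be an isomorphism. Likewise, taken literally, the claim that $\lambda$ and $-\lambda$ are $\geq 0$ up to constants on $T$ is false: the failure of exactly this is what gives $\pi_\ast\mathcal O_T(\lambda)=0$ in \Cref{cor:4}. Appealing to a ``\Cref{prop:5}-style analysis'' is also circular.

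The missing idea is to use the descent datum of $\mathcal E|_\sigma$ along $\sigma\to T$. Put $\tau=\sigma\times_T\sigma$. It is a universally surjective Olsson precone over $S$, represented by the image $R$ of $Q\oplus Q$ in the quotient of $Q^{\rm gp}\oplus Q^{\rm gp}$ by the antidiagonal copy of $\bar M_S^{\rm gp}\oplus X$. Universal surjectivity of $\sigma\to T$ gives an exact sequence
\[
0\to X\to Q^{\rm gp}/\bar M_S^{\rm gp}\xrightarrow{p_1^\ast-p_2^\ast}R^{\rm gp}/Q^{\rm gp}\to 0 .
\]
Since $p_1^\ast(\mathcal E|_\sigma)\simeq p_2^\ast(\mathcal E|_\sigma)$, the uniqueness part of \Cref{prop:vect-on-cone} on $\tau$ gives a bijection $f$ with $p_1^\ast\mu_i\equiv p_2^\ast\mu_{f(i)}$ modulo constants. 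Pulling back along the diagonal $\sigma\to\tau$ gives $\mu_{f(i)}\equiv\mu_i$, hence $p_1^\ast\mu_i\equiv p_2^\ast\mu_i$. So each $\mu_i$ lies in $X$ up to a constant.

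Only then can you compute $\rho_\ast\mathcal E(-\lambda)|_\sigma=F_\lambda$. This uses \Cref{prop:min} and the fact that $\mu-\lambda$ is unbounded below on $\sigma$ for $\mu\neq\lambda$ in $X$, again by universal surjectivity.

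Finally, your phrase ``computed on the cone'' hides the identification $\pi_\ast\mathcal E(-\lambda)=\rho_\ast\mathcal E(-\lambda)|_\sigma$. Let $\psi\colon\tau\to S$ be the projection. You need the restriction maps $\rho_\ast(\mathcal E(-\lambda)|_\sigma)\to\psi_\ast(\mathcal E(-\lambda)|_\tau)$ along both projections to be isomorphisms, which follows from the projection formula. Then both are inverse to restriction along the diagonal, so the equalizer is everything. This is the paper's ``constant system'' argument.
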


		\begin{proof}
			Let $\mathcal E$ be a locally free sheaf on $T$.  Let $X$ be the character group of $T$.  The assertion is local in the strict \'etale topology of $S$, so we may assume that $X \simeq \mathbf Z^r$, and therefore that $T \simeq \logGm^r$.

			First we demonstrate the uniqueness.  If $\mathcal E = \bigoplus E_\lambda(\lambda)$, with the sum taken over $\lambda \in X$, then Corollary~\ref{cor:4} implies that we can recover $E_\lambda$ as $\pi_\ast \mathcal E(-\lambda)$ where $\pi : T \to S$ is the projection.

			Now we prove the existence.  For each $\lambda \in X$, let $E_\lambda = \pi_\ast \mathcal E(-\lambda)$, where $\pi : T \to S$ is the projection.  By adjunction, we have a map
			\begin{equation} \label{eqn:12}
				\phi\colon\bigoplus E_\lambda(\lambda) \to \mathcal E .
			\end{equation}
			We will argue that each $E_\lambda$ is locally free on $S$ and that $\phi$ is an isomorphism.  

			The assertion that $\phi$ is an isomorphism means that, for every fine, saturated logarithmic scheme $S'$ over $T$, the induced map $\phi \big|_{S'}$ is an isomorphism.  If $S'$ is a fine, saturated logarithmic scheme then any morphism $S' \to T$ factors through some universal surjection $\sigma \to T$, where $\sigma$ is an Olsson precone over $S$.  Therefore it will be sufficient to show that $\phi \big|_\sigma$ is an isomorphism whenever $\sigma$ is an exact Olsson precone over $S$ and $\sigma \to T$ is a universal surjection.  With these assumptions, let  $\rho : \sigma \to S$ be the projection.

			Let us represent $\sigma$ by an extension $Q$ of $\bar M_S^{\rm gp}$.  Proposition~\ref{prop:vect-on-cone} implies that, at least after replacing $S$ with a strict \'etale cover, $\mathcal E \big|_\sigma$ splits as $\bigoplus F_\mu(\mu)$, where the $F_\mu$ are locally free sheaves on $S$ and the $\mu$ are sections of $Q^{\rm gp}$.  The $\mu$ are well defined in $Q^{\rm gp} / \bar M_S^{\rm gp}$.  We argue that they descend to $X$.

			The map $\sigma \to T$ is given by a morphism $X \to Q^{\rm gp}$.  The universal surjectivity corresponds to the requirement that 
			\begin{equation*}
				\bar M_S^{\rm gp} \oplus X \to Q^{\rm gp}
			\end{equation*}
			be exact, in the sense that the pre\"image of $Q$ is $\bar M_S \oplus 0$.  In particular, $X \to Q^{\rm gp} / \bar M_S^{\rm gp}$ must be injective.  We can represent the fiber product $\sigma \mathop\times_ T\sigma$ by a sheaf of monoids $R$, obtained as the image of $Q \oplus Q$ in the quotient of $Q^{\rm gp} \oplus Q^{\rm gp}$ by the antidiagonal copy of $\bar M_S^{\rm gp} \oplus X$.  One may verify directly that $R$ is sharp, and therefore that 
			\begin{equation*}
				R^{\rm gp} = Q^{\rm gp} \oplus Q^{\rm gp} / \bar M_S^{\rm gp} \oplus X .
			\end{equation*}
			In particular, $R^{\rm gp} / Q^{\rm gp} \simeq Q^{\rm gp} / \bar M_S^{\rm gp} \oplus X$ so there is an exact sequence~:
			\begin{equation} \label{eqn:11}
				0 \to X \to Q^{\rm gp} / \bar M_S^{\rm gp} \xrightarrow{p_1^\ast - p_2^\ast} R^{\rm gp} / Q^{\rm gp} \to 0
			\end{equation}

			We return to the splitting $\mathcal E \big|_\sigma \simeq \bigoplus F_\mu(\mu)$.  Let $\tau = \sigma\mathop\times_T\sigma$.  Then $\mathcal E \big|_\tau$ gets two splittings from the two projections~:
			\begin{equation*}
				\mathcal E \big|_\tau \simeq \bigoplus F_\mu(p_1^\ast \mu) \simeq \bigoplus F_\mu(p_2^\ast \mu)
			\end{equation*}
			However, Proposition~\ref{prop:vect-on-cone} implies that, modulo constants, the $p_1^\ast \mu$ and $p_2^\ast \mu$ are permutations of one another, and that the permutation relating them restricts to the identity on the diagonal $\sigma \to \tau$.  Therefore the permutation must be the identity and $p_1^\ast \mu$ agrees with $p_2^\ast \mu$ modulo constants.  But now the exactness of~\eqref{eqn:11} implies that, after adjustment by a constant, $\mu$ lies in $X$.

			Now we can write $\mathcal E \big|_\sigma \simeq \bigoplus F_\mu(\mu)$, with the sum taken over characters $\mu$ of $T$.  Proposition~\ref{prop:vect-on-cone} implies that
			\begin{equation*}
				\rho_\ast \mathcal E (-\lambda) \big|_\sigma = \bigoplus \rho_\ast F_\mu(\mu-\lambda) = F_\lambda
			\end{equation*}
			since $\mu - \lambda$ is unbounded below on $\sigma$ (because $\sigma \to T$ is universally surjective), hence has vanishing global sections by Corollary~\ref{cor:pushforward}.  It follows as well that
			\begin{equation} \label{eqn:13}
				\bigoplus_\lambda\rho^\ast F_\lambda(\lambda) \to \mathcal E \big|_\sigma
			\end{equation}
			is an isomorphism.

			We may identify
			\begin{equation*}
				\pi_\ast \mathcal E(-\lambda) = \varprojlim_{\sigma \to T} \rho_\ast \mathcal E(-\lambda) \big|_\sigma
			\end{equation*}
			with the limit taken over all universal surjections $\sigma \to T$, 
            If $\tau \to \sigma$ is a universal surjection of exact Olsson precones over $S$, and $\psi : \tau \to S$ is the projection, then $\rho_\ast \mathcal E(-\lambda) \big|_\sigma \to \psi_\ast \mathcal E(-\lambda) \big|_\tau$ is an isomorphism by the projection formula.  Since the fiber product of any two universal surjections to $T$ is also a universal surjection to $T$, this system must be constant.%
			\footnote{Let $C$ be a nonempty category with products and let $F$ be a presheaf on $C$ such that $F(\sigma) \to F(\tau)$ is an isomorphism for all $\tau \to \sigma$ in $C$.  Then $F$ is constant~: choose an object $\sigma \in C$.  For every $\tau \in C$, we obtain an isomorphism~:
			\begin{equation*}
				F(\sigma) \xrightarrow\sim F(\sigma \times \tau) \xleftarrow\sim F(\tau)
			\end{equation*}
			This isomorphism is compatible with the restriction maps because if $\tau \to \omega$ is a morphism in $C$ then we can apply $F$ to the commutative diagram below~:
			\begin{equation*} \begin{tikzcd}[ampersand replacement=\&]
				\tau \ar[dd] \& \tau \times \sigma \ar[l] \ar[dr] \ar[dd] \\
				\&\& \sigma \\
				\omega \ar[r] \& \omega \times \sigma \ar[ur] 
			\end{tikzcd}
			\end{equation*}
			}
			We conclude that $\pi_\ast \mathcal E(-\lambda)$ is the constant value of the $\rho_\ast \mathcal E(-\lambda) \big|_\sigma$, and in particular is locally free.  Furthermore, the map~\eqref{eqn:12} corresponds to the system of isomorphisms~\eqref{eqn:13}, and therefore is an isomorphism as well.
		\end{proof}

		\begin{corollary} \label{cor:3}
			Suppose that $S$ is a fine, saturated logarithmic scheme and $\sigma$ is a unviersally surjective weakly convex Olsson precone over $S$.   Let $\mathcal E$ be a locally free sheaf on $\sigma$.  Then, locally in the strict \'etale topology $S$, there is a splitting $\mathcal E \simeq \bigoplus \mathcal O_\sigma(\lambda_i)$ for linear functions $\lambda_0, \ldots, \lambda_r$ on $\sigma$.  The $\lambda_i$ and their multiplicities are unique up to permutation of their indices and addition of constants.
		\end{corollary}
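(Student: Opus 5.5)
The plan is to reduce to the two cases already treated: the strictly convex case (\Cref{prop:vect-on-cone}) and the tropical torus case (\Cref{prop:5}).

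\textbf{Step 1: product structure.} By \Cref{prop:polyhedral domain}, locally in the strict \'etale topology of $S$ the weakly convex precone $\sigma$ is a trivial torsor over the Olsson precone $\sigma' = \Theta_R$. It is therefore isomorphic to $T \times_S \sigma'$, where $T$ is the lineality space with character group $X = \amb R / R^{\rm gp}$, split as $\tropGm^r$. Universal surjectivity of $\sigma \to S$ implies universal surjectivity of $\sigma' \to S$, since $\sigma \to \sigma'$ is a torsor and hence surjective. Then \Cref{lem:univSurj} shows that $\bar M_S \to R$ is exact.

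\textbf{Step 2: existence of the splitting.} View $\sigma = T_{\sigma'}$ as a tropical torus over the base $\sigma'$. Although $\sigma'$ is an algebraic stack rather than a scheme, the proof of \Cref{prop:5} uses only Olsson precones over the base and the pushforward results of \Cref{cor:4}. I would argue that it goes through after pulling back to a smooth atlas, or alternatively rerun it directly over $S$ with $\sigma$ in place of $T$. This yields a unique splitting $\mathcal E \simeq \bigoplus_{\chi \in X} E_\chi(\chi)$, where each $E_\chi = p_\ast \mathcal E(-\chi)$ is a locally free sheaf on $\sigma'$ and $p : \sigma \to \sigma'$ is the projection. Next apply \Cref{prop:vect-on-cone} to each $E_\chi$ on the universally surjective precone $\sigma'$. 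After strict \'etale localization on $S$, this splits $E_\chi \simeq \bigoplus_j L_{\chi j}(\mu_{\chi j})$ with $L_{\chi j}$ invertible on $S$ and $\mu_{\chi j} \in R^{\rm gp}$. Localizing further trivializes the $L_{\chi j}$. Since a splitting $\amb R \cong R^{\rm gp} \oplus X$ is fixed by the trivialization of the torsor, each $\chi + \mu_{\chi j}$ is a linear function on $\sigma$. This gives $\mathcal E \simeq \bigoplus \mathcal O_\sigma(\lambda_i)$.

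\textbf{Step 3: uniqueness.} To show the multiset of $\lambda_i$ is determined modulo constants, project to $X$. The $X$-components are determined by the uniqueness part of \Cref{prop:5}, since the summands $E_\chi = p_\ast\mathcal E(-\chi)$ are canonical. Within a fixed $\chi$, the $R^{\rm gp}$-parts are determined modulo $\bar M_S^{\rm gp}$ by the uniqueness in \Cref{prop:vect-on-cone}. A change of the torsor trivialization shifts $\lambda$ only by pulling back characters, which is consistent with this. An alternative intrinsic route is to use $\pi_\ast\mathcal E(-\lambda)$ together with \Cref{cor:5}: the multiplicity of $\lambda$ modulo constants is read off from which twists have nonzero pushforward with bounded-above behaviour, but the first route is simpler.

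\textbf{Main obstacle.} The delicate step is Step 2: applying \Cref{prop:5} relative to the stack $\sigma'$ rather than a logarithmic scheme. If that transfer is problematic, I would instead redo the argument of \Cref{prop:5} directly. This means covering $\sigma$ by universal surjections $\tau \to \sigma$ from exact Olsson precones over $S$, splitting $\mathcal E|_\tau$ by \Cref{prop:vect-on-cone}, and descending the weights via the exact sequence analogous to \eqref{eqn:11}, now with $\bar M_S^{\rm gp}$ replaced by $R^{\rm gp}$.
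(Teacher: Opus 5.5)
Your proposal is correct and follows essentially the same route as the paper. Both trivialize the lineality torsor via Proposition~\ref{prop:polyhedral domain} to write $\sigma \simeq \sigma' \times T$, split along characters of $T$ using \Cref{prop:5} over the base $\sigma'$, and then split each $E_\chi$ using \Cref{prop:vect-on-cone}. Uniqueness comes from the uniqueness clauses of those two results, together with the observation that changing the trivialization does not change the resulting linear functions on $\sigma$. The paper applies \Cref{prop:5} over the stack $\sigma'$ without comment, so flagging that step and offering a fallback is a reasonable precaution, not a departure from its argument.
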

		\begin{proof}
			Let $T$ be the lineality space of $\sigma$ and let $\sigma' = \sigma / T$.  Then Proposition~\ref{prop:polyhedral domain} implies that $\sigma'$ is an exact Olsson precone over $S$ and $\sigma$ is a $T$-torsor over $\sigma'$.  Proposition~\ref{prop:polyhedral domain} also implies that this torsor is locally trivial in the strict \'etale topology of $S$.  After replacing $S$ by a strict \'etale cover, we may therefore identify $\sigma$ with $\sigma' \times T$ by the choice of a map $q : \sigma \to T$.  By Proposition~\ref{prop:5}, a locally free sheaf $\mathcal E$ on $\sigma' \times T$ splits as $\bigoplus E_\lambda(\lambda)$, with the sum taken over characters $\lambda$ of $T$ and with the $E_\lambda$ being locally free sheaves on $\sigma'$. By Proposition~\ref{prop:vect-on-cone}, each of the $E_\lambda$ splits, locally in the strict \'etale topology of $S$, as a direct sum of $F_i(\mu_i)$, where each $\mu_i$ is a linear function on $\sigma'$ and each $F_i$ is locally free on $S$.   After re\"indexing, we obtain a splitting of $\mathcal E$ into summands $F_i(\mu_i + \lambda_i)$~:
			\begin{equation*}
				\mathcal E \simeq \bigoplus F_i(\mu_i + q^\ast \lambda_i)
			\end{equation*}

			The ambiguities in these splittings are permutation of the indices and addition of constants to the $\mu_i$, by Proposition~\ref{prop:vect-on-cone}, as well as the choice of the splitting $q : \sigma \to T$.  The ambiguity of the latter is a morphism $a : \sigma' \to T$.  Replacing $q$ with $q + a$ will replace $\mu_i$ with $\mu_i - a^\ast \lambda_i$.  Then the function
			\begin{equation*}
				\mu_i - a^\ast \lambda_i + (q + a)^\ast \lambda_i = \mu_i + q^\ast \lambda_i
			\end{equation*}
			on $\sigma$ remains unchanged.  Thus the only ambiguity in the splitting of $\mathcal E$ is permutation of the indices and addition of constants.
		\end{proof}

		\begin{corollary} \label{cor:6}
			Suppose that $S$ is a fine, saturated logarithmic scheme and $\sigma$ is a universally surjective, weakly convex Olsson precone over $S$.  Let $\mathcal L$ be an invertible sheaf on $\sigma$.  Then, locally in the strict \'etale topology of $S$, there is an invertible sheaf $L$ on $S$ and a section $\lambda$ of $\bar M_\sigma^{\rm gp}$ such that $\mathcal L \simeq \pi^\ast L(\lambda)$.  The section $\lambda$ is unique up to addition of constants.
		\end{corollary}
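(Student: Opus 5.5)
This is the rank-one case of Corollary~\ref{cor:3}, and I will derive it from that corollary. Apply Corollary~\ref{cor:3} to $\mathcal E = \mathcal L$. After passing to a strict \'etale cover of $S$, this gives a splitting $\mathcal L \simeq \bigoplus_i F_i(\lambda_i)$ with the $F_i$ locally free on $S$. Since $\mathcal L$ has rank one, the splitting has exactly one summand. Hence $\mathcal L \simeq \pi^\ast L(\lambda)$ for a single linear function $\lambda$, where $L = F_0$ is locally free of rank one, that is, invertible on $S$.

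More precisely, I would keep track of the twists appearing in the proof of Corollary~\ref{cor:3}. There one first trivializes the lineality torsor, which is possible locally by Proposition~\ref{prop:polyhedral domain}, so that $\sigma \simeq \sigma' \times T$. Proposition~\ref{prop:5} then splits $\mathcal L$ according to characters of $T$; for a rank-one sheaf exactly one character $\lambda_T$ occurs, with coefficient an invertible sheaf on $\sigma'$. That invertible sheaf on $\sigma'$ has the form $L(\mu)$ by Proposition~\ref{line_bundles_on_cones}, or equivalently by Proposition~\ref{prop:vect-on-cone} in rank one. Putting these together, $\lambda = \mu + q^\ast \lambda_T$.

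For uniqueness, suppose $\pi^\ast L(\lambda) \simeq \pi^\ast L'(\lambda')$. I would invoke the uniqueness clause of Corollary~\ref{cor:3}, which says that $\lambda$ is determined up to addition of constants. To give a direct check as well: tensoring with $\mathcal O(-\lambda')$, the isomorphism yields $\pi_\ast \mathcal O_\sigma(\lambda - \lambda') \otimes L \simeq L'$, which is invertible. By Corollary~\ref{cor:5}, $\pi_\ast \mathcal O_\sigma(\lambda-\lambda')$ is the colimit of $\mathcal O_S(\beta)$ over constants $\beta \le \lambda - \lambda'$. The same holds with $\lambda$ and $\lambda'$ exchanged. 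So there are constants $\beta \le \lambda-\lambda'$ and $\beta' \le \lambda'-\lambda$, and the composite $\mathcal O(\beta+\beta') \to \mathcal O_S$ must be an isomorphism at each geometric point. Therefore $\beta + \beta' = 0$, which forces $\lambda - \lambda' = \beta$, a constant.

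The main subtle point is uniqueness in the presence of the lineality space. There one needs that a character of $T$ which is nonconstant contributes no sections; this is exactly Corollary~\ref{cor:4}, and it is already built into Corollary~\ref{cor:5}. Universal surjectivity is used to guarantee that $\pi_\ast \mathcal O_\sigma = \mathcal O_S$ (Corollary~\ref{cor:structuresheaf}). Existence itself is routine given Corollary~\ref{cor:3}.
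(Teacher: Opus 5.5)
Your proposal is correct and takes the same route as the paper, whose proof consists only of the observation that this is the rank-one case of Corollary~\ref{cor:3}. In your supplementary direct uniqueness check, take $\beta$ and $\beta'$ to be constants through which the given isomorphism and its inverse actually factor; then the composite $\mathcal O_S(\beta+\beta')\to\mathcal O_S$ sends a local generator to $1$, which forces $\beta+\beta'=0$, whereas for arbitrary constants $\beta\le\lambda-\lambda'$ and $\beta'\le\lambda'-\lambda$ the composite need not be an isomorphism.
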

		\begin{proof}
			This is the rank~$1$ case of Corollary~\ref{cor:3}.
		\end{proof}
            
\begin{corollary}\label{exact_sequence_polyhedra}
     Let $S$ be a fine, saturated logarithmic scheme.  Let $\pi : \sigma \to S$ be a universally surjective, weakly convex Olsson precone.  Then
		 \begin{equation*}
			 M_S^{\rm gp} \to \pi_\ast M_\sigma^{\rm gp}
		 \end{equation*}
		 is an isomorphism and there is an exact sequence 
		 \begin{equation*}
				0 \to \bar M_S^{\rm gp} \to \pi_\ast \bar M_\sigma^{\rm gp} \to \mathrm R^1 \pi_\ast \mathcal O_S^\ast \to 0  .
		 \end{equation*}
\end{corollary}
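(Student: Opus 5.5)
Both claims come from the exponential-type sequence $0 \to \mathcal O^\ast \to M^{\rm gp} \to \bar M^{\rm gp} \to 0$ on $\sigma$ and on $S$. Pushing forward along $\pi$, the sequence on $\sigma$ gives a long exact sequence
\begin{equation*}
0 \to \pi_\ast \mathcal O_\sigma^\ast \to \pi_\ast M_\sigma^{\rm gp} \to \pi_\ast \bar M_\sigma^{\rm gp} \to \mathrm R^1\pi_\ast \mathcal O_\sigma^\ast \to \mathrm R^1 \pi_\ast M_\sigma^{\rm gp}.
\end{equation*}
We compare it with the same sequence on $S$. I will prove three facts: $\pi_\ast \mathcal O_\sigma^\ast = \mathcal O_S^\ast$, the image of $\pi_\ast M^{\rm gp}_\sigma$ in $\pi_\ast\bar M^{\rm gp}_\sigma$ is exactly $\bar M_S^{\rm gp}$, and $\pi_\ast \bar M_\sigma^{\rm gp} \to \mathrm R^1\pi_\ast\mathcal O^\ast_\sigma$ is surjective. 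The first two give $M_S^{\rm gp} \simeq \pi_\ast M_\sigma^{\rm gp}$ by the five lemma, and the first and third give the short exact sequence. (The $\mathcal O_S^\ast$ in the statement should be read as $\mathcal O_\sigma^\ast$.)

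\emph{Units.} By Corollary~\ref{cor:5} with $\gamma=0$, together with exactness, we get $\pi_\ast\mathcal O_\sigma = \mathcal O_S$. The same statement applies after any strict base change, so units on $\sigma$ are exactly the pullbacks of units on $S$. Hence $\pi_\ast\mathcal O_\sigma^\ast = \mathcal O_S^\ast$.

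\emph{Image of $\pi_\ast M^{\rm gp}_\sigma$.} A local section $m$ of $\pi_\ast M^{\rm gp}_\sigma$ with image $\lambda \in \pi_\ast\bar M^{\rm gp}_\sigma$ gives a trivialization of the torsor $\mathcal O^\ast_\sigma(-\lambda)$, hence $\mathcal O_\sigma(\lambda)\simeq\mathcal O_\sigma$. Corollary~\ref{cor:6} says an invertible sheaf determines its linear function up to addition of constants. Therefore $\lambda$ is a constant $\beta \in \bar M_S^{\rm gp}$. The fibre of $M^{\rm gp}$ over $\beta$ is the torsor $\mathcal O_S^\ast(-\beta)$ pulled back, and its sections over $\sigma$ are those of $\mathcal O_S^\ast(-\beta)$ by the units computation. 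This shows injectivity and surjectivity of $M_S^{\rm gp}\to\pi_\ast M_\sigma^{\rm gp}$, via the five lemma applied to the two short exact sequences $0\to\mathcal O^\ast\to M^{\rm gp}\to \mathrm{image}\to0$. Injectivity of $\bar M_S^{\rm gp}\to\pi_\ast\bar M_\sigma^{\rm gp}$ follows because $\pi$ is exact, hence injective on characteristic monoids, as $\bar M_S$ is sharp.

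\emph{Surjectivity onto $\mathrm R^1\pi_\ast\mathcal O_\sigma^\ast$.} A local section of $\mathrm R^1\pi_\ast\mathcal O^\ast_\sigma$ is, strict-étale locally on $S$, an invertible sheaf $\mathcal L$ on $\sigma$. By Corollary~\ref{cor:6}, after further localization we have $\mathcal L\simeq\pi^\ast L(\lambda)$. Shrinking $S$ again trivializes $L$, so the class of $\mathcal L$ equals the class of $\mathcal O_\sigma(\lambda)$. This is the coboundary of the linear function $\lambda\in\pi_\ast\bar M^{\rm gp}_\sigma$ (the torsor $\mathcal O^\ast_\sigma(-\lambda)$, up to sign conventions). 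Finally, exactness at $\pi_\ast\bar M_\sigma^{\rm gp}$ says the kernel of the coboundary is the image of $\pi_\ast M^{\rm gp}_\sigma$, which we identified with $\bar M_S^{\rm gp}$.

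\emph{Main obstacle.} The delicate step is making the uniqueness in Corollary~\ref{cor:6} apply on the nose. We need that $\mathcal O_\sigma(\lambda)$ trivial (locally on $S$) forces $\lambda$ constant, and not merely constant after some further cover. Since $\bar M_S^{\rm gp}\subset\pi_\ast\bar M_\sigma^{\rm gp}$ is a subsheaf, constancy can be checked strict-étale locally, so I expect this to be fine.
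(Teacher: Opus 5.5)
Your proposal is correct and follows the paper's proof essentially step for step. You push forward $0\to\mathcal O^\ast\to M^{\rm gp}\to\bar M^{\rm gp}\to 0$ and compare it with the sequence on $S$, identify the units via $\pi_\ast\mathcal O_\sigma=\mathcal O_S$, use the uniqueness in Corollary~\ref{cor:6} to show that the image of $\pi_\ast M_\sigma^{\rm gp}$ consists of constants, get injectivity from exactness, and get surjectivity onto $\mathrm R^1\pi_\ast\mathcal O_\sigma^\ast$ from the existence part of Corollary~\ref{cor:6}. Two small points in your favour: citing Corollary~\ref{cor:5} rather than Corollary~\ref{cor:structuresheaf} is, if anything, better suited to the weakly convex setting, and you are right that the $\mathcal O_S^\ast$ in the statement should read $\mathcal O_\sigma^\ast$.
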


\begin{proof}
	We apply $\pi_\ast$ to the following exact sequence~:
    \[0\to\OO_{\sigma}^*\to M_\sigma^{\rm gp}\to \bar M_\sigma^{\rm gp}\to0.\]
	We obtain a commutative diagram with exact rows~:
	\begin{equation} \label{eqn:6} \begin{tikzcd}
		0 \ar[r] & \OO_S^\ast \ar[r] \ar[d] & M_S^{\rm gp} \ar[r] \ar[d]& \bar M_S^{\rm gp} \ar[r]  \ar[d]& 0 \ar[d] \\
		0 \ar[r] & \pi_\ast \OO_\sigma^\ast \ar[r] & \pi_\ast M_\sigma^{\rm gp} \ar[r] & \pi_\ast \bar M_\sigma^{\rm gp} \ar[r] & \mathrm R^1 \pi_\ast \OO_\sigma^\ast \ar[r] & \mathrm R^1 \pi_\ast M_\sigma^{\rm gp} 
	\end{tikzcd}
	\end{equation}
	The map $\OO_S^\ast \to \pi_\ast \OO_\sigma^\ast$ is an isomorphism by Corollary~\ref{cor:structuresheaf}.  The fiber of $\pi_\ast M_\sigma^{\rm gp} \to \pi_\ast \bar M_\sigma^{\rm gp}$ over $\gamma$ is $\pi_\ast \mathcal O_\sigma^\ast(-\gamma)$~: in other words, the sheaf of trivializations of $\mathcal O_\sigma^\ast(-\gamma)$ on $S$.  By Corollary~\ref{cor:6}, this sheaf is empty unless $\gamma$ is constant, so $M_S^{\rm gp} \to \pi_\ast M_\sigma^{\rm gp}$ is surjective.  Exactness of $M_\sigma$ over $M_S$ implies that $\bar M_S^{\rm gp} \to \pi_\ast \bar M_\sigma^{\rm gp}$ is injective, which implies that $M_S^{\rm gp} \to \pi_\ast M_\sigma^{\rm gp}$ is also injective.  The bottom row of~\eqref{eqn:6} can therefore be rewritten~:
	\begin{equation*}
		0 \to \bar M_S^{\rm gp} \to \pi_\ast \bar M_\sigma^{\rm gp} \to \mathrm R^1 \pi_\ast \mathcal O_\sigma^\ast \to \mathrm R^1 \pi_\ast M_\sigma^{\rm gp}
	\end{equation*}
	The map $\pi_\ast \bar M_\sigma^{\rm gp} \to \mathrm R^1 \pi_\ast \mathcal O_\sigma^\ast$ is surjective by \Cref{cor:6}.
    \end{proof}

\section{Moduli of vector bundles on Olsson fans} \label{sec:moduli}
\tocdesc{conditions guaranteeing algebraicity of the moduli stack of vector bundles on an Olsson fan and examples where algebraicity fails~; connections with Bruhat--Tits buildings}

\subsection{Moduli of vector bundles on integral cones and subdivisions}

\begin{proposition} \label{prop:8}
	Let $S$ be a fine, saturated logarithmic scheme and let $\sigma \to S$ be a universally surjective, integral, weakly convex Olsson cone over $S$. Then the locally free sheaves on $\sigma$ are parameterized by an algebraic stack over $S$.  It is a disjoint union of stacks $\mathrm BG_{\boldsymbol{\lambda}}$ indexed by multisubsets $\boldsymbol{\lambda}$ of the character group of the tropical torus spanned by $\sigma$.  The group $G_{\boldsymbol{\lambda}}$ is, locally in the strict \'etale topology of $S$, isomorphic to the group of invertible matrices in
	\begin{equation*}
		\bigoplus_{i,j} \mathcal O_S(\inf_\sigma(\lambda_j - \lambda_i)) ,
	\end{equation*}
	where the sum is taken over pairs of indices in $\boldsymbol\lambda$.
\end{proposition}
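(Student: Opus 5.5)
We will build the stack from the local splitting in Corollary~\ref{cor:3} together with a computation of the endomorphism sheaves. First, fix the discrete invariant. By Corollary~\ref{cor:3}, any locally free sheaf $\mathcal E$ of rank $r$ on $\sigma$ splits, strict-\'etale locally on $S$, as $\bigoplus_i \mathcal O_\sigma(\lambda_i)$, possibly after twisting each summand by a line bundle on $S$, which we absorb locally. The multiset of the $\lambda_i$ is well defined modulo constants. Taking the image in $\amb R/\bar M_S^{\rm gp}$, the character group of the span, gives a multiset $\boldsymbol\lambda$ that is locally constant on $S$, so it can be viewed as a section of $\Sym^r$ of the character sheaf.

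This decomposes the stack of locally free sheaves as a disjoint union of open and closed substacks $\mathfrak V_{\boldsymbol\lambda}$. Each $\mathfrak V_{\boldsymbol\lambda}$ is a gerbe: every two objects over an $S$-scheme $S'$ are locally isomorphic, and objects exist locally, namely $\mathcal E_{\boldsymbol\lambda}=\bigoplus_i\mathcal O_\sigma(\lambda_i)$ for lifts $\lambda_i$ of the given characters. It is therefore neutral and equivalent to $\mathrm BG_{\boldsymbol\lambda}$, where $G_{\boldsymbol\lambda}=\underline{\mathrm{Aut}}(\mathcal E_{\boldsymbol\lambda})$ is viewed as a sheaf of groups on $S$. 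This step needs the base-change compatibility of the construction, that is, that the pullback of $\sigma$ along $S'\to S$ is again a universally surjective integral weakly convex cone. Strictness of $S'\to S$ is harmless here. For general log maps the same discussion applies pointwise, since universal surjectivity and integrality are stable under base change.

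Next, compute $\underline{\mathrm{End}}(\mathcal E_{\boldsymbol\lambda})=\bigoplus_{i,j}\pi_\ast\mathcal O_\sigma(\lambda_j-\lambda_i)$. By Corollary~\ref{cor:5}, each summand is $\varinjlim_{\beta\le\lambda_j-\lambda_i}\mathcal O_S(\beta)$, with $\beta$ ranging over constants. The main point is to show that this colimit is $\mathcal O_S(\inf_\sigma(\lambda_j-\lambda_i))$, with the convention $\mathcal O_S(-\infty)=0$. For this we use integrality. After splitting off the lineality space via Proposition~\ref{prop:polyhedral domain}, a function that is nonconstant along $T$ has empty set of constant lower bounds, which matches Corollary~\ref{cor:4}. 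Otherwise the function is pulled back from the integral, local (because exact) cone $\sigma'$, and Corollary~\ref{cor:10} together with Corollary~\ref{cor:pushforward} gives a greatest lower bound section $\delta\in\bar M_S^{\rm gp}\cup\{-\infty\}$. The filtered colimit then has a final object, so it equals $\mathcal O_S(\delta)$. These sheaves are locally free of rank $\le1$ and compatible with base change by Lemma~\ref{prop:7}.

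Consequently $\underline{\mathrm{End}}$ is a locally free $\mathcal O_S$-algebra of finite rank whose formation commutes with base change. Its unit group $G_{\boldsymbol\lambda}$, the invertible matrices, is an open subscheme of a vector bundle, hence a smooth affine group scheme over $S$. It follows that $\mathrm BG_{\boldsymbol\lambda}$ is algebraic. I expect the main obstacle to be the base-change compatibility of both the splitting and the infimum sections. One must check that the infimum commutes with arbitrary (non-strict) base change $S'\to S$, and I would handle this by reducing to stalks at geometric points and invoking Lemma~\ref{prop:7}. One must also check that the isomorphism class of $\boldsymbol\lambda$ is locally constant, for which I would use the uniqueness clause in Corollary~\ref{cor:3} applied over henselizations.
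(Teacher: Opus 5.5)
Your argument is essentially the paper's own proof: split locally via Corollary~\ref{cor:3}, sort by the multiset of characters, identify each piece with $\mathrm BG_{\boldsymbol\lambda}$ for the automorphism group of $\bigoplus\mathcal O_\sigma(\lambda_i)$, and compute $\pi_\ast\mathcal O_\sigma(\lambda_j-\lambda_i)=\mathcal O_S(\inf_\sigma(\lambda_j-\lambda_i))$; your reduction through Proposition~\ref{prop:polyhedral domain} and Corollary~\ref{cor:4} makes explicit the lineality-space case, which the paper handles only by citing Corollary~\ref{cor:pushforward}, a result stated for ordinary Olsson precones. One caveat, which you share with the paper's formulation, is that $\amb R/\bar M_S^{\rm gp}$ need not be locally constant: for the family of intervals of Example~\ref{exa:relative_edge} over a discrete valuation ring with $s$ a uniformizer, it is $\mathbf Z$ at the closed point and $0$ at the generic point (where $\sigma$ is a point and every $\mathcal O_\sigma(\lambda)$ is trivial), so the substacks $\mathrm BG_{\boldsymbol\lambda}$ are open but may overlap over generizations rather than being disjoint and closed, though algebraicity survives because the stack is then a gerbe, locally of the form $\mathrm BG_{\boldsymbol\lambda}$, over the possibly non-separated \'etale space of $\Sym^r(\amb R/\bar M_S^{\rm gp})$.
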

\begin{proof}
	At least after replacing $S$ by a strict \'etale cover, Corollary~\ref{cor:3} implies that every locally free sheaf $\mathcal E$ on $\sigma$ splits as $\bigoplus E_i(\lambda_i)$, with the $\lambda_i$ and their multiplicities uniquely determined in the character group of the torus spanned by $\sigma$.  Therefore the stack $\uHom(\sigma, \BGL_n)$ on the strict \'etale site of $S$ splits as a disjoint union of components indexed by the choices of the $\lambda_i$ and their multiplicities.  For any fixed choice $\boldsymbol{\lambda} = (\lambda_1, \ldots, \lambda_n)$, let $\uHom(\sigma, \BGL_n)_{\boldsymbol{\lambda}}$ be the corresponding component of $\uHom(\sigma, \BGL_n)$.  Any object of $\uHom(\sigma, \BGL_n)_{\boldsymbol{\lambda}}$ is locally isomorphic to $\bigoplus E_i(\lambda_i)$.  Thus $\uHom(\sigma, \BGL_n)_{\boldsymbol{\lambda}} = \mathrm BG_{\boldsymbol{\lambda}}$ where $G_{\boldsymbol{\lambda}}$ is the automorphism group of $\bigoplus E_i(\lambda_i)$, namely the group of invertible matrices in
	\begin{equation*}
		\pi_\ast \bigoplus E_j \otimes E_i^\vee(\lambda_j - \lambda_i) = \bigoplus E_j \otimes E_i^\vee ( \inf_\sigma (\lambda_j - \lambda_i) ) .
	\end{equation*}
	The equality comes from Corollary~\ref{cor:pushforward}.  This group is algebraic, so $\uHom(\sigma, \BGL_n) = \mathrm B G_{\boldsymbol{\lambda}}$ is as well.  After further localization in $S$, we can assume that the $E_i$ are all trivial.  This completes the proof.
\end{proof}
\begin{remark} \label{rem:1}
	One can also demonstrate \Cref{prop:8} using \cite[Theorem 5.7]{LunaEtaleSlice} (over a field) or \cite[Theorem 10.14]{EtaleLocalStructure} (in mixed characteristic).  Note that the flatness hypotheses in the above cited results are implied here by the hypothesis that $\sigma$ be integral over $S$ here.

	While it is true that if $\pi\colon \sigma\to S$ is an exact Olsson cone, then $\pi$ is a good moduli space by \Cref{gms_univclosed}, this hypothesis is not sufficient for the conclusion of \Cref{prop:8}~: see Example~\ref{ex:1}, below.
\end{remark}

\begin{example} \label{ex:1}
 Let $h\colon P\to Q$ be the monoid homorphism of \Cref{exa:exact_not_integral}, which is exact but not integral. The cone $\sigma_Q$ is cut out by the inequalities $0\leq x,y\leq z$.  It can also be described as the cone over the square  with vertices $(0,0,1),(1,0,1),(0,1,1),(1,1,1)$. The map $\phi\colon\sigma_Q\to\sigma_P=\mathbf R_{\geq0}^2$ is the projection to the first two coordinates, which is surjective but maps the ray spanned by $(1,1,1)$ to the diagonal of $\mathbf R_{\geq0}^2$.


 Let $S=\Aaff^2$ with its standard logarithmic structure, and $\pi\colon \sigma=S\times_{\Acal^2}\Acal_Q\to S$. By \Cref{prop:vect-on-cone}, every vector bundle on $\sigma$ splits (moreover, vector bundles on $S=\Aaff^2$ are trivial). Notice that both $x$ and $y$ are dominated by $z$, but $P_{\leq z}$ is not filtered, so $\pi_*\OO_\sigma(z)=\Ical_0$ is the ideal sheaf of the origin in $\Aaff^2$, which is a rank-one torsion-free sheaf, but not a line bundle.

	Consider the multicharacter $\boldsymbol\lambda = (0,z)$, and $\Ecal=\OO_\sigma\oplus\OO_\sigma(z)$. Let $\mathfrak V_{\boldsymbol\lambda}$ be the stack of locally free sheaves on $\sigma$ with multicharacter $\boldsymbol\lambda$.  Then $\mathfrak V_{(0,z)}\simeq \mathrm B G$, where $G$ is the automorphism group of $\Ecal$. For every $S$-scheme $T$, we have:
 \[ G(T)=\begin{pmatrix}
\mathcal O^*_T & \mathcal I_0 \otimes \mathcal O_T \\
0&\mathcal O^*_T
\end{pmatrix}\subseteq \operatorname{GL}_2(T),\]
which is not representable (a coherent sheaf is representable if and only if it is locally free~: see \cite[Corollary~2]{Nitsure}). Hence $\mathfrak V_{\boldsymbol\lambda}$ is not an algebraic stack.
\end{example}
 We can extend the representability result to weak cone complexes with no interesting topology, or at least constant fiberwise topology.
\begin{definition}
    Let $S$ be an atomic, fine, saturated logarithmic scheme and let $\Sigma$ be a weakly convex cone complex over $S$. We write $J(\Sigma)$ for the partially ordered set of faces of $\sigma$ and call it the \emph{order complex} of $\Sigma$.
    The \emph{exact intersection complex} of $\Sigma$ is the partially ordered subset $I(\Sigma)=I(\Sigma/S) \subset J(\Sigma)$ consisting only of those cones that are integral and universally surjective over $S$.
    
\end{definition}

We can also think of $I(\Sigma)$ and $J(\Sigma)$ as categories, and will do so in the future without comment. 

Next, we recall a few definitions from category theory~:
\begin{definition}
    Let $\phi\colon I\to J$ be a functor. For an object $j$ of $J$, the \emph{comma category} $(j\downarrow \phi)$ is the category whose objects are pairs $(i,u)$ where $i$ is an object of $I$, and $u\colon j\to\phi(i)$ is an arrow of $J$; arrows from $(i,u)$ to $(i',u')$ are arrows $v\colon i\to i'$ of $I$ such that $\phi(v)\circ u=u'$.  We say $\phi$ is \emph{cofinal} if every comma category is non-empty and connected.

    The \emph{nerve} of a category $C$ is the simplicial set  whose $n$-simplices are the functors $[n] \to C$, with $[n]$ being the category associated with the the totally ordered finite set $\{ 0 < \cdots < n \}$.  A functor between categories is called a \emph{homotopy equivalence} if it induces a homotopy equivalence after taking the nerve.  A category is \emph{contractible} if it is homotopy equivalent to a punctual category.
\end{definition}

\begin{remark} \label{rem:3}
	\begin{enumerate}
		\item The significance of cofinality is that if $\phi : I \to J$ is cofinal and $F : J^{\rm op} \to C$ is a functor, then $\varprojlim_{j \in J} F(j) = \varprojlim_{i \in I} F(\phi(i))$ \cite[Tag~002R]{stacks-project}.
\item Any category with an initial or a final object is contractible.  
\item A contractible category is certainly nonempty and connected, so a functor whose comma categories $j \downarrow \phi$ are contractible is cofinal.
	\end{enumerate}
\end{remark}

\begin{proposition}\label{thm:5}
	Let $S$ be a fine, saturated logarithmic scheme and let $\Sigma$ be 
    a weak cone complex over $S$. Assume that $I(\Sigma)$ is cofinal in $J(\Sigma)$ on an atomic cover of $S$ in the strict \'etale topology. Then the locally free sheaves on $\Sigma$ are parameterized by an algebraic stack over $S$.
\end{proposition}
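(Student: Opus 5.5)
The plan is to reduce to a local statement over an atomic base and then express the stack of locally free sheaves on $\Sigma$ as a finite limit of the algebraic stacks produced by \Cref{prop:8}.

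Algebraicity of a stack in the strict étale topology of $S$ is étale local, and the formation of the stack of locally free sheaves on $\Sigma$ commutes with base change. I may therefore replace $S$ by a member of the given atomic cover, on which $I(\Sigma)\to J(\Sigma)$ is cofinal. Because $S$ is atomic, the face poset $J(\Sigma)$ is a constant finite poset, and $\Sigma$ is the colimit of its faces $\sigma\in J(\Sigma)$ along face morphisms, which are strict open immersions. Descent for quasicoherent sheaves then identifies the stack $\mathfrak V(\Sigma)$ of locally free sheaves on $\Sigma$ with the limit
\begin{equation*}
\mathfrak V(\Sigma)\simeq \varprojlim_{\sigma\in J(\Sigma)} \mathfrak V(\sigma),
\end{equation*}
in the same way as \Cref{thm:vbgeneral}. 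To be careful, I would phrase this as an equivalence of fibered categories over $S$, using that $\Sigma$ is glued from the $\sigma$ by face morphisms.

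Next I apply cofinality. By \Cref{rem:3}, the limit over $J(\Sigma)$ equals the limit over $I(\Sigma)$, so
\begin{equation*}
\mathfrak V(\Sigma)\simeq\varprojlim_{\sigma\in I(\Sigma)}\mathfrak V(\sigma).
\end{equation*}
This step requires a version of cofinality for limits of stacks, that is, 2-limits of groupoid-valued functors. The cited statement covers 1-categorical limits. I would justify the 2-categorical case by pointing out that the comma categories here are posets, and by checking directly that data indexed by $I$ extend uniquely to $J$. Given a face $\tau\in J(\Sigma)$, nonemptiness of $\tau\downarrow\phi$ produces some $\sigma\in I$ containing $\tau$, and the bundle on $\tau$ is defined by restriction from $\sigma$. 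Connectedness shows that different choices of $\sigma$ give canonically isomorphic restrictions. Compatibility of these isomorphisms, the cocycle condition, needs a zig-zag argument in the comma poset. I expect this step to be the main obstacle, since strictly speaking contractibility rather than mere connectedness is what guarantees a 2-categorical cofinality statement. If connectedness turns out to be insufficient, I would try to show that in posets of faces all comma categories are in fact contractible, because each has a final element given by an intersection of faces, or else add that hypothesis.

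Finally, every $\sigma\in I(\Sigma)$ is integral and universally surjective. By \Cref{prop:8}, each $\mathfrak V(\sigma)$ is then algebraic, a disjoint union of classifying stacks $\mathrm BG_{\boldsymbol\lambda}$ of smooth affine group schemes. A finite limit of algebraic stacks is algebraic, since it can be built from iterated fiber products and equalizers. Moreover $I(\Sigma)$ is finite, and the limit stays locally of finite presentation. This gives algebraicity over each atomic piece, and étale descent then glues these to an algebraic stack over $S$.
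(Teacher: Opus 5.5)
Your outline is the same as the paper's proof: reduce to an atomic base, write the stack as $\varprojlim_{J(\Sigma)}\uHom(\sigma,\BGL_n)$, pass to $I(\Sigma)$ by cofinality, then conclude from \Cref{prop:8} and the algebraicity of finite limits. The paper handles the cofinality step in one line by citing Mac Lane, which is the $1$-categorical statement for its notion of cofinal (comma categories nonempty and connected).

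The worry you raise is correct. It is a genuine gap, in your argument and in the paper's, and it cannot be closed under the stated hypothesis.

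\begin{itemize}
\item With merely connected comma categories, your zig-zag construction fails. A loop in $(\tau\downarrow I(\Sigma))$ produces a holonomy automorphism of the restriction to $\tau$, and the $I(\Sigma)$-data do not force it to be trivial. For groupoid-valued limits the right condition is that the comma categories be simply connected, for instance contractible.
\item The paper's own final example is an instance.
\begin{itemize}
\item Take the loop of length $\delta$ over a complete discrete valuation ring. The edge $\sigma$ has the face $O=\{\alpha=\beta=0\}$. This face is the generic point of $S$, so it is not universally surjective. By contrast, $\sigma$ and the glued endpoint $\rho$ are integral and universally surjective.
\item Take $J(\Sigma)$ to be the category of faces, which you must do for descent since $\rho$ maps to $\sigma$ in two ways. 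Then $(O\downarrow I(\Sigma))$ is $\rho\rightrightarrows\sigma$, which is connected with nerve a circle. So $I(\Sigma)$ is cofinal in the paper's sense.
\item The limit over $I(\Sigma)$ is an equalizer of algebraic stacks, hence algebraic.
\item The true stack carries one more condition, coming from $O$: the gluing must restrict to the identity over the generic point. This is exactly the constraint the paper uses to show non-algebraicity.
\end{itemize}
\item The same holonomy obstruction occurs without self-gluing. An example is the cone over a triangulated annulus placed at height one. Its apex lies over the generic point, and $(\text{apex}\downarrow I(\Sigma))$ has the homotopy type of the annulus.
\end{itemize}

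Your fallback does not rescue the argument. In general $(\tau\downarrow I(\Sigma))$ has no final object, since several maximal cones may contain $\tau$. The intersection of the cones of $I(\Sigma)$ containing $\tau$ would be an initial object only if it lay in $I(\Sigma)$, and typically it is $\tau$ itself. In the loop, the smallest such cone $\rho$ is not initial either, because it maps to $\sigma$ in two ways.

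The correct repair is your last option: assume homotopy cofinality, meaning contractible (or at least simply connected) comma categories. This is precisely what \Cref{prop:4} proves for integral subdivisions of integral, universally surjective cones. So \Cref{cor:2} goes through with your argument.

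In the descent step you should also treat $J(\Sigma)$ as a category and use \'etale rather than Zariski descent. A cone can be glued to itself and then is not an open substack of $\Sigma$. If $J(\Sigma)$ is read as a poset, the limit formula itself already fails for the loop.
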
 
\begin{proof}
	The assertion is local in the strict \'etale topology of $S$.  We therefore assume that $S$ is atomic.  Then 
	\begin{equation*}
		\uHom(\Sigma, \BGL_n) = \varprojlim_{\sigma \in J(\Sigma)} \uHom(\sigma, \BGL_n)
	\end{equation*}
	with the limit taken over the \emph{full} order complex $J(\Sigma)$ of $\Sigma$.  If the \emph{exact} intersection complex $I(\Sigma)$ is cofinal in $J(\Sigma)$, it is equivalent to take the limit over the former \cite[Section IX.3]{MacLane}.  A finite limit of algebraic stacks is algebraic, so it is sufficient to observe that $\uHom(\sigma, \BGL_n)$ is an algebraic stack over $S$ when $\sigma$ is an integral, universally surjective, weakly convex Olsson cone over $S$, by Proposition~\ref{prop:8}.
\end{proof}
Next we  show that an integral subdivision of an integral, universally surjective, weakly convex Olsson precone satisfies the cofinality assumption.

\begin{remark}
    Not every subdivision of an integral cone is integral~:

		\begingroup\par\bigskip\centering
    \begin{tikzpicture}[>=latex,scale=1.5]
  \draw[thick] (0,0) rectangle (1 cm,1 cm);
  \draw[thin] (0,0) -- (1 cm,1 cm) (0,1 cm) -- (1 cm,0);
  \draw[->] (1.1 cm,.5 cm) -- (1.9 cm,.5 cm);
  \draw[thick] (2,0 cm) -- (2 cm,1 cm);
\end{tikzpicture}
\par\bigskip\endgroup

For a non-integral subdivision the following proposition may not hold.

\end{remark}
\begin{proposition} \label{prop:2}
	Let $S$ be an atomic, fine, saturated logarithmic scheme.  Let $\Sigma' \to \Sigma$ be a subdivision of integral, universally surjective, weak cone complexes over $S$.  Then $J(\Sigma')\to J(\Sigma)$ and $I(\Sigma') \to I(\Sigma)$ are homotopy equivalences.
\end{proposition}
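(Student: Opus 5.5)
We apply Quillen's Theorem A: a functor $\phi\colon C\to D$ whose comma categories $(d\downarrow\phi)$ (or, dually, the fibers over $d$) are all contractible is a homotopy equivalence. Because $S$ is atomic, the cells of $\Sigma$ and $\Sigma'$ form finite posets $J(\Sigma)$ and $J(\Sigma')$. The subdivision induces a monotone map $\phi\colon J(\Sigma')\to J(\Sigma)$ sending a cell $\rho$ of $\Sigma'$ to the smallest face $\sigma$ of $\Sigma$ containing it. Since $\phi$ is monotone, it is natural to use the poset form of Theorem A: check that for each face $\sigma$ of $\Sigma$ the subposet $\phi^{-1}(J(\Sigma)_{\leq\sigma})$ is contractible. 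This subposet consists of the cells of $\Sigma'$ lying in $\sigma$, i.e.\ the cells of the induced subdivision $\Sigma'|_\sigma$.

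\textbf{Step 1 (contractibility of $J(\Sigma'|_\sigma)$).} A cell $\rho$ of $\Sigma'|_\sigma$ meets the lineality space of $\sigma$ exactly when it contains the minimal cell of $\sigma$. After quotienting by that lineality torus (Proposition~\ref{prop:polyhedral domain}), the poset of cells of $\Sigma'|_\sigma$ is the face poset of a polyhedral subdivision of a convex cone, possibly with the apex subdivided. Its nerve is the barycentric subdivision of a cone over a ball, hence contractible.

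A cleaner combinatorial argument would be better, since it works for finitely generated partially ordered groups rather than over $\mathbf R$. Here is one. Let $\rho_0$ be the unique cell of $\Sigma'$ containing the vertex of $\sigma$; it exists by the valuative surjectivity condition~\ref{it:3}, applied to the point $0$. Then the map $\rho\mapsto\rho\cap\Star$-closure, or more simply the chain $\rho\geq\rho\wedge\rho_0\leq\rho_0$ where defined, gives a contraction of the poset. If no such meet exists, I would fall back on the geometric realization argument: realise $J(\Sigma'|_\sigma)$ via barycentric subdivision as homeomorphic to a closed convex set. I expect this step to be the main obstacle.

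\textbf{Step 2 (the exact intersection complexes).} The restriction $I(\Sigma')\to I(\Sigma)$ needs an additional check: a cell $\rho$ of $\Sigma'$ that is integral and universally surjective over $S$ must map to an exact cell of $\Sigma$, and for $\sigma$ exact the preimage poset must again be contractible. For the first point, use Lemma~\ref{lem:exact_stars} together with integrality of the subdivision $\Sigma'\to\Sigma$. Composing $\rho\to\phi(\rho)\to S$, the morphism $\rho\to\phi(\rho)$ is integral and local, hence exact, so $\phi(\rho)$ is universally surjective because $\rho$ is. For the contractibility, inside an exact, integral $\sigma$ the exact cells of $\Sigma'|_\sigma$ are exactly those that surject onto the whole base. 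Passing to stars at the closed point of $S$, using Lemma~\ref{lem:1} and Lemma~\ref{lem:exact_stars}, identifies them with the cells of a subdivision of $\Star_x(\sigma)$ that contain the fibre direction. I would then rerun Step 1 on this star subdivision to obtain contractibility.

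\textbf{Step 3 (conclusion).} Theorem A then gives that both $J(\Sigma')\to J(\Sigma)$ and $I(\Sigma')\to I(\Sigma)$ are homotopy equivalences. The key point to verify carefully is the fibre description in Step 2: that "universally surjective and integral over $S$" for cells of $\Sigma'$ inside $\sigma$ is detected on stars. This uses that integrality is stable under localization (Proposition~\ref{prop:1}).
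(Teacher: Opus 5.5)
Your Quillen Theorem~A reduction to showing that $I(\Sigma'|_\sigma)$ is contractible, for a single integral, universally surjective cone $\sigma$, is the same as the paper's. Your geometric argument in Step~1 is adequate for $J$. The gap is the contractibility claim in Step~2, which is the whole content of the proposition.

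The integral, universally surjective cells of $\Sigma'|_\sigma$ form an upward-closed subposet of $J(\Sigma'|_\sigma)$, and contractibility of a poset does not pass to its upward-closed subposets. Passing to a star does not help. $\Star_x$ only sees the cells through $x$, and the exact cells of the star subdivision again form an upward-closed subposet, so ``rerunning Step~1'' leaves you where you started.

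More tellingly, nothing in your argument uses the integrality of $\Sigma'$ at this step, yet the conclusion fails without it. Take the cone over a square subdivided by its two diagonals, mapped to the cone over a segment, as in the remark preceding the proposition. The only cells of $\Sigma'$ that are integral and universally surjective over $S$ are the two edges mapping isomorphically onto the segment. The triangles containing those edges have the centre as a vertex, which maps into the interior of the base, so they are not integral. Hence $I(\Sigma')$ is two points, while $I(\sigma)$ has the largest element $\sigma$. (Incidentally, $\phi(\rho)\in I(\Sigma)$ needs no exactness of $\rho\to\phi(\rho)$: $\rho\to S$ factors through $\phi(\rho)$, and $\Sigma$ is integral by hypothesis.)

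The missing idea is the paper's slicing argument. Fix a geometric point $s$ of the closed stratum and a local valuation $\bar M_{S,s}\to\mathbf R_{\geq 0}$, i.e.\ an interior point of the base cone. Let $\sigma(s)$ be the resulting set of real points. It is a convex polyhedron, nonempty because $\sigma$ is universally surjective, and it is subdivided by the slices $\tau(s)$ of the cells $\tau$ of $\Sigma'$.

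One then shows that $\tau$ is universally surjective over $S$ if and only if $\tau(s)\neq\emptyset$. If $t\in\tau(s)$, then $\bar M_{S,s}\to\bar M_{\tau,t}$ is local. It is integral because $\Sigma'$ is, hence exact by \Cref{rmk:integral-exact}. This is precisely where integrality enters: geometrically, an integral cell maps onto a face of the base cone, so meeting the fibre over an interior point forces it to map onto everything. Conversely, if $\tau$ is universally surjective, then $s\times_S\tau\to s$ is surjective. So it has an $s$-valued point, by the Nullstellensatz and the existence of local homomorphisms from fine monoids to $\mathbf R_{\geq 0}$.

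Consequently $I(\Sigma'|_\sigma)$ is the face poset of a polyhedral subdivision of the convex set $\sigma(s)$, hence contractible. Stars are the right tool for the subsequent cofinality statement, \Cref{prop:4}, whose proof uses the present proposition as input.
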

\begin{proof}
	We explain $I$, leaving the easier case of $J$ to the reader.
    
    By Quillen's Theorem A~\cite{Quillen}, it is sufficient to show that the comma category $I(\Sigma')\downarrow\tau$ is contractible for every object $\tau\in I(\Sigma)$. 
		Therefore we reduce to the case where $\Sigma = \sigma$ is a single integral, universally surjective, weakly convex Olsson cone and we need only show that $I(\Sigma')$ is contractible.  

	We reduce to a subdivision of usual polyhedra. Let $s$ be a geometric point of the closed stratum of $S$.  Let $\bar M_{S,s} \to \mathbf R_{\geq 0}$ be a local valuation of $\bar M_S$ and define $M_s$ to be the induced logarithmic structure on $s$ (whose characteristic monoid has value group $\mathbf R$).  Let $\sigma(s)$ be the set of $s$-points of $\sigma$, 
		with its usual topology.  Then $\Sigma'(s)$ is homeomorphic to $\sigma(s)$.  We argue furthermore that the integral intersection complex $I(\Sigma')$ is the same as the intersection complex of the $\tau(s) \subset \sigma(s)$ for $\tau$ among the polyhedra of $\Sigma'$.  For this, it will be sufficient to show, for every polyhedron $\tau$ in $J(\Sigma')$, that $\tau(s)$ is contractible if and only if $\tau$ is universally surjective over $S$.  In fact, $\tau(s)$ is convex, so it is contractible if and only if it is nonempty, so we only need to show that $\tau(s)$ is nonempty if and only if $\tau$ is universally surjective over $S$.  If $\tau(s)$ is nonempty then, choosing $t \in \tau(s)$, the dual homomorphism of monoids $\bar M_{\sigma,s} \to \bar M_{\tau,t}$ must be local~; since it is also integral, it is exact by \Cref{rmk:integral-exact}, so $\tau \to \sigma$ is universally surjective.  Conversely, if $\tau \to \sigma$ is universally surjective then the base change $s \mathop\times_S \tau \to s$ is surjective, so $s \mathop\times_S \tau$ has an $s$-valued point by the Nullstellensatz and its monoidal analogue.%
		\footnote{If $P$ is a finitely generated monoid then it has a local homomorphism to $\mathbf R_{\geq 0}$.}

    We conclude that $I(\Sigma')$ is equivalent to the intersection complex of a polyhedral subdivision of the convex region $\sigma(s)$, hence is contractible.  \end{proof}

\begin{proposition} \label{prop:4}
	Let $S$ be an atomic, fine, saturated logarithmic scheme.  Let $\sigma$ be an integral, universally surjective, weakly convex Olsson cone over $S$, and let $\Sigma'$ be an integral subdivision of $\sigma$.  Then the inclusion $I(\Sigma') \to J(\Sigma')$ is cofinal.
\end{proposition}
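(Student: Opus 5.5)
To prove cofinality of $I(\Sigma') \to J(\Sigma')$, I will verify the definition directly. For each $\tau \in J(\Sigma')$, the comma category $(\tau \downarrow \iota)$ must be nonempty and connected; by Remark~\ref{rem:3} it suffices to show it is contractible. The order complex $J(\Sigma')$ is a poset of faces, with arrows given by face inclusions, and the functor is contravariant in the sense relevant to limits. So $(\tau\downarrow\iota)$ is the poset of cones $\rho \in I(\Sigma')$ comparable to $\tau$ in the direction used for the limit in Proposition~\ref{thm:5}. Since limits of sheaves restrict from larger cones to faces, these are the cones $\rho \in I(\Sigma')$ having $\tau$ as a face, i.e.\ the exact, integral cones in the star of $\tau$.

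The plan is to reduce, as in the proof of Proposition~\ref{prop:2}, to ordinary polyhedral geometry over a geometric point $s$ of the closed stratum of the atomic base $S$, using a local valuation $\bar M_{S,s}\to\mathbf R_{\geq 0}$. The first step is the criterion already established in that proof. A cone $\rho$ of $\Sigma'$ (integral over $S$, being a cone of an integral subdivision of an integral cone) lies in $I(\Sigma')$ if and only if $\rho(s)\neq\varnothing$. This holds because an $s$-point gives a local, hence exact (Remark~\ref{rmk:integral-exact}), dual homomorphism. The comma category is therefore the poset of cones $\rho \succeq \tau$ of $\Sigma'$ with $\rho(s)$ nonempty.

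Next I would apply the star construction. Choose a point $x$ of $\tau$ (over some valuative monoid) in the relative interior of $\tau$. By Lemma~\ref{lem:1}, $\Star_x(\Sigma')$ is a subdivision of $\Star_x(\sigma)$. Its cones are in bijection with the cones of $\Sigma'$ containing $\tau$, and by Lemma~\ref{lem:exact_stars} universal surjectivity is preserved under passage to stars. Moreover, $\Star_x(\sigma)$ is again an integral, universally surjective, weakly convex cone over the base $\Star_x(S)$, since $\sigma$ contains $\tau \ni x$ and Lemma~\ref{lem:exact_stars} applies. The comma category is thus identified with $I(\Star_x(\Sigma'))$, and Proposition~\ref{prop:2} (in the form established in its proof, that $I$ of a subdivision of such a cone is the intersection complex of a polyhedral subdivision of the convex region $\Star_x(\sigma)(s)$) shows it is contractible.

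I expect the main obstacle to be the identification in the previous paragraph when $\tau$ itself is not universally surjective. In that case $\tau(s)=\varnothing$, so one cannot pick $x$ as a point over $s$. One must instead take $x$ over a larger base point and check that the star still lives over an atomic base to which Proposition~\ref{prop:2} applies. I would try to handle this by taking $x$ to be the generic point of $\tau$ over a point of $S$ where $\tau$ is nonempty, and then showing that $\Star_x(\Sigma')(s)$ is exactly the union of the $\rho(s)$ for $\rho\succeq\tau$. That union is the set of points of the convex region $\sigma(s)$ near the "direction" of $\tau$, namely the link in the convex set. It is convex and nonempty because $\sigma(s)$ is convex and nonempty and the subdivision is integral. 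Failing this, I would argue directly that the union of the relatively open cells $\rho(s)$, $\rho\succeq\tau$, is a star-convex open subset of $\sigma(s)$ and apply the nerve theorem.
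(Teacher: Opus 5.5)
Your main line is the paper's proof. The comma category is the set of cones $\rho \succeq \tau$ in $I(\Sigma')$. Lemma~\ref{lem:exact_stars} and Lemma~\ref{lem:1} identify it with $I(\Star_\tau(\Sigma')/\Star_\tau(S))$ for a subdivision of $\Star_\tau(\sigma)$, and Proposition~\ref{prop:2} gives contractibility. The only cosmetic difference is that the paper uses Proposition~\ref{prop:2} as a homotopy equivalence with $I(\Star_\tau(\sigma))$, which has $\sigma$ as a final object, rather than quoting the inside of its proof.

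The obstacle you anticipate for $\tau$ not universally surjective does not arise. The star is taken at $\tau$'s own tautological point, or equivalently any relative-interior point, and that point need not lie over $s$. Lemma~\ref{lem:exact_stars} places no surjectivity hypothesis on $\tau$. Proposition~\ref{prop:2} is then applied over $\Star_\tau(S)$, which has the same underlying scheme as $S$ but the coarser log structure generated by the face of $\bar M_S$ vanishing on the image of $\tau$. So you only need a point of the closed stratum of that base, not an $s$-point of $\tau$.

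You should drop the fallback. Its claim that $\bigcup_{\rho\succeq\tau}\rho(s)$ is convex is false in general: closed stars in a polyhedral subdivision of a convex region are usually only star-shaped.
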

\begin{proof}
Let us write $(\tau\downarrow I(\Sigma'))$ for the comma category of the inclusion $I(\Sigma') \to J(\Sigma')$ where  $\tau$ is a cone in $\Sigma'$. To prove the statement it suffices to show that  $(\tau\downarrow I(\Sigma'))$ is non-empty and connected, but in fact we will show that it is contractible.

	By definition of comma category, the objects of $(\tau\downarrow I(\Sigma'))$  are the cones in $\Sigma'$ which contain $\tau$ and are integral and exact over $S$.  By Lemma~\ref{lem:exact_stars} this is identified with $I(\Star_\tau(\Sigma)/\Star_\tau(S))$.  By \Cref{lem:1}, $\Star_\tau(\Sigma')$ is a subdivision of $\Star_\tau(\sigma)$, so by Proposition~\ref{prop:2}, $I(\Star_\tau(\Sigma'))$ is homotopy equivalent to $I(\Star_\tau(\sigma))$.  But, $\sigma$ is a final object of $I(\Star_\tau(\sigma))$, so $I(\Star_\tau(\sigma))$ is contractible.
\end{proof}

We conclude with a corollary to Proposition~\ref{prop:8}, which we feel may offer a clue to the appropriate extension of Theorem~\ref{thm:5} to more general tropical spaces.  To make the statement, we require a definition.

\begin{definition} \label{def:3}
	Let $S$ be a fine, saturated logarithmic scheme and let $\pi : \sigma \to S$ be a universally surjective, integral, weakly convex Olsson cone over $S$.  Let $T$ be the tropical torus spanned by $\sigma$.  Let $\boldsymbol\lambda = (\lambda_1, \ldots, \lambda_n)$ be a tuple of characters of $T$.  For each $i$ and $j$, let $\mu_{i,j} \in \bar M_S^{\rm gp} = \inf_\sigma(\lambda_i - \lambda_j)$ be the minimum value taken by $\lambda_i - \lambda_j$ on $\sigma$.  The \emph{Weyl convex hull} of $\sigma$ is the weakly convex Olsson cone in $T$ defined by the inequalities
	\begin{equation*}
		\lambda_i - \lambda_j \geq \mu_{i,j}
	\end{equation*}
	for every $i$ and $j$.
\end{definition}

\begin{corollary} \label{cor:13}
	Let $S$ be a fine, saturated logarithmic scheme,  $\pi : \sigma \to S$ a universally surjective, integral, weakly convex Olsson cone over $S$, and  $\mathcal E$  a locally free sheaf on $\sigma$.  Then $\mathcal E$ extends uniquely, up to unique isomorphism, to the Weyl convex hull of $\sigma$.
\end{corollary}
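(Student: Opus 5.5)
The plan is to reduce to the explicit description of Proposition~\ref{prop:8}. Let $\tau$ denote the Weyl convex hull, and note that it depends on $\mathcal E$ through its multicharacter $\boldsymbol\lambda$. The question is local in the strict \'etale topology of $S$: uniqueness up to unique isomorphism means the local extensions glue. We may therefore assume $S$ atomic and, by Corollary~\ref{cor:3}, that $\mathcal E \simeq \bigoplus_i \mathcal O_\sigma(\lambda_i)$, with $\boldsymbol\lambda$ well defined modulo constants in the character group $X$ of $T$. Since $\sigma \subset \tau$ (each inequality $\lambda_i - \lambda_j \geq \mu_{i,j}$ holds on $\sigma$ by definition of $\mu_{i,j}$), restriction defines a morphism of stacks $r\colon \uHom(\tau,\BGL_n)_{\boldsymbol\lambda} \to \uHom(\sigma,\BGL_n)_{\boldsymbol\lambda}$. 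It suffices to show that $r$ is an equivalence on the components indexed by $\boldsymbol\lambda$. Existence is then immediate, since $\bigoplus \mathcal O_\tau(\lambda_i)$ restricts to $\mathcal E$. Uniqueness also follows, provided we know that any extension to $\tau$ has multicharacter $\boldsymbol\lambda$. This holds because the multicharacter of a bundle on $\tau$ is determined modulo constants by its restriction to $\sigma$: the map $X \to \bar M^{\rm gp}_\sigma / \bar M^{\rm gp}_S$ is injective, as $\sigma$ spans $T$ and is universally surjective.

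First, $\tau$ must satisfy the hypotheses of Proposition~\ref{prop:8}. It is a weakly convex precone in $T$ containing $\sigma$, so it is universally surjective, because $\sigma$ is. Integrality is the delicate point: $\tau$ is cut out by the finitely many type-$A$ root inequalities, with constants $\mu_{i,j}$ that are sections of $\bar M_S^{\rm gp}$. I would verify integrality through criterion~\ref{it:11} of Proposition~\ref{prop:1}. Concretely, every linear function $\gamma = \sum_{i,j} c_{i,j}(\lambda_i - \lambda_j)$ with $c_{i,j} \geq 0$ should have an infimum on $\tau$, namely the minimum over such expressions of $\sum c_{i,j}\mu_{i,j}$. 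I would then check that this minimum is attained uniformly over $S$, i.e.\ that it is a section of $\bar M_S^{\rm gp}$. The $\mu_{i,j}$ satisfy the triangle inequalities $\mu_{i,k} \leq \mu_{i,j} + \mu_{j,k}$, and for a root-system polyhedron with such data the infimum is computed by shortest paths. If this verification proves too hard, a fallback is to avoid needing integrality of $\tau$ and compute the automorphism sheaf on $\tau$ directly, via Corollary~\ref{cor:5}.

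Second, compare automorphism groups. By Proposition~\ref{prop:8} applied to $\sigma$ and to $\tau$, the two components are $\mathrm BG^\sigma_{\boldsymbol\lambda}$ and $\mathrm BG^\tau_{\boldsymbol\lambda}$. Here $G^\sigma_{\boldsymbol\lambda}$ consists of the invertible matrices in $\bigoplus_{i,j}\mathcal O_S(\inf_\sigma(\lambda_j - \lambda_i))$, and $G^\tau_{\boldsymbol\lambda}$ is defined in the same way with $\inf_\tau$. The restriction map is induced entrywise by $\pi_\ast\mathcal O_\tau(\lambda_j-\lambda_i) \to \pi_\ast\mathcal O_\sigma(\lambda_j-\lambda_i)$. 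By Corollary~\ref{cor:pushforward} it is enough to show $\inf_\tau(\lambda_j - \lambda_i) = \inf_\sigma(\lambda_j-\lambda_i)$. The inequality $\inf_\tau \leq \inf_\sigma$ holds because $\sigma\subset\tau$. Conversely, $\lambda_j - \lambda_i \geq \mu_{j,i} = \inf_\sigma(\lambda_j-\lambda_i)$ on $\tau$ by the definition of $\tau$. So the group schemes agree, $r$ is an isomorphism on components, and the extension is unique up to unique isomorphism.

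I expect the main obstacle to be the integrality of $\tau$ over $S$ in the first step, specifically the uniformity of the $\mu_{i,j}$ as sections of $\bar M_S^{\rm gp}$. Once that is in place, the rest is formal.
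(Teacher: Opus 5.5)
Your argument follows the paper's proof. You split locally, extend by $\bigoplus\mathcal O(\lambda_i)$ from the ambient torus, and get uniqueness up to isomorphism from splitting on $\tau$. Uniqueness up to unique isomorphism then comes from comparing automorphism groups entrywise, using $\inf_\tau(\lambda_j-\lambda_i)=\inf_\sigma(\lambda_j-\lambda_i)=\mu_{j,i}$. The paper makes this comparison by applying Proposition~\ref{prop:8} to $\tau$, which Definition~\ref{def:3} simply declares to be a weakly convex Olsson cone. You were right to flag this, but do not try to prove that $\tau$ is integral: it is false in general. Your sketch also reverses the inequalities. One has $\mu_{i,k}\geq\mu_{i,j}+\mu_{j,k}$. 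By LP duality, the infimum of $\sum c_{i,j}(\lambda_i-\lambda_j)$ on $\tau$ is the \emph{maximum} of $\sum c_{i,j}\mu_{i,j}$ over representations. This is a transportation problem, not a shortest-path computation.

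Here is a counterexample. Take $\bar M_S=\mathbf N a\oplus\mathbf N b$, let $X=\mathbf Z^3$ have basis $\lambda_1,\lambda_2,\lambda_3$, and put $\lambda_4=0$. For a metric $d$ on $\{1,2,3,4\}$ set $P_d=\{\lambda_i-\lambda_j\leq d_{ij}\}$. Let $d^k_{ij}=|p^k_i-p^k_j|$ with $p^1=(0,2,1,3)$ and $p^2=(0,2,3,1)$, and let $\sigma=aP_{d^1}+bP_{d^2}$ be the fibrewise Minkowski sum.
\begin{itemize}
\item The faces of $\sigma$ are the families $aF_1+bF_2$, so every face maps onto a face of $\mathbf R^2_{\geq 0}$. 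Since the base is smooth, $\sigma$ is integral; it is also saturated, universally surjective, and spans $T$.
\item For $\mathcal E=\bigoplus_i\mathcal O_\sigma(\lambda_i)$, the Weyl hull is $\tau=P_{d}$ with $d=ad^1+bd^2$. The solution of the transportation problem gives
\[\inf_\tau(\lambda_1+\lambda_2-\lambda_3-\lambda_4)=-\min(d_{31}+d_{42},\,d_{32}+d_{41})=-\min(2a+4b,\,4a+2b).\]
\item The constants $-2a-4b$ and $-4a-2b$ both lie below this function on $\tau$. No common upper bound of them does, as one sees by evaluating at $a=b=1$.
\end{itemize}
So criterion~\ref{it:11} of Proposition~\ref{prop:1} fails, and $\tau$ is not integral.

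Your fallback is exactly the right repair, and with it your proof is complete.
\begin{itemize}
\item Corollary~\ref{cor:3} needs only universal surjectivity of $\tau$, which you have. It gives the splitting of any extension and hence its multicharacter.
\item Corollary~\ref{cor:5} gives $\pi_\ast\mathcal O_\tau(\lambda_j-\lambda_i)\cong\mathcal O_S(\mu_{j,i})$, which is zero when $\mu_{j,i}=-\infty$. The reason is that the poset of constants $\leq\lambda_j-\lambda_i$ on $\tau$ sits inside the corresponding poset for $\sigma$. The latter has greatest element $\mu_{j,i}$ by integrality of $\sigma$, and $\mu_{j,i}$ lies in the smaller poset by the definition of $\tau$.
\end{itemize}
Integrality is therefore needed only for $\sigma$, and the restriction map on endomorphism sheaves is an isomorphism.
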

\begin{proof}
	Since the assertion is local in the strict \'etale topology of $S$, we may assume by Proposition~\ref{prop:vect-on-cone} that $\mathcal E = \bigoplus E_i(\lambda_i)$ for some locally free sheaves $E_i$ on $S$ and characters $\lambda_i$ of the torus $T$ spanned by $\sigma$.  But then the formula $\bigoplus E_i(\lambda_i)$ gives an extension of $\mathcal E$ to a locally free sheaf $\mathcal F$ on $T$.  Then $\mathcal F \big|_\tau$ extends $\mathcal E$.  Proposition~\ref{prop:vect-on-cone} implies that $\mathcal F \big|_\tau$ is the unique extension of $\mathcal E$ to $\tau$, up to isomorphism.  But Proposition~\ref{prop:8} shows that the automorphism group of $\mathcal F \big|_\tau$ is the same as the automorphism group of $\mathcal E$, so this extension is unique up to unique isomorphism.
\end{proof}

\subsection{Cautionary tales about the general case}
 \subsubsection{}
				Not all line bundles on Olsson \emph{fans} arise from conewise linear functions. For instance, let $\Sigma$ be a loop with a single vertex (the $\Gm$ quotient of an irreducible nodal cubic). Normalising shows that a line bundle is determined by an integer (the slope or degree on the normalisation) together with an invertible scalar (gluing datum). The only line bundle corresponding  to a conewise linear function is the trivial one.

\subsubsection{}            Vector bundles on Olsson \emph{fans} do not split in general. It is enough to consider equivariant bundles on non-affine toric varieties~:   the tangent bundle of $\mathbf P^2$ does not split, for example.  However, the Euler sequence provides a resolution by direct sums of conewise-linear functions.

            Another example is the following rank-two vector bundle on the tripod, which is an equivariant version of (a $1$-parameter smoothing of) a rational curve obtained by attaching three $\PP^1$ to the points $\{0,1,\infty\}$ of a fourth $\PP^1$, see \cite[Counterexample 4.5]{martensvariations}. We describe the bundle in terms of its Perling data. Let $R$ denote a discrete valuation ring with uniformiser $\pi$ and field of fractions $K$. Consider the four lattices up to homothety
            \begin{align*}
                L_0=&\langle e_0,e_1\rangle,\\
                L_1=&\langle e_0,\pi e_1\rangle,\\
                L_2=&\langle e_0,\pi^{-1}e_1\rangle,\\
                L_3=&\langle \pi e_0,e_0+e_1\rangle.
            \end{align*}
						inside $K^2$. These lattices, and the inclusions among them, define a rank two vector bundle on a tripod $\Sigma$ over $R$, with central vertex $v_0$ and leaves $v_1,v_2,v_3$. There does not exist a basis of $K^2$ diagonalising all the inclusions,\footnote{Although there exist diagonal bases for every pair of edges, for instance $\{e_0,e_1\}$ for $\{0,1,2\}$, $\{e_0,e_0+e_1\}$ for $\{0,1,3\}$, and $\{e_0+e_1,e_1\}$ for $\{0,2,3\}$.} meaning that the vector bundle on the tripod is not split. Equivariant vector bundles on a toric scheme $X_{\Sigma}$ (over $R$) are classified by maps from the fan $\Sigma$ to the Bruhat--Tits buildings of $\mathrm{PGL}$ \cite{kaveh2022toric}~; vector bundles that split correspond to maps that factor through a single apartment. This example is modeled on the Bruhat--Tits building of $\mathrm{PGL}_2(K)$ so as to be the smallest example not contained in a single apartment.

\subsubsection{} The next example shows that even the stack of line bundles on an Olsson fan may fail to be algebraic.
 
Let $S$ be the spectrum of a complete discrete valuation ring, equipped with its divisorial logarithmic structure.  Write $\delta$ for the generator of $\bar M_S$.  Let $\sigma$ be the Olsson fan dual to the monoid generated by $\alpha$ and $\beta$ with the relation $\alpha + \beta = \delta$.  Thus $\sigma$ is a family of intervals of length~$\delta$ over $S$.  Let $\Sigma$ be the result of gluing the loci $\{ \alpha = 0 \}$ and $\{ \beta = 0 \}$.  Then $\Sigma$ is a family of loops of length~$\delta$ over $S$.

For each $n$, let $S_n$ be the $n$-th order neighborhood of the closed point, $S_0$.  Let $\Sigma_n$ and $\sigma_n$ be the preimages of $S_n$ in $\Sigma$ and in $\sigma$, respectively.  An invertible sheaf on $\sigma_n$ is $\mathcal O_{\sigma_n}(\lambda)$ for some linear function $\lambda$ on $\sigma_n$.  Since $\lambda$ is only well-defined up to constants, we can choose $\lambda$ to have value $0$ at $\{ \alpha = 0 \}$ so that $\lambda = n \alpha$ for some integer $n$.  Therefore we can identify the Picard group of $\sigma_n$ with $\mathbf Z$.  Since the Picard group of $\Sigma_n$ has the additional choice of gluing along $\{ \alpha = 0 \}$ and $\{ \beta = 0 \}$, we get $\Pic(\Sigma_n) \simeq \mathbf Z \times \Gamma(S_n, \mathcal O_{S_n}^\ast)$.

We can also compute the Picard group of $\Sigma$.  Again, we have $\Pic(\sigma) = \mathbf Z$.  The gluing along the loci $\{ \alpha = 0 \}$ and $\{ \beta = 0 \}$ is a section $g$ of $\mathcal O_S^\ast$, but this section must restrict to $1$ at the generic point.  Indeed, $\Sigma$ is obtained from $\sigma$ by an \'etale equivalence relation that restricts to be trivial over the open point.  But if $K$ is the field of fractions of $\mathcal O_S$ then $\mathcal O_S^\ast$ embeds in $K^\ast$, and therefore $g = 1$.  Thus $\Pic(\Sigma) = \mathbf Z$.

On the other hand, if $\Pic(\Sigma)$ were representable by an algebraic space (or if it were the sheaf of isomorphism classes of an algebraic stack), $\Gamma(S, \Pic(\Sigma))$ would coincide with $\varprojlim \Gamma(S_n, \Pic(\Sigma_n))$.  But we have just seen that
\begin{equation*}
	\varprojlim \Gamma(S_n, \Pic(\Sigma_n)) = \mathbf Z \times \varprojlim \Gamma(S_n, \mathcal O_{S_n}^\ast) = \mathbf Z \times \Gamma(S, \mathcal O_S^\ast) \neq \mathbf Z.
\end{equation*}

\bibliographystyle{alpha}
\bibliography{biblio} 

\stoptoc
\subsection*{Funding}
L.B. is a member of INdAM group GNSAGA.
\resumetoc

\end{document}